\newtheorem{theorem}{Theorem}[section]
\newtheorem{lemma}[theorem]{Lemma}
\newtheorem{cor}[theorem]{Corollary}
\theoremstyle{definition}
\newtheorem{prop}[theorem]{Proposition}
\theoremstyle{remark}
\newtheorem{remark}[theorem]{Remark}
\newcommand{\vip}{\vskip.2cm}
\newcommand{\R}{\mathbb{R}}
\newcommand{\cX}{\boldsymbol{X}}
\newcommand{\E}{\mathbb{E}}
\newcommand{\Et}{\E_\theta}
\newcommand{\Var}{\mathbb{V}\mathrm{ar}}
\newcommand{\cA}{\mathcal{A}}
\newcommand{\ct}{\boldsymbol{t}}
\newcommand{\cF}{\boldsymbol{F}}
\newcommand{\cx}{\boldsymbol{x}}
\newcommand{\cc}{\boldsymbol{c}}
\newcommand{\cf}{\boldsymbol{f}}
\newcommand{\cV}{\mathbf{V}}
\newcommand{\ce}{\boldsymbol{e}}
\newcommand{\cL}{\boldsymbol{L}}
\newcommand{\cJ}{\boldsymbol{J}}
\newcommand{\cM}{\boldsymbol{M}}
\newcommand{\cD}{\mathcal{D}}
\newcommand{\mL}{\mathcal{L}}
\newcommand{\bL}{\boldsymbol{\mathcal{L}}}
\newcommand{\indiq}{{\bf 1}}
\newcommand{\bun}{{\mathbf 1}_N}
\begin{document}
\title[Statistical inference for Hawkes process]{Statistical inference for a partially observed interacting system of Hawkes processes}

\author{Chenguang LIU}

\address{Sorbonne Universit\'e- LPSM, Campus Pierre et Marie Curie, 4 place Jussieu, 75252 ´
PARIS CEDEX 05, LPSM, F-75005 Paris, France.}

\email{LIUCG92@gmail.com}

\begin{abstract}
We observe the actions of a $K$ sub-sample of $N$ individuals up to time $t$ for some large $K\le N$.
We model the relationships of individuals by i.i.d. Bernoulli($p$)-random variables, where $p\in (0,1]$ is an unknown parameter. The rate of action of each individual depends on some unknown parameter $\mu> 0$ and
  on the sum of some function $\phi$ of the ages of the actions of the individuals which influence him. The function $\phi$ is unknown but we assume it rapidly decays.  The aim of this paper is to estimate the parameter $p$ asymptotically as $N\to \infty$, $K\to \infty$, and $t\to \infty$. Let $m_t$ be the average number of actions per individual up to time $t$.
In the subcritical case, where $m_t$ increases linearly, we build an estimator of $p$ with the rate
of convergence $\frac{1}{\sqrt{K}}+\frac{N}{m_t\sqrt{K}}+\frac{N}{K\sqrt{m_t}}$.
In the supercritical case, where  $m_{t}$ increases exponentially fast, we build an estimator of $p$
with the rate of convergence $\frac{1}{\sqrt{K}}+\frac{N}{m_{t}\sqrt{K}}$.
\end{abstract}

\subjclass[2010]{62M09, 60J75, 60K35}

\keywords{Multivariate Hawkes processes, Point processes, 
Statistical inference, Interaction graph, Stochastic interacting particles, 
Mean field limit.}

\maketitle

\tableofcontents

\label{key}\section{Introduction} 
\subsection{Motivation}
 Hawkes processes have been used to model interactions between multiple entities evolving through time.   For  example in  neuroscience,   Reynaud-Bouret et al. \cite{PRBB} use  multivariate Hawkes processes to model the spikes of  different neurons.  In finance,  Bauwens and Hautsch  in \cite{Q}  give an order book model.  Social networks interactions are considered in Blundell et al. \cite{F},
Simma-Jordan \cite{ASMI}, Zhou et al. \cite{zhzh}. There are even some application in criminology, see e.g. Mohler,  Short, Brantingham,  Schoenberg and Tita in \cite{S}.

Concerning the statistical inference for Hawkes processes, mainly the case of fixed finite dimension
$N$ has been studied  in the asymptotic $t\to \infty.$  In parametric models, Ogata has studied the maximum likelihood estimator  in \cite{o1}.  Non-parametric models  were considered by Bacry and Muzzy \cite{bmu2}, Hansen et al. \cite{hrr}, Reynaud-Bouret et al. \cite{rs,rrgt,PRBB} and  Rasmussen \cite{r} with a  Bayesian approach,
   see remarks \ref{11} and \ref{12} for details.
 \vip
 
 However, in the real world, we often need to consider the case when the number of individuals is large. For example, in the neuroscience, the number of the neurons are usually enormously large.  So it is natural  to consider the double asymptotic  $t\to \infty\ and\ N\to \infty$.   The studies about this case are rare.  As far as we are aware,  the only paper which consider this case is \cite{A}.

\subsection{Model}
We have $N$ individuals. Each individual $j \in \{1,\dots,N\}$ is connected to the set of individuals
$S_j=\{i \in \{1,\dots,N\} : \theta_{ij}=1\}$.
The only possible action of the individual $i$ is to send a message to all the individuals of $S_i$.
Here $Z^{i,N}_t$ stands for the number of messages sent by $i$ during $[0,t]$. The  counting process $(Z_{s}^{i,N})_{i=1...N,0\le s\le t}$ is determined by its intensity process $(\lambda_{s}^{i,N})_{i=1...N,0\le s\le t}.$ It is informally defined by 
\begin{align*}
    P\Big(Z_{t}^{i,N} \textit{has a jump in } [t, t + dt]\Big|\mathcal{F}_t\Big)= \lambda_{t}^{i,N}dt,\  i = 1,..., N
\end{align*}

where   $\mathcal{F}_{t}$ denotes the sigma-field generated by $(Z_{s}^{i,N})_{i=1...N,0\le s\le t}$ and $(\theta_{ij})_{i,j=1,...,N}$. 

The rate $\lambda^{i,N}_t$ at which $i$ sends messages can be decomposed as the sum of two effects:

$\bullet$ he sends {\it new} messages at rate $\mu$;

$\bullet$ he {\it forwards} the messages he received, after some delay (possibly infinite) depending on the 
age of the message, which induces a sending rate of the form 
$\frac{1}{N}\sum_{j=1}^{N}\theta_{ij}\int_{0}^{t-}\phi(t-s)dZ_{s}^{j,N}$.

\smallskip

If for example $\phi=\boldsymbol{1}_{[0,K]}$, then  $N^{-1} \sum_{j=1}^N \theta_{ij}\int_0^{t-}  \phi(t-s)dZ_{s}^{j,N}$ is
precisely the number of messages that the $i$-th individual received between time $t-K$ and time $t$,
divided by $N$.
\begin{remark}\label{11}
In Bacry and Muzzy  \cite{bmu2},  Hansen et al \cite{hrr},  Reynaud-Bouret et al \cite{rs,rrgt,PRBB} consider the non-parametric estimation of the following system: for fixed $N\ge 1,$ and $i,j=1,...,N$, the  counting process $(Z_{s}^{i,N})_{i=1...N,0\le s\le t}$ is determined by its intensity process $(\lambda_{s}^{i,N})_{i=1...N,0\le s\le t},$ which defined by
\begin{align}\label{BMuy}
\lambda_{t}^{i,N}:=\mu_i+\sum_{j=1}^{N}\int_{0}^{t-}\phi_{ij}(t-s)dZ_{s}^{j,N}
\end{align}
for $\mu_i>0$ and $\phi_{ij}:[0,\infty)\to [0,\infty)$ is measurable, locally integrable. They provided estimators of the $\mu_i$ and the functions $\phi_{ij}.$
For fixed $N$, our model can be seen as a special case of (\ref{BMuy}) for $\mu_i=\mu$ and $\phi_{ij}(s)=\frac{\theta_{ij}}{N}\phi(s)$. 
\end{remark} 
 
 \begin{remark}\label{12}
In \cite{r}, Rasmussen consider the Bayesian inference of the following one dimensional system: the  counting process $(Z_{s})_{0\le s\le t}$ is determined by its intensity process $(\lambda_{s})_{0\le s\le t},$ which defined by
\begin{align}\label{rms}
\lambda_{t}:=\mu_t+\int_{0}^{t-}\phi(t-s)dZ_{s}
\end{align}
which the rate term $\mu_t$ depends on time $t.$
\end{remark}

\subsection{Main Goals}
We assume that $(\theta_{ij})_{i,j=1,...,N}$ is a family  of i.i.d. Bernoulli($p$) random variables, where $p$ is an unknown parameter. The parameter $\mu$ and the function $\phi$ are also unknown. In  \cite{A}, Delattre and Fournier  consider the case when one observes the  whole sample 
$(Z_{s}^{i,N})_{i=1...N,0\le s\le t}$ and they propose an estimator of $p$.

However, in the real world, it is  often impossible to observe the whole population. Our goal in the present 
paper is to consider the case where one observes only a subsample of indivudals. In other words, we want to build an estimator of $p$ 
when observing $(Z_{s}^{i,N})_{\{i=1,...,K,\ 0\le s\le t\}}$ with $1\ll K \le N$ and with $t$ large. 
\begin{remark}
Since the family of  $(Z^{i,N})_{\{i=1,...,N\}}$ is exchangeable,  considering that the observation  is given by  the first $K$ processes is not restriction.
\end{remark}

\smallskip

Let $\Lambda=\int_{0}^{\infty}\phi(t)dt\in (0,\infty].$ In \cite{A}, we  see that growth of $Z_{t}^{1,N}$  depends 
on the value of $\Lambda p$. When $\Lambda p<1$ (subcritical case),  $Z_{t}^{1,N}$ increases (in average) 
linearly with time, while when $\Lambda p>1$ (supercritical case), it increases exponentially. 
Thus  the estimation procedure will be different in the two cases.
We will not consider the critical case when $\Lambda p=1$.
\begin{remark}
We can find   simulations for $K<N$ already in \cite{A}. The simulations are about the case $K=\frac{N}{4}.$
\end{remark}

 

\section{Main results}

\subsection{Setting }
We consider some unknown parameters $p$ $\in(0,1],\  \mu >0$ and $\phi:[0,\infty)\to [0,\infty)$. We always assume that the function $\phi$ is measurable and locally integrable. For  $N\ge 1$, we consider an i.i.d. family $(\Pi^{i}(dt,dz))_{i=1,...,N}$ of Poisson measures on $[0,\infty)\times [0,\infty)$ with intensity $dtdz$, together with $(\theta_{ij})_{i,j=1,...,N}$ is a family  of i.i.d. Bernoulli($p$) random variables which is independent of the family $(\Pi^{i}(dt,dz))_{i=1,...,N}$. We consider the following system: for all $i\in \{1,...,N\},$ all $t\geq 0,$
\begin{align}\label{sssy}
Z_{t}^{i,N}:=\int^{t}_{0}\int^{\infty}_{0}\boldsymbol{1}_{\{z\le\lambda_{s}^{i,N}\}}\Pi^{i}(ds,dz), \hbox{ where }
\lambda_{t}^{i,N}:=\mu+\frac{1}{N}\sum_{j=1}^{N}\theta_{ij}\int_{0}^{t-}\phi(t-s)dZ_{s}^{j,N}.
\end{align}
In this paper, $\int_{0}^{t}$ means $\int_{[0,t]}$, and $\int_{0}^{t-}$ means $\int_{[0,t)}$. The solution $((Z_{t}^{i,N})_{t\ge 0})_{i=1,...,N}$ is a family of counting processes. 
By \cite[Proposition 1]{A}, the system $(1)$ has a unique $(\mathcal{F}_{t})_{t\ge 0}$-measurable c\`adl\`ag 
solution, where   
$$\mathcal{F}_{t}=\sigma(\Pi^{i}(A):A\in\mathcal{B}([0,t]\times [0,\infty)),i=1,...,N)\vee 
\sigma(\theta_{ij},i,j=1,...,N),$$ as soon as $\phi$ is locally integrable.

\subsection{Assumptions}
Recall that $\Lambda=\int_{0}^{\infty}\phi(t)dt\in (0,\infty].$
We will work under one of the two following conditions:
either for some $q\geq 1$,
\renewcommand\theequation{{$H(q)$}}
\begin{equation}
\mu \in (0,\infty), \quad \Lambda p \in (0,1) \quad \hbox{and} 
\quad \int_0^\infty s^q\phi(s)ds <\infty
\end{equation}
or
\renewcommand\theequation{{$A$}}
\begin{equation}
\mu \in (0,\infty), \quad \Lambda p \in (1,\infty] \quad \hbox{and} \quad \int_0^t |d\phi(s)|
\hbox{ increases at most polynomially.}
\end{equation}
\renewcommand\theequation{\arabic{equation}}\addtocounter{equation}{-2}

\begin{remark}
In many applications, $\phi$ is smooth and  decays fast.
Hence what we have in mind is that in the subcritical case,  $(H(q))$ is satisfied for all $q\geq 1$.
In the supercritical case, $(A)$ seems very reasonable: all the nonegative polynomial functions satifies the condition.
\end{remark}
\subsection{The result in the subcritical case}
For $N\geq 1$ and for $((Z^{i,N}_t)_{t\geq 0})_{i=1,\dots,N}$ the solution of (\ref{sssy}), we set
$\bar{Z}^{N}_{t}:=N^{-1}\sum_{i=1}^N Z^{i,N}_t$ and $\bar{Z}^{N,K}_{t}:=K^{-1}\sum_{i=1}^K Z^{i,N}_t.$ Next, we introduce
$$\varepsilon _{t}^{N,K}:=t^{-1}(\bar{Z}_{2t}^{N,K}-\bar{Z}_{t}^{N,K}),\qquad \mathcal{V}_{t}^{N,K}
:=\frac{N}{K}\sum_{i=1}^{K}\Big[\frac{Z_{2t}^{i,N}-Z_{t}^{i,N}}t-\varepsilon_{t}^{N,K}\Big]^{2}-\frac{N}{t}\varepsilon_{t}^{N,K}.$$
For $\Delta>0$ such that $t/(2\Delta)\in \mathbb{N}^{*},$ we set
\begin{align}
\mathcal{W}_{\Delta,t}^{N,K}:=2\mathcal{Z}_{2\Delta,t}^{N,K}-\mathcal{Z}_{\Delta,t}^{N,K},\quad
\label{Xtnk}\mathcal{X}_{\Delta,t}^{N,K}:=\mathcal{W}_{\Delta,t}^{N,K}-\frac{N-K}{K}\varepsilon_t^{N,K}
\\
\hbox{where} \quad
\mathcal{Z}^{N,K}_{\Delta,t}:=\frac{N}{t}\sum_{a=t/\Delta}^{2t/\Delta}(\bar{Z}_{a\Delta}^{N,K}-\bar{Z}_{(a-1)\Delta}^{N,K}-\Delta\varepsilon_{t}^{N,K})^{2}.
\end{align}
\begin{remark}
The estimators we defined above were already appearing in \cite{A}. The aim of this paper is to prove the convergence of these estimators.
\end{remark}
\begin{theorem}\label{abcd}
We assume $(H(q))$ for some $q>3$. There exists constants $C<\infty$, $C'>0$ depending only on $q$, $p$, $\mu$, $\phi$
such that for all $\varepsilon\in (0,1)$, all $1\le K\le N$, all $t\geq 1$,  setting 
$\Delta_t= t/(2 \lfloor t^{1-4/(q+1)}\rfloor)$,
$$P\Big(\Big|\Psi\Big(\varepsilon_{t}^{N,K},\mathcal{V}_{t}^{N,K},\mathcal{X}_{\Delta_{t},t}^{N,K}\Big)-
(\mu,\Lambda,p)\Big|\ge \varepsilon\Big) \le \frac{C}{\varepsilon}\Big(\frac{1}{\sqrt{K}}+\frac{N}{K\sqrt{t^{1-\frac{4}{1+q}}}}+\frac{N}{t\sqrt{K}}\Big)+CNe^{-C'K}$$
with $\Psi:=\boldsymbol{1}_{D}\Phi: \R^3\mapsto\R^3$, the function 
$\Phi:=(\Phi^{(1)},\Phi^{(2)},\Phi^{(3)})$ being defined on 
$D:= \{(u, v, w)\in \mathbb{R}^{3}: w > 0 \ and\ u,\  v\ge 0\}$ by
\begin{gather*}
\Phi^{(1)}(u,v,w):=u\sqrt{\frac{u}{w}},\quad  \Phi^{(2)}(u,v,w):=\frac{v+[u-\Phi^{(1)}(u,v,w)]^{2}}{u[u-\Phi^{(1)}(u,v,w)]}, \\
\Phi^{(3)}(u,v,w):=\frac{1-u^{-1}\Phi^{(1)}(u,v,w)}{\Phi^{(2)}(u,v,w)}.
\end{gather*}
\end{theorem}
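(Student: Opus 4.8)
The plan is to prove that the three statistics concentrate, as $t\to\infty$, at the point $(u^\star,v^\star,w^\star):=(m,\ \Lambda^2m^2p(1-p),\ m^3/\mu^2)$, where $m:=\mu/(1-\Lambda p)$ is the stationary mean rate per individual, and to note that this is exactly the point where the explicit map $\Phi$ returns $(\mu,\Lambda,p)$: indeed $\Phi^{(1)}(u^\star,v^\star,w^\star)=m\sqrt{m/w^\star}=\mu$, then $u^\star-\Phi^{(1)}=m\Lambda p$ and $v^\star+(u^\star-\Phi^{(1)})^2=\Lambda^2m^2p$, so $\Phi^{(2)}=\Lambda$ and $\Phi^{(3)}=(1-\mu/m)/\Lambda=p$. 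Assuming $p<1$ (the case $p=1$ is a boundary situation treated separately), $(u^\star,v^\star,w^\star)$ lies in the interior of $D$ and the denominators $w$, $u(u-\Phi^{(1)})$, $\Phi^{(2)}$ of $\Phi$ equal there $w^\star>0$, $m^2\Lambda p>0$, $\Lambda>0$, so $\Phi$ is $C^1$, hence $L$-Lipschitz, on a fixed ball $U$ of radius $r$ around $(u^\star,v^\star,w^\star)$ with $\overline U\subset D$. Writing $T:=(\varepsilon_t^{N,K},\mathcal{V}_t^{N,K},\mathcal{X}_{\Delta_t,t}^{N,K})$, one has $|\Psi(T)-(\mu,\Lambda,p)|\le L|T-(u^\star,v^\star,w^\star)|$ on $\{T\in U\}$, whereas $\{T\notin U\}\subset\{|T-(u^\star,v^\star,w^\star)|\ge r\}$; so Markov's inequality, applied componentwise, reduces the theorem (after using $\varepsilon<1$ to absorb $1/r$ into $C/\varepsilon$) to the three bounds
$$\E\big|\varepsilon_t^{N,K}-u^\star\big|+\E\big|\mathcal{V}_t^{N,K}-v^\star\big|+\E\big|\mathcal{X}_{\Delta_t,t}^{N,K}-w^\star\big|\ \lesssim\ \frac1{\sqrt K}+\frac N{K\sqrt{t^{1-4/(q+1)}}}+\frac N{t\sqrt K}+Ne^{-C'K}.$$
The additive $Ne^{-C'K}$ comes in uniformly via the event $G$ that all row and column sums $N^{-1}\sum_j\theta_{ij}$, $N^{-1}\sum_i\theta_{ij}$ are within a fixed $\delta$ of $p$: by Hoeffding and a union bound over $\le 2N$ events, $P(G^c)\le CNe^{-2\delta^2N}\le CNe^{-2\delta^2K}$ since $K\le N$; on $G$ every individual stays subcritical with bounded rate and all graph-averages below are controlled, while on $G^c$ each contribution is bounded by Cauchy--Schwarz together with the polynomial-in-$(t,N)$ moment estimates on $Z^{i,N}$ from \cite{A}.

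For $\varepsilon_t^{N,K}$ (the easy one): by exchangeability $\E[\varepsilon_t^{N,K}]=t^{-1}\int_t^{2t}\E[\lambda_s^{1,N}]\,ds$, and $s\mapsto\E[\lambda_s^{1,N}]$ solves, up to an $O(1/N)$ term, a renewal equation with kernel $p\phi$, so under $(H(q))$ it tends to $m$ with error $O(s^{-q})$; with $\Var(Z_{2t}^{1,N}-Z_t^{1,N})=O(t)$ (orthogonality of the compensated-jump martingales and decay of correlations of the intensity) and $\mathrm{Cov}(Z_{2t}^{1,N}-Z_t^{1,N},Z_{2t}^{2,N}-Z_t^{2,N})=O(t/N)$ (interaction only through shared influencers) this gives $\E|\varepsilon_t^{N,K}-m|\lesssim K^{-1/2}$. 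For $\mathcal{V}_t^{N,K}$: with $Y_i:=t^{-1}(Z_{2t}^{i,N}-Z_t^{i,N})$ we have $\mathcal{V}_t^{N,K}=N\cdot\frac1K\sum_{i\le K}(Y_i-\bar Y)^2-\frac Nt\bar Y$, $\bar Y=\varepsilon_t^{N,K}$, and $\E[\frac1K\sum(Y_i-\bar Y)^2]=\frac{K-1}K(\Var(Y_1)-\mathrm{Cov}(Y_1,Y_2))$. Conditionally on the graph, $Z^{i,N}$ is a Cox process whose long-run rate concentrates at $m_i:=\mu+\Lambda m\,N^{-1}\sum_j\theta_{ij}+\cdots$, so $m_i-m\approx\Lambda m(N^{-1}\sum_j\theta_{ij}-p)$ has variance $\approx\Lambda^2m^2p(1-p)/N$; decomposing $Z_{2t}^{i,N}-Z_t^{i,N}=m_it+(\text{mean-zero residual})$ yields $\Var(Y_1)=\Var(m_1)+\tfrac mt+O(\tfrac1{Nt})$ and $\mathrm{Cov}(Y_1,Y_2)=O(\tfrac1{Nt})$ (the levels $m_1,m_2$ involve disjoint rows of the graph), so the $\tfrac mt$ term is precisely cancelled by $-\tfrac Nt\bar Y$ and $\E[\mathcal{V}_t^{N,K}]\to N\Var(m_1)\to v^\star$. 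Viewing $\mathcal{V}_t^{N,K}$ as $N$ times the sample variance of $K$ weakly dependent variables with between-variance $\asymp1/N$ and within-variance $\asymp1/t$, a second-moment computation gives $\Var(\mathcal{V}_t^{N,K})\lesssim 1/K+N^2/(Kt^2)$, hence $\E|\mathcal{V}_t^{N,K}-v^\star|\lesssim\frac1{\sqrt K}+\frac N{t\sqrt K}+\frac1t+\frac1N$.

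The statistic $\mathcal{X}_{\Delta_t,t}^{N,K}$ carries the heart of the argument. Since $\Delta\varepsilon_t^{N,K}$ is the average over blocks of the $\Delta$-block increments of $\bar Z^{N,K}$, $\mathcal{Z}_{\Delta,t}^{N,K}$ is $\tfrac N\Delta$ times the empirical variance of those increments over the $\asymp t/\Delta$ blocks tiling $[t,2t]$. For one block, $\Var(\text{block incr.\ of }\bar Z^{N,K})=\Var(\text{block incr.\ of }\bar Z^{N})+(\Var(\tilde D_1)-\mathrm{Cov}(\tilde D_1,\tilde D_2))(\tfrac1K-\tfrac1N)$, with $\tilde D_i$ individual $i$'s level-centred block increment. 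Via the Poisson-cluster representation of $\sum_iZ^{i,N}$ (immigration rate $N\mu$, branching ratio $p\Lambda$), $\tfrac N\Delta\Var(\text{block incr.\ of }\bar Z^{N})\to\mu/(1-p\Lambda)^3=w^\star$ as $\Delta\to\infty$, while $\tfrac1\Delta\Var(\tilde D_1)\to m$ (individual excitation is diluted over $\asymp Np$ sources, so each $Z^{i,N}$ is asymptotically Poisson of rate $m$) and $\tfrac1\Delta\mathrm{Cov}(\tilde D_1,\tilde D_2)\to0$; thus $\E[\mathcal{Z}_{\Delta,t}^{N,K}]=w^\star+\tfrac{N-K}Km+(\text{an error affine in }1/\Delta\text{, plus a polynomially small remainder})$. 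The Richardson-type combination $\mathcal{W}_{\Delta,t}^{N,K}=2\mathcal{Z}_{2\Delta,t}^{N,K}-\mathcal{Z}_{\Delta,t}^{N,K}$ cancels the affine part, and subtracting $\tfrac{N-K}K\varepsilon_t^{N,K}\approx\tfrac{N-K}Km$ — the role of that correction term in the definition of $\mathcal{X}$ — leaves $\E[\mathcal{X}_{\Delta_t,t}^{N,K}]\to w^\star$. The fluctuation needs a fourth-moment bound for the block increments, yielding $\Var(\mathcal{Z}_{\Delta,t}^{N,K})\lesssim N^2\Delta/(K^2t)+\Delta/t$; with $\Delta_t\asymp t^{4/(q+1)}$ this balances the bias, the standard deviation being $\asymp\tfrac NK t^{-(q-3)/(2(q+1))}=\tfrac N{K\sqrt{t^{1-4/(q+1)}}}$, which $\to0$ precisely when $q>3$. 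Adding $\tfrac{N-K}K\E|\varepsilon_t^{N,K}-m|$ from the correction gives $\E|\mathcal{X}_{\Delta_t,t}^{N,K}-w^\star|\lesssim\frac1{\sqrt K}+\frac N{K\sqrt{t^{1-4/(q+1)}}}+\frac N{t\sqrt K}$.

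The main obstacle I anticipate is the fourth-moment control of $\mathcal{Z}_{\Delta,t}^{N,K}$ together with the proof that the Richardson combination removes the finite-block bias down to an order governed by the number $q$ of finite moments of $\phi$ — all while carrying the dependence on $K$, not merely $N$, through the moment estimates — and, alongside it, the bookkeeping for the two partial-observation corrections ($-\tfrac{N-K}K\varepsilon_t^{N,K}$ in $\mathcal{X}$, $-\tfrac Nt\varepsilon_t^{N,K}$ in $\mathcal{V}$) and the sampling variances they cancel, which is the feature genuinely new beyond the full-observation case $K=N$ of \cite{A}.
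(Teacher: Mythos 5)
Your overall architecture is exactly the paper's: identify the limit point $(u^\star,v^\star,w^\star)=\big(\tfrac{\mu}{1-\Lambda p},\tfrac{\mu^2\Lambda^2p(1-p)}{(1-\Lambda p)^2},\tfrac{\mu}{(1-\Lambda p)^3}\big)$, check that $\Phi$ maps it to $(\mu,\Lambda,p)$ and is locally Lipschitz there, and convert $L^1$ rates for the three statistics into the stated probability bound via Markov. Your reading of the roles of the corrections $-\tfrac{N}{t}\varepsilon_t^{N,K}$ and $-\tfrac{N-K}{K}\varepsilon_t^{N,K}$, of the Richardson combination $2\mathcal{Z}_{2\Delta,t}^{N,K}-\mathcal{Z}_{\Delta,t}^{N,K}$, and of the origin of each term in the rate (including why $q>3$ is needed for $\tfrac{N}{K}\sqrt{\Delta_t/t}\to 0$) is also correct. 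But the content of the theorem \emph{is} the three $L^1$ bounds, and those you assert rather than establish, so the proposal remains a plan rather than a proof.

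Two concrete places where the sketch as written would not go through. First, your good event $G$ controls only the full row and column sums $N^{-1}\sum_j\theta_{ij}$ and $N^{-1}\sum_i\theta_{ij}$. The analysis of all three statistics, which see only rows $1,\dots,K$, requires uniform-in-$j$ control of the partial column sums $\sum_{i\le K}\theta_{ij}$ --- equivalently of $|||I_KA_N|||_1$ and hence of the dual weights $c_N^K(j)=\sum_{i\le K}Q_N(i,j)$ that govern $\Var_\theta(\bar Z_\Delta^{N,K})$ --- and it is the maximum of these $N$ Binomial$(K,p)$ variables, not Hoeffding applied to full sums, that produces the $Ne^{-C'K}$ term (this is the paper's $\Omega_{N,K}$ and Lemma \ref{ONK}); your $G$ does not control these quantities. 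Second, and more substantially, the two claims carrying the third estimator --- that $\Var(\mathcal{Z}_{\Delta,t}^{N,K})\lesssim N^2\Delta/(K^2t)+\cdots$ and that the extrapolation removes the affine-in-$\Delta$ bias leaving only remainders of order $\Delta^{-q}$ governed by the moments of $\phi$ --- are the heart of the proof. In the paper they rest on the expansion $\Var_{\theta}(\bar{U}_{x+\Delta}^{N,K}-\bar{U}_{x}^{N,K})=\tfrac{\Delta}{N}\mathcal{W}_{\infty,\infty}^{N,K}-\mathcal{Y}^{N,K}+O(x\Delta^{-q}K^{-1})$ (Lemma \ref{UUW}) and on fourth-moment covariance bounds for the compensated martingales with explicit $K$-dependence, e.g.\ $|\mathrm{Cov}_{\theta}(M_r^{k}M_s^{l},M_u^{a}M_v^{b})|\lesssim t/N^2$, $t^{3/2}/N$ or $t^2$ according to index coincidences (Lemmas \ref{Udelta} and \ref{dk3}); none of this follows from the Poisson-cluster heuristic you invoke. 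You flag this yourself as the main obstacle; it is, and it is left unresolved in your write-up.
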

 
 We quote \cite[Remark 2]{A}, which says that the mean number of actions per individual per unit 
of time increases linearly. 

\begin{remark}\label{re12}
Assume $H(1)$. Then for all $\varepsilon>0$,
$$
\lim_{(N,t)\to (\infty,\infty)} P \Big(\Big| \frac{\bar Z^{N,K}_t}t- \frac \mu {1-\Lambda p}\Big|
\geq \varepsilon \Big)=0.
$$
So roughly, if observing $((Z_{s}^{i,N})_{s \in [0,t]})_{i=1,...,K}$, we observe approximately $Kt$ actions.
\end{remark}

\begin{remark}
If the function $\phi$   decays fast, for example $\phi(s)= a e^{-bs}$ or $c\indiq_{D}$ where $D$ is some compact set. In these situations, the function $\phi$ can satisfy the assumptions for arbitrary $q>0$. Hence, we can almost replace  $\frac{N}{K\sqrt{t}}$ by $\frac{N}{K\sqrt{t^{1-\frac{4}{1+q}}}}.$  
\end{remark}

\begin{remark}
We are going to consider two special cases:

$\bullet$ When $K\sim N,$ we have 
$$
(\frac{1}{\sqrt{K}}+\frac{N}{K\sqrt{t^{1-\frac{4}{1+q}}}}+\frac{N}{t\sqrt{K}})+CNe^{-C'K}\sim (\frac{1}{\sqrt{N}}+\frac{1}{\sqrt{t^{1-\frac{4}{1+q}}}}+\frac{\sqrt{N}}{t})+CNe^{-C'N}. 
$$
Hence, in order to ensure the convergence, we just need $\frac{\sqrt{N}}{t}\to 0.$

$\bullet$ Assume $K\sim \gamma \log N$ and $\gamma C'>1,$ where $C'$ is as in theorem \ref{abcd},  we have 
$$
(\frac{1}{\sqrt{K}}+\frac{N}{K\sqrt{t^{1-\frac{4}{1+q}}}}+\frac{N}{t\sqrt{K}})+CNe^{-C'K}\sim (\frac{1}{\sqrt{\log N}}+\frac{N}{\log N\sqrt{t^{1-\frac{4}{1+q}}}}+\frac{N}{t\sqrt{\log N}})+CN^{1-\gamma C'}. 
$$

Hence, in order to ensure the convergence, we just need $\frac{N}{\log N\sqrt{t^{1-\frac{4}{1+q}}}}+\frac{N}{t\sqrt{\log N}}\to 0,$ which equivalent to 
$\frac{N}{\log N\sqrt{t^{1-\frac{4}{1+q}}}}\to 0.$
\end{remark}

\subsection{The result in the supercritical case}

Here we define $\bar{Z}_{t}^{N,K}$ as previously and we set
\begin{align}
\label{UP}\mathcal{U}_{t}^{N,K}:=\Big[\frac{N}{K}\sum_{i=1}^{K}\Big(\frac{Z_{t}^{i,N}-\bar{Z}_{t}^{N,K}}{\bar{Z}_{t}^{N,K}}\Big)^{2}-
\frac{N}{\bar{Z}_{t}^{N,K}}\Big]\boldsymbol{1}_{\{\bar{Z}_{t}^{N,K}>0\}}\\
 \label{PU}\hbox{and} \quad \mathcal{P}_{t}^{N,K}:=\frac{1}{\mathcal{U}_{t}^{N,K}+1}\boldsymbol{1}_{\{\mathcal{U}_{t}^{N,K}\ge 0\}}.
\end{align}

\begin{theorem} \label{supercrit}
We assume $(A)$ and define $\alpha_{0}$ by $p\int_{0}^{\infty}e^{-\alpha_{0}t}\phi(t)dt=1$ (recall that by $(A)$,
$\Lambda p = p \int_{0}^{\infty}\phi(t)dt>1$).
 For all $\eta>0$, there is a constant $C_{\eta}>0$ (depending on $p,\mu,\phi,\eta$), such that for all $N\ge K\ge 1$, all $\varepsilon\in(0,1)$,
$$P(|\mathcal{P}_{t}^{N,K}-p|\ge \varepsilon)\le \frac{C_{\eta}e^{4\eta t}}{\varepsilon}\Big(\frac{N}{\sqrt{K}e^{\alpha_{0}t}}+\frac{1}{\sqrt{K}}\Big).$$
\end{theorem}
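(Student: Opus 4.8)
The plan is to exploit that $\mathcal{P}^{N,K}_t$ is an explicit function of $\mathcal{U}^{N,K}_t$, built so that $\mathcal{U}^{N,K}_t$ concentrates at $\tfrac{1-p}{p}$ (note $\big(\tfrac{1-p}{p}+1\big)^{-1}=p$), and then to prove that concentration. For the reduction: since $x\mapsto(x+1)^{-1}$ is $1$-Lipschitz on $[0,\infty)$, on $\{\mathcal U^{N,K}_t\ge0\}$ one has $|\mathcal P^{N,K}_t-p|\le|\mathcal U^{N,K}_t-\tfrac{1-p}{p}|$; on $\{\mathcal U^{N,K}_t<0\}$ the estimator equals $0$ and this event lies in $\{|\mathcal U^{N,K}_t-\tfrac{1-p}{p}|\ge\tfrac{1-p}{p}\}$; and $\{\bar Z^{N,K}_t=0\}$ forces no jump of the $K$ observed processes on $[0,t]$, so it has probability $\le e^{-K\mu t}$. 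Using $\varepsilon\in(0,1)$ and absorbing $p$-dependent constants into $C_\eta$, it therefore suffices to prove
\[
P\Big(\big|\mathcal U^{N,K}_t-\tfrac{1-p}{p}\big|\ge\varepsilon\Big)\ \le\ \frac{C_\eta e^{4\eta t}}{\varepsilon}\Big(\frac N{\sqrt K\,e^{\alpha_0 t}}+\frac1{\sqrt K}\Big),
\]
which I would do by introducing a ``good event'' $\Omega_t$ with $P(\Omega_t^c)\le C_\eta e^{-(\alpha_0-\eta)t}$ (this already fits the right-hand side, since $\varepsilon<1\le N/\sqrt K$), then bounding $\E[\,|\mathcal U^{N,K}_t-\tfrac{1-p}{p}|\,\indiq_{\Omega_t}]$ and invoking Markov.

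\emph{The growth representation and the $W$-cancellation.} Write $Z^{i,N}_t=m^i_t+M^i_t$ with $m^i_t:=\int_0^t\lambda^{i,N}_s\,ds$ and $M^i_t$ the compensated counting martingale, $\langle M^i\rangle_t=\int_0^t\lambda^{i,N}_s\,ds$; set $\psi(u):=\int_0^u\phi(s)\,ds$, $D_i:=\sum_{j=1}^N\theta_{ij}$ (the number of individuals influencing $i$) and $\bar D^{N,K}:=K^{-1}\sum_{i=1}^K D_i$. The main analytic input — presumably furnished by preliminary lemmas, and the supercritical counterpart of \cite[Remark 2]{A} — is a quantitative description of the linear Volterra system \eqref{sssy} under $(A)$: for every $\eta>0$ there exist a positive random variable $W=W^N$ (the Perron-direction intrinsic limit, common to every coordinate) and an event $\Omega_t$ with $P(\Omega_t^c)\le C_\eta e^{-(\alpha_0-\eta)t}$ on which $e^{-\eta t}\le e^{-\alpha_0 t}\bar Z^{N,K}_t\le e^{\eta t}$, $\langle M^i\rangle_t\le e^{(\alpha_0+\eta)t}$ for all $i\le K$, and
\[
Z^{i,N}_t=\frac{D_i}{Np}\,e^{\alpha_0 t}W+M^i_t+\big(\text{common Perron part}+\text{subexponential remainder}\big).
\]
The coefficient $D_i/(Np)$ is the $N\to\infty$ limit of the $i$-th Perron-eigenvector component of $\Theta/N$ (the spectral gap of an Erd\H{o}s--R\'enyi matrix pushing the correction to $O(1/N)$); it arises from $m^i_t\approx\tfrac{D_i}{N}\int_0^{t-}\psi(t-s)\,d\bar m^N_s$ with $\bar m^N_s\approx Ce^{\alpha_0 s}$, together with the renewal identity $\alpha_0\int_0^\infty e^{-\alpha_0 u}\psi(u)\,du=1/p$ (integrate by parts, using $p\int_0^\infty e^{-\alpha_0 u}\phi(u)\,du=1$). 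The crucial point is that the common factor $W$, and the common part of the fluctuation, \emph{cancels in the ratio}: on $\Omega_t$,
\[
\frac{Z^{i,N}_t}{\bar Z^{N,K}_t}=\frac{D_i}{\bar D^{N,K}}+R^i_t,\qquad \sum_{i=1}^K R^i_t=0,
\]
where, up to subexponential terms, $R^i_t=(M^i_t-\bar M^{N,K}_t)/\bar Z^{N,K}_t$ plus the $O(1/N)$ gap between the true Perron component and $D_i/\bar D^{N,K}$; using $\langle M^i\rangle_t\le e^{(\alpha_0+\eta)t}$ and $\bar Z^{N,K}_t\ge e^{(\alpha_0-\eta)t}$, and conditioning on $\theta$ together with $\bar Z^{N,K}_t$, the $R^i_t$ have conditional mean $O(N^{-1}+e^{(\eta-\alpha_0)t})$ and conditional variance $O(e^{(\eta-\alpha_0)t})$.

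\emph{Second-moment bookkeeping.} On $\Omega_t$, using $K^{-1}\sum_{i\le K}Z^{i,N}_t=\bar Z^{N,K}_t$ and expanding the square,
\[
\mathcal U^{N,K}_t+\frac N{\bar Z^{N,K}_t}=\frac NK\sum_{i=1}^K\Big(\frac{D_i}{\bar D^{N,K}}-1\Big)^2+\frac{2N}{K}\sum_{i=1}^K\Big(\frac{D_i}{\bar D^{N,K}}-1\Big)R^i_t+\frac NK\sum_{i=1}^K(R^i_t)^2 .
\]
The first term depends only on $(D_1,\dots,D_K)$, i.i.d.\ $\mathrm{Binomial}(N,p)$: it is $N$ times the ratio of the sample variance to the squared sample mean of the degrees, and since $N\,\Var(D_1)/(\E D_1)^2=\tfrac{1-p}{p}$, a routine CLT/Chebyshev estimate for the sample mean and variance of the (bounded-moment) $D_i$ gives $\E\big|\tfrac NK\sum_{i\le K}\big(\tfrac{D_i}{\bar D^{N,K}}-1\big)^2-\tfrac{1-p}{p}\big|\le C/\sqrt K$; this produces the $1/\sqrt K$ part of the rate. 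For the last term, a short second-moment computation conditioning on $\theta$ (using $\langle M^i\rangle_t\approx m^i_t$, $K^{-1}\sum m^i_t\approx\bar Z^{N,K}_t$, and that the $M^i$ are, given $\theta$, nearly uncorrelated) shows its conditional expectation is $\approx N/\bar Z^{N,K}_t$ — exactly what the built-in $-N/\bar Z^{N,K}_t$ removes — while its centred version has, by BDG, conditional standard deviation $\lesssim\tfrac NK\sqrt{K\,e^{(\eta-2\alpha_0)t}}=\tfrac N{\sqrt K}e^{(\eta-\alpha_0)t}$; this produces the $N/(\sqrt K e^{\alpha_0 t})$ part. Finally the cross term — in conditional mean and in conditional standard deviation alike — is $\lesssim\sqrt{N/K}\,e^{\eta t-\alpha_0 t/2}$, and the elementary inequality $\sqrt{Ne^{-\alpha_0 t}}\le 1+Ne^{-\alpha_0 t}$ makes this $\lesssim e^{\eta t}\big(\tfrac1{\sqrt K}+\tfrac N{\sqrt Ke^{\alpha_0 t}}\big)$, dominated by the first two. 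Collecting the three estimates, taking expectations, adding $P(\Omega_t^c)$, absorbing the finitely many stray $e^{\eta t}$ factors into $e^{4\eta t}$, and applying Markov's inequality yields the displayed bound for $\mathcal U^{N,K}_t$, hence the theorem. The exponent $4\eta$, $\eta>0$ arbitrary, is exactly the slack that renders harmless every subexponential effect — the polynomial growth of $\int_0^t|d\phi|$ from $(A)$, the $O(1/N)$ Perron-versus-degree gap (harmless since $1/N\le1/\sqrt K$ and the bound is anyway non-trivial only when $e^{\alpha_0 t}\gtrsim N$), the difference between $\alpha_0$ and the true exponential rate of the $N$-particle system, and the polynomial prefactors in the renewal asymptotics.

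\emph{Expected main difficulty.} Everything downstream of the growth representation is routine first/second-moment analysis; the real obstacle is that representation itself: proving, under the weak hypothesis $(A)$ and with explicit $t$-dependent errors allowing an $e^{\eta t}$ slack, that the supercritical interacting system grows like $e^{\alpha_0 t}W$ times a degree-determined profile. This combines (i) Laplace-transform/renewal control of the deterministic-given-$\theta$ means $m^i_t$; (ii) martingale (BDG) control of $Z^{i,N}_t-m^i_t$, isolating the dominant Perron-direction component — common to all $i$, hence washed out by the normalisation — and bounding the residual conditional variance by (a polynomial in $e^{\eta t}$ times) $e^{\alpha_0 t}$; (iii) the random-matrix estimate identifying the Perron direction of $\Theta/N$ with the normalised degree vector up to $O(1/N)$; and (iv) the delicate lower-tail bound $P\big(\bar Z^{N,K}_t\le e^{(\alpha_0-\eta)t}\big)\le C_\eta e^{-(\alpha_0-\eta)t}$, where the immigration $\mu>0$ and the polynomial control in $(A)$ are essential, needed to dominate $\Omega_t^c$.
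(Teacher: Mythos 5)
Your proposal follows essentially the same route as the paper: decompose $Z^{i,N}_t$ into a Perron--direction deterministic term plus martingale plus remainders (the paper's $v_t^{N,K}\boldsymbol{V}_N^K+\boldsymbol{I}_t^{N,K}+\boldsymbol{M}_t^{N,K}+\boldsymbol{J}_t^{N,K}$), identify the Perron eigenvector of $A_N$ with the normalised degree vector via a power-iteration/concentration argument, reduce $\mathcal{U}^{N,K}_\infty$ to the empirical variance of the i.i.d.\ Binomial$(N,p)$ degrees to get the $1/\sqrt{K}$ rate, and control the martingale fluctuations to get the $N/(\sqrt{K}e^{\alpha_0 t})$ rate; the cancellation of the common growth factor in the ratio is exactly how the paper handles $\boldsymbol{J}_t^{N,K}-\bar{J}_t^{N,K}\boldsymbol{1}_K$. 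Two caveats. First, the bound $P(\Omega_t^c)\le C_\eta e^{-(\alpha_0-\eta)t}$ you posit for the lower tail of $\bar{Z}_t^{N,K}$ is stronger than what the paper proves and than what second-moment methods can deliver: the fluctuation of $e^{-\alpha_0 t}\bar{Z}_t^{N,K}$ around its $\theta$-conditional mean is of order $N^{-1/2}$ uniformly in $t$ (it comes from the Perron component of the propagated martingale), so Chebyshev only yields $C_\eta e^{2\eta t}(K^{-1/2}+e^{-\alpha_0 t})$, as in the paper; this is harmless here because $K^{-1/2}$ already sits in the target, but your stated form of the good-event bound is not attainable without genuine lower-tail control of the limit random variable. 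Second, your argument explicitly defers the ``growth representation'' to preliminary lemmas; that representation (the $N^{-3/8}$-rate alignment of $A_N^n$ with the Perron direction on $\Omega_N^{K,2}$, the renewal asymptotics of $v_t^{N,K}$, and the $L^2$ bounds on $\boldsymbol{I}$, $\boldsymbol{J}$, $\boldsymbol{M}$) is where essentially all of the paper's work lies, so the proposal as written supplies the correct blueprint and bookkeeping but not a self-contained proof.
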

Next, we quote \cite[Remark 5]{A}.

\begin{remark} \label{remsup}
Assume $(A)$ and consider $\alpha_0>0$ such that $p \int_0^\infty e^{-\alpha_0 t}\phi(t)dt =1$.
Then for all $\eta>0$,
$$\lim_{t\to \infty}\lim_{(N,K)\to (\infty,\infty)} P(\bar{Z}_{t}^{N,K}\in[e^{(\alpha_{0}-\eta)t},e^{(\alpha_{0}+\eta)t}])=1.$$
So roughly, if observing $((Z_{s}^{i,N})_{s \in [0,t]})_{i=1,...,K}$, we observe around $Ke^{\alpha_{0}t}$ actions.
\end{remark}

\section{On the choice of the estimators}

In the whole paper, we denote by $\E_\theta$ the conditional expectation knowing $(\theta_{ij})_{i,j=1,\dots,N}$.
Here we explain informally why the estimators should converge.

\subsection{The subcritical  case}
We define $A_{N}(i,j):=N^{-1}\theta_{ij}$ and the matrix $(A_{N}(i,j))_{i,j\in\{1,...,N\}}$, as well as 
$Q_{N}:=(I-\Lambda A_{N})^{-1}$ on the event on which $I-\Lambda A_N$ is invertible.

\smallskip

Define $\widetilde{\varepsilon} _{t}^{N,K}:=t^{-1}\bar{Z}_{t}^{N,K},\ K\le N$. We expect that,
for $t$ large enough, $Z_{t}^{i,N} \simeq \mathbb{E}_{\theta}[Z_{t}^{i,N}]$. 
And, by definition of $Z_{t}^{i,N}$, see \eqref{sssy}, it is not hard to get  
$$\mathbb{E}_{\theta}[Z_{t}^{i,N}]=\mu t+N^{-1}\sum_{j=1}^{N}\theta_{ij}\int_{0}^{t}\phi(t-s)
\mathbb{E}_{\theta}[Z_{s}^{j,N}]ds.$$
Hence, assuming that $\gamma_{N}(i)=\lim_{t\to \infty}t^{-1}\mathbb{E}_{\theta}[Z_{t}^{i,N}]$  exists for each $i=1,...,N$
and observing that $\int_{0}^{t}\phi(t-s)sds\simeq \Lambda t$, we find that the vector 
$\boldsymbol{\gamma}_{N}=(\gamma_N(i))_{i=1,\dots,N}$ should satisfy
$\boldsymbol{\gamma}_{N}=\mu\boldsymbol{1}_{N}+\Lambda A_{N}\boldsymbol{\gamma}_{N} $,
where $\boldsymbol{1}_{N}$ is the vector defined by $\boldsymbol{1}_{N}(i)=1$ for all $i=1,\dots,N$.
Thus we  deduce that $\boldsymbol{\gamma}_{N}=\mu(I-\Lambda A_{N})^{-1}\boldsymbol{1}_{N}=\mu\boldsymbol{\ell}_{N}$, where we have set
$$
\boldsymbol{\ell}_N:=Q_N\boldsymbol{1}_N,\ \ell_{N}(i):=\sum_{j=1}^{N}Q_{N}(i,j),\ \bar{\ell}_N:=\frac{1}{N}\sum_{i=1}^N\ell_N(i),\ \bar{\ell}^K_N:=\frac{1}{K}\sum_{i=1}^K\ell_N(i)
$$
So we  expect that  $Z_{t}^{i,N}\simeq \mathbb{E}_{\theta}[Z_{t}^{i.N}]\simeq \mu \ell_{N}(i)t$,
whence $\widetilde{\varepsilon}_{t}^{N,K}=t^{-1}\bar{Z}_{t}^{N,K}\simeq \mu\bar{\ell}_{N}^{K}$.

\smallskip

We informally show that
$\ell_{N}(i)\simeq 1+\Lambda(1-\Lambda p)^{-1}L_{N}(i)$, where $L_{N}(i):=\sum_{j=1}^{N}A_{N}(i,j)$:
when $N$ is large, $\sum_{j=1}^N A_{N}^{2}(i,j)= N^{-2}\sum_{j=1}^N\sum_{k=1}^N \theta_{ik}\theta_{kj}
\simeq p  N^{-1}\sum_{k=1}^N \theta_{ik} = p L_N(i)$. And one gets convinced similarly that
for any $n\in \mathbb{N}_*$, roughly, $\sum_{j=1}^{N}A_{N}^{n}(i,j)\simeq p^{n-1}L_{N}(i)$. 
So
$$\ell_{N}(i)=\sum_{n\ge 0}\Lambda^{n}\sum_{j=1}^{N}A_{N}^{n}(i,j)\simeq 1+\sum_{n\ge 1}\Lambda^{n}p^{n-1}L_{N}(i)=1+\frac{\Lambda}{1-\Lambda p}L_{N}(i).$$ 
But $(NL_{N}(i))_{i=1,...,N}$ are i.i.d. Bernoulli$(N,p)$ random variables, so that
$\bar{\ell}^{K}_{N}\simeq 1 + \Lambda p (1-\Lambda p)^{-1}=(1-\Lambda p)^{-1}$.
Finally, we have explained why $\widetilde{\varepsilon} _{t}^{N,K}$ should resemble $\mu (1-\Lambda p)^{-1}$.

\smallskip

Knowing $(\theta_{ij})_{i,j=1..N}$, the process $Z_{t}^{1,N}$  resembles a Poisson process,
so that $\Var_\theta (Z_{t}^{1,N})\simeq \Et[Z_{t}^{1,N}]$, whence
$$
\Var(Z_{t}^{1,N})=\Var(\mathbb{E}_{\theta}[Z_{t}^{1,N}])+\mathbb{E}[\Var_{\theta}(Z_{t}^{1,N})]\simeq 
\Var(\mathbb{E}_{\theta}[Z_{t}^{1,N}])+\mathbb{E}[Z_{t}^{1,N}].
$$
Writing an empirical version of this equality, we find
$$
\frac 1K\sum_{i=1}^{K}(Z_{t}^{i,N}-\bar{Z}_{t}^{N,K})^{2}\simeq \frac 1K\sum_{i=1}^{K}\Big(\mathbb{E}_{\theta}[Z_{t}^{i,N}]-
\mathbb{E}_{\theta}[\bar{Z}_{t}^{N,K}]\Big)^{2}+\bar{Z}_{t}^{N,K}.
$$
And since $Z_{t}^{i,N}\simeq \mu \ell_{N}(i)t\simeq 
\mu [1+(1-\Lambda p)^{-1}\Lambda  L_N(i)]t$
as already seen a few lines above,
we find
$$
\frac 1K\sum_{i=1}^{K}(Z_{t}^{i,N}-\bar{Z}_{t}^{N,K})^{2}
\simeq \frac {\mu^{2}t^{2}\Lambda^2}{K(1-\Lambda p)^2}\sum_{i=1}^{K}(L_{N}(i)-
\bar{L}_{N}^{K})^{2}+\bar{Z}_{t}^{N,K}.
$$
But $(NL_{N}(i))_{i=1,...,N}$ are i.i.d. Bernoulli$(N,p)$ random variables, so that
\begin{align*}
\widetilde{\mathcal{V}}_{t}^{N,K}:=&
\frac N K \sum_{i=1}^K \Big[ \frac {Z^{i,N}_t}t -  \widetilde{\varepsilon}^{N,K}_t\Big]^2 
- \frac N t \widetilde{\varepsilon}^{N,K}_t\\
=&\frac N{K t^{2}}\Big[\sum_{i=1}^{K}(Z_{t}^{i,N}-\bar{Z}_{t}^{N,K})^{2}-K\bar{Z}_{t}^{N,K}
\Big] \\
\simeq &\frac{N\mu^{2}\Lambda^2}{K(1-\Lambda p)^2}\sum_{i=1}^{K}(L_{N}(i)-\bar{L}_{N}^{K} )^{2}
\simeq \frac{\mu^{2}\Lambda^{2}p(1-p)}{(1-\Lambda p)^{2}}.
\end{align*}

We finally build a third estimator.
The temporal empirical variance 
$$
\frac \Delta t\sum_{k=1}^{t/\Delta}\Big[\bar{Z}_{k\Delta}^{N,K}-\bar{Z}_{(k-1)\Delta}^{N,K}-\frac \Delta t\bar{Z}_{t}^{N,K}
\Big]^{2}
$$  
should resemble $\Var_{\theta}[\bar{Z}_{\Delta}^{N,K}] $ if $1\ll\Delta \ll t$. 
So we expect that:
$$\widetilde{\mathcal{W}}_{\Delta,t}^{N,K}:= 
\frac Nt\sum_{k=1}^{t/\Delta}\Big[\bar{Z}_{k\Delta}^{N,K}-\bar{Z}_{(k-1)\Delta}^{N,K}-\Delta t^{-1}\bar{Z}_{t}^{N,K}\Big]^{2}
\simeq \frac N \Delta \Var_{\theta}[\bar{Z}_{\Delta}^{N,K}].
$$
To understand what $\Var_{\theta}[\bar{Z}_{\Delta}^{N,K}]$ looks like, we 
introduce the centered process $U_{t}^{i,N}:=Z_{t}^{i,N}-\mathbb{E}_{\theta}[Z_{t}^{i,N}]$ and the 
martingale $M_{t}^{i,N}:=Z_{t}^{i,N}-C_{t}^{i,N}$ where $C^{i,N}$ is the compensator of $Z^{i,N}$.  
An easy computation, see \cite[Lemma 11]{A}, shows that, denoting by
$\boldsymbol{U}^{N}_{t}$ and $\boldsymbol{M}^{N}_{t}$ the vectors $(U^{i,N}_t)_{i=1,\dots,N}$ and
$(M^{i,N}_t)_{i=1,\dots,N}$,
$$
\boldsymbol{U}^{N}_{t}=
\boldsymbol{M}_{t}^{N}+A_{N}\int_{0}^{t}\phi(t-s)\boldsymbol{U}_{s}^{N}ds.
$$ 
So for large times, we  conclude that
$\boldsymbol{U}_{t}^{N}\simeq \boldsymbol{M}_{t}^{N}+\Lambda A_{N}\boldsymbol{U}_{t}^{N}$, whence
finally $\boldsymbol{U}_{t}^{N}\simeq Q\boldsymbol{M}_{t}^{N}$ and thus
$$
\frac{1}{K}\sum_{i=1}^{K}U^{i,N}_{t}\simeq \frac{1}{K} \sum_{i=1}^{K}\sum_{j=1}^{N}Q(i,j)M_{t}^{j,N}
=  \frac{1}{K} \sum_{j=1}^N c_{N}^{K}(j) M_{t}^{j,N},
$$
where we have set $c_{N}^{K}(j)=\sum_{i=1}^{K}Q_{N}(i,j)$.
But we obviously have $[M^{j,N},M^{i,N}]_{t}=\boldsymbol{1}_{\{i=j\}}Z_{t}^{j,N}$ (see \cite[Remark 10]{A}), so that
$$
\Var_{\theta}[\bar{Z}_{t}^{N,K}]=\Var_{\theta}[\bar{U}_{t}^{N,K}] \simeq 
\frac{1}{K^{2}}\sum_{j=1}^{N}(c_{N}^{K}(j))^{2} Z_{t}^{j,N}.
$$ 
Recalling that $Z_{t}^{j,N} \simeq \mu \ell_N(j) t$, we conclude that
$\Var_{\theta}[\bar{Z}_{t}^{N,K}]\simeq K^{-2}\mu t \sum_{j=1}^{N}\Big(c_{N}^{K}(j)\Big)^{2} \ell_N(j)$, whence
$$
\widetilde{\mathcal{W}}_{\Delta,t}^{N,K}\simeq \frac N\Delta\Var_{\theta}[\bar{Z}_{\Delta}^{N,K}]\simeq 
\mu \frac{N}{K^{2}}\sum_{j=1}^{N}\Big(c_{N}^{K}(j)\Big)^{2}\ell_{N}(j).
$$ 

To compute this last quantity, we start from $c_{N}^{K}(j)=\sum_{n\ge 0}\sum_{i=1}^{K}\Lambda^{n}A_{N}^{n}(i,j)$.
But we have $\sum_{i=1}^K A_{N}^{2}(i,j)= N^{-2}\sum_{i=1}^K\sum_{k=1}^N \theta_{ik}\theta_{kj}
\simeq p  KN^{-2}\sum_{k=1}^N \theta_{kj} = p KN^{-1} C_N(j)$. And one gets convinced similarly that
for any $n\in \mathbb{N}_*$, roughly, $\sum_{i=1}^{K}A_{N}^{n}(i,j)\simeq KN^{-1} p^{n-1}C_{N}(j)$. 
So we conclude that
$c_{N}^{K}(j)\simeq A_{N}^{0}(i,j)+\frac{K\Lambda}{N(1-\Lambda p)} C_{N}(j)$. Consequently,
$c_{N}^{K}(j)\simeq 1+\frac{K}{N}\frac{\Lambda p}{(1-\Lambda p)}$ for $j\in \{1,...,K\}$ and
$c_{N}^{K}(j)\simeq \frac{K}{N}\frac{\Lambda p}{(1-\Lambda p)}$ for $j\in \{K+1,...,N\}$.
We finally get, recalling that $\ell_N(j)\simeq (1-\Lambda p)^{-1}$,  
\begin{align*}
\widetilde{\mathcal{W}}_{\Delta,t}^{N,K}\simeq &\mu\frac{N}{K^{2}}\sum_{j=1}^{N}\Big(c_{N}^{K}(j)\Big)^{2} \ell_{N}(j)\\
\simeq& \mu \frac{N}{K^2}\Big(\frac K {1-\Lambda p} \Big[1+\frac {K \Lambda p}{N(1-\Lambda p)}\Big]^2 
+ \frac {N-K} {1-\Lambda p} \Big[\frac {K \Lambda p}{N(1-\Lambda p)}\Big]^2 
\Big)\\
\simeq& \frac{\mu}{(1-\Lambda p)^{3}}+\frac{(N-K)\mu}{K(1-\Lambda p)}.
\end{align*}
All in all, we should have $\widetilde{\mathcal{X}}_{\Delta,t}^{N,K}\simeq \frac{\mu}{(1-\Lambda p)^{3}}.$

It readily follows that $\Psi(\widetilde{\varepsilon}^{N,K}_{t}, \widetilde{\mathcal{V}}^{N,K}_{t}, \widetilde{\mathcal{X}}^{N,K}_{\Delta, t})$
should resemble $(\mu,\Lambda,p)$.

\smallskip

The three estimators $\varepsilon^{N,K}_{t}, \mathcal{V}^{N,K}_{t}, \mathcal{X}^{N,K}_{\Delta, t}$ 
are very similar to 
$\widetilde{\varepsilon}^{N,K}_{t},\ \widetilde{\mathcal{V}}^{N,K}_{t},\ \widetilde{\mathcal{X}}^{N,K}_{\Delta, t}$ 
and should converge to the same limits.
Let us explain why we have introduced $\varepsilon^{N,K}_{t}, \mathcal{V}^{N,K}_{t}, \mathcal{X}^{N,K}_{\Delta, t}$,
of which the expressions are more complicated.
The main idea is that, see \cite[Lemma 16 (ii)]{A}, $\mathbb{E}[Z^{i,N}_t]= \mu \ell_N(i) t + \chi^N_i \pm t^{1-q}$
(under $(H(q))$), for some finite random variable $\chi^N_i$.
As a consequence, $t^{-1}\mathbb{E}[Z^{i,N}_{2t}-Z^{i,N}_t]$ converges to $\mu \ell_N(i)$ considerably much faster,
if $q$ is large, than $t^{-1}\mathbb{E}_\theta [Z^{i,N}_{t}]$ (for which the error is of order $t^{-1}$).

\subsection{The  supercritical  case}

We now turn to the supercritical case where $\Lambda p>1$.
We introduce the  $N\times N$ matrix $A_{N}(i,j)=N^{-1}\theta_{ij}$.

\smallskip

We expect that $Z_{t}^{i,N} \simeq H_{N}\mathbb{E}_{\theta}[Z_{t}^{i,N}]$, when $t$ is large, 
for some random $H_{N}>0$ not 
depending on $i$. 
Since $\Lambda p>1$, the process  should increase like an exponential function, i.e. there should be
$\alpha_N>0$ such that for all $i=1,\dots,N$,
$\mathbb{E}_{\theta}[Z_{t}^{i,N}]\simeq \gamma_{N}(i)e^{\alpha_{N}t}$ for $t$ very large,
where $\gamma_{N}(i)$ is some positive random constant.
We recall that $\mathbb{E}_{\theta}[Z_{t}^{i,N}]=\mu t+N^{-1}\sum_{j=1}^{N}\theta_{ij}\int_{0}^{t}\phi(t-s)
\mathbb{E}_{\theta}[Z_{s}^{j,N}]ds$. We insert $\mathbb{E}_{\theta}[Z_{t}^{i,N}]\simeq \gamma_{N}(i)e^{\alpha_{N}t}$ 
in this equation and let $t$ go to  infinite: we informally get 
$\boldsymbol{\gamma}_{N}=A_{N}\boldsymbol{\gamma}_{N}\int_{0}^{\infty}e^{-\alpha_{N}s}\phi(s)ds$. 
In other words, $\boldsymbol{\gamma}_{N}=(\gamma_N(i))_{i=1,\dots,N}$ is an eigenvector of $A_N$ for the eigenvalue 
$\rho_N:=(\int_{0}^{\infty}e^{-\alpha_{N}s}\phi(s)ds)^{-1}.$ 

\smallskip

But $A_N$ has nonnegative entries. Hence by the Perron-Frobenius theorem, it has a unique (up to normalization)
eigenvector $\boldsymbol{V}_{N}$ with nonnegative entries (say, such that $\|\boldsymbol{V}_{N}\|_{2}=\sqrt{N}$), 
and this vector corresponds to the maximum eigenvalue $\rho_N$ of $A_N$.
So there is a (random) constant $\kappa_N$ such that $\boldsymbol{\gamma}_{N}\simeq \kappa_N \boldsymbol{V}_{N}$.
All in all, we find that $Z_{t}^{i,N} \simeq \kappa_N H_{N} e^{\alpha_N t} \boldsymbol{V}_{N}(i)$.
We define $\boldsymbol{V}_N^K=I_K\boldsymbol{V}_N,$ where $I_K$ is the $N\times N$-matrix 
defined by $I_K(i,j)={\bf 1}_{\{i=j\leq K\}}$.

\smallskip

As in the subcritical case, the variance $K^{-1}\sum_{i=1}^{K}(Z_{t}^{i,N}-\bar{Z}_{t}^{N,K})^{2}$
should look like 
$$
\frac 1K\sum_{i=1}^{K}(\mathbb{E}_{\theta}[Z_{t}^{i,N}]
-\mathbb{E}_{\theta}[\bar{Z}_{t}^{N,K}])^{2}+\bar{Z}_{t}^{N,K}
\simeq \frac{\kappa_N^2H_N^2e^{2\alpha_{N}t}}K\sum_{i=1}^{K}(V_{N}(i)-\bar{V}_{N}^{K})^{2}+\bar{Z}_{t}^{N,K},
$$
where as usual $\bar V^K_N:= K^{-1}\sum_{i=1}^K V_N(i)$.
We  also get $\bar{Z}_{t}^{N,K}\simeq \kappa_N H_N \bar{V}_{N}^{K}e^{\alpha_{N}t}$. Finally,
$$
\mathcal{U}_{t}^{N,K}=\frac {N}{K (\bar{Z}_{t}^{N,K})^{2}}
[\sum_{i=1}^{K}(Z_{t}^{i,N}-\bar{Z}_{t}^{N,K})^{2}-K\bar{Z}_{t}^{N,K}]\boldsymbol{1}_{\{\bar{Z}_{t}^{N,K}>0\}}\simeq
\frac N {K (\bar{V}_{N}^{K})^{2}}\sum_{i=1}^{K}(V_{N}(i)-\bar{V}_{N}^{K})^{2}.
$$

Next, we  consider the term $(\bar{V}_{N}^{K})^{-2}\sum_{i=1}^{K}(V_{N}(i)-\bar{V}_{N}^{K})^{2}$.
By a rough estimation, $A^{2}_{N}(i,j)\simeq \frac{p^{2}}{N}$. Because  $I_{K}A_{N}^{2}\boldsymbol{V}_{N}=\rho_{N}^{2}\boldsymbol{V}_{N}^{K}$, we have $\rho_{N}^{2}\boldsymbol{V}_{N}^{K}\simeq p^{2}\bar{V}_{N}\boldsymbol{1}_{K}$, where $\boldsymbol{1}_K$ is the $N$ dimensional vector of which the first $K$ elements are $1$ and others are $0.$ By the same reason, we have $\rho_{N}^{2}\boldsymbol{V}_{N}\simeq p^{2}\bar{V}_{N}\boldsymbol{1}_{N}.$ So $\boldsymbol{V}_{N}^{K}=I_{K}A_{N}\boldsymbol{V}_{N}/\rho_{N}\simeq k_{N}I_{K}A_{N}\boldsymbol{1}_{N}$, where $k_{N}=(p^{2}/\rho_{N}^{3})\bar{V}_{N}$. In other words, the vector $(k_{N})^{-1}\boldsymbol{V}^{K}_{N}$ is almost like the vector $\boldsymbol{L}_{N}^{K}=I_{K}A_{N}\boldsymbol{1}_{N}$.
Finally,  we  expect that
$$
\mathcal{U}_{t}^{N,K}\simeq\frac{N}{K}(\bar{V}_{N}^{K})^{-2}\sum_{i=1}^{K}(V_{N}(i)-\bar{V}_{N}^{K})^{2}\simeq \frac{N}{K}(\bar{L}_{N}^{K})^{-2}\sum_{i=1}^{K}(L_{N}(i)-\bar{L}_{N}^{K})^{2}\simeq p^{-2}p(1-p)= \frac{1}{p}-1,
$$
whence ${\mathcal P}^{N,K}_t\simeq p$.

\section{Optimal rates in some toy models}

The goal of this section is to verify, using some toy models, that the rates of convergence
of our estimators, see Theorems \ref{abcd} and \ref{supercrit}, are not far from being optimal.

\subsection{The first example}
Consider $\alpha_0\ge 0$ and two unknown parameters $\Gamma>0$ and $p\in(0,1]$.
Consider an i.i.d. family  $(\theta_{ij})_{i,j=1...N}$ of Bernoulli($p$)-distributed random variables, 
where $N\ge$ 1.  We set $\lambda_{t}^{i,N}=N^{-1}\Gamma e^{\alpha_{0}t}\sum_{j=1}^{N}\theta_{ij}$ and we 
introduce the processes
$(Z_{t}^{1,N})_{t\ge 0}$, ...., $(Z_{t}^{N,N})_{t\ge 0}$ which are, conditionally on $(\theta_{ij})$, independent 
inhomogeneous Poisson process with intensities $(\lambda_{t}^{1,N})_{t\ge 0}$, ..., $(\lambda_{t}^{N,N})_{t\ge 0}$.
We only observe $(Z_{s}^{i,N})_{s\in[0,t],\,i=1,...K}$, where $K\le N$ and we want to estimate the parameter $p$ 
in the asymptotic $(K,N,t)\to (\infty,\infty,\infty)$.
This model is a simplified version of the one studied in our paper. And roughly speaking, 
the mean number of jumps per individuals until time $t$ resembles 
$m_{t}=\int_{0}^{t}e^{\alpha_{0}s}ds$.
When $\alpha_{0}=0$, this mimics the subcritical case, while when $\alpha_0>0$, this mimics the supercritical case.
Remark that $(Z_{t}^{i,N})_{i=1,...K}$ is a sufficient statistic, since $\alpha_0$ is known.

\vip

We use the central limit theorem in order to perform 
a Gaussian approximation of $Z_{t}^{i,N}$. It is easy to show that:
\begin{align*}
\lambda_{t}^{i,N}&= \Gamma e^{\alpha_{0}t}\Big[\frac{1}{\sqrt{N}}\sqrt{p(1-p)}\frac{1}{\sqrt{Np(1-p)}}\sum_{j=1}^{N}(\theta_{ij}-p)+p\Big]
\end{align*}
and $\frac{1}{\sqrt{Np(1-p)}}\sum_{j=1}^{N}(\theta_{ij}-p)$ converges in law to a Gaussian random variable $G_{i}\sim \mathcal{N}(0,1)$, where $G_{i}$ is an i.i.d Gaussian family, as $N\to\infty$, for each $i$. Thus 
\begin{align*}
\lambda_{t}^{i,N}\simeq \Gamma e^{\alpha_{0}t}[\sqrt{N^{-1}p(1-p)}G_{i}+p].
\end{align*}
Moreover, conditionally on $(\theta_{ij})_{i,j=1,\dots,N}$, $Z_{t}^{i,N}$ is a Poisson random variable with mean $\int_{0}^{t}\lambda_{s}^{i,N}ds$. Thus, as $t$ is large, we have
$Z^{i,N}_t\simeq \int_0^t \lambda^{i,N}_s ds  + \sqrt{\int_0^t \lambda^{i,N}_s ds}H_i$
where $(H_i)_{i=1,...,N}$ is a family of $\mathcal{N}(0,1)$-distributed random variables, independent of 
$(G_i)_{i=1,\dots,N}$.
Since $(m_t)^{-1}N^{-1/2}\ll(m_t)^{-1}$,
we obtain
$(m_t)^{-1}Z^{i,N}_t \simeq \Gamma p + \Gamma \sqrt{N^{-1}p(1-p)}G_i + \sqrt{(m_t)^{-1}\Gamma p} H_i$,
of which the law is nothing but $\mathcal{N}( \Gamma p, N^{-1}\Gamma^2p(1-p)+ (m_t)^{-1}\Gamma p )$.

\vip

By the above discussion, we construct the following toy model: one observes $(X_{t}^{i,N})_{i=1,...K}$, where $(X_{t}^{i,N})_{i=1,...N}$ are i.i.d and $\mathcal{N}(\Gamma p, N^{-1}\Gamma^{2}p(1-p)+(m_{t})^{-1}\Gamma p)$-distributed.  
Moreover we assume that $\Gamma p$ is known.
So we can use the well-known statistic result: the empirical variance 
$S_{t}^{N,K}=K^{-1}\sum_{i=1}^{K}(X_{t}^{i,N}-\Gamma p)^{2}$ is the best estimator of 
$N^{-1}\Gamma^{2}p(1-p)+(m_{t})^{-1}\Gamma p$ (in any reasonnable sense). 
So $T_{t}^{N,K}= N(\Gamma p)^{-2}(S_{t}^{N,K}-(\Gamma p)/m_{t}) $ 
is the best estimator of $(\frac{1}{p}-1)$. As
\begin{align*}
\mathrm{Var} (S_{t}^{N,K})=\frac{1}{K}\mathrm{Var} [(X_{t}^{1,N}-\Gamma p)^{2}]=\frac{2}{K}\Big(\frac{\Gamma^{2}p(1-p)}N
+\frac{\Gamma p}{m_{t}}\Big)^{2},
\end{align*}
we have
\begin{align*}
\mathrm{Var} (T_{t}^{N,K})=\frac2{(\Gamma p)^{4}}\Big(\frac{\Gamma^{2}p(1-p)}{\sqrt{K}}+\frac{N\Gamma p}{m_{t}\sqrt{K}}
\Big)^2.
\end{align*}
In other words, we cannot estimate  $\Big(\frac{1}{p}-1\Big)$ with a precision better than $\Big(\frac{1}{\sqrt{K}}+\frac{N}{m_{t}\sqrt{K}}\Big),$ which implies that we cannot estimate $p$ with a precision better than $\Big(\frac{1}{\sqrt{K}}+\frac{N}{m_{t}\sqrt{K}}\Big)$.

\smallskip
\subsection{The second example}

In the second part of this section, we are going to explain why there is a term $\frac{N}{K\sqrt{t^{1-\frac{4}{1+q}}}}$ in the  subcritical case.

\vip

We consider discrete times $t=1,...,T$ and two unknown parameters $\mu>0$ and $p\in(0,1]$.
Consider an i.i.d. family  $(\theta_{ij})_{i,j=1...N}$ of Bernoulli($p$)-distributed random variables, 
where $N\ge 1$. 
We set $Z^{i,N}_0=0$ for all $i=1,\dots,N$ and assume that, conditionally
on $(\theta_{ij})_{i,j=1,\dots N}$ and $(Z_{s}^{j,N})_{s=0,\dots,t,j=1\dots,N}$, the random variables 
$(Z_{t+1}^{i,N}-Z_{t}^{i,N})$ (for $i=1,\dots,N$) are independent and $\mathcal{P}(\lambda_{t}^{i,N})$-distributed, 
where $\lambda_{t}^{i,N}=\mu +\frac{1}{N}\sum_{j=1}^{N}\theta_{ij}(Z_{t}^{j,N}-Z_{t-1}^{j,N})$. 
This process $(Z_{t}^{i,N})_{i=1,\dots,N,t=0,\dots T}$ resembles the system of Hawkes processes studied 
in the present paper. 

\vip

 By \cite [theorem 2]{I}, we have when time $t$ is large, the process $\boldsymbol{Z}_{t}^{N}$ is similar to a d-dimensional diffusion process  $(I-A_{N})^{-1}\Sigma^\frac{1}{2}\boldsymbol{B}_{t}+\mathbb{E}_{\theta}[\boldsymbol{Z}_{t}^{N}]$, where $\boldsymbol{B}_{t}$ is a N-dimensional Brownian Motion and $\Sigma$ is the diagonal matrix such that $\Sigma_{ii}=((I-A_N)^{-1}\mu)_i.$  Hence $(Z_{t+1}^{i,N}-Z_{t}^{i,N})-\Et[Z_{t}^{i,N}-Z_{t-1}^{i,N}]$  (for $i=1,\dots,N$ and $t=1,...,T$) are independent.
 Since  $\mathbb{E}_{\theta}[\boldsymbol{Z}_{t}^{N}]$ is similar to $\frac{\mu t}{1-p}$ when both $N$ and $t$ are large. Hence $\lambda_{t}^{i,N}\simeq \Et[\lambda_{t}^{i,N}]\simeq \frac{\mu}{1-p}.$ Then by Gaussian approximation,  we can roughly replace $(Z_{t}^{j,N}-Z_{t-1}^{j,N})_{j=1,\dots,N}$ in the expression of
$(\lambda_{t}^{i,N})_{i=1,\dots,N}$ by $(\frac{\mu}{1-p}+Y^{j,N}_t)_{j=1,\dots,N}$, for an i.i.d. array $(Y^{j,N}_t)_{j=1,\dots,N,t=1,\dots,T}$
of $\mathcal{N}(0,\frac{\mu}{1-p})$-distributed random variables. Also, we replace the $\mathcal{P}(\lambda_{t}^{i,N})$
law by its Gaussian approximation.

\vip

We thus introduce the following model, with unknown parameters $\mu>0$ and $p\in (0,1)$.
We start with three independent families of i.i.d. random variables, namely 
$(\theta_{ij})_{i,j=1,\dots,N}$ with law Bernoulli$(p)$, and 
$(Y^{j,N}_t)_{j=1,\dots,N,t=1,\dots,T}$ with law $\mathcal{N}(0,\frac{\mu}{1-p})$ and $(A^{j,N}_t)_{j=1,\dots,N,t=1,\dots,T}$ with law $\mathcal{N}(0,1)$.
We then set, for each $t=1,\dots,T$ and each $i=1,\dots,N$, 
$$
a_{t}^{i,N}=\mu+\frac{1}{N}\sum_{j=1}^{N}\theta_{ij}\Big(\frac{\mu}{1- p}+Y_{t}^{j,N}\Big) \quad \hbox{and}\quad 
X_t^{i,N}=a_{t}^{i,N}+ \sqrt{a_{t}^{i,N}}A^{i,N}_t.
$$

We compute the covariances. First, for all $i=1,\dots,N$ and all $t=1,\dots,T$,
\begin{align*}
\mathrm{Var}(X_{t}^{i,N})
&=\mathbb{E}[(a_{t}^{i,N}+\sqrt{a_{t}^{i,N}}A_{t}^{i,N}-\frac{\mu}{1- p})^{2}]
\\&=\mathbb{E}\Big[\Big(\frac{\mu}{N(1-p)}\sum_{k=1}^{N}(\theta_{ik}-p)+\frac{1}{N}\sum_{k=1}^{N}\theta_{ik}Y_{t}^{k,N}+\sqrt{a_{t}^{i,N}}A_{t}^{i,N}\Big)^{2}\Big]\\
&=\frac{p\mu^2}{N(1-p)}+\frac{p\mu^2}{N(1-p)^2}+\frac{\mu}{(1-p)}.
\end{align*}
Next, for $i\ne j$ and all $t=1,\dots,T$,
\begin{align*}
\mathrm{Cov}(X_{t}^{i,N}, X_{t}^{j,N})
&=\mathbb{E}\Big[\Big(a_{t}^{i,N}+\sqrt{a_{t}^{i,N}}A_{t}^{i,N}-\frac{\mu}{(1-p)}\Big)\Big(a_{t}^{j,N}+\sqrt{a_{t}^{j,N}}A_{t}^{j,N}-\frac{\mu}{(1-p)}\Big)\Big]
\\&=\mathbb{E}\Big[\frac{1}{N^{2}}\sum_{k=1}^{N}\theta_{jk}\theta_{ik}(Y_{t}^{k,N})^{2}\Big]
=\frac{p^{2}}{N}\frac{\mu^2}{(1-p)^2}.
\end{align*}
For $s\ne t$ and $i=1,\dots,N$,

\begin{align*}
\mathrm{Cov}(X_{t}^{i,N}, X_{s}^{i,N})
=&\mathbb{E}\Big[\Big(a_{t}^{i,N}+\sqrt{a_{t}^{i,N}}A_{t}^{i,N}-\frac{\mu}{(1-p)}\Big)\Big(a_{s}^{i,N}+\sqrt{a_{s}^{i,N}}A_{s}^{i,N}-\frac{\mu}{(1-p)}\Big)\Big]\\
=&\Big(\frac{\mu}{1-p}\Big)^2\mathrm{Var} \Big(\frac{1}{N}\sum_{j=1}^{N}\theta_{ij} \Big)
=\frac{p\mu^2}{N(1-p)}.
\end{align*}
Finally, for $s\ne t$ and $i\ne j$,
\begin{align*}
\mathrm{Cov}(X_{t}^{i,N}, X_{s}^{j,N})
&=\mathbb{E}\Big[\Big(a_{t}^{i,N}+\sqrt{a_{t}^{i,N}}A_{t}^{i,N}-\mu-p\Big)\Big(a_{s}^{j,N}+\sqrt{a_{s}^{j,N}}A_{t}^{j,N}-\mu-p\Big)\Big]
=0.
\end{align*}
Over all we have $\mathrm{Cov}(X_{t}^{i,N}, X_{s}^{j,N})=C_{\mu,p,N}((i,t),(j,s))$, where
\begin{align*}
C_{\mu,p,N}((i,t),(j,s))=
\begin{cases} 
\frac{p\mu^2}{N(1-p)}+\frac{p\mu^2}{N(1-p)^2}+\frac{\mu}{(1-p)} &\hbox{if} \;\; i= j,\ t= s,\\
\frac{p^{2}}{N}\frac{\mu^2}{(1-p)^2}  &\hbox{if} \;\;i\ne j,\ t= s,\\
\frac{p\mu^2}{N(1-p)} &\hbox{if} \;\;i=j,\ t\ne s,\\
0 &\hbox{if} \;\; i\ne j,\ t\ne s.
\end{cases}
\end{align*}
Form the covariance function above, we can ignore the covariance when $t\ne s.$ So, we construct a new covariance function:
\begin{align*}
\widetilde{C}_{\mu,p,N}((i,t),(j,s))=
\begin{cases} 
\frac{p\mu^2}{N(1-p)}+\frac{p\mu^2}{N(1-p)^2}+\frac{\mu}{(1-p)} &\hbox{if} \;\; i= j,\ t= s,\\
\frac{p^{2}}{N}\frac{\mu^2}{(1-p)^2}  &\hbox{if} \;\;i\ne j,\ t= s,\\
0 &\hbox{if} \;\;i=j,\ t\ne s,\\
0 &\hbox{if} \;\; i\ne j,\ t\ne s.
\end{cases}
\end{align*}
We thus consider the following toy model: for two unknown parameters $\mu>0$ and $p\in (0,1)$,
we observe $(U^{i,N}_s)_{i=1,\dots,K,s=0,\dots,T}$, for some Gaussian array $(U^{i,N}_s)_{i=1,\dots,N,s=0,\dots,T}$
with covariance matrix $\widetilde{C}_{\mu,p,N}$ defined above and we want to estimate $p$.
If assuming that $\frac{\mu}{1-p}$ is known, it is well-known that
the temporal empirical variance 
$S_{T}^{N,K}=\frac{1}{T}\sum_{t=1}^{T}(\bar{U}_{t}^{N,K}-\frac{\mu}{1-p})^{2}$, where 
$\bar{U}_{t}^{N,K}=\frac{1}{K}\sum_{i=1}^{K}U_{t}^{i,N}$, is the best estimator of 
$\frac{(2p-p^{2})\mu^2}{NK(1-p)^2}+\frac{\mu}{K(1-p)}+\frac{p^2(K-1)}{NK}\frac{\mu^2}{(1-p)^2},$  (in all the usual senses).
Consequently, 
$C_{T}^{N,K}=\frac{N}{K-1}(\frac{\mu}{1-p})^{-2}[KS_{T}^{N,K}-\frac{\mu}{1-p}]$ is the best estimator of $p^2$. 
And 
$$
\mathrm{Var}(C_{T}^{N,K})=\frac{1}{T}\frac{N^{2}}{(K-1)^{2}}K^{2}\frac{1}{K^{2}}
\Big[\rho+\frac{(K-1)\alpha}{N}\Big]^{2}\simeq \frac{N^2}{TK^2}.
$$
where $\rho=\frac{(2p-p^{2})\mu^2}{N(1-p)^2}+\frac{\mu}{(1-p)}$ and $\alpha=\frac{p^{2}\mu^2}{(1-p)^2}$
Hence for this Gaussian toy model, it is not possible to estimate $p^2$ (and thus $p$) 
with a precision better than $\frac{N}{K}\frac{1}{\sqrt{T}}$.

\subsection{Conclusion}
Using the first example, it seems that it should not be possible to estimate $p$ faster than
$N/(\sqrt{K} e^{\alpha_0 t}) + 1/\sqrt K$. in the supercritical case.
Using the two examples, it seems that it should not be possible to estimate $p$ faster than
$N/(t\sqrt{K}) + 1/\sqrt K + N/(K\sqrt t)$ in the subcritical case.

\section{Analysis of a random matrix in the subcritical case}
\subsection{Some notations}
For $r\in [1,\infty)$ and $x\in \R^N$, we set $\|\boldsymbol{x}\|_{r}=(\sum_{i=1}^{N}|x_{i}|^{r})^{\frac{1}{r}}$, and$\ \|\boldsymbol{x}\|_{\infty}=\max_{i=1...N}|x_{i}|$. For $M$ a $N\times N$ matrix, we denote by $|||M|||_{r}$ is the operator norm associated to $ \|\cdot \|_{r}$, that is $|||M|||_{r}=\sup_{\boldsymbol{x}\in R^{n}}\|M\boldsymbol{x}\|_{r}/\|\boldsymbol{x}\|_{r}$. We  have the special cases
$$
|||M|||_{1}=\sup_{\{j=1,...,N\}}\sum_{i=1}^{N}|M_{ij}|,\quad |||M|||_{\infty}=\sup_{\{i=1,...,N\}}\sum_{j=1}^{N}|M_{ij}|.
$$
We also have the inequality
$$
|||M|||_{r}\le|||M|||_{1}^{\frac{1}{r}}|||M|||_{\infty}^{1-\frac{1}{r}}\quad \hbox{for any}\quad r\in [1,\infty).
$$

We define $A_{N}(i,j):=N^{-1}\theta_{ij}$ and the matrix $(A_{N}(i,j))_{i,j\in\{1,...,N\}}$, as well as 
$Q_{N}:=(I-\Lambda A_{N})^{-1}$ on the event on which $I-\Lambda A_N$ is invertible.

\vip
Next, we are going to give the event $\Omega_{N,K}$ , which  we mainly work on it in this paper.

For $1\leq K \leq N$, we introduce the $N$-dimensional vector $\boldsymbol{1}_K$ defined by
$\boldsymbol{1}_K(i)=\indiq_{\{1\leq i\leq K\}}$ for $i=1,\dots,N$, and the $N\times N$-matrix $I_K$  
defined by $I_K(i,j)={\bf 1}_{\{i=j\leq K\}}$.

\vip

We assume here that $\Lambda p\in\ (0,1)$ and we set 
$a=\frac{1+\Lambda p}{2}\in\ (0,1).$ Next, we introduce  the events
\begin{eqnarray*}
&\Omega_{N}^{1}:=\Big\{\Lambda |||A_{N}|||_{r}\le a ,\ \hbox{for all } \  r\in[1,\infty]\Big\},\quad\\ &\mathcal{F}_{N}^{K,1}:=\Big\{\Lambda |||I_{K}A_{N}|||_{r}\le \Big(\frac{K}{N}\Big)^{\frac{1}{r}}a, \hbox{for}\  \hbox{all}\  r\in [1,\infty)\Big\},\\
&\mathcal{F}_{N}^{K,2}:=\Big\{\Lambda |||A_{N}I_{K}|||_{r}\le \Big(\frac{K}{N}\Big)^{\frac{1}{r}}a,\ \hbox{for}\   \hbox{all}\  r\in [1,\infty)\Big\},\\
&\Omega^{1}_{N,K}:= \Omega^1_N \cap\mathcal{F}_{N}^{K,1}, \quad \Omega^{1}_{N,K}:= \Omega^1_N \cap\mathcal{F}_{N}^{K,2},
\quad  \Omega_{N,K}=\Omega^{1}_{N,K}\cap\Omega^{2}_{N,K}.
\end{eqnarray*}

We set 
$\boldsymbol{\ell}_N:=Q_N\boldsymbol{1}_N,\ \ell_{N}(i):=\sum_{j=1}^{N}Q_{N}(i,j),\ \bar{\ell}_N:=\frac{1}{N}\sum_{i=1}^N\ell_N(i),\ \bar{\ell}^K_N:=\frac{1}{K}\sum_{i=1}^K\ell_N(i)$.

\vip

We also set $c^K_{N}(j):=\sum_{i=1}^{K}Q_{N}(i,j),\
\bar{c}^K_N:=\frac{1}{N}\sum_{j=1}^Nc^K_N(j)$.

\vip

We let $\boldsymbol{L}_N:=A_N\boldsymbol{1}_N,\ L_{N}(i):=\sum_{j=1}^{N}A_{N}(i,j),\ \bar{L}_N:=
\frac{1}{N}\sum_{i=1}^NL_N(i),\ \bar{L}^K_N:=\frac{1}{K}\sum_{i=1}^KL_N(i)$ and
$\boldsymbol{C}_N:=A_N^*\boldsymbol{1}_N,\ C_{N}(j):=\sum_{i=1}^{N}A_{N}(i,j),\
\bar{C}_N:=\frac{1}{N}\sum_{j=1}^NC_N(j),\ \bar{C}^K_N:=\frac{1}{K}\sum_{j=1}^KC_N(i)$
and consider the event
$$
\mathcal{A}_{N}:=\{\|\boldsymbol{L}_{N}-p\boldsymbol{1}_{N}\|_{2}+\|\boldsymbol{C}_{N}-p\boldsymbol{1}_{N}\|_{2}\le N^{\frac{1}{4}}\}.
$$ 
where
$\boldsymbol{L}_{N}$ is the vectors $(L_N(i))_{i=1,\dots,N}.$
We also set $x_{N}(i)=\ell_{N}(i)-\bar{\ell}_{N}$,  $\boldsymbol{x}_{N}=(x_N(i))_{i=1,\dots,N}$,
$X_{N}(i)=L_{N}(i)-\bar{L}_{N}$ and $\boldsymbol{X}_{N}=(X_N(i))_{i=1,\dots,N}.$
We finally put $X_{N}^{K}(i)=(L_{N}(i)-\bar{L}^{K}_{N})\indiq_{\{i\leq K\}}$ and $\boldsymbol{X}^{K}_{N}=
(X_N^K(i))_{i=1,\dots,N}=\boldsymbol{L}_{N}^K- \bar{L}^{K}_{N}\boldsymbol{1}_K$, as well as
$x_{N}^{K}(i)=(\ell_{N}(i)-\bar{\ell}^{K}_{N})\indiq_{\{i\leq K\}}$ and $\boldsymbol{x}^{K}_{N}=
(x_N^K(i))_{i=1,\dots,N}=\boldsymbol{\ell}_{N}^K- \bar{\ell}^{K}_{N}\boldsymbol{1}_K$.

\subsection{Review of some lemmas found in \cite{A}}
In this subsection we recall results  from \cite{A} showing that $\mathcal{A}_{N}$ and $\Omega_{N}^{1}$ are big, and  upper-bounds concerning $x_n$ and $\boldsymbol{X}_{N}$.
\begin{lemma}\label{lo}
We assume that $\Lambda p<1.$ Then
$\Omega_{N,K}\subset\Omega_{N}^{1}\subset\{|||Q_{N}|||_{r}\le C, $ for all $r \in[1,\infty]\}\subset\{\sup_{i=1...N}\ell_{N}(i)\le C\}$, where $C=(1-a)^{-1}$. For any $\alpha>0$, there exists a constant $C_{\alpha}$ such that 
$$P(\mathcal{A}_{N})\ge 1-C_{\alpha}N^{-\alpha}.$$

\end{lemma}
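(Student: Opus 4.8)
The plan is to prove the chain of inclusions first, since each step is essentially a one-line consequence of a standard operator-norm estimate, and then to treat the probabilistic statement $P(\mathcal{A}_N)\geq 1-C_\alpha N^{-\alpha}$ separately.

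For the inclusions: the middle one is the key analytic input. On $\Omega_N^1$ we have $\Lambda|||A_N|||_r\leq a<1$ for every $r\in[1,\infty]$, so the Neumann series $Q_N=(I-\Lambda A_N)^{-1}=\sum_{n\geq 0}\Lambda^n A_N^n$ converges in every operator norm $|||\cdot|||_r$, and
$$|||Q_N|||_r\leq \sum_{n\geq 0}(\Lambda|||A_N|||_r)^n\leq \sum_{n\geq 0}a^n=\frac{1}{1-a}=:C.$$
In particular $I-\Lambda A_N$ is invertible on $\Omega_N^1$, so $Q_N$ is well-defined there. For the last inclusion, note that $\ell_N(i)=\sum_{j=1}^N Q_N(i,j)=(Q_N\boldsymbol{1}_N)(i)$, hence $\sup_i|\ell_N(i)|=\|Q_N\boldsymbol{1}_N\|_\infty\leq |||Q_N|||_\infty\|\boldsymbol{1}_N\|_\infty\leq C$. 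The first inclusion $\Omega_{N,K}\subset\Omega_N^1$ is immediate from the definitions, since $\Omega_{N,K}=\Omega_{N,K}^1\cap\Omega_{N,K}^2$ and $\Omega_{N,K}^1=\Omega_N^1\cap\mathcal{F}_N^{K,1}\subset\Omega_N^1$.

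For the probabilistic bound on $\mathcal{A}_N$: recall $L_N(i)=N^{-1}\sum_{j=1}^N\theta_{ij}$, so $NL_N(i)\sim\mathrm{Binomial}(N,p)$, hence $\E[L_N(i)]=p$ and $L_N(i)-p=N^{-1}\sum_{j=1}^N(\theta_{ij}-p)$ is an average of $N$ independent centered bounded variables; similarly for $C_N(j)=N^{-1}\sum_{i=1}^N\theta_{ij}$. The plan is to bound $\|\boldsymbol{L}_N-p\boldsymbol{1}_N\|_2^2=\sum_{i=1}^N(L_N(i)-p)^2$ by a moment method: for any integer $m\geq 1$, $\E[\|\boldsymbol{L}_N-p\boldsymbol{1}_N\|_2^{2m}]\leq N^{m-1}\sum_{i=1}^N\E[(L_N(i)-p)^{2m}]$ by Jensen/Hölder, and $\E[(L_N(i)-p)^{2m}]\leq C_m N^{-m}$ by the Marcinkiewicz–Zygmund (or Rosenthal) inequality applied to the sum $N^{-1}\sum_j(\theta_{ij}-p)$. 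This gives $\E[\|\boldsymbol{L}_N-p\boldsymbol{1}_N\|_2^{2m}]\leq C_m$, so by Markov's inequality $P(\|\boldsymbol{L}_N-p\boldsymbol{1}_N\|_2\geq N^{1/4}/2)\leq C_m N^{-m/2}$; the same for $\boldsymbol{C}_N$, and choosing $m$ large enough ($m\geq 2\alpha$) and combining via a union bound yields $P(\mathcal{A}_N)\geq 1-C_\alpha N^{-\alpha}$.

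The main obstacle, such as it is, is purely bookkeeping in the moment estimate: one must be careful that the constant $C_m$ depends only on $m$ and $p$ (not on $N$), which is exactly what Rosenthal/Marcinkiewicz–Zygmund provides since $|\theta_{ij}-p|\leq 1$; and one must handle the sum $\|\boldsymbol{L}_N-p\boldsymbol{1}_N\|_2+\|\boldsymbol{C}_N-p\boldsymbol{1}_N\|_2$ as a whole, which is immediate by $(x+y)^{2m}\leq 2^{2m-1}(x^{2m}+y^{2m})$. Since this result is quoted from \cite{A}, I expect the proof to simply cite the relevant lemmas there rather than reproduce the computation in full.
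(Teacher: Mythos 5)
Your argument is correct and complete: the Neumann-series bound $|||Q_N|||_r\le\sum_{n\ge 0}a^n=(1-a)^{-1}$ on $\Omega_N^1$ gives the chain of inclusions, and the moment bound $\E[\|\boldsymbol{L}_N-p\boldsymbol{1}_N\|_2^{2m}]\le C_m$ plus Markov with $m\ge 2\alpha$ gives the probability estimate. The paper itself offers no proof here — it only cites \cite[Notation 12 and Proposition 14, Step 1]{A} — and your self-contained argument is exactly the standard one underlying that reference, so there is nothing to fault.
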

\begin{proof}
See \cite[Notation 12 and Proposition 14, Step 1]{A}.
\end{proof}

\begin{lemma}\label{omegaN1}
Assume that $\Lambda p<1$. Then,    $$P(\Omega_{N}^{1})\ge1-C\exp(-cN)$$ 
for some constants $C>0$ and $c>0$.
\end{lemma}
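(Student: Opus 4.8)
The plan is to reduce the statement --- which concerns the operator norms $|||A_N|||_r$ for \emph{all} $r\in[1,\infty]$ simultaneously --- to the two endpoint cases $r=1$ and $r=\infty$, and then to apply a standard exponential tail bound together with a union bound.

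\emph{Reduction to the endpoints.} By the interpolation inequality $|||M|||_r\le|||M|||_1^{1/r}|||M|||_\infty^{1-1/r}$ recalled above (valid for every $r\in[1,\infty)$), if both $\Lambda|||A_N|||_1\le a$ and $\Lambda|||A_N|||_\infty\le a$, then $\Lambda|||A_N|||_r\le a$ for every $r\in[1,\infty]$; hence
$$\Omega_N^1\supset\{\Lambda|||A_N|||_1\le a\}\cap\{\Lambda|||A_N|||_\infty\le a\},$$
and it suffices to bound the probabilities of the complements of the two events on the right. Since $A_N(i,j)=N^{-1}\theta_{ij}\ge 0$, the explicit formulas for the $1$- and $\infty$-norms give $|||A_N|||_\infty=\sup_{i}\sum_{j=1}^N A_N(i,j)=\sup_i L_N(i)$ and $|||A_N|||_1=\sup_{j}\sum_{i=1}^N A_N(i,j)=\sup_j C_N(j)$, where $L_N(i)=N^{-1}\sum_{j=1}^N\theta_{ij}$ and $C_N(j)=N^{-1}\sum_{i=1}^N\theta_{ij}$.

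\emph{Concentration.} Each $L_N(i)$ and each $C_N(j)$ is an average of $N$ i.i.d. $\mathrm{Bernoulli}(p)$ variables, hence has expectation $p$. Because $a=(1+\Lambda p)/2$, the hypothesis $\Lambda p<1$ is exactly equivalent to $\Lambda p<a$, so that $\delta:=a/\Lambda-p>0$. By Hoeffding's inequality I would bound $P(L_N(i)>p+\delta)\le e^{-2N\delta^2}$ for each $i$, and likewise $P(C_N(j)>p+\delta)\le e^{-2N\delta^2}$ for each $j$. A union bound over $i\in\{1,\dots,N\}$ then gives
$$P\big(\Lambda|||A_N|||_\infty>a\big)=P\Big(\sup_{1\le i\le N}L_N(i)>a/\Lambda\Big)\le Ne^{-2N\delta^2},$$
and the identical estimate holds for $P(\Lambda|||A_N|||_1>a)$ after a union bound over $j$.

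\emph{Conclusion and difficulty.} Combining the two steps, $P((\Omega_N^1)^c)\le 2Ne^{-2N\delta^2}$, and since $Ne^{-2N\delta^2}\le Ce^{-cN}$ for any fixed $c\in(0,2\delta^2)$ and a suitable $C=C(c,\delta)<\infty$, one gets $P(\Omega_N^1)\ge 1-Ce^{-cN}$. I do not anticipate any real obstacle: the only mildly subtle point is that $\Omega_N^1$ is phrased with a supremum over all $r$, but the interpolation inequality collapses this to the two endpoint norms, each of which is a maximum of $N$ averages of i.i.d. Bernoulli variables and therefore directly amenable to a Chernoff/Hoeffding estimate. (One could equally well invoke the Chernoff bound for binomial tails; only the exponential decay rate matters, and the polynomial prefactor $N$ is harmless.)
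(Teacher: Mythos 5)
Your proof is correct. The paper itself gives no argument here but simply defers to \cite[Lemma 13]{A}, and the proof there is essentially the one you give: the interpolation inequality $|||M|||_r\le|||M|||_1^{1/r}|||M|||_\infty^{1-1/r}$ reduces the event to the endpoint norms $\sup_i L_N(i)$ and $\sup_j C_N(j)$, which are controlled by Hoeffding's inequality (using that $a/\Lambda-p>0$ is equivalent to $\Lambda p<1$) and a union bound over the $N$ rows and $N$ columns.
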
 

\begin{proof} 
See  \cite[Lemma 13]{A}.
\end{proof}

\begin{lemma}\label{lll}
Assume that $\Lambda p<1$. Then
$$
\mathbb{E}\Big[\boldsymbol{1}_{\Omega_{N}^{1}}\Big|\bar{\ell}_{N}-\frac{1}{1-\Lambda p}\Big|^{2}\Big]\le\frac{C}{N^{2}}.
$$
\end{lemma}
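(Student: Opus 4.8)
The plan is to expand $\bar\ell_N - (1-\Lambda p)^{-1}$ into a sum of contributions indexed by the power $n$ in the Neumann series $Q_N = \sum_{n\geq 0}\Lambda^n A_N^n$, and to control each term using the (rough) heuristic already explained in Section 4, namely that $\sum_{j=1}^N A_N^n(i,j)\approx p^{n-1}L_N(i)$ for $n\geq 1$ and that the $(NL_N(i))_i$ are i.i.d. Binomial$(N,p)$. Concretely, on $\Omega^1_N$ we may write $\bar\ell_N = 1 + \frac1N\sum_{n\geq 1}\Lambda^n \mathbf{1}_N^* A_N^n \mathbf{1}_N$, and the target constant is $\frac{1}{1-\Lambda p} = 1 + \sum_{n\geq 1}(\Lambda p)^n = 1 + \sum_{n\geq 1}\Lambda^n p^{n-1}\cdot p$. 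So I would study the difference term by term:
\[
\bar\ell_N - \frac{1}{1-\Lambda p} = \frac1N \sum_{n\geq 1}\Lambda^n\Big(\mathbf{1}_N^* A_N^n \mathbf{1}_N - N p^n\Big),
\]
and then split the $L^2$ estimate according to whether $\mathbf{1}_N^* A_N^n\mathbf{1}_N$ is close to its "expected" value $Np^n$.

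First I would record the deterministic bound: on $\Omega^1_N$, $\Lambda|||A_N|||_r\leq a<1$ for all $r$, so $\frac1N|\mathbf{1}_N^* A_N^n\mathbf{1}_N|\leq \Lambda^{-n}a^n$, which makes the tail $\sum_{n\geq n_0}$ summable and negligible (exponentially small in $n_0$); this lets me truncate the series at some slowly growing $n_0 = n_0(N)$, e.g. $n_0 \sim \log N$, at the cost of an error that is $o(N^{-2})$. For the remaining finitely many terms, I would use a recursive/telescoping argument: writing $v_N^{(n)} := A_N^n \mathbf{1}_N$, one has $v_N^{(n)} = A_N v_N^{(n-1)}$, and I would compare $v_N^{(n)}$ with $p^{n-1}\boldsymbol{L}_N$ (equivalently $p^n \mathbf{1}_N$ after averaging), controlling $\|v_N^{(n)} - p^{n-1}\boldsymbol{L}_N\|_2$ inductively. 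The key input at each step is that $A_N \mathbf{1}_N = \boldsymbol{L}_N$ is close to $p\mathbf{1}_N$ in $\ell^2$ on the event $\mathcal{A}_N$ (which is overwhelmingly likely, by Lemma \ref{lo}), together with a concentration estimate for $A_N \boldsymbol{w}$ around $p(\frac1N\mathbf{1}_N^*\boldsymbol{w})\mathbf{1}_N$ for a fixed vector $\boldsymbol{w}$ — this is just a sum of $N$ independent bounded contributions, so $\mathbb{E}\|A_N\boldsymbol w - \text{mean}\|_2^2$ is $O(\|\boldsymbol w\|_2^2/N)$. Combining the operator-norm bound on $\Omega^1_N$ (to keep all intermediate vectors of norm $O(\sqrt N)$) with these per-step $O(1/N)$ relative errors, and summing the geometric series in $n$, yields $\mathbb{E}[\mathbf 1_{\Omega^1_N}|\bar\ell_N - (1-\Lambda p)^{-1}|^2] = O(N^{-2})$, where one factor of $N^{-1}$ comes from the averaging $\frac1N\mathbf 1_N^*(\cdot)$ exploiting cancellation (the centered Binomial average $\bar L_N - p$ has variance $\sim p(1-p)/N^2$) and another from the concentration of $A_N$ acting on vectors.

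The main obstacle I anticipate is the bookkeeping of how the errors propagate through the iterated products $A_N^n$: a naive bound on $\|A_N^n\mathbf 1_N - p^n\mathbf 1_N\|_2$ would grow like $n$ (or worse) per step unless one is careful to re-center at each stage and to use the event $\Omega^1_N$ to prevent norm blow-up, and one must check that the combined truncation error, the $\mathcal A_N$-complement error (negligible by Lemma \ref{lo} with $\alpha$ large), and the accumulated concentration errors all genuinely land at order $N^{-2}$ rather than merely $N^{-1}$. I would also need to verify that restricting to $\Omega^1_N$ (rather than the full space) does not destroy the independence structure used in the concentration step — this is fine because $\Omega^1_N$ is measurable with respect to $(\theta_{ij})$ and one can bound $\mathbb E[\mathbf 1_{\Omega^1_N}(\cdot)]\leq \mathbb E[(\cdot)]$ for nonnegative integrands at the cost of possibly working with a slightly enlarged deterministic bound from $\Omega^1_N$ on the integrand itself. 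Alternatively — and this is probably the cleanest route — one simply cites \cite{A}, since this is precisely the kind of estimate proved there; but absent that, the scheme above is the way I would carry it out.
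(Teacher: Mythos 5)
The paper does not actually prove this lemma: its ``proof'' is the single line ``See \cite[Proposition 14]{A}''. So your closing remark --- that the cleanest route is simply to cite \cite{A} --- is exactly what the paper does, and your Neumann-series expansion of $\bar\ell_N-\tfrac{1}{1-\Lambda p}$ into the terms $\Lambda^n(\tfrac1N\boldsymbol{1}_N^*A_N^n\boldsymbol{1}_N-p^n)$ is indeed the skeleton of the argument carried out in that reference.

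Your from-scratch sketch, however, has one genuine gap at its core: the ``concentration estimate for $A_N\boldsymbol{w}$''. The bound $\mathbb{E}\|A_N\boldsymbol{w}-p\bar w\boldsymbol{1}_N\|_2^2=O(\|\boldsymbol{w}\|_2^2/N)$ is correct for a \emph{deterministic} vector $\boldsymbol{w}$ (each coordinate is then a sum of $N$ independent centered terms), but in your induction the vector fed to $A_N$ is $\boldsymbol{w}=A_N^{n-1}\boldsymbol{1}_N$ (or $\boldsymbol{L}_N$), which is built from the very same Bernoulli variables $\theta_{ij}$ as the matrix acting on it. The independence you invoke is therefore not available, and the caveat you raise about restricting to $\Omega_N^1$ is a red herring: the indicator is harmless, while the correlation between $A_N$ and its own iterates is the real obstruction. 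Overcoming it requires either a moment/path computation over words in the $\theta_{ij}$ (counting coinciding edges, which is where the extra factor of $N^{-1}$ actually comes from) or a conditioning/decoupling argument; that is precisely the nontrivial content of \cite[Proposition 14]{A}, and it is also why the companion estimate $\mathbb{E}[\|A_N\boldsymbol{X}_N\|_2^2]\le CN^{-1}$ of Lemma \ref{xxx}(iii) --- where $\boldsymbol{X}_N$ again depends on $A_N$ --- is imported from \cite{A} rather than obtained by the naive variance calculation. Your overall architecture (truncation, per-term $L^2$ bounds with geometric decay, $\mathrm{Var}(\bar L_N)=p(1-p)/N^2$ for the leading term) is sound and does land at $O(N^{-2})$ once that step is supplied, but as written the key inductive estimate is asserted, not proved.
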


\begin{proof} 
See \cite[Proposition 14]{A}.
\end{proof}

\begin{lemma}\label{xxx}
Assume that $\Lambda p<1$, set $b=\frac{2+\Lambda p}{3}$ and consider $N_{0}$ the smallest integer such that
$a+\Lambda N_{0}^{-\frac{1}{4}}\le b$. For all $N\ge N_{0},$
$$
(i)\boldsymbol{1}_{\Omega_{N}^{1}\cap\mathcal{A}_{N}}\|\boldsymbol{x}_{N}\|_{2}\le C\|\cX_{N}\|_{2}, \quad (ii)\mathbb{E}[\|\boldsymbol{X}_{N}\|_{2}^{4}]\le C, \quad (iii)\mathbb{E}[\|A_{N}\boldsymbol{X_{N}}\|^{2}_{2}]\le CN^{-1}.
$$
\end{lemma}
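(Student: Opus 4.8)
Write $\xi_{ij}:=\theta_{ij}-p$, so that $A_N=\tfrac pN J_N+B_N$, with $J_N$ the all-ones matrix and $B_N(i,j):=N^{-1}\xi_{ij}$, and set $\boldsymbol{Y}_N:=\boldsymbol{L}_N-p\boldsymbol{1}_N=B_N\boldsymbol{1}_N$; note that $\boldsymbol{X}_N=\boldsymbol{Y}_N-(\bar L_N-p)\boldsymbol{1}_N$ and $\sum_{i=1}^N X_N(i)=0$. Part $(ii)$ is the soft one: since $\bar L_N$ is the empirical mean of the $L_N(i)$, one has $\|\boldsymbol{X}_N\|_2\le\|\boldsymbol{Y}_N\|_2$, so it is enough to bound $\mathbb{E}\big[(\sum_i(L_N(i)-p)^2)^2\big]$. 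Writing $L_N(i)-p=N^{-1}\sum_j\xi_{ij}$ and using that the $N$ rows are i.i.d. (so the row-sums $\sum_j\xi_{ij}$ are independent), routine moment estimates give $\mathbb{E}[(L_N(i)-p)^2]=N^{-1}p(1-p)$ and $\mathbb{E}[(L_N(i)-p)^4]\le CN^{-2}$; expanding the square and splitting off the diagonal from the independent off-diagonal terms yields $\mathbb{E}\big[(\sum_i(L_N(i)-p)^2)^2\big]\le N\cdot CN^{-2}+N(N-1)(N^{-1}p(1-p))^2\le C$.

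For $(iii)$, since the entries of $\boldsymbol{X}_N$ sum to zero we have $J_N\boldsymbol{X}_N=0$, hence $A_N\boldsymbol{X}_N=B_N\boldsymbol{X}_N$; using $\boldsymbol{X}_N=\boldsymbol{Y}_N-(\bar L_N-p)\boldsymbol{1}_N$ and $\boldsymbol{Y}_N=B_N\boldsymbol{1}_N$ this equals $B_N^2\boldsymbol{1}_N-(\bar L_N-p)\boldsymbol{Y}_N$, so
$$\mathbb{E}[\|A_N\boldsymbol{X}_N\|_2^2]\le 2\,\mathbb{E}[\|B_N^2\boldsymbol{1}_N\|_2^2]+2\,\mathbb{E}\big[(\bar L_N-p)^2\|\boldsymbol{Y}_N\|_2^2\big].$$
For the first term, expand $\mathbb{E}[\|B_N^2\boldsymbol{1}_N\|_2^2]=N^{-4}\sum_i\sum_{j,k,j',k'}\mathbb{E}[\xi_{ij}\xi_{jk}\xi_{ij'}\xi_{j'k'}]$; a summand is nonzero only when the four ordered pairs $(i,j),(j,k),(i,j'),(j',k')$ group into (at most two) coinciding pairs, and the only configuration leaving order $N^2$ free choices of $(j,k,j',k')$ is $j=j',\,k=k'$, contributing $N^{-4}\cdot N\cdot\sum_{j,k}\mathbb{E}[\xi_{ij}^2\xi_{jk}^2]\le N^{-4}\cdot N\cdot N^2=N^{-1}$; all other configurations leave $O(1)$ free indices and total $O(N^{-3})$. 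For the second term, Cauchy--Schwarz and $(ii)$ give $\mathbb{E}\big[(\bar L_N-p)^2\|\boldsymbol{Y}_N\|_2^2\big]\le(\mathbb{E}[(\bar L_N-p)^4])^{1/2}(\mathbb{E}[\|\boldsymbol{Y}_N\|_2^4])^{1/2}\le(CN^{-4})^{1/2}C^{1/2}=CN^{-2}$, where $\mathbb{E}[(\bar L_N-p)^4]\le CN^{-4}$ because $\bar L_N-p=N^{-2}\sum_{i,j}\xi_{ij}$. Adding the two bounds gives $\mathbb{E}[\|A_N\boldsymbol{X}_N\|_2^2]\le CN^{-1}$.

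The core of $(i)$ is the exact identity, valid on $\Omega_N^1$ (where $Q_N=(I-\Lambda A_N)^{-1}$ exists by a Neumann series),
$$\boldsymbol{\ell}_N=\frac{1}{1-\Lambda p}\boldsymbol{1}_N+\frac{\Lambda}{1-\Lambda p}\,Q_N\boldsymbol{Y}_N.$$
To derive it, set $\boldsymbol{w}_N:=\boldsymbol{\ell}_N-\boldsymbol{1}_N$; from $(I-\Lambda A_N)\boldsymbol{\ell}_N=\boldsymbol{1}_N$ one gets $\boldsymbol{w}_N=\Lambda A_N\boldsymbol{\ell}_N=\Lambda\boldsymbol{L}_N+\Lambda A_N\boldsymbol{w}_N$, i.e. $\boldsymbol{w}_N=\Lambda Q_N\boldsymbol{L}_N=\Lambda p\,\boldsymbol{\ell}_N+\Lambda Q_N\boldsymbol{Y}_N$, and solving for $\boldsymbol{\ell}_N$ gives the identity. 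Let $P$ be the orthogonal projection onto $\{\boldsymbol{1}_N\}^{\perp}$, so that $P\boldsymbol{1}_N=0$, $\|P\|_{2\to2}=1$, $P\boldsymbol{\ell}_N=\boldsymbol{x}_N$ and $PQ_N\boldsymbol{1}_N=P\boldsymbol{\ell}_N=\boldsymbol{x}_N$; applying $P$ to the identity and then writing $\boldsymbol{Y}_N=\boldsymbol{X}_N+(\bar L_N-p)\boldsymbol{1}_N$ yields
$$\Big(1-\frac{\Lambda(\bar L_N-p)}{1-\Lambda p}\Big)\boldsymbol{x}_N=\frac{\Lambda}{1-\Lambda p}\,PQ_N\boldsymbol{X}_N.$$
On $\mathcal{A}_N$ one has $|\bar L_N-p|\le N^{-1/2}\|\boldsymbol{Y}_N\|_2\le N^{-1/4}$, and $N_0$ was chosen exactly so that $\Lambda N^{-1/4}\le b-a=\tfrac16(1-\Lambda p)$ for $N\ge N_0$, whence the scalar factor on the left has absolute value at least $\tfrac56$; on $\Omega_N^1$, Lemma~\ref{lo} gives $|||Q_N|||_2\le C=(1-a)^{-1}$. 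Combining these with $\|P\|_{2\to2}=1$ gives $\boldsymbol{1}_{\Omega_N^1\cap\mathcal{A}_N}\,\|\boldsymbol{x}_N\|_2\le\tfrac65\cdot\tfrac{\Lambda}{1-\Lambda p}\,|||Q_N|||_2\,\|\boldsymbol{X}_N\|_2\le C\|\boldsymbol{X}_N\|_2$ for $N\ge N_0$.

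I expect the one genuinely delicate point to be the combinatorial accounting in $(iii)$: showing that among the $\sim N^5$ summands of the quadruple sum only an $O(N^3)$-sized subset has nonzero expectation and that each such summand is $O(1)$. By comparison, $(i)$ reduces to the short algebraic identity above together with the harmless observation that the threshold $N_0$ keeps the scalar correction factor bounded away from $1$ on $\mathcal{A}_N$, and $(ii)$ is a routine fourth-moment computation.
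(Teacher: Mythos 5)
Your proof is correct. The paper itself gives no argument for this lemma (it simply cites \cite[Proof of Proposition 14, Steps 2 and 4]{A}), and your reconstruction follows essentially the standard route of that reference: the fixed-point identity $\boldsymbol{\ell}_N=(1-\Lambda p)^{-1}\boldsymbol{1}_N+\Lambda(1-\Lambda p)^{-1}Q_N\boldsymbol{Y}_N$ combined with the bound $|||Q_N|||_2\le(1-a)^{-1}$ on $\Omega_N^1$ and the control $|\bar L_N-p|\le N^{-1/4}$ on $\mathcal{A}_N$ (which is exactly what the threshold $N_0$ is calibrated for), plus routine fourth-moment computations for i.i.d. Bernoulli entries. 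One harmless remark on $(iii)$: in the quadruple sum $\sum_{j,k,j',k'}\mathbb{E}[\xi_{ij}\xi_{jk}\xi_{ij'}\xi_{j'k'}]$ the configurations other than $j=j'$, $k=k'$ in fact all vanish (any attempted pairing forces $j=j'$), so your $O(N^{-3})$ for the remainder is an overestimate of zero, which of course does not affect the conclusion.
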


\begin{proof} 
See \cite[Proof of Proposition 14, Steps 2 and 4]{A}.
\end{proof}
\begin{remark}
In Lemma \ref{xxx}, the condition   $\Lambda p<1$ is not necessary for $(ii)$ and $(iii).$
\end{remark}
\begin{lemma}\label{EZ}
Assume that $\Lambda p<1$ and set $k:=\Lambda^{-1}\int_{0}^{\infty}s\phi(s)ds$, then for $n\ge 0$, $t\ge 0,$
$$
\int_{0}^{t}s\phi^{*n}(t-s)ds=\Lambda^{n}t-n\Lambda^{n}k+\varepsilon_{n}(t),
$$
where $0\le\varepsilon_{n}(t)\le C\min\{n^{q}\Lambda^{n}t^{1-q},n\Lambda^{n}k\}$ and where $\phi^{*n}(s)$ is the n-times convolution of $\phi$. We adopt the convention that $\phi^{*0}(s)ds=\delta_0(ds)$, whence in particular
$\int_{0}^{t}s\phi^{*0}(t-s)ds=t$.
\end{lemma}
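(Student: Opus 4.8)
The plan is to prove Lemma \ref{EZ} by induction on $n$, exploiting the associativity of convolution: $\phi^{*n} = \phi * \phi^{*(n-1)}$ and the elementary identity $\int_0^t s\, \phi^{*n}(t-s)\, ds = \int_0^t \big[\int_0^u (u-r)\phi^{*(n-1)}(u-r)\,dr\big]\phi(t-u)\,du + \int_0^t \big[\int_0^u r\,\phi(r)\,dr\big]\phi^{*(n-1)}(t-u)\,du$ after writing $s = (s - u) + u$ in a suitable Fubini rearrangement (or, more cleanly, differentiate the generating function $\sum_n \Lambda^{-n}\int_0^t s\phi^{*n}(t-s)\,ds$ — but the inductive route keeps control of the error term explicitly). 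The base case $n=0$ is the stated convention $\int_0^t s\,\delta_0(t-ds) = t$, and $n=1$ is $\int_0^t s\phi(t-s)\,ds = \int_0^t (t-u)\phi(u)\,du = \Lambda t - \int_0^t u\phi(u)\,du = \Lambda t - \Lambda k + \varepsilon_1(t)$ with $\varepsilon_1(t) = \int_t^\infty u\phi(u)\,du \ge 0$, which under $H(q)$ is bounded by $\min\{t^{1-q}\int_t^\infty u^q\phi(u)\,du,\ \int_0^\infty u\phi(u)\,du\} \le C\min\{t^{1-q},\Lambda k\}$.

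For the inductive step, assume the formula holds for $n-1$. Split $\int_0^t s\phi^{*n}(t-s)\,ds$ using $\phi^{*n} = \phi^{*(n-1)} * \phi$ and the substitution that turns the weight $s$ into two pieces:
\begin{align*}
\int_0^t s\,\phi^{*n}(t-s)\,ds
&= \int_0^t \Big(\int_0^{t-v} u\,\phi^{*(n-1)}(t-v-u)\,du\Big)\phi(v)\,dv
+ \int_0^t \Big(\int_0^{t-v}\phi^{*(n-1)}(u)\,du\Big)\, v\,\phi(v)\,dv,
\end{align*}
where the first term comes from writing $s$ as (mass variable of $\phi^{*(n-1)}$) and the second from (mass variable of $\phi$). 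Into the first integral I substitute the induction hypothesis $\int_0^{w} u\phi^{*(n-1)}(w-u)\,du = \Lambda^{n-1}w - (n-1)\Lambda^{n-1}k + \varepsilon_{n-1}(w)$ with $w = t-v$; into the second I use $\int_0^{w}\phi^{*(n-1)}(u)\,du \le \Lambda^{n-1}$ and $\int_0^\infty v\phi(v)\,dv = \Lambda k$. Collecting the main terms gives $\Lambda^{n-1}\int_0^t (t-v)\phi(v)\,dv - (n-1)\Lambda^{n-1}k\int_0^t\phi(v)\,dv + \Lambda^{n-1}\int_0^t v\phi(v)\,dv + (\text{errors}) = \Lambda^{n-1}(\Lambda t - \Lambda k) - (n-1)\Lambda^{n-1}k\Lambda + \Lambda^{n-1}\Lambda k + \cdots = \Lambda^n t - n\Lambda^n k + \varepsilon_n(t)$, exactly as claimed, once one checks that the three "$\Lambda k$" contributions combine correctly (the $-\Lambda^{n-1}\Lambda k$ from $\int_0^t(t-v)\phi$ and the $+\Lambda^{n-1}\Lambda k$ from $\int_0^t v\phi$ cancel, leaving $-(n-1)\Lambda^n k - \Lambda^n k \cdot 0$... — the bookkeeping here needs the tail corrections folded into $\varepsilon_n$).

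The main obstacle, and the step requiring the most care, is propagating the error bound $0 \le \varepsilon_n(t) \le C\min\{n^q\Lambda^n t^{1-q}, n\Lambda^n k\}$ through the induction with the correct power $n^q$ and without losing the nonnegativity. Nonnegativity should follow because every discarded quantity is a genuine tail $\int_t^\infty(\cdots)$ of a nonnegative function, so $\varepsilon_n(t)$ is a sum of such tails; one must verify no cancellation forces it negative. The factor $n^q$ arises from summing $n$ terms each of which, in the worst case, carries an error of order $n^{q-1}\Lambda^n t^{1-q}$ from the induction hypothesis plus fresh tail terms — here I would use the convolution tail estimate $\int_0^\infty s^q\phi^{*n}(s)\,ds \le C n^{q} \Lambda^n$ (a consequence of $\int_0^\infty s^q\phi(s)\,ds<\infty$, provable by Minkowski/Young-type arguments on the convolution, or by the multinomial expansion of $(s_1+\dots+s_n)^q$) to control $\varepsilon_n(t) = \int_t^\infty s\phi^{*n}(\cdots) + \cdots \le t^{1-q}\int_t^\infty s^q\phi^{*n} \le Cn^q\Lambda^n t^{1-q}$ directly, which is cleaner than chasing the recursion term by term. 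The second bound $\varepsilon_n(t) \le Cn\Lambda^n k$ is uniform in $t$ and follows from $\varepsilon_n(t) \le \varepsilon_n(0^+)$-type monotonicity together with $0 \le \Lambda^n t - n\Lambda^n k + \varepsilon_n(t)$ forcing $\varepsilon_n(t) \le$ the original full integral's value; taking the minimum of the two bounds finishes the proof.
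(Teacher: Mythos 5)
The paper does not actually prove this lemma: it is quoted from Delattre--Fournier, and the ``proof'' in the text is the citation \cite[Lemma 15]{A}. So your attempt must be judged on its own merits, and as written it has genuine gaps. You assemble the right ingredients (the moment bound $\int_0^\infty s^q\phi^{*n}(s)\,ds\le Cn^q\Lambda^{n}$ via the power-mean inequality, and the idea that $\varepsilon_n$ is a tail of a nonnegative function), but the inductive machinery around them is broken. The two-term convolution identity in your inductive step is false: setting $F_m(w):=\int_0^w s\,\phi^{*m}(w-s)\,ds$, a substitution $u=t-s$ plus Fubini gives simply $F_n(t)=\int_0^t \phi(v)\,F_{n-1}(t-v)\,dv$, so your first term alone already equals $F_n(t)$ and the second term $\int_0^t\big(\int_0^{t-v}\phi^{*(n-1)}\big)v\phi(v)\,dv$ is spurious. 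This is precisely why your bookkeeping does not close: your own computation of the main terms lands on $\Lambda^n t-(n-1)\Lambda^n k$ instead of $\Lambda^n t-n\Lambda^n k$, and you leave the missing $\Lambda^n k$ behind an ellipsis. (With the correct one-term identity the constants do close: $\Lambda^{n-1}(\Lambda t-\Lambda k)-(n-1)\Lambda^{n-1}k\,\Lambda=\Lambda^n t-n\Lambda^n k$, the leftover $(n-1)\Lambda^{n-1}k\int_t^\infty\phi$ being absorbed into $\varepsilon_n$.) Your base case $n=1$ also has a slip: $\int_0^t(t-u)\phi(u)\,du=\Lambda t-\Lambda k+\int_t^\infty(u-t)\phi(u)\,du$, not $\Lambda t-\int_0^t u\phi(u)\,du$; you dropped $-t\int_t^\infty\phi$.

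The induction is in fact unnecessary, and dropping it removes every difficulty you flag. Substituting $u=t-s$,
$$
\int_0^t s\,\phi^{*n}(t-s)\,ds=t\int_0^t\phi^{*n}(u)\,du-\int_0^t u\,\phi^{*n}(u)\,du
=\Lambda^n t-n\Lambda^n k+\varepsilon_n(t),\qquad
\varepsilon_n(t)=\int_t^\infty(u-t)\,\phi^{*n}(u)\,du,
$$
using $\int_0^\infty\phi^{*n}=\Lambda^n$ and $\int_0^\infty u\,\phi^{*n}(u)\,du=n\Lambda^{n-1}\int_0^\infty u\phi(u)\,du=n\Lambda^n k$. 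Nonnegativity is then manifest; $\varepsilon_n(t)\le\int_0^\infty u\,\phi^{*n}(u)\,du=n\Lambda^n k$ gives the second bound without the unjustified ``monotonicity'' step you invoke; and $\varepsilon_n(t)\le t^{1-q}\int_t^\infty u^q\phi^{*n}(u)\,du\le Cn^q\Lambda^n t^{1-q}$ gives the first, exactly by the moment estimate you already state. As submitted the proposal is not a proof; repaired along these lines it becomes one.
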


\begin{proof} 
See \cite[Lemma 15]{A}.
\end{proof}
\smallskip

\subsection{Probabilistic lower bound}
In this subsection, we are going to prove that the set $\Omega_{N,K}$ has high probability, which will allow to work on the set $\Omega_{N,K}$ for all our study.
\begin{lemma}\label{ONK}
Assume that $\Lambda p<1$. It holds that $$P(\Omega_{N,K})\ge 1-CNe^{-cK}$$ for some constants $C>0$ and $c>0$.
\end{lemma}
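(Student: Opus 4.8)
The plan is to reduce the statement to a single Chernoff estimate for a sum of $K$ i.i.d.\ Bernoulli($p$) random variables, together with a union bound over at most $N$ indices. First, note that $\Omega_{N,K}=\Omega_{N}^{1}\cap\mathcal{F}_{N}^{K,1}\cap\mathcal{F}_{N}^{K,2}$; by Lemma~\ref{omegaN1} we have $P((\Omega_{N}^{1})^{c})\le Ce^{-cN}\le Ce^{-cK}$ since $K\le N$, so it suffices to bound $P(\Omega_{N}^{1}\setminus\mathcal{F}_{N}^{K,1})$ and $P(\Omega_{N}^{1}\setminus\mathcal{F}_{N}^{K,2})$, each by $CNe^{-cK}$.

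Next, I would use the interpolation inequality $|||M|||_{r}\le|||M|||_{1}^{1/r}|||M|||_{\infty}^{1-1/r}$ to reduce the ``for all $r\in[1,\infty)$'' requirement defining each $\mathcal{F}_{N}^{K,i}$ to its two endpoints $r=1$ and $r=\infty$: the defining right-hand side is itself a product of a power of $a$ and a power of $K/N$, so interpolating the two endpoint bounds reproduces exactly the required $(K/N)^{\bullet}a$. In each case one of the two endpoint bounds is automatic on $\Omega_{N}^{1}$, since the rows of $I_{K}A_{N}$ (resp.\ the columns of $A_{N}I_{K}$) are rows (resp.\ columns) of $A_{N}$ or are zero, so that $|||I_{K}A_{N}|||_{\infty}\le|||A_{N}|||_{\infty}$ (resp.\ $|||A_{N}I_{K}|||_{1}\le|||A_{N}|||_{1}$), and the definition of $\Omega_{N}^{1}$ gives $\Lambda|||A_{N}|||_{r}\le a$. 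The remaining endpoint bound, written out, is the statement $\max_{1\le j\le N}\sum_{i=1}^{K}\theta_{ij}\le aK/\Lambda$ (resp.\ with $i,j$ exchanged). Since $\Lambda p<1$ we have $a/\Lambda-p=\frac{1-\Lambda p}{2\Lambda}>0$, so each $\mathrm{Bin}(K,p)$-distributed sum $\sum_{i=1}^{K}\theta_{ij}=K(p+\delta)+\cdots$ with $\delta:=a/\Lambda-p>0$ fixed exceeds $aK/\Lambda$ only with probability $\le e^{-cK}$ for a constant $c=c(p,\Lambda)>0$ (Chernoff/Bernstein), and a union bound over the $N$ choices of $j$ gives the desired $Ne^{-cK}$.

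Combining the three contributions (the two $\mathcal{F}_{N}^{K,i}$ estimates and Lemma~\ref{omegaN1}) yields $P(\Omega_{N,K})\ge1-CNe^{-cK}$. The only genuinely probabilistic step is the Chernoff bound, and I do not expect a real obstacle: the part that looks delicate --- controlling an operator norm uniformly in $r\in[1,\infty)$ --- costs nothing once one observes that, by the interpolation inequality, it reduces to the two endpoint norms $|||\cdot|||_{1}$ and $|||\cdot|||_{\infty}$, one of which is a maximal column sum and the other a maximal row sum of the Bernoulli matrix $\theta$, and exactly one of the two is already handled by membership in $\Omega_{N}^{1}$.
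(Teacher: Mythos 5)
Your proof is correct and is essentially the paper's own argument: the paper likewise reduces $\mathcal{F}_N^{K,1}$ to the single endpoint norm not already controlled on $\Omega_N^1$ via the interpolation inequality $|||M|||_r\le |||M|||_1^{1/r}|||M|||_\infty^{1-1/r}$, bounds that endpoint by a Hoeffding estimate for the maximum of $N$ i.i.d.\ Binomial$(K,p)$ sums, and then combines with Lemma \ref{omegaN1} (treating $\mathcal{F}_N^{K,2}$ "in the same way"). One caveat: for $A_NI_K$ your interpolation yields the exponent $(K/N)^{1-1/r}$ rather than the $(K/N)^{1/r}$ appearing in the paper's definition of $\mathcal{F}_N^{K,2}$ — but the latter is evidently a typo (taken literally, $\mathcal{F}_N^{K,2}$ at $r=1$ would force $\max_{j\le K}\sum_{i=1}^N\theta_{ij}\le Ka/\Lambda$, which fails with overwhelming probability once $K/N<\Lambda p/a$), so the version your argument actually establishes is the one that can hold with probability $1-CNe^{-cK}$.
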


\begin{proof}
On $\Omega_{N,K}^{1}$, we have 
$$
N|||I_{K}A_{N}|||_{1}=\sup_{j=1,...,N}\sum_{i=1}^{K}\theta_{ij}=\max\{X_{1}^{N,K},... ,X_{N}^{N,K}\},
$$ 
where $X_{i}^{N,K}=\sum_{j=1}^{K}\theta_{ij}$ for $i=1,...,N$ are i.i.d and Binomial$(K,p)$-distributed.
So, 
\begin{eqnarray*}
P\Big(\Lambda\frac{N}{K}|||I_{K}A_{N}|||_{1}\ge a\Big)&=&P\Big(\max\{X_{1}^{N,K},...X_{N}^{N,K}\}\ge\frac{Ka}{\Lambda}\Big)  \le NP\Big(X_{1}^{N,K}\ge\frac{Ka}{\Lambda}\Big)\\
&\le& NP\Big(|X_{1}^{N,K}-Kp|\ge K\Big(\frac{a}{\Lambda}-p\Big)\Big)\le 2Ne^{-2K(\frac{a}{\Lambda}-p)^{2}}.
\end{eqnarray*}
The last equality follows from Hoeffding inequality.
On the event $\Omega_{N}^{1}\cap \{\Lambda\frac{N}{K}|||I_{K}A_{N}|||_{1}\le a\},$ 
we have 
$$
|||I_{K}A_{N}|||_{r}\le|||I_{K}A_{N}|||_{1}^{\frac{1}{r}}\|I_{K}A_{N}|||_{\infty}^{1-\frac{1}{r}}\le|||I_{K}A_{N}|||_{1}^{\frac{1}{r}}||A_{N}|||_{\infty}^{1-\frac{1}{r}}\le\Big(\frac{a}{\Lambda}\frac{K}{N}\Big)^{\frac{1}{r}}\Big(\frac{a}{\Lambda}\Big)^{1-\frac{1}{r}}=\frac{a}{\Lambda}\Big(\frac{K}{N}\Big)^{\frac{1}{r}}.
$$
We conclude that $\Omega^{1}_{N,K}=\Omega_{N}^{1}\cap\{(\frac{N}{K})|||I_{K}A_{N}|||_{1}\le a\}$. And by Lemma \ref{omegaN1},  we  deduce that $P(\Omega^{1}_{N,K})\ge 1-CNe^{-cK}.$
By the same way, we prove that $P(\Omega^{2}_{N,K})\ge 1-CNe^{-cK}.$  
Finally by the definition of $\Omega_{N,K}$, we have $P(\Omega_{N,K})\ge 1-CNe^{-cK}.$
\end{proof}

\subsection{Matrix analysis for the first estimator}
The aim of this subsection is to prove that $\bar \ell_N^K \simeq 1/(1-\Lambda p)$
and to study the rate of convergence.

\begin{lemma}\label{lA}
Assume $\Lambda p<1$. Then
$$
\mathbb{E}\Big[\boldsymbol{1}_{\Omega_{N,K}}|\bar{\ell}_{N}^{K}-1-\Lambda p\bar{\ell}_{N}|^{2}\Big]\le\frac{C}{NK}.
$$
\end{lemma}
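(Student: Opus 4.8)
The key identity is the algebraic relation between $c^K_N$ and $\boldsymbol{\ell}_N$. Since $Q_N = (I-\Lambda A_N)^{-1}$, we have $Q_N = I + \Lambda A_N Q_N$, i.e. $Q_N(i,j) = \indiq_{\{i=j\}} + \Lambda \sum_{k=1}^N A_N(i,k) Q_N(k,j)$. Summing over $i \le K$ gives $c^K_N(j) = \indiq_{\{j\le K\}} + \Lambda \sum_{k=1}^N \big(\sum_{i\le K} A_N(i,k)\big) Q_N(k,j)$. Now sum over all $j \in \{1,\dots,N\}$ and divide by $K$: since $\sum_j Q_N(k,j) = \ell_N(k)$, we obtain
\begin{align*}
\bar{\ell}^K_N = \frac{1}{K}\sum_{j=1}^N c^K_N(j) = 1 + \frac{\Lambda}{K}\sum_{k=1}^N \Big(\sum_{i=1}^K A_N(i,k)\Big)\ell_N(k) = 1 + \frac{\Lambda}{K}\sum_{k=1}^N \Big(\sum_{i=1}^K \frac{\theta_{ik}}{N}\Big)\ell_N(k).
\end{align*}
So $\bar{\ell}^K_N - 1 - \Lambda p \bar{\ell}_N = \frac{\Lambda}{K}\sum_{k=1}^N \big(\sum_{i=1}^K \frac{\theta_{ik}}{N} - \frac{Kp}{N}\big)\ell_N(k)$. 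Writing $Y_k := \sum_{i=1}^K(\theta_{ik} - p)$, this equals $\frac{\Lambda}{NK}\sum_{k=1}^N Y_k\,\ell_N(k)$, and the goal becomes to bound $\E[\boldsymbol{1}_{\Omega_{N,K}}(\frac{\Lambda}{NK}\sum_k Y_k \ell_N(k))^2]$ by $C/(NK)$.

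The plan is then to split $\ell_N(k)$ into a constant part plus a fluctuation and exploit that the $Y_k$ are centered, independent across $k$, and each has variance $Kp(1-p) \le CK$. First I would write $\ell_N(k) = \bar\ell_N + x_N(k)$ (using the notation $x_N(k) = \ell_N(k) - \bar\ell_N$), so the sum becomes $\bar\ell_N \sum_k Y_k + \sum_k Y_k x_N(k)$. For the first term, $\sum_{k=1}^N Y_k = \sum_{i=1}^K\sum_{k=1}^N(\theta_{ik}-p)$ is a centered sum of $NK$ independent bounded variables, so $\E[(\sum_k Y_k)^2] \le CNK$, and on $\Omega_N^1 \supset \Omega_{N,K}$ we have $\bar\ell_N \le C$ by Lemma \ref{lo}; this contributes $\frac{\Lambda^2}{N^2K^2}\cdot C \cdot CNK = C/(NK)$ as desired. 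For the cross term I would use Cauchy–Schwarz in the form $|\sum_k Y_k x_N(k)| \le \|\boldsymbol{Y}\|_2 \|\boldsymbol{x}_N\|_2$ — but this must be handled carefully because $\boldsymbol{x}_N$ and the $Y_k$ are not independent (both depend on the $\theta_{ij}$). The trick is that $\|\boldsymbol{x}_N\|_2$ involves only columns/rows structure controlled via Lemma \ref{xxx}(i), which on $\Omega_N^1 \cap \mathcal{A}_N$ gives $\|\boldsymbol{x}_N\|_2 \le C\|\boldsymbol{X}_N\|_2$ with $\E[\|\boldsymbol{X}_N\|_2^4] \le C$ by Lemma \ref{xxx}(ii). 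Then by Cauchy–Schwarz on the expectation, $\E[\boldsymbol{1}_{\Omega_{N,K}}\|\boldsymbol{Y}\|_2^2\|\boldsymbol{x}_N\|_2^2] \le C(\E[\|\boldsymbol{Y}\|_2^4])^{1/2}(\E[\|\boldsymbol{X}_N\|_2^4])^{1/2}$, and $\E[\|\boldsymbol{Y}\|_2^4] = \E[(\sum_k Y_k^2)^2]$; since each $Y_k^2$ has mean $\le CK$ and the $Y_k$ are independent across $k$ with $\E[Y_k^4] \le CK^2$, this is $\le C(NK)^2 + CNK^2 \le C N^2 K^2$. Hence the cross term contributes $\frac{\Lambda^2}{N^2K^2}\cdot C\cdot (N^2K^2)^{1/2}\cdot 1 = C/(NK)$, which is exactly the bound claimed.

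I anticipate the main obstacle to be the dependence issue in the cross term: $\boldsymbol{x}_N$, being built from $Q_N = (I - \Lambda A_N)^{-1}$, depends on \emph{all} entries $\theta_{ij}$, while $\boldsymbol{Y}$ depends on the first $K$ rows. A naive independence argument fails, so one genuinely needs the global moment bounds from Lemma \ref{xxx} together with the restriction to $\Omega_N^1 \cap \mathcal{A}_N$ (which is why the statement carries the indicator $\boldsymbol{1}_{\Omega_{N,K}}$). A secondary subtlety is making sure the centering is done against $\bar\ell_N$ rather than against $1/(1-\Lambda p)$ — the point of this lemma is precisely to produce the relation $\bar\ell^K_N \approx 1 + \Lambda p\,\bar\ell_N$ with error $O(1/\sqrt{NK})$, which combined with Lemma \ref{lll} (giving $\bar\ell_N \approx 1/(1-\Lambda p)$ with error $O(1/N)$) will later yield $\bar\ell^K_N \approx 1/(1-\Lambda p)$. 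One should also double-check that one can afford the crude Cauchy–Schwarz splitting of the expectation rather than a more refined second-moment computation; the target rate $C/(NK)$ leaves just enough room, since $(NK)^{1/2}/(N^2K^2) = 1/(N^{3/2}K^{3/2})$ is in fact better, and the binding term is the $\bar\ell_N\sum_k Y_k$ piece.
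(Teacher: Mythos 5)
Your proof follows essentially the same route as the paper's: the same identity $\bar{\ell}_{N}^{K}=1+\frac{\Lambda}{K}\sum_{j}C_{N}^{K}(j)\ell_{N}(j)$, the same split $\ell_N(j)=\bar\ell_N+x_N(j)$, independence of the centered column sums for the $\bar\ell_N$ piece, and Cauchy--Schwarz plus Lemma \ref{xxx} for the fluctuation piece, so the argument is correct in substance. Two small points: Lemma \ref{xxx}(i) only controls $\|\boldsymbol{x}_N\|_2$ on $\Omega_N^1\cap\mathcal{A}_N$ (not on $\Omega_{N,K}$, which does not contain $\mathcal{A}_N$), so you still need the paper's final step of bounding the contribution of $\Omega_{N,K}\cap\mathcal{A}_N^c$ via $P(\mathcal{A}_N^c)\le C N^{-2}$ and the boundedness of $\bar\ell_N,\bar\ell_N^K$ there; and your closing aside that the cross term is of order $1/(N^{3/2}K^{3/2})$ is an arithmetic slip --- as your own main computation shows, it is of order $1/(NK)$, the same as the $\bar\ell_N\sum_k Y_k$ piece.
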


\begin{proof}
Recall that $\boldsymbol{\ell}_{N}=Q_{N}\boldsymbol{1_{N}}$, whence $Q_{N}^{-1}\boldsymbol{\ell}_{N}=\boldsymbol{1}_{N}$. And since, $Q_{N}=(I-\Lambda A_{N})^{-1}$, we have $Q_{N}^{-1}\boldsymbol{\ell}_{N}=(I-\Lambda A_{N})\boldsymbol{\ell}_{N}=\boldsymbol{1}_{N}$ and thus $\boldsymbol{\ell}_{N}=\boldsymbol{1_{N}}+\Lambda A_{N}\ell_{N}$. We conclude that
\begin{align*}
  &\bar{\ell}_{N}^{K}=\frac{1}{K}(\boldsymbol{\ell}_{N},\boldsymbol{1}_{K})=1+\frac{\Lambda}{K}\sum^{K}_{i=1}\sum_{j=1}^{N}A(i,j)\ell_{N}(j)=1+\frac{\Lambda}{K}\sum_{j=1}^{N}C_{N}^{K}(j)\ell_{N}(j),
\end{align*}
where $C_{N}^{K}(j):=\sum_{i=1}^{K}A(i,j)=\frac{1}{N}\sum_{i=1}^{K}\theta_{ij}.$
By some easy computing, we have
\begin{equation}\label{sde}
\mathbb{E}\Big[\Big(\sum_{j=1}^{N}\Big[C_{N}^{K}(j)-\frac{Kp}{N}\Big]^{2}\Big)^{2}\Big]\le C\frac{K^{2}}{N^{2}},
\end{equation}
whence 
\begin{align*}
&\mathbb{E}\Big[\boldsymbol{1}_{\Omega_{N,K}\cap\mathcal{A}_{N}}\Big|\bar{\ell}_{N}^{K}-1-\Lambda p\bar{\ell}_{N}\Big|^{2}\Big]\\
=&\mathbb{E}\Big[\boldsymbol{1}_{\Omega_{N,K}\cap\mathcal{A}_{N}}\Big|\frac{\Lambda}{K}\sum_{j=1}^{N}\Big(C_{N}^{K}(j)-\frac{Kp}{N}\Big)\ell_{N}(j)\Big|^{2}\Big]
\\
=&\mathbb{E}\Big[\boldsymbol{1}_{\Omega_{N,K}\cap\mathcal{A}_{N}}\Big|\frac{\Lambda}{K}\sum_{j=1}^{N}\Big(C_{N}^{K}(j)-\frac{Kp}{N}\Big)(\ell_{N}(j)-\bar{\ell}_{N})+\bar{\ell}_{N}\frac{\Lambda}{K}\sum_{j=1}^{N}\Big(C_{N}^{K}(j)-\frac{Kp}{N}\Big)\Big|^{2}\Big]
\\
\le&
2\mathbb{E}\Big[\boldsymbol{1}_{\Omega_{N,K}\cap\mathcal{A}_{N}}\Big|\frac{\Lambda}{K}\sum_{j=1}^{N}\Big(C_{N}^{K}(j)-\frac{Kp}{N}\Big)(\ell_{N}(j)-\bar{\ell}_{N})\Big|^{2}\Big]\\&+2\mathbb{E}\Big[\boldsymbol{1}_{\Omega_{N,K}\cap\mathcal{A}_{N}}\Big|\bar{\ell}_{N}\frac{\Lambda}{K}\sum_{j=1}^{N}\Big(C_{N}^{K}(j)-\frac{Kp}{N}\Big)\Big|^{2}\Big].
\end{align*}
Consequently,
\begin{align*}
&\mathbb{E}\Big[\boldsymbol{1}_{\Omega_{N,K}\cap\mathcal{A}_{N}}\Big|\bar{\ell}_{N}^{K}-1-\Lambda p\bar{\ell}_{N}\Big|^{2}\Big]\\
  \le& 
2\Big(\frac{\Lambda}{K}\Big)^{2}\mathbb{E}\Big[\boldsymbol{1}_{\Omega_{N,K}\cap\mathcal{A}_{N}}\|\boldsymbol{x}_{N}\|^{4}_{2}\Big]^{\frac{1}{2}}\mathbb{E}\Big[\boldsymbol{1}_{\Omega_{N,K}\cap\mathcal{A}_{N}}\Big(\sum_{j=1}^{N}\Big(C_{N}^{K}(j)-\frac{Kp}{N}\Big)^{2}\Big)^{2}\Big]^{\frac{1}{2}}\\
&\ +2\frac{\Lambda^{2}}{K^{2}}\mathbb{E}\Big[\boldsymbol{1}_{\Omega_{N,K}\cap\mathcal{A}_{N}}\Big|\bar{\ell}_{N}\Big|^{2}\Big|\sum_{j=1}^{N}\Big(C_{N}^{K}(j)-\frac{Kp}{N}\Big)\Big|^{2}\Big]
\\
\le &
\frac{C}{NK}\mathbb{E}\Big[\boldsymbol{1}_{\Omega_{N,K}\cap\mathcal{A}_{N}}\|\boldsymbol{x}_{N}\|^{4}_{2}\Big]^{\frac{1}{2}}
+2\frac{\Lambda^{2}}{K^{2}}\mathbb{E}\Big[\boldsymbol{1}_{\Omega_{N,K}\cap\mathcal{A}_{N}}\Big|\bar{\ell}_{N}\Big|^{2}\Big|\sum_{j=1}^{N}\Big(C_{N}^{K}(j)-\frac{Kp}{N}\Big)\Big|^{2}\Big].
\end{align*}
By Lemma \ref{xxx}, we know that $
\mathbb{E}[\boldsymbol{1}_{\Omega_{N,K}\cap\mathcal{A_{N}}}\|\boldsymbol{x}_{N}\|^{4}_{2}]\le C.$
By Lemma \ref{lo}, $\bar{\ell}_{N}$ and $\bar{\ell}_{N}^{K}$ are bounded  on the set $\Omega_{N,K}$,
whence, recalling \eqref{sde}, and since $\{C_{N}^{K}(j)-\frac{Kp}{N}\}_{j=1,...,N}$ are independent, we conclude that 
\begin{align*}
    \mathbb{E}\Big[\boldsymbol{1}_{\Omega_{N,K}\cap\mathcal{A}_{N}}\Big|\bar{\ell}_{N}\Big|^{2}\Big|\sum_{j=1}^{N}\Big(C_{N}^{K}(j)-\frac{Kp}{N}\Big)\Big|^{2}\Big]\le& C\mathbb{E}\Big[\boldsymbol{1}_{\Omega_{N,K}\cap\mathcal{A}_{N}}\Big|\sum_{j=1}^{N}\Big(C_{N}^{K}(j)-\frac{Kp}{N}\Big)\Big|^{2}\Big]\\
    \le & C\mathbb{E}\Big[\sum_{j=1}^{N}\Big(C_{N}^{K}(j)-\frac{Kp}{N}\Big)^{2}\Big]\\
    \le& \frac{CK}{N}.
\end{align*}
Hence
$$
\mathbb{E}\Big[\boldsymbol{1}_{\Omega_{N,K}\cap{\mathcal{A_{N}}}}\Big|\bar{\ell}_{N}^{K}-1-\Lambda p\bar{\ell}_{N}\Big|^{2}\Big]\le\frac{C}{NK}.
$$
We finally apply Lemma \ref{lo} with e.g. $\alpha=2$ and get
\begin{align*}
\mathbb{E}\Big[\boldsymbol{1}_{\Omega_{N,K}}\Big|\bar{\ell}_{N}^{K}-1-\Lambda p\bar{\ell}_{N}\Big|^{2}\Big]
&=\mathbb{E}\Big[\boldsymbol{1}_{\Omega_{N,K}\cap{\mathcal{A_{N}}}}\Big|\bar{\ell}_{N}^{K}-1-\Lambda p\bar{\ell}_{N}\Big|^{2}\Big]+\mathbb{E}\Big[\boldsymbol{1}_{\Omega_{N,K}\cap{\mathcal{A_{N}^{C}}}}\Big|\bar{\ell}_{N}^{K}-1-\Lambda p\bar{\ell}_{N}\Big|^{2}\Big]\\
&\le\frac{c}{NK}+\frac{C}{N^2}\le\frac{C}{NK}.
 \end{align*}
\end{proof}

The next lemma is the main result of the subsection.

\begin{lemma}\label{ellp}
If $\Lambda p<1$, we have
$$
\mathbb{E}\Big[\boldsymbol{1}_{\Omega_{N,K}}\Big|\bar{\ell}_{N}^{K}-\frac{1}{1-\Lambda p}\Big|^{2}\Big]\le\frac{C}{NK}.
$$
\end{lemma}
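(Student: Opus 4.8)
The plan is to reduce the statement to the two preceding lemmas by a one-line algebraic decomposition. The key observation is that the constant $\frac{1}{1-\Lambda p}$ satisfies the fixed-point identity $\frac{1}{1-\Lambda p} = 1 + \Lambda p \cdot \frac{1}{1-\Lambda p}$, which exactly mirrors the approximate identity $\bar\ell_N^K \simeq 1 + \Lambda p\,\bar\ell_N$ controlled in Lemma \ref{lA}. So I would write
$$
\bar{\ell}_{N}^{K}-\frac{1}{1-\Lambda p}
=\Big(\bar{\ell}_{N}^{K}-1-\Lambda p\,\bar{\ell}_{N}\Big)
+\Lambda p\Big(\bar{\ell}_{N}-\frac{1}{1-\Lambda p}\Big),
$$
using that $1+\Lambda p\cdot\frac{1}{1-\Lambda p}-\frac{1}{1-\Lambda p}=0$.

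Next I would multiply by $\boldsymbol{1}_{\Omega_{N,K}}$, take squares, and apply $(u+v)^2\le 2u^2+2v^2$ to get
$$
\mathbb{E}\Big[\boldsymbol{1}_{\Omega_{N,K}}\Big|\bar{\ell}_{N}^{K}-\frac{1}{1-\Lambda p}\Big|^{2}\Big]
\le 2\,\mathbb{E}\Big[\boldsymbol{1}_{\Omega_{N,K}}\big|\bar{\ell}_{N}^{K}-1-\Lambda p\,\bar{\ell}_{N}\big|^{2}\Big]
+2\Lambda^2p^2\,\mathbb{E}\Big[\boldsymbol{1}_{\Omega_{N,K}}\Big|\bar{\ell}_{N}-\frac{1}{1-\Lambda p}\Big|^{2}\Big].
$$
The first term on the right is bounded by $C/(NK)$ directly by Lemma \ref{lA}. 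For the second term I would use that $\Omega_{N,K}\subset\Omega_{N}^{1}$ (Lemma \ref{lo}) to bound $\boldsymbol{1}_{\Omega_{N,K}}\le\boldsymbol{1}_{\Omega_{N}^{1}}$, then apply Lemma \ref{lll} to get $\mathbb{E}[\boldsymbol{1}_{\Omega_{N}^{1}}|\bar{\ell}_{N}-\frac{1}{1-\Lambda p}|^{2}]\le C/N^{2}$, and finally note $1/N^2\le 1/(NK)$ since $K\le N$. Summing the two contributions gives the claimed bound $C/(NK)$.

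There is essentially no obstacle here: the genuine work (the variance estimate \eqref{sde} for the column sums $C_N^K(j)$, the $L^2$-control of $\|\boldsymbol{x}_N\|_2$, and the boundedness of $\bar\ell_N$ on $\Omega_{N,K}$) has already been packaged into Lemma \ref{lA}, and the scalar concentration of $\bar\ell_N$ around $\frac{1}{1-\Lambda p}$ into Lemma \ref{lll}. The only thing to be careful about is choosing the decomposition so that the residual constant cancels exactly, and checking that the error $C/N^2$ from Lemma \ref{lll} is dominated by the target rate $C/(NK)$, which holds precisely because $K\le N$.
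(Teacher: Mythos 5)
Your proof is correct and is essentially identical to the paper's own argument: the same fixed-point decomposition $\frac{1}{1-\Lambda p}=1+\frac{\Lambda p}{1-\Lambda p}$, the same elementary inequality, and the same appeal to Lemmas \ref{lA} and \ref{lll} together with $\Omega_{N,K}\subset\Omega_N^1$ and $K\le N$. Nothing to add.
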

\begin{proof}
Observing that $1/(1-\Lambda p)= 1+ \Lambda p/(1-\Lambda p)$, we write
\begin{align*}
  \mathbb{E}\Big[\boldsymbol{1}_{\Omega_{N,K}}\Big|\bar{\ell}_{N}^{K}-\frac{1}{1-\Lambda p}\Big|^{2}\Big] \le& 2\mathbb{E}\Big[\boldsymbol{1}_{\Omega_{N,K}}\Big|\bar{\ell}_{N}^{K}-1-\Lambda p\bar{\ell}_{N}\Big|^{2}\Big]+2 \mathbb{E}\Big[\boldsymbol{1}_{\Omega_{N,K}}\Big|\Lambda p\bar{\ell}_{N}-\frac{\Lambda p}{1-\Lambda p}\Big|^{2}\Big] \\
  \le& 2\mathbb{E}\Big[\boldsymbol{1}_{\Omega_{N,K}}\Big|\bar{\ell}_{N}^{K}-1-\Lambda p\bar{\ell}_{N}\Big|^{2}\Big]+ 2(\Lambda p)^{2}\mathbb{E}\Big[\boldsymbol{1}_{\Omega_{N}^{1}}\Big|\bar{\ell}_{N}-\frac{1}{1-\Lambda p}\Big|^{2}\Big]. 
\end{align*} 
We complete the proof applying Lemmas \ref{lll} and \ref{lA}.
\end{proof}

\subsection{Matrix analysis for the second estimator}
The aim of this subsection is to prove that
$\frac{N}{K}\|\boldsymbol{x}_{N}^{K}\|_{2}^{2}\simeq\Lambda^{2}p(1-p)/(1-\Lambda p)^{2}$
and to study the rate of convergence.

\begin{lemma}\label{IAX}
Assume that $p\in(0,1]$. It holds that
$$ 
\mathbb{E}[\|I_{K}A_{N}\boldsymbol{X}_{N}\|_{2}^{2}]\le CKN^{-2}.
$$
\end{lemma}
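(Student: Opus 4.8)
The plan is to reduce this estimate to the bound $\mathbb{E}[\|A_N\boldsymbol{X}_N\|_2^2]\le CN^{-1}$ of Lemma~\ref{xxx}(iii) by an exchangeability argument; no new hard estimate is needed.

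First I would write the left-hand side coordinatewise. Since $I_K$ is the diagonal projection onto the first $K$ coordinates,
$$\|I_K A_N\boldsymbol{X}_N\|_2^2=\sum_{i=1}^K\big((A_N\boldsymbol{X}_N)_i\big)^2,\qquad \|A_N\boldsymbol{X}_N\|_2^2=\sum_{i=1}^N\big((A_N\boldsymbol{X}_N)_i\big)^2.$$
Taking expectations, it thus suffices to show that $\mathbb{E}[((A_N\boldsymbol{X}_N)_i)^2]$ does not depend on $i\in\{1,\dots,N\}$; indeed, this would give $\mathbb{E}[\|I_K A_N\boldsymbol{X}_N\|_2^2]=K\,\mathbb{E}[((A_N\boldsymbol{X}_N)_1)^2]=\frac{K}{N}\mathbb{E}[\|A_N\boldsymbol{X}_N\|_2^2]$.

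The key step is this row-exchangeability. For a permutation $\sigma$ of $\{1,\dots,N\}$, the array $(\theta_{\sigma(i)\sigma(j)})_{i,j}$ has the same law as $(\theta_{ij})_{i,j}$. Replacing every $\theta_{ij}$ by $\theta_{\sigma(i)\sigma(j)}$ turns $L_N(j)=\sum_k A_N(j,k)$ into $L_N(\sigma(j))$, hence leaves $\bar L_N$ unchanged and turns $\boldsymbol{X}_N$ into the vector $(X_N(\sigma(j)))_j$; a one-line computation then shows that $(A_N\boldsymbol{X}_N)_i$ is turned into $(A_N\boldsymbol{X}_N)_{\sigma(i)}$. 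Consequently $(A_N\boldsymbol{X}_N)_i\overset{d}{=}(A_N\boldsymbol{X}_N)_{\sigma(i)}$ for all $\sigma$ and all $i$, which yields the required invariance of $\mathbb{E}[((A_N\boldsymbol{X}_N)_i)^2]$ in $i$. Finally I would invoke Lemma~\ref{xxx}(iii) — which applies here under the sole hypothesis $p\in(0,1]$, by the remark following that lemma — to get $\mathbb{E}[\|A_N\boldsymbol{X}_N\|_2^2]\le CN^{-1}$, whence $\mathbb{E}[\|I_K A_N\boldsymbol{X}_N\|_2^2]\le CKN^{-2}$.

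I do not expect a genuine obstacle: the only point requiring care is the bookkeeping in the permutation-equivariance computation, and the substantive work — the $N^{-1}$ decay of $\mathbb{E}[\|A_N\boldsymbol{X}_N\|_2^2]$ — is already available from \cite{A}.
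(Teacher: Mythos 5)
Your proposal is correct and follows essentially the same route as the paper: the paper likewise writes $\mathbb{E}[\|I_K A_N\boldsymbol{X}_N\|_2^2]$ coordinatewise, uses the (implicit) exchangeability of the rows to identify the sum with $\frac{K}{N}\mathbb{E}[\|A_N\boldsymbol{X}_N\|_2^2]$, and concludes by Lemma \ref{xxx}(iii). Your explicit permutation-equivariance computation just spells out the step the paper leaves implicit, and your appeal to the remark that $(iii)$ holds for all $p\in(0,1]$ is the right justification.
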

\begin{proof}
By Lemma \ref{xxx},  we already know that
$\mathbb{E}[\|A_{N}\boldsymbol{X}_{N}\|_{2}^{2}]\le \frac{C}{N}$, whence
\begin{align*}
\mathbb{E}\Big[\|I_{K}A_{N}\boldsymbol{X}_{N}\|_{2}^{2}\Big]=\sum_{i=1}^{K}\mathbb{E}\Big[\Big(\sum_{j=1}^{N}\frac{\theta_{ij}}{N}(L_{N}(j)-\bar{L}_{N})\Big)^{2}\Big]
  =\frac{K}{N^{2}}\mathbb{E}\Big[\Big(\sum_{j=1}^{N}\theta_{1j}(L_{N}(j)-\bar{L}_N)\Big)^{2}\Big],
\end{align*}
which equals $\frac{K}{N}\mathbb{E}[\|A_{N}\boldsymbol{X}_{N}\|_{2}^{2}]$ and thus is bounded by
$CKN^{-2}$.
\end{proof}

\begin{lemma}\label{xlX}
Assume that $\Lambda p <1$. It holds that
$$\mathbb{E}[\boldsymbol{1}_{\Omega_{N,K}\cap \mathcal{A}_{N}}\|\boldsymbol{x}_{N}^{K}-\bar{\ell}_{N}\Lambda \boldsymbol{X}_{N}^{K}\|^{2}_{2}]\le CN^{-1}.$$
\end{lemma}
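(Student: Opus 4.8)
The plan is to decompose $\boldsymbol{x}_N^K - \bar\ell_N \Lambda \boldsymbol{X}_N^K$ into pieces that can each be controlled by the already-established lemmas, particularly Lemma \ref{xxx}(i), (iii), Lemma \ref{IAX}, and Lemma \ref{ellp}. Recall the key algebraic identities: $\boldsymbol{\ell}_N = \boldsymbol{1}_N + \Lambda A_N \boldsymbol{\ell}_N$, so applying $I_K$ and subtracting the appropriate mean vectors, $\boldsymbol{\ell}_N^K = \boldsymbol{1}_K + \Lambda I_K A_N \boldsymbol{\ell}_N$, whence $\boldsymbol{x}_N^K = \boldsymbol{\ell}_N^K - \bar\ell_N^K \boldsymbol{1}_K = (1 - \bar\ell_N^K)\boldsymbol{1}_K + \Lambda I_K A_N \boldsymbol{\ell}_N$. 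Since $\boldsymbol{\ell}_N = \bar\ell_N \boldsymbol{1}_N + \boldsymbol{x}_N$ and $A_N \boldsymbol{1}_N = \boldsymbol{L}_N = \bar L_N \boldsymbol{1}_N + \boldsymbol{X}_N$, one gets
$$
\boldsymbol{x}_N^K = (1 - \bar\ell_N^K)\boldsymbol{1}_K + \Lambda \bar\ell_N I_K \boldsymbol{L}_N + \Lambda I_K A_N \boldsymbol{x}_N = (1 - \bar\ell_N^K + \Lambda \bar\ell_N \bar L_N)\boldsymbol{1}_K + \Lambda \bar\ell_N I_K \boldsymbol{X}_N + \Lambda I_K A_N \boldsymbol{x}_N.
$$
Now $I_K \boldsymbol{X}_N$ differs from $\boldsymbol{X}_N^K = \boldsymbol{L}_N^K - \bar L_N^K \boldsymbol{1}_K$ only by the scalar shift $(\bar L_N^K - \bar L_N)\boldsymbol{1}_K$, so the target difference becomes a sum of three terms: a multiple of $\boldsymbol{1}_K$ with coefficient $c_N := 1 - \bar\ell_N^K + \Lambda \bar\ell_N \bar L_N + \Lambda\bar\ell_N(\bar L_N^K - \bar L_N)$, plus $\Lambda I_K A_N \boldsymbol{x}_N$.

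**Key steps.** First I would bound $\|\Lambda I_K A_N \boldsymbol{x}_N\|_2$. On $\Omega_{N,K}\cap\mathcal{A}_N$ we have $\|\boldsymbol{x}_N\|_2 \le C\|\boldsymbol{X}_N\|_2$ by Lemma \ref{xxx}(i), and writing $\boldsymbol{x}_N = \bar\ell_N \boldsymbol{x}_N' $... more directly: decompose $I_K A_N \boldsymbol{x}_N = \bar\ell_N I_K A_N \boldsymbol{X}_N + I_K A_N(\boldsymbol{x}_N - \bar\ell_N\boldsymbol{X}_N)$ — but this is circular. Better: since $\|\boldsymbol{x}_N\|_2\le C\|\boldsymbol{X}_N\|_2$ on the good event and $\boldsymbol{X}_N = A_N\boldsymbol{1}_N - \bar L_N\boldsymbol{1}_N$, one cannot directly apply Lemma \ref{IAX} to $\boldsymbol{x}_N$. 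Instead, write $\Lambda A_N \boldsymbol{x}_N = \boldsymbol{\ell}_N - \boldsymbol{1}_N - \Lambda A_N \bar\ell_N\boldsymbol{1}_N + (\text{correction})$; cleanly, from $\boldsymbol{\ell}_N = \boldsymbol{1}_N + \Lambda A_N\boldsymbol{\ell}_N$ we get $\Lambda A_N \boldsymbol{x}_N = \boldsymbol{x}_N - \Lambda \bar\ell_N \boldsymbol{X}_N - (\text{scalar})\boldsymbol{1}_N$, so $\|\Lambda I_K A_N\boldsymbol{x}_N\|_2$ is controlled by $\|I_K(\boldsymbol{x}_N^K - \bar\ell_N\Lambda\boldsymbol{X}_N^K)\|_2$ plus scalar-times-$\boldsymbol{1}_K$ terms plus $\|\boldsymbol{1}_K\|_2$-type errors — again self-referential but in a way that can be closed: moving the $I_K$ version of the LHS to one side yields an equation of the form $\boldsymbol{v}_N = (\text{scalar})\boldsymbol{1}_K + \Lambda I_K A_N \boldsymbol{v}_N$ for the error vector $\boldsymbol{v}_N$, and on $\Omega_{N,K}$ the operator $\Lambda I_K A_N$ has norm $\le a\sqrt{K/N}<1$, so $\boldsymbol{v}_N = (\text{scalar})(I - \Lambda I_K A_N)^{-1}\boldsymbol{1}_K$ with $\|(I - \Lambda I_K A_N)^{-1}\boldsymbol{1}_K\|_2 \le C\sqrt K$. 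Thus everything reduces to showing the scalar coefficient is $O(1/\sqrt{NK})$ in $L^2$, times $\sqrt K$, giving the claimed $CN^{-1}$.

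**The scalar estimate.** The coefficient collapses, using $\boldsymbol{\ell}_N = \boldsymbol{1}_N + \Lambda A_N\boldsymbol{\ell}_N$ and averaging over the first $K$ coordinates, essentially to $\bar\ell_N^K - 1 - \Lambda\bar\ell_N\bar C_N^K$ (or a close variant), which I would bound by combining Lemma \ref{lA} (giving $\bar\ell_N^K - 1 - \Lambda p\bar\ell_N = O(1/\sqrt{NK})$ in $L^2$ on $\Omega_{N,K}$), the boundedness of $\bar\ell_N$ on $\Omega_{N,K}$ (Lemma \ref{lo}), and the fact that $\bar C_N^K - p$ is $O(1/\sqrt K)$ in $L^2$ but appears multiplied by a $1/\sqrt N$ factor coming from the averaging structure — more precisely $\mathbb{E}[(\bar C_N^K - p)^2]\le C/(NK)$ after accounting for the $N^{-1}$ inside $A_N$, combined with the averaging of $K$ coordinates. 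The remaining term involving $\bar L_N^K - \bar L_N$ contributes $O(1/\sqrt{NK})$ as well since $(NL_N(i))_i$ are i.i.d. Binomial$(N,p)$ so $\bar L_N^K - \bar L_N$ is a difference of sample means of bounded variables with variance $O(1/(NK))$. Finally the $\mathcal{A}_N^c$ part is negligible by Lemma \ref{lo} with $\alpha$ large, since on $\Omega_{N,K}$ all relevant quantities ($\bar\ell_N$, $\bar\ell_N^K$, $\|\boldsymbol{X}_N\|_2$ in $L^4$) are bounded.

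**Main obstacle.** The genuinely delicate point is the self-referential bookkeeping: writing the error vector $\boldsymbol{v}_N := I_K(\boldsymbol{x}_N^K - \bar\ell_N\Lambda\boldsymbol{X}_N^K)$ and verifying it satisfies a fixed-point relation $\boldsymbol{v}_N = (\text{small scalar})\boldsymbol{1}_K + \Lambda I_K A_N \boldsymbol{v}_N$ with all the cross-terms (those mixing $\boldsymbol{X}_N$, $\boldsymbol{C}_N$, $\bar L_N^K - \bar L_N$ and the scalars $1 - \bar\ell_N^K + \Lambda\bar\ell_N\bar L_N$) correctly identified as either multiples of $\boldsymbol{1}_K$ or as $\Lambda I_K A_N$ applied to something already controlled. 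One must be careful that the leftover genuinely has the structure (scalar)$\cdot\boldsymbol{1}_K$ plus $\Lambda I_K A_N \boldsymbol{v}_N$ — i.e. that $I_K A_N \boldsymbol{X}_N$ does not leak an uncontrolled $O(\sqrt{K/N})$ term — but here Lemma \ref{IAX} ($\mathbb{E}\|I_K A_N \boldsymbol{X}_N\|_2^2 \le CK/N^2$) is exactly what saves us, since $\sqrt{K/N^2} = \sqrt K / N$ is of the right order after multiplication by the bounded $\bar\ell_N$. Assembling these bounds via Cauchy–Schwarz and the operator-norm control $\||(I-\Lambda I_K A_N)^{-1}\||_2 \le C$ on $\Omega_{N,K}$ then yields $\mathbb{E}[\boldsymbol{1}_{\Omega_{N,K}\cap\mathcal{A}_N}\|\boldsymbol{v}_N\|_2^2]\le CN^{-1}$, and since $\boldsymbol{x}_N^K - \bar\ell_N\Lambda\boldsymbol{X}_N^K$ is supported on the first $K$ coordinates it equals $\boldsymbol{v}_N$, completing the proof.
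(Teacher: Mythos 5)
Your algebraic starting point is the right one and matches the paper: expanding $\boldsymbol{\ell}_N^K=\boldsymbol{1}_K+\Lambda I_KA_N\boldsymbol{\ell}_N$ and splitting $\boldsymbol{\ell}_N=\bar\ell_N\boldsymbol{1}_N+\boldsymbol{x}_N$, $A_N\boldsymbol{1}_N=\bar L_N\boldsymbol{1}_N+\boldsymbol{X}_N$ leads exactly to the paper's identity
$$\boldsymbol{x}_{N}^{K}-\Lambda\bar{\ell}_{N}\boldsymbol{X}_{N}^{K}=\Lambda I_{K}A_{N}\big(\boldsymbol{x}_{N}-\Lambda\bar{\ell}_{N}\boldsymbol{X}_{N}\big)+\bar{\ell}_{N}\Lambda^{2} I_{K}A_{N}\boldsymbol{X}_{N}-\frac{\Lambda}{K}\sum_{j=1}^{N}\Big[C_{N}^{K}(j)-\frac{K}{N}p\Big]x_{N}(j)\,\boldsymbol{1}_{K},$$
and your treatment of the second term (Lemma \ref{IAX}) and of the scalar term (order $1/\sqrt{NK}$ in $L^2$, times $\|\boldsymbol{1}_K\|_2=\sqrt K$) is correct.

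The genuine gap is in how you close the first term. You claim the error vector $\boldsymbol{v}_N:=\boldsymbol{x}_N^K-\bar\ell_N\Lambda\boldsymbol{X}_N^K$ satisfies a fixed-point relation $\boldsymbol{v}_N=(\text{scalar})\boldsymbol{1}_K+\Lambda I_KA_N\boldsymbol{v}_N$ that can be inverted using $\Lambda\||I_KA_N\||_2<1$. It does not: the operator in the true identity acts on the \emph{full} $N$-dimensional error $\boldsymbol{w}_N:=\boldsymbol{x}_N-\Lambda\bar\ell_N\boldsymbol{X}_N$, and since $(A_N\boldsymbol{w}_N)(i)=\sum_{j=1}^N A_N(i,j)w_N(j)$ involves the coordinates $j=K+1,\dots,N$ of $\boldsymbol{w}_N$, the relation cannot be rewritten with $\boldsymbol{v}_N$ (which is supported on the first $K$ coordinates) on the right-hand side. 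So the self-reference does not close for $K<N$, and the inversion of $(I-\Lambda I_KA_N)$ is applied to an identity that is false. What is actually needed is an \emph{a priori} bound on the full error, $\mathbb{E}[\boldsymbol{1}_{\Omega_{N,K}\cap\mathcal{A}_N}\|\boldsymbol{w}_N\|_2^2]\le C/N$; the paper imports this from \cite[Proposition 14, Step 7]{A} and then simply uses the boundedness of $\||I_KA_N\||_2$. Your fixed-point idea is essentially a proof of that $N$-dimensional bound (there $I_K=I$ and the relation $(I-\Lambda A_N)\boldsymbol{w}_N=c\boldsymbol{1}_N+\Lambda^2\bar\ell_N A_N\boldsymbol{X}_N$ does close), but even then the scalar $c=-\Lambda N^{-1}(\boldsymbol{C}_N-p\boldsymbol{1}_N,\boldsymbol{x}_N)$ must be shown to be $O(1/N)$ in $L^2$, which requires more than the crude Cauchy--Schwarz bound on $\mathcal{A}_N$ (that only gives $N^{-3/4}$); this is the nontrivial content of the quoted step of \cite{A}. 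To repair your proof, either establish the $K=N$ case first and then feed it into the displayed identity, or cite the bound on $\|\boldsymbol{w}_N\|_2$ directly.
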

\begin{proof}
By definition, $\boldsymbol{\ell}_{N}^{K}=I_{K}\boldsymbol{\ell}_{N}=\boldsymbol{1}_{K}+\Lambda I_{K}A_{N}\boldsymbol{\ell}_{N},$ so that
$$\bar{\ell}_{N}^{K}=\frac{1}{K}(\boldsymbol{1}_{K},\boldsymbol{\ell}_{N}^{K})= \frac{1}{K}(\boldsymbol{1}_{K},I_{K}\boldsymbol{\ell}_{N})=\frac{1}{K}(\boldsymbol{1}_{K},\boldsymbol{1}_{K}+\Lambda I_{K}A_{N}\boldsymbol{\ell}_{N})=1+\frac{\Lambda}{K}(I_{K}A_{N}\boldsymbol{\ell}_{N},\boldsymbol{1}_{K}).$$
And, recalling that $\boldsymbol{x}_{N}^{K}=\boldsymbol{\ell}_{N}^{K}-\bar{\ell}_{N}^{K}\boldsymbol{1}_{K}$, we find
\begin{align*}
\boldsymbol{x}_{N}^{K}&=\boldsymbol{1}_{K}+\Lambda I_{K}A_{N}\boldsymbol{\ell}_{N}-[1+\frac{\Lambda}{K}(I_{K}A_{N}\boldsymbol{\ell}_{N},\boldsymbol{1}_{K})]\boldsymbol{1}_{K}\\
&= \Lambda I_{K}A_{N}\boldsymbol{\ell}_{N}-\frac{\Lambda}{K}(I_{K}A_{N}\boldsymbol{\ell}_{N},\boldsymbol{1}_{K})\boldsymbol{1}_{K}\\&= \Lambda I_{K}A_{N}(\boldsymbol{\ell}_{N}-\bar{\ell}_{N}\boldsymbol{1}_{N})-\frac{\Lambda}{K}(I_{K}A_{N}(\boldsymbol{\ell}_{N}-\bar{\ell}_{N}\boldsymbol{1}_{N}),\boldsymbol{1}_{K})\boldsymbol{1}_{K}\\
&\qquad +\bar{\ell}_{N}[\Lambda I_{K}A_{N}\boldsymbol{1}_{N}-\frac{\Lambda}{K}(I_{K}A_{N}\boldsymbol{1}_{N},\boldsymbol{1}_{K})\boldsymbol{1}_{K}]
\\&=\Lambda I_{K}A_{N}\boldsymbol{x}_{N}-\frac{\Lambda}{K}(I_{K}A_{N}\boldsymbol{x}_{N},\boldsymbol{1}_{K})\boldsymbol{1}_{K}+\bar{\ell}_{N}[\Lambda I_{K}A_{N}\boldsymbol{1}_{N}-\frac{\Lambda}{K}(I_{K}A_{N}\boldsymbol{1}_{N},\boldsymbol{1}_{K})\boldsymbol{1}_{K}]
\\&= \Lambda I_{K}A_{N}\boldsymbol{x}_{N}-\frac{\Lambda}{K}(I_{K}A_{N}\boldsymbol{x}_{N},\boldsymbol{1}_{K})\boldsymbol{1}_{K}+\Lambda\bar{\ell}_{N} \boldsymbol{X}_{N}^{K}.
\end{align*}
We deduce that
\begin{align*}
\boldsymbol{x}_{N}^{K}-\Lambda\bar{\ell}_{N}\boldsymbol{X}_{N}^{K}&=\Lambda I_{K}A_{N}\boldsymbol{x}_{N}-\frac{\Lambda}{K}(I_{K}A_{N}\boldsymbol{x}_{N}, \boldsymbol{1}_{K})\boldsymbol{1}_{K} \\
&=\Lambda I_{K}A_{N}(\boldsymbol{x}_{N}-\Lambda\bar{\ell}_{N}\boldsymbol{X}_{N})-\frac{\Lambda}{K}(I_{K}A_{N}\boldsymbol{x}_{N}, \boldsymbol{1}_{K})\boldsymbol{1}_{K}+\bar{\ell}_{N}\Lambda^{2} I_{K}A_{N}\boldsymbol{X}_{N}\\
&=\Lambda I_{K}A_{N}(\boldsymbol{x}_{N}-\Lambda\bar{\ell}_{N}\boldsymbol{X}_{N})+\bar{\ell}_{N}\Lambda^{2}I_{K}A_{N}\boldsymbol{X}_{N}-\frac{\Lambda}{K}\Big[\sum_{i=1}^{K}\sum_{j=1}^{N}A_{N}(i,j)x_{N}(j)\Big]\boldsymbol{1}_{K}\\
&=\Lambda I_{K}A_{N}(\boldsymbol{x}_{N}-\Lambda\bar{\ell}_{N}\boldsymbol{X}_{N})+\bar{\ell}_{N}\Lambda^{2} I_{K}A_{N}\boldsymbol{X}_{N}-\frac{\Lambda}{K}\sum_{j=1}^{N}\Big[C_{N}^{K}(j)-\frac{K}{N}p\Big]x_{N}(j)\boldsymbol{1}_{K}. 
\end{align*}
In the last  step, we used that $\sum_{i=1}^{N}x(i)=0$.
As a conclusion,
\begin{align*}
\|\boldsymbol{x}_{N}^{K}-\bar{\ell}_{N}\Lambda \boldsymbol{X}_{N}^{K}\|^{2}_{2} \le & 3(\Lambda\|I_{K}A_{N}(\boldsymbol{x}_{N}-\bar{\ell}_{N}\boldsymbol{X}_{N})\|_{2})^{2}+3(\Lambda^{2}\bar{\ell}_{N}\|I_{K}A_{N}\boldsymbol{X}_{N}\|_{2})^{2}\\
  &+    3\Lambda^{2} K^{-1}\Big(\sum_{j=1}^{N}\Big[C_{N}^{K}(j)-\frac{K}{N}p\Big]x_{N}(j)\Big)^{2}. 
\end{align*}
By the Cauchy-Schwarz inequality, \eqref{sde} and Lemma \ref{xxx}, we have
\begin{align*}
&\mathbb{E}\Big[\boldsymbol{1}_{\Omega_{N,K}\cap \mathcal{A}_{N}}\Big(\sum_{j=1}^{N}\Big[C_{N}^{K}(j)-\frac{K}{N}p\Big]x_{N}(j)\Big)^{2}\Big]\\
\le& \mathbb{E}\Big[\boldsymbol{1}_{\Omega_{N,K}\cap \mathcal{A}_{N}}\Big(\sum_{j=1}^{N}\Big[C_{N}^{K}(j)-\frac{K}{N}p\Big]^{2}\Big)\Big]\mathbb{E}\Big[\boldsymbol{1}_{\Omega_{N,K}\cap \mathcal{A}_{N}}\Big(\sum_{j=1}^{N}x_{N}^{2}(j)\Big)\Big]\le \frac{CK}{N}.
\end{align*}
We also know from \cite[Proposition 14,  step 7, line 12]{A} that
$\mathbb{E}[\boldsymbol{1}_{\Omega_{N,K}\cap \mathcal{A}_{N}}\|\boldsymbol{x}_{N}-\bar{\ell}_{N}\boldsymbol{X}_{N}\|_{2}^{2}]\le \frac{C}{N}$. And also, by the definition, $\|A_{N}\|_{2}$ is bounded on
$\mathcal{A_{N}}$. 
So $$\mathbb{E}\Big[|||I_{K}A_{N}|||_{2}^{2}\|\boldsymbol{x}_{N}-\bar{\ell}_{N}\boldsymbol{X}_{N}\|_{2}^{2}\Big]\le \frac{C}{N}.$$ 
Recalling Lemma \ref{IAX} and that $\bar \ell_N$ is bounded on $\Omega_{N,K}$, the conclusion follows.
\end{proof}

\begin{lemma}\label{xLX}
Assume that $\Lambda p <1$. It holds that
$$\mathbb{E}\Big[\boldsymbol{1}_{\Omega_{N,K}\cap \mathcal{A}_{N}}\Big|\|\boldsymbol{x}_{N}^{K}\|_{2}^{2}-(\Lambda \bar{\ell}_{N})^{2}\|\boldsymbol{X}_{N}^{K}\|_{2}^{2}\Big|\Big]\le \frac{C\sqrt{K}}{N}.$$
\end{lemma}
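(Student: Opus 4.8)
The plan is to deduce this from Lemma \ref{xlX}, which already controls the $L^2$-distance between $\boldsymbol{x}_N^K$ and $\Lambda\bar\ell_N\boldsymbol{X}_N^K$, together with a crude first-moment bound on $\|\boldsymbol{X}_N^K\|_2^2$. First I would record the elementary identity, valid for any $\boldsymbol{a},\boldsymbol{b}\in\R^N$,
\[
\big|\|\boldsymbol{a}\|_2^2-\|\boldsymbol{b}\|_2^2\big|=\big|(\boldsymbol{a}-\boldsymbol{b},\boldsymbol{a}+\boldsymbol{b})\big|\le \|\boldsymbol{a}-\boldsymbol{b}\|_2^2+2\|\boldsymbol{a}-\boldsymbol{b}\|_2\|\boldsymbol{b}\|_2,
\]
and apply it with $\boldsymbol{a}=\boldsymbol{x}_N^K$ and $\boldsymbol{b}=\Lambda\bar\ell_N\boldsymbol{X}_N^K$. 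Multiplying by $\boldsymbol{1}_{\Omega_{N,K}\cap\mathcal{A}_N}$, taking expectations, and using the Cauchy--Schwarz inequality on the cross term, it suffices to prove the two bounds
\[
\mathbb{E}\big[\boldsymbol{1}_{\Omega_{N,K}\cap\mathcal{A}_N}\|\boldsymbol{x}_N^K-\Lambda\bar\ell_N\boldsymbol{X}_N^K\|_2^2\big]\le \frac{C}{N}
\]
and
\[
\mathbb{E}\big[\boldsymbol{1}_{\Omega_{N,K}\cap\mathcal{A}_N}\Lambda^2\bar\ell_N^2\|\boldsymbol{X}_N^K\|_2^2\big]\le \frac{CK}{N},
\]
since then the diagonal term contributes $C/N\le C\sqrt K/N$ (using $K\ge 1$) and the cross term contributes $2(C/N)^{1/2}(CK/N)^{1/2}=C\sqrt K/N$.

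The first bound is precisely Lemma \ref{xlX}. For the second, on $\Omega_{N,K}$ the quantity $\bar\ell_N$ is bounded by a constant by Lemma \ref{lo} (as $\Omega_{N,K}\subset\Omega_N^1\subset\{\sup_i\ell_N(i)\le C\}$), so after dropping the indicators it reduces to showing $\mathbb{E}[\|\boldsymbol{X}_N^K\|_2^2]\le CK/N$. Writing $X_N^K(i)=(L_N(i)-\bar L_N^K)\indiq_{\{i\le K\}}$ and expanding the empirical variance around $p$,
\[
\|\boldsymbol{X}_N^K\|_2^2=\sum_{i=1}^K(L_N(i)-\bar L_N^K)^2=\sum_{i=1}^K(L_N(i)-p)^2-K(\bar L_N^K-p)^2\le \sum_{i=1}^K(L_N(i)-p)^2,
\]
and since $NL_N(i)$ is Binomial$(N,p)$-distributed one has $\mathbb{E}[(L_N(i)-p)^2]=p(1-p)/N$, giving $\mathbb{E}[\|\boldsymbol{X}_N^K\|_2^2]\le Kp(1-p)/N\le CK/N$.

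There is no genuine obstacle: the lemma is essentially a bookkeeping step combining Lemma \ref{xlX} with an elementary first-moment estimate on $\|\boldsymbol{X}_N^K\|_2^2$. The only points needing a little care are keeping the indicator $\boldsymbol{1}_{\Omega_{N,K}}$ throughout the cross term so that the factor $\bar\ell_N$ can legitimately be bounded via Lemma \ref{lo}, and observing that the $O(1/N)$ "square" term $\|\boldsymbol{a}-\boldsymbol{b}\|_2^2$ is absorbed by the target rate $\sqrt K/N$.
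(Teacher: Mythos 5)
Your proof is correct and follows essentially the same route as the paper: factor the difference of squared norms, control $\|\boldsymbol{x}_N^K-\Lambda\bar\ell_N\boldsymbol{X}_N^K\|_2$ via Lemma \ref{xlX}, apply Cauchy--Schwarz, and use a first-moment bound $\mathbb{E}[\|\boldsymbol{X}_N^K\|_2^2]\le CK/N$ together with the boundedness of $\bar\ell_N$ on $\Omega_{N,K}$. The only (harmless) variation is that you write $\boldsymbol{a}+\boldsymbol{b}=(\boldsymbol{a}-\boldsymbol{b})+2\boldsymbol{b}$, which spares you the paper's extra step of separately bounding $\mathbb{E}[\boldsymbol{1}_{\Omega_{N,K}\cap\mathcal{A}_N}\|\boldsymbol{x}_N^K\|_2^2]$.
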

\begin{proof}
We start from
\begin{eqnarray*}
\Big|\|\boldsymbol{x}_{N}^{K}\|_{2}^{2}-(\Lambda \bar{\ell}_{N})^{2}\|\boldsymbol{X}_{N}^{K}\|_{2}^{2}\Big|
\le \|\boldsymbol{x}_{N}^{K}-(\Lambda \bar{\ell}_{N})\boldsymbol{X}_{N}^{K}\|_{2}(\|\boldsymbol{x}_{N}^{K}\|_{2}+(\Lambda \bar{\ell}_{N})\|\boldsymbol{X}_{N}^{K}\|_{2}),
\end{eqnarray*}
whence
\begin{align*}
&\mathbb{E}\Big[\boldsymbol{1}_{\Omega_{N,K}\cap \cA_{N}}\Big|\|\boldsymbol{x}_{N}^{K}\|_{2}^{2}-(\Lambda \bar{\ell}_{N})^{2}\|\boldsymbol{X}_{N}^{K}\|_{2}^{2}\Big|\Big] \\
&\le
\mathbb{E}\Big[\boldsymbol{1}_{\Omega_{N,K}\cap \cA_{N}}\|\boldsymbol{x}_{N}^{K}-(\Lambda \bar{\ell}_{N})\boldsymbol{X}_{N}^{K}\|_{2}(\|\boldsymbol{x}_{N}^{K}\|_{2}+(\Lambda \bar{\ell}_{N})\|\boldsymbol{X}_{N}^{K}\|_{2})\Big].
\end{align*}
By the Cauchy-Schwarz inequality.
\begin{align*}
&\mathbb{E}\Big[\boldsymbol{1}_{\Omega_{N,K}\cap \mathcal{A}_{N}}\|\boldsymbol{x}_{N}^{K}-(\Lambda \bar{\ell}_{N})\boldsymbol{X}_{N}^{K}\|_{2}\Big(\|\boldsymbol{x}_{N}^{K}\|_{2}+(\Lambda \bar{\ell}_{N})\|\boldsymbol{X}_{N}^{K}\|_{2}\Big)\Big]\\
\le&\mathbb{E}\Big[\boldsymbol{1}_{\Omega_{N,K}\cap \mathcal{A}_{N}}\|\boldsymbol{x}_{N}^{K}-\bar{\ell}_{N}\Lambda \boldsymbol{X}_{N}^{K}\|^{2}_{2}\Big]^{\frac{1}{2}}\mathbb{E}\Big[\boldsymbol{1}_{\Omega_{N,K}\cap \cA_{N}}\Big(\|\boldsymbol{x}_{N}^{K}\|_{2}+(\Lambda \bar{\ell}_{N})\|\boldsymbol{X}_{N}^{K}\|_{2}\Big)^{2}\Big]^{\frac{1}{2}}.
\end{align*}
Lemma \ref{xlX} directly tells us that $\mathbb{E}[\boldsymbol{1}_{\Omega_{N,K}\cap \cA_{N}}\|\boldsymbol{x}_{N}^{K}-\bar{\ell}_{N}\Lambda \boldsymbol{X}_{N}^{K}\|^{2}_{2}]\leq C/N$.

\vip

Next, it is easy to prove, using that $\|\boldsymbol{X}_{N}^{K}\|_{2}^{2}=\sum_{i=1}^K(L_N(i)-\bar L_N^K)$,
that $NL_N(1),\dots,NL_N(K)$ are i.i.d. and Binomial$(N,p)$, that
\begin{equation}\label{ZE}
  \mathbb{E}\Big[\Big(\frac{N}{K} \|\boldsymbol{X}_{N}^{K}\|_{2}^{2}-p(1-p)\Big)^{2}\Big]\le CK^{-1},
\end{equation}
whence, recalling that $\bar\ell_N$ is bounded on $\Omega_{N,K}$,
$$\mathbb{E}\Big[\boldsymbol{1}_{\Omega_{N,K}\cap \mathcal{A}_{N}}(\bar\ell_N)^2
  \frac{N}{K}\|\boldsymbol{X}_{N}^{K}\|^{2}_{2}\Big] \le C\mathbb{E}\Big[\Big(\frac{N}{K}\|\boldsymbol{X}_{N}^{K}\|_{2}^{2}-p(1-p)\Big)^{2}\Big]^{\frac{1}{2}}+C\le C.$$
Then, by Lemma \ref{xlX} again,
\begin{align*}
\mathbb{E}\Big[\boldsymbol{1}_{\Omega_{N,K}\cap \cA_{N}}\|\boldsymbol{x}_{N}^{K}\|^{2}_{2}\Big]\le 2\mathbb{E}\Big[\boldsymbol{1}_{\Omega_{N,K}\cap \cA_{N}}\|\boldsymbol{x}_{N}^{K}-\bar{\ell}_{N}\Lambda \boldsymbol{X}_{N}^{K}\|^{2}_{2}\Big]+2\mathbb{E}\Big[\boldsymbol{1}_{\Omega_{N,K}\cap \cA_{N}}\|\bar{\ell}_{N}\Lambda \boldsymbol{X}_{N}^{K}\|^{2}_{2}\Big] \le  C\frac{K}{N}.
\end{align*}
The conclusion follows.
\end{proof}

\begin{lemma}\label{lXp}
Assume that $\Lambda p <1$. It holds that
$$\mathbb{E}\Big[\boldsymbol{1}_{\Omega_{N,K}\cap \cA_{N}}\Big|\frac{N}{K}(\bar{\ell}_{N})^{2}\|\boldsymbol{X}_{N}^{K}\|_{2}^{2}-\frac{p(1-p)}{(1-\Lambda p)^{2}}\Big|\Big]\le\frac{C}{\sqrt{K}}.$$
\end{lemma}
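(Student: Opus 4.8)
The plan is to write the quantity inside the expectation as a sum of two errors, each of which can be controlled by estimates already established. Starting point is the algebraic identity
$$\frac{N}{K}(\bar{\ell}_{N})^{2}\|\boldsymbol{X}_{N}^{K}\|_{2}^{2}-\frac{p(1-p)}{(1-\Lambda p)^{2}}=(\bar{\ell}_{N})^{2}\Big(\frac{N}{K}\|\boldsymbol{X}_{N}^{K}\|_{2}^{2}-p(1-p)\Big)+p(1-p)\Big((\bar{\ell}_{N})^{2}-\frac{1}{(1-\Lambda p)^{2}}\Big),$$
so that by the triangle inequality the left-hand side of the lemma is at most the sum of the $\boldsymbol{1}_{\Omega_{N,K}\cap\cA_N}$-expectations of the absolute values of the two terms on the right.

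For the first term, recall from Lemma \ref{lo} that $\Omega_{N,K}\subset\Omega_N^1\subset\{\sup_i\ell_N(i)\le C\}$, so $(\bar{\ell}_{N})^{2}$ is bounded by a constant on $\Omega_{N,K}$. Combining this with $\boldsymbol{1}_{\Omega_{N,K}\cap\cA_N}\le 1$, the Cauchy--Schwarz inequality and the estimate \eqref{ZE}, namely $\mathbb{E}[(\frac{N}{K}\|\boldsymbol{X}_{N}^{K}\|_{2}^{2}-p(1-p))^{2}]\le CK^{-1}$, one gets
$$\mathbb{E}\Big[\boldsymbol{1}_{\Omega_{N,K}\cap\cA_N}(\bar{\ell}_{N})^{2}\Big|\frac{N}{K}\|\boldsymbol{X}_{N}^{K}\|_{2}^{2}-p(1-p)\Big|\Big]\le C\,\mathbb{E}\Big[\Big(\frac{N}{K}\|\boldsymbol{X}_{N}^{K}\|_{2}^{2}-p(1-p)\Big)^{2}\Big]^{1/2}\le\frac{C}{\sqrt{K}}.$$
For the second term, I would factor $(\bar{\ell}_{N})^{2}-\frac{1}{(1-\Lambda p)^{2}}=(\bar{\ell}_{N}-\frac{1}{1-\Lambda p})(\bar{\ell}_{N}+\frac{1}{1-\Lambda p})$; on $\Omega_{N,K}\subset\Omega_N^1$ the second factor is bounded by a constant, so using $\Omega_{N,K}\subset\Omega_N^1$ once more, the Cauchy--Schwarz inequality and Lemma \ref{lll} ($\mathbb{E}[\boldsymbol{1}_{\Omega_N^1}|\bar{\ell}_{N}-\frac{1}{1-\Lambda p}|^{2}]\le CN^{-2}$),
$$\mathbb{E}\Big[\boldsymbol{1}_{\Omega_{N,K}\cap\cA_N}p(1-p)\Big|(\bar{\ell}_{N})^{2}-\frac{1}{(1-\Lambda p)^{2}}\Big|\Big]\le C\,\mathbb{E}\Big[\boldsymbol{1}_{\Omega_N^1}\Big|\bar{\ell}_{N}-\frac{1}{1-\Lambda p}\Big|^{2}\Big]^{1/2}\le\frac{C}{N}\le\frac{C}{\sqrt{K}},$$
the last inequality because $K\le N$ gives $\sqrt{K}\le N$. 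Adding the two bounds yields the claim.

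I do not expect a genuine obstacle here: the lemma is essentially a bookkeeping recombination of the two quantitative inputs \eqref{ZE} and Lemma \ref{lll}, together with the uniform boundedness of $\bar{\ell}_{N}$ on $\Omega_{N,K}$. The only point that requires a little care is to keep track of which event each estimate is stated on, and to use the inclusion $\Omega_{N,K}\subset\Omega_N^1$ so that Lemmas \ref{lo} and \ref{lll} remain applicable after inserting the indicator of the smaller event $\Omega_{N,K}\cap\cA_N$.
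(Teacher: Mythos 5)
Your proof is correct and follows essentially the same route as the paper: an add-and-subtract decomposition handled by the triangle inequality, with the two pieces controlled by \eqref{ZE} and Lemma \ref{lll} respectively, together with the boundedness of $\bar{\ell}_N$ on $\Omega_{N,K}$. The only (harmless) difference is in how the cross terms are distributed: the paper pairs $|(\bar\ell_N)^2-(1-\Lambda p)^{-2}|$ with the random factor $\frac{N}{K}\|\boldsymbol{X}_N^K\|_2^2$ and therefore also needs the fourth-moment bound $\mathbb{E}[(\frac{N}{K})^2\|\boldsymbol{X}_N^K\|_2^4]\le C$, whereas your arrangement pairs it with the constant $p(1-p)$ and avoids that step.
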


\begin{proof}
We define 
$$
d_{N}^{K}=\mathbb{E}\Big[\boldsymbol{1}_{\Omega_{N,K}\cap \cA_{N}}\Big|\frac{N}{K}(\bar{\ell}_{N})^{2}\|\boldsymbol{X}_{N}^{K}\|_{2}^{2}-\frac{p(1-p)}{(1-\Lambda p)^{2}}\Big|\Big].
$$ 
Then $d_{N}^{K}\le a_{N}^{K}+b_{N}^{K}$, where 
\begin{align*}
&a_{N}^{K}=\frac{N}{K}\mathbb{E}\Big[\boldsymbol{1}_{\Omega_{N,K}\cap \cA_{N}}\Big|(\bar{\ell}_{N})^{2}-(1-\Lambda p)^{-2}\Big|\|\boldsymbol{X}_{N}^{K}\|_{2}^{2}\Big],\\ 
&b_{N}^{K}=(1-\Lambda p)^{-2}\mathbb{E}\Big[\boldsymbol{1}_{\Omega_{N,K}\cap \cA_{N}}\Big|\frac{N}{K}\|\boldsymbol{X}_{N}^{K}\|_{2}^{2}-p(1-p)\Big|\Big].
\end{align*}
First, \eqref{ZE} directly implies that $b_N^K\leq C/\sqrt{K}$.
Next, \eqref{ZE} also implies that  $\mathbb{E}[(\frac{N}{K})^{2}\|\boldsymbol{X}_{N}^{K}\|_{2}^{4}]\le C,$
whence $a_N^K \leq C/\sqrt{K}$ by Lemma \ref{lll}. This completes the proof.
\end{proof}

Here is the main lemma of this subsection.

\begin{lemma}\label{mainM2}
Assume that $\Lambda p <1$. It holds that
$$
\mathbb{E}\Big[\boldsymbol{1}_{\Omega_{N,K}\cap \cA_{N}}\Big|\frac{N}{K}\|\boldsymbol{x}_{N}^{K}\|_{2}^{2}-
  \frac{\Lambda^{2}p(1-p)}{(1-\Lambda p)^{2}}\Big|\Big]\le\frac{C}{\sqrt{K}}.
$$
\end{lemma}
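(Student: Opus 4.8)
\begin{proof}[Proof plan]
The plan is to simply assemble Lemmas \ref{xLX} and \ref{lXp}, which together already contain all the substance. Write
\begin{align*}
\frac{N}{K}\|\boldsymbol{x}_{N}^{K}\|_{2}^{2}-\frac{\Lambda^{2}p(1-p)}{(1-\Lambda p)^{2}}
=&\;\frac{N}{K}\Big(\|\boldsymbol{x}_{N}^{K}\|_{2}^{2}-(\Lambda\bar{\ell}_{N})^{2}\|\boldsymbol{X}_{N}^{K}\|_{2}^{2}\Big)\\
&\;+\Lambda^{2}\Big(\frac{N}{K}(\bar{\ell}_{N})^{2}\|\boldsymbol{X}_{N}^{K}\|_{2}^{2}-\frac{p(1-p)}{(1-\Lambda p)^{2}}\Big),
\end{align*}
multiply by $\boldsymbol{1}_{\Omega_{N,K}\cap\mathcal{A}_{N}}$, take absolute values and apply the triangle inequality.
\end{proof}

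For the first term, Lemma \ref{xLX} gives $\mathbb{E}[\boldsymbol{1}_{\Omega_{N,K}\cap\mathcal{A}_{N}}\,|\,\|\boldsymbol{x}_{N}^{K}\|_{2}^{2}-(\Lambda\bar{\ell}_{N})^{2}\|\boldsymbol{X}_{N}^{K}\|_{2}^{2}\,|]\le C\sqrt{K}/N$, so after multiplying by $N/K$ one obtains a bound of order $C/\sqrt{K}$, which is exactly the gain that the factor $N/K$ is designed to absorb. For the second term, Lemma \ref{lXp} directly gives the bound $C/\sqrt{K}$, and since we are in the subcritical regime $\Lambda p<1$ with $p>0$ we have $\Lambda<\infty$, so the factor $\Lambda^{2}$ is just a finite constant that may be absorbed into $C$. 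Adding the two bounds yields the claim.

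There is essentially no obstacle here: the genuinely delicate estimates have been carried out earlier, namely the comparison $\boldsymbol{x}_{N}^{K}\approx \Lambda\bar{\ell}_{N}\boldsymbol{X}_{N}^{K}$ in Lemma \ref{xlX}/\ref{xLX} (which relied on the operator-norm control of $I_KA_N$ on $\Omega_{N,K}$, on the bound $\mathbb{E}[\|A_N\boldsymbol{X}_N\|_2^2]\le CN^{-1}$, and on the concentration estimate \eqref{sde}), and the computation of $\frac{N}{K}(\bar{\ell}_N)^2\|\boldsymbol{X}_N^K\|_2^2$ in Lemma \ref{lXp} (which used that $NL_N(1),\dots,NL_N(K)$ are i.i.d.\ Binomial$(N,p)$ together with Lemma \ref{lll} on the convergence of $\bar{\ell}_N$). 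The only point worth making explicit in the write-up is that the error $C\sqrt{K}/N$ from Lemma \ref{xLX} is precisely of the right size to survive multiplication by $N/K$, giving $C/\sqrt{K}$ rather than something worse.
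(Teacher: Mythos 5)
Your proposal is correct and is exactly the paper's own argument: the same decomposition into $\frac{N}{K}\bigl(\|\boldsymbol{x}_{N}^{K}\|_{2}^{2}-(\Lambda\bar{\ell}_{N})^{2}\|\boldsymbol{X}_{N}^{K}\|_{2}^{2}\bigr)$ plus $\Lambda^{2}\bigl(\frac{N}{K}(\bar{\ell}_{N})^{2}\|\boldsymbol{X}_{N}^{K}\|_{2}^{2}-\frac{p(1-p)}{(1-\Lambda p)^{2}}\bigr)$, followed by the triangle inequality and Lemmas \ref{xLX} and \ref{lXp}, which yield $\frac{N}{K}\cdot\frac{C\sqrt{K}}{N}+\frac{C}{\sqrt{K}}\le \frac{C}{\sqrt{K}}$. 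Nothing is missing.
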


\begin{proof}
It directly follows from Lemmas \ref{xLX} and \ref{lXp} that
\begin{align*}
  &\mathbb{E}\Big[\boldsymbol{1}_{\Omega_{N,K}\cap \cA_{N}}\Big|\frac{N}{K}\|\boldsymbol{x}_{N}^{K}\|_{2}^{2}-
    \frac{\Lambda^{2}p(1-p)}{(1-\Lambda p)^{2}}\Big|\Big]\\
\le& \Lambda^{2}\mathbb{E}\Big[\boldsymbol{1}_{\Omega_{N,K}\cap \cA_{N}}\Big|\frac{N}{K}(\bar{\ell}_{N})^{2}\|\boldsymbol{X}_{N}^{K}\|_{2}^{2}-\frac{p(1-p)}{(1-\Lambda p)^{2}}\Big|\Big]\\
& +\frac{N}{K}\mathbb{E}\Big[\boldsymbol{1}_{\Omega_{N,K}\cap \cA_{N}}\Big|\|\boldsymbol{x}_{N}^{K}\|_{2}^{2}-(\Lambda \bar{\ell}_{N})^{2}\|\boldsymbol{X}_{N}^{K}\|_{2}^{2}\Big|\Big] \\
\le& C\Big(\frac{1}{\sqrt{K}}+\frac{N}{K}\frac{\sqrt{K}}{N}\Big),
\end{align*}
from which the conclusion.
\end{proof}

\subsection{Matrix analysis for the third estimator}
We define $\mathcal{W}^{N,K}_{\infty,\infty}:=\frac{\mu N}{K^{2}}\sum_{j=1}^{N}(c_{N}^{K}(j))^{2}\ell_{N}(j)-\frac{N-K}{K}\bar{\ell}_N^K,$ $\mathcal{X}^{N,K}_{\infty,\infty}:=\mathcal{W}^{N,K}_{\infty,\infty}-\frac{\mu(N-K)}{K}\bar{\ell}_N^K.$
The aim of this subsection is to prove that
$\mathcal{X}^{N,K}_{\infty,\infty}\simeq \mu/(1-\Lambda p)^{3}$
and to study the rate of convergence.

\begin{lemma}\label{FNK}
Assume that $\Lambda p <1$. It holds that
$$
\mathbb{E}[||\boldsymbol{F}_{N}^{K}||^{2}_{2}]\le\frac{CK}{N},
$$
where $\boldsymbol{F}_{N}^{K}:=\boldsymbol{1}_{K}^T A_N 
-\frac{1}{N}(\boldsymbol{1}_{K}^TA_{N},\boldsymbol{1}_{N}^T)\boldsymbol{1}_{N}^T$ is a row vector.
\end{lemma}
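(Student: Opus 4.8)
The plan is to unwind the matrix notation and reduce the statement to an elementary second-moment estimate. Recall $A_N(i,j)=N^{-1}\theta_{ij}$, so the $j$-th entry of the row vector $\boldsymbol{1}_K^TA_N$ is exactly $C_N^K(j):=\sum_{i=1}^K A_N(i,j)=N^{-1}\sum_{i=1}^K\theta_{ij}$, and therefore $\frac1N(\boldsymbol{1}_K^TA_N,\boldsymbol{1}_N^T)=\frac1N\sum_{j=1}^N C_N^K(j)$. Consequently the $j$-th coordinate of $\boldsymbol{F}_N^K$ is $C_N^K(j)-\frac1N\sum_{j'=1}^N C_N^K(j')$; in other words $\boldsymbol{F}_N^K$ is simply the vector with coordinates $C_N^K(j)$ recentred by its empirical mean, and $\|\boldsymbol{F}_N^K\|_2^2=\sum_{j=1}^N\big(C_N^K(j)-\frac1N\sum_{j'=1}^N C_N^K(j')\big)^2$.

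Next I would use that recentring by the empirical mean can only decrease the sum of squares: for any constant $c$, $\sum_{j=1}^N\big(C_N^K(j)-\frac1N\sum_{j'}C_N^K(j')\big)^2\le\sum_{j=1}^N\big(C_N^K(j)-c\big)^2$. Choosing $c=Kp/N=\mathbb{E}[C_N^K(j)]$ and taking expectations gives
$$
\mathbb{E}\big[\|\boldsymbol{F}_N^K\|_2^2\big]\le\sum_{j=1}^N\mathbb{E}\Big[\big(C_N^K(j)-\tfrac{Kp}{N}\big)^2\Big]=\sum_{j=1}^N\mathrm{Var}\big(C_N^K(j)\big).
$$
Finally, since the $(\theta_{ij})_i$ are i.i.d. Bernoulli$(p)$, one has $\mathrm{Var}(C_N^K(j))=N^{-2}\mathrm{Var}\big(\sum_{i=1}^K\theta_{ij}\big)=N^{-2}Kp(1-p)$, and summing over the $N$ coordinates yields $\mathbb{E}[\|\boldsymbol{F}_N^K\|_2^2]\le N^{-1}Kp(1-p)\le CK/N$, which is the claim.

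There is no genuine obstacle here: once the notation is decoded, the lemma is essentially a one-line variance computation (and is a close cousin of the estimate \eqref{sde} already used above). The only points requiring a little care are keeping straight which objects are row versus column vectors in the expression defining $\boldsymbol{F}_N^K$, and invoking the ``empirical mean minimises the sum of squares'' inequality so as to avoid having to control the empirical mean $\frac1N\sum_j C_N^K(j)$ directly.
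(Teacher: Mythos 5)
Your proof is correct and follows essentially the same route as the paper: both decode $\boldsymbol{F}_N^K$ as the recentred vector of the $C_N^K(j)$, apply the inequality $\sum_j(x_j-\bar x)^2\le\sum_j(x_j-c)^2$ with $c=Kp/N$, and conclude by the Binomial variance $N^{-2}Kp(1-p)$ summed over the $N$ coordinates. Nothing to add.
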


\begin{proof}
Since the inequality $\sum_{i=1}^n(x_i-\bar{x})^2\le \sum_{i=1}^n(x_i-m)^2$, where $\bar{x}=\frac{1}{N}\sum_{i=1}^n x_i,$ is correct for any real sequence $\{x_i\}_{i=1,...,n}$ and real number $p.$
By definition,
\begin{align*}
\mathbb{E}[||\boldsymbol{F}_{N}^{K}||^{2}_{2}]&=\mathbb{E}\Big[\sum_{j=1}^{N}\Big\{\frac{1}{N}\sum_{i=1}^{K}\theta_{ij}-\frac{1}{N^{2}}\sum_{i=1}^{K}\sum_{l=1}^{N}\theta_{il}\Big\}^{2}\Big]\\
&\le \mathbb{E}\Big[\sum_{j=1}^{N}\Big\{\frac{1}{N}\sum_{i=1}^{K}\theta_{ij}-\frac{Kp}{N}\Big\}^{2}\Big]\le \frac{1}{N}\mathbb{E}\Big[\Big\{\sum_{i=1}^{K}(\theta_{i1}-p)\Big\}^{2}\Big]\le \frac{CK}{N}.
\end{align*}
\end{proof}

\begin{lemma}\label{cc1}
Assume that $\Lambda p <1$. It holds that
$$
\mathbb{E}\Big[\boldsymbol{1}_{\Omega_{N,K}}\|\boldsymbol{t}_{N}^{K}\|^{2}_{2}\Big]\le C\frac{K^2}{N^2}.
$$
where $\boldsymbol{c}_N^K= \indiq_K^T Q_N$,
$\bar{c}_{N}^{K}:=\frac{1}{N}\sum_{j=1}^Nc_{N}^{K}(j),$ and
$$
\boldsymbol{t}_{N}^{K}:=\boldsymbol{c}_{N}^{K}-\bar{c}_{N}^{K}\boldsymbol{1}^{T}_{N}-\boldsymbol{1}^{T}_{K}+\frac{K}{N}\boldsymbol{1}^{T}_{N}.$$
\end{lemma}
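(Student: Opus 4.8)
The plan is to reduce the estimate, through the resolvent identity for $Q_N$, to Lemma~\ref{FNK} together with the operator-norm control of Lemma~\ref{lo}. First I would record that on $\Omega_{N,K}\subseteq\Omega_N^1$ the matrix $I-\Lambda A_N$ is invertible and $\boldsymbol{c}_N^K(I-\Lambda A_N)=\boldsymbol{1}_K^T$, i.e. $\boldsymbol{c}_N^K=\boldsymbol{1}_K^T+\Lambda\,\boldsymbol{c}_N^K A_N$. Since $\bar{c}_N^K-\tfrac KN=\tfrac1N(\boldsymbol{c}_N^K-\boldsymbol{1}_K^T,\boldsymbol{1}_N^T)=\tfrac\Lambda N(\boldsymbol{c}_N^K A_N,\boldsymbol{1}_N^T)$, this yields
$$\boldsymbol{t}_N^K=\Lambda\,\pi(\boldsymbol{c}_N^K A_N),\qquad\text{where}\quad \pi(\boldsymbol{v}):=\boldsymbol{v}-\tfrac1N(\boldsymbol{v},\boldsymbol{1}_N^T)\boldsymbol{1}_N^T$$
is the orthogonal projection of a row vector onto $\boldsymbol{1}_N^{\perp}$, so that $\|\pi(\boldsymbol{v})\|_2\le\|\boldsymbol{v}\|_2$ and $\|\boldsymbol{v}A_N\|_r\le|||A_N|||_r\|\boldsymbol{v}\|_r$ for every $r$.

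Next I would run a fixed-point/absorption argument. Writing $\boldsymbol{g}^T:=\boldsymbol{1}_K^T+(\bar{c}_N^K-\tfrac KN)\boldsymbol{1}_N^T$ (the explicit ``two-level'' approximation of $\boldsymbol{c}_N^K$, so that $\boldsymbol{c}_N^K=\boldsymbol{g}^T+\boldsymbol{t}_N^K$), one gets $\boldsymbol{c}_N^K A_N=\boldsymbol{g}^T A_N+\boldsymbol{t}_N^K A_N$, hence
$$\boldsymbol{t}_N^K=\Lambda\,\pi(\boldsymbol{g}^T A_N)+\Lambda\,\pi(\boldsymbol{t}_N^K A_N)\qquad\text{on }\Omega_{N,K}.$$
On $\Omega_{N,K}$ one has $\Lambda|||A_N|||_2\le a<1$, so the last term has $\|\cdot\|_2$ at most $a\|\boldsymbol{t}_N^K\|_2$; absorbing it gives $\boldsymbol{1}_{\Omega_{N,K}}\|\boldsymbol{t}_N^K\|_2\le\frac{\Lambda}{1-a}\,\boldsymbol{1}_{\Omega_{N,K}}\|\pi(\boldsymbol{g}^T A_N)\|_2$.

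It remains to bound $\mathbb{E}[\boldsymbol{1}_{\Omega_{N,K}}\|\pi(\boldsymbol{g}^T A_N)\|_2^2]$. Since $\boldsymbol{g}^T A_N=\boldsymbol{1}_K^T A_N+(\bar{c}_N^K-\tfrac KN)\boldsymbol{1}_N^T A_N$ and $\pi(\boldsymbol{1}_K^T A_N)$ is exactly the row vector $\boldsymbol{F}_N^K$ of Lemma~\ref{FNK}, one has $\pi(\boldsymbol{g}^T A_N)=\boldsymbol{F}_N^K+(\bar{c}_N^K-\tfrac KN)\,\pi(\boldsymbol{1}_N^T A_N)$. Lemma~\ref{FNK} handles the first term. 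For the second I would estimate the scalar and the vector separately: on $\Omega_{N,K}$, using $\boldsymbol{c}_N^K-\boldsymbol{1}_K^T=\Lambda\boldsymbol{c}_N^K A_N$, the bound $\|\boldsymbol{v}A_N\|_1\le|||A_N|||_\infty\|\boldsymbol{v}\|_1$, and $\|\boldsymbol{c}_N^K\|_1\le|||Q_N|||_\infty\|\boldsymbol{1}_K\|_1$, Lemma~\ref{lo} gives $|\bar{c}_N^K-\tfrac KN|\le\tfrac1N\|\boldsymbol{c}_N^K-\boldsymbol{1}_K^T\|_1=\tfrac\Lambda N\|\boldsymbol{c}_N^K A_N\|_1\le\tfrac aN\|\boldsymbol{c}_N^K\|_1\le CK/N$; and $\|\pi(\boldsymbol{1}_N^T A_N)\|_2\le\|\boldsymbol{C}_N-p\boldsymbol{1}_N\|_2$, whence $\mathbb{E}\|\pi(\boldsymbol{1}_N^T A_N)\|_2^2\le\sum_{j=1}^N\mathrm{Var}(C_N(j))\le C$. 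Combining these with Lemma~\ref{FNK} bounds $\mathbb{E}[\boldsymbol{1}_{\Omega_{N,K}}\|\pi(\boldsymbol{g}^T A_N)\|_2^2]$, and the inequality of the second paragraph then completes the estimate.

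The hard part will be that $\boldsymbol{c}_N^K$, equivalently $\boldsymbol{r}:=\boldsymbol{c}_N^K-\boldsymbol{1}_K^T=\Lambda\boldsymbol{c}_N^K A_N$, depends on the random matrix $A_N$ in a way that makes $\boldsymbol{c}_N^K A_N$ far from a simple function of the $\theta_{ij}$; the absorption step of the second paragraph --- legitimate precisely on $\Omega_{N,K}$ because of the uniform controls $\Lambda|||A_N|||_2\le a$ and $|||Q_N|||_r\le C$ there --- is what removes this difficulty and reduces everything to the $\theta$-linear vector $\boldsymbol{1}_K^T A_N$ treated in Lemma~\ref{FNK} plus the lower-order scalar error $\bar c_N^K-K/N$.
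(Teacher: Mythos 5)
Your route is essentially the paper's: the resolvent identity $\boldsymbol{c}_N^K=\boldsymbol{1}_K^T+\Lambda\boldsymbol{c}_N^KA_N$, the resulting decomposition of $\boldsymbol{t}_N^K$ into $\Lambda\boldsymbol{F}_N^K$, a scalar multiple of the centered column-sum vector $\pi(\boldsymbol{1}_N^TA_N)$, and a self-referential term absorbed via $\Lambda|||A_N|||_2\le a<1$ on $\Omega_{N,K}$. Your absorption step is in fact cleaner than the paper's: by using the $\ell^2$-contractivity of the centering projection $\pi$ you handle $\Lambda\boldsymbol{t}_N^KA_N$ and $\tfrac{\Lambda}{N}(\boldsymbol{t}_N^KA_N,\boldsymbol{1}_N^T)\boldsymbol{1}_N^T$ in one pathwise stroke, whereas the paper treats the second term separately on $\mathcal{A}_N$ (via $\|\boldsymbol{C}_N-p\boldsymbol{1}_N\|_2\le N^{1/4}$) and must then patch up $\mathcal{A}_N^c$ at the end; your version dispenses with $\mathcal{A}_N$ entirely.

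There is, however, a gap in the final numerology, and it is one your proposal shares with the paper's own proof rather than one you introduced. Lemma \ref{FNK} gives $\mathbb{E}[\|\boldsymbol{F}_N^K\|_2^2]\le CK/N$, and this order is sharp: writing $F_N^K(j)=N^{-1}(S_j-\bar S)$ with $S_j=\sum_{i\le K}\theta_{ij}$ i.i.d.\ Binomial$(K,p)$, one gets $\mathbb{E}\|\boldsymbol{F}_N^K\|_2^2=(N-1)Kp(1-p)/N^2\asymp K/N$. Since $K\le N$ we have $K/N\ge K^2/N^2$, so your chain of inequalities delivers $\mathbb{E}[\boldsymbol{1}_{\Omega_{N,K}}\|\boldsymbol{t}_N^K\|_2^2]\le CK/N$, not $CK^2/N^2$; the $n=1$ term $\Lambda\boldsymbol{F}_N^K$ of the Neumann series dominates and does not cancel against the other pieces. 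The paper's proof silently upgrades the citation of Lemma \ref{FNK} to $CK^2/N^2$ at the corresponding point, which contradicts both the statement and the proof of that lemma. If the $K^2/N^2$ rate is genuinely needed downstream (it is invoked for $\boldsymbol{f}_N^K=\boldsymbol{t}_N^KI_K$ in Lemmas \ref{f} and \ref{WWW}), one must exploit the restriction to the first $K$ coordinates, for which $\mathbb{E}\|\boldsymbol{F}_N^KI_K\|_2^2\asymp K^2/N^2$ because only $K$ coordinates, each of variance $\asymp K/N^2$, survive; for the full vector $\boldsymbol{t}_N^K$ the bound $CK/N$ is the best this decomposition can give.
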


\begin{proof}
By definition,  $\boldsymbol{c}_{N}^{K}:=\boldsymbol{1}^{T}_{K}Q_{N},$
$\bar{c}_{N}^{K}=\frac{1}{N}(\boldsymbol{c}_{N}^{K},\boldsymbol{1}_{N}),$ $Q_{N}=(I-\Lambda A_{N})^{-1}$, so that
$$\boldsymbol{c}_{N}^{K}=\boldsymbol{1}^{T}_{K}+\Lambda \boldsymbol{c}_{N}^{K}A_{N}, \qquad 
\bar{c}_{N}^{K}=\frac{1}{N}(\boldsymbol{c}_{N}^{K},\boldsymbol{1}_{N}^T)
=\frac{K}{N}+\frac{\Lambda}{N}(\boldsymbol{c}_{N}^{K}A_{N}, \boldsymbol{1}_{N}^T).$$
We deduce that
\begin{align}
\boldsymbol{t}_{N}^{K}&=\boldsymbol{c}_{N}^{K}-\bar{c}_{N}^{K}\boldsymbol{1}^{T}_{N}-\boldsymbol{1}^{T}_{K}+\frac{K}{N}\boldsymbol{1}^{T}_{N}\notag\\
&=\Lambda \boldsymbol{c}_{N}^{K}A_{N}-\frac{\Lambda}{N}(\boldsymbol{c}_{N}^{K}A_{N}, \boldsymbol{1}_{N}^T)\boldsymbol{1}^{T}_{N}\notag\\
 &=\Lambda \boldsymbol{t}_{N}^{K}A_{N}-\frac{\Lambda}{N}(\boldsymbol{t}_{N}^{K}A_{N},\boldsymbol{1}_{N}^T)\boldsymbol{1}^{T}_{N}+\Lambda \bar{c}_{N}^{K}\boldsymbol{1}^{T}_{N}A_{N}-\frac{\Lambda}{N}\bar{c}_{N}^{K}(\boldsymbol{1}^{T}_{N}A_N,1_{N}^T)\boldsymbol{1}^{T}_{N} \notag\\
 &\qquad +\Lambda \boldsymbol{1}_{K}^{T}A_{N}-\frac{\Lambda}{N}(\boldsymbol{1}^{T}_{K}A_{N},\boldsymbol{1}_{N}^T)
 \boldsymbol{1}^{T}_{N}-\Lambda \frac{K}{N}\boldsymbol{1}_{N}^{T}A_{N}+\frac{\Lambda}{N}\frac{K}{N}(\boldsymbol{1}_{N}^{T}A_{N},\boldsymbol{1}_{N}^T) \notag\\
 &=\Lambda \boldsymbol{t}_{N}^{K}A_{N}-\frac{\Lambda}{N}(\boldsymbol{t}_{N}^{K}A_{N},\boldsymbol{1}_{N}^T)\boldsymbol{1}^{T}_{N}+\Lambda\bar{c}_{N}^{K}\boldsymbol{X}_{N}^{T}-\Lambda \boldsymbol{F}^{K}_{N}-\Lambda \frac{K}{N}\boldsymbol{X}_{N}^{T}.\label{bbg}
\end{align}
where $\boldsymbol{X}_{N}^{T}=\boldsymbol{1}_{N}^{T}A_{N}-\frac{1}{N}(\boldsymbol{1}_{N}^{T}A_{N},\boldsymbol{1}_{N}^T).$
And it is clear that
$\frac{N}{K}\bar{c}_{N}^{K}=\bar{\ell}_{N}^{K}.$

\vip
 
By Lemma \ref{lo}, $\bar{\ell}_{N}$ and $\bar{\ell}_{N}^{K}$ are bounded  on the set $\Omega_{N,K}$, whence,
using Lemma \ref{xxx},
$$
\mathbb{E}\Big[\boldsymbol{1}_{\Omega_{N,K}}\Big(||\Lambda\bar{c}_{N}^{K}\boldsymbol{X}_{N}^{T}||_{2}^2+
||\Lambda\frac K N \boldsymbol{X}_{N}^{T}||_2^2 \Big) \Big ] \leq C \frac {K^2}{ N^2}.
$$
Next, Lemma \ref{FNK} tells us that
$$
\mathbb{E}\Big[\boldsymbol{1}_{\Omega_{N,K}}\|\boldsymbol{F}_{N}^{K}\|^2_{2}\Big]
\le\mathbb{E}\Big[\|\boldsymbol{F}_{N}^{K}\|^2_{2}\Big]\le\frac{CK^2}{N^2}.
$$
Observing that $\sum_{i=1}^{N}t_{N}^{K}(i)=0,$ we see that
\begin{align*}
||\frac{\Lambda}{N}(\boldsymbol{t}_{N}^{K}A_{N},\boldsymbol{1}_{N}^T)\boldsymbol{1}_{N}^T||_2^2=&
\frac{\Lambda^2}N(\boldsymbol{t}_{N}^{K}A_{N},\boldsymbol{1}_{N}^T)^2\\
=&\frac{\Lambda^2}{N^{3}}\Big(\sum^{N}_{i=1}\sum_{j=1}^{N}(\theta_{ij}-p)t_{N}^{K}(i)\Big)^2\\
=&\frac{\Lambda^2}{N}\Big(\sum^{N}_{i=1}(C_N(i)-p)t_{N}^{K}(i)\Big)^2,
\end{align*}
so that
\begin{align*}
\mathbb{E}\Big[\boldsymbol{1}_{\Omega_{N,K}\cap \cA_N}\|\frac{\Lambda}{N}(\boldsymbol{t}_{N}^{K}A_{N},\boldsymbol{1}_{N}^T)\boldsymbol{1}_{N}^T\|^2_{2}\Big]\le&\frac{\Lambda^2}{N}\mathbb{E}\Big[\boldsymbol{1}_{\Omega_{N,K}\cap \cA_N}
    \|\boldsymbol{t}_{N}^{K}\|^2_{2}\|C_{N}-p\boldsymbol{1}_{N}\|^2_{2}\Big]\\
\leq& \frac{\Lambda^2}{N^{1/2}} \mathbb{E}\Big[\boldsymbol{1}_{\Omega_{N,K}\cap \cA_N}
    \|\boldsymbol{t}_{N}^{K}\|^2_{2}\Big]
\end{align*}
by definition of $\cA_N$. Since finally $||\Lambda \ct_N^K A_N||_2 \leq |||\Lambda A_N |||_2 ||\Lambda \ct_N^K ||_2
\leq a ||\Lambda \ct_N^K ||_2$ on $\Omega_{N,K}$ with $a=(1+\Lambda p)/2$, we conclude that
$$
\mathbb{E}\Big[\boldsymbol{1}_{\Omega_{N,K}\cap \cA_N}\|\boldsymbol{t}_{N}^{K}\|^2_{2}\Big]
\leq C \frac{K^2}{N^2} + (a+ \Lambda^2 N^{-1/2})
\mathbb{E}\Big[\boldsymbol{1}_{\Omega_{N,K}\cap \cA_N}\|\boldsymbol{t}_{N}^{K}\|^2_{2}\Big].
$$
Since $(a+ \Lambda^2 N^{-1/2})<(a+1)/2<1$ for all $N$ large enough, we conclude that, for some constant $C>0$,
for all $N\geq 1$,
$$
\mathbb{E}\Big[\boldsymbol{1}_{\Omega_{N,K}\cap \cA_N}\|\boldsymbol{t}_{N}^{K}\|^2_{2}\Big]
\leq C \frac{K^2}{N^2}.
$$
Finally, observing that $\|\boldsymbol{t}_{N}^{K}\|^2_{2}$ is obviously bounded by $C N$ on
$\Omega_{N,K}$ and recalling that $\mathbb{P}(\cA_N) \geq 1- C/N^3$ by Lemma \ref{lo}, we easily conclude that
$$
\mathbb{E}\Big[\boldsymbol{1}_{\Omega_{N,K}}\|\boldsymbol{t}_{N}^{K}\|^2_{2}\Big]
\leq C \frac{K^2+1}{N^2} \leq C \frac{K^2}{N^2}
$$
as desired.
\end{proof}

\begin{lemma}\label{f}
Assume that $\Lambda p <1$. It holds that
$$
\mathbb{E}\Big[\boldsymbol{1}_{\Omega_{N,K}}||\boldsymbol{f}_{N}^{K}||^{2}_{2}\Big]\le C\frac{K^{2}}{N^{2}}, \quad \mathbb{E}\Big[\boldsymbol{1}_{\Omega_{N,K}}\Big|(\boldsymbol{f}_{N}^{K},\boldsymbol{1}_{K}^T)\Big|\Big]\le \frac{CK}{N}.
$$
where $\boldsymbol{f}_{N}^{K}:=\boldsymbol{t}_{N}^{K}I_{K}$.
\end{lemma}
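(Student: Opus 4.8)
The plan is the following. The first inequality costs nothing: since $I_K$ is the coordinate projection onto $\{1,\dots,K\}$, we have $\|\boldsymbol{f}_{N}^{K}\|_{2}=\|\boldsymbol{t}_{N}^{K}I_{K}\|_{2}\le\|\boldsymbol{t}_{N}^{K}\|_{2}$, whence $\mathbb{E}[\boldsymbol{1}_{\Omega_{N,K}}\|\boldsymbol{f}_{N}^{K}\|_{2}^{2}]\le\mathbb{E}[\boldsymbol{1}_{\Omega_{N,K}}\|\boldsymbol{t}_{N}^{K}\|_{2}^{2}]\le CK^{2}/N^{2}$ by Lemma \ref{cc1}. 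For the second inequality, since the coordinates of $\boldsymbol{t}_{N}^{K}I_{K}$ beyond index $K$ vanish, I would first note that $g_{N}^{K}:=(\boldsymbol{f}_{N}^{K},\boldsymbol{1}_{K}^{T})=(\boldsymbol{t}_{N}^{K},\boldsymbol{1}_{K})$, and then aim to establish the exact identity
\begin{equation*}
g_{N}^{K}=\frac{K^{2}}{N}\Big(1-(1-\Lambda p)\bar{\ell}_{N}^{K}\Big)+\Lambda(\boldsymbol{c}_{N}^{K},\boldsymbol{r}),\qquad \boldsymbol{r}:=A_{N}\boldsymbol{1}_{K}-\frac{Kp}{N}\boldsymbol{1}_{N},
\end{equation*}
the vector $\boldsymbol{r}$ having coordinates $r(i)=N^{-1}\sum_{j\le K}(\theta_{ij}-p)$.

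To obtain this identity I would start from $\boldsymbol{t}_{N}^{K}=\boldsymbol{c}_{N}^{K}-\bar{c}_{N}^{K}\boldsymbol{1}_{N}^{T}-\boldsymbol{1}_{K}^{T}+\frac{K}{N}\boldsymbol{1}_{N}^{T}$, so that $g_{N}^{K}=(\boldsymbol{c}_{N}^{K},\boldsymbol{1}_{K})-K\bar{c}_{N}^{K}-K+K^{2}/N$, and then use the fixed-point relation $\boldsymbol{c}_{N}^{K}=\boldsymbol{1}_{K}^{T}+\Lambda\boldsymbol{c}_{N}^{K}A_{N}$ from the proof of Lemma \ref{cc1}, together with $(\boldsymbol{c}_{N}^{K}A_{N},\boldsymbol{1}_{K})=(\boldsymbol{c}_{N}^{K},A_{N}\boldsymbol{1}_{K})$, $(\boldsymbol{c}_{N}^{K},\boldsymbol{1}_{N})=N\bar{c}_{N}^{K}$ and $\bar{c}_{N}^{K}=\frac{K}{N}\bar{\ell}_{N}^{K}$; this gives $(\boldsymbol{c}_{N}^{K},\boldsymbol{1}_{K})=K+\Lambda\frac{K^{2}p}{N}\bar{\ell}_{N}^{K}+\Lambda(\boldsymbol{c}_{N}^{K},\boldsymbol{r})$, and after substituting, the constant $K$'s cancel and the $\bar{\ell}_{N}^{K}$-terms combine into the displayed formula.

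Then I would estimate the two terms separately. For the first, Cauchy--Schwarz and Lemma \ref{ellp} give $\mathbb{E}[\boldsymbol{1}_{\Omega_{N,K}}\frac{K^{2}}{N}|1-(1-\Lambda p)\bar{\ell}_{N}^{K}|]\le C\frac{K^{2}}{N}(NK)^{-1/2}=CK^{3/2}/N^{3/2}\le CK/N$, the last step using $K\le N$. For the second, I would decompose $\boldsymbol{c}_{N}^{K}=\boldsymbol{t}_{N}^{K}+\bar{c}_{N}^{K}\boldsymbol{1}_{N}^{T}+\boldsymbol{1}_{K}^{T}-\frac{K}{N}\boldsymbol{1}_{N}^{T}$, splitting $(\boldsymbol{c}_{N}^{K},\boldsymbol{r})$ into four pieces: $(\boldsymbol{t}_{N}^{K},\boldsymbol{r})$ is handled by Cauchy--Schwarz using $\mathbb{E}[\boldsymbol{1}_{\Omega_{N,K}}\|\boldsymbol{t}_{N}^{K}\|_{2}^{2}]\le CK^{2}/N^{2}$ (Lemma \ref{cc1}) and the elementary variance bound $\mathbb{E}\|\boldsymbol{r}\|_{2}^{2}\le CK/N$; the two pieces involving $\boldsymbol{1}_{N}^{T}$ use the boundedness $\bar{c}_{N}^{K}=\frac{K}{N}\bar{\ell}_{N}^{K}\le CK/N$ on $\Omega_{N,K}$ (Lemma \ref{lo}) and $\mathbb{E}(\boldsymbol{1}_{N}^{T},\boldsymbol{r})^{2}\le CK/N$; and $(\boldsymbol{1}_{K}^{T},\boldsymbol{r})$ uses $\mathbb{E}(\boldsymbol{1}_{K}^{T},\boldsymbol{r})^{2}\le CK^{2}/N^{2}$. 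Each piece is then of order $K/N$ or $K^{3/2}/N^{3/2}$, and $K^{3/2}/N^{3/2}\le K/N$ since $K\le N$, which finishes the proof.

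The one point that really needs care is the term $\frac{K^{2}}{N}(1-(1-\Lambda p)\bar{\ell}_{N}^{K})$: using only the fact that $\bar{\ell}_{N}^{K}$ is bounded on $\Omega_{N,K}$ would give the useless bound $CK^{2}/N$, so one genuinely needs the sharp $L^{2}$-rate $\mathbb{E}[\boldsymbol{1}_{\Omega_{N,K}}|\bar{\ell}_{N}^{K}-(1-\Lambda p)^{-1}|^{2}]\le C/(NK)$ from Lemma \ref{ellp}. A minor additional subtlety is that $\boldsymbol{t}_{N}^{K}$ and $\boldsymbol{r}$ are correlated, so the term $(\boldsymbol{t}_{N}^{K},\boldsymbol{r})$ must be controlled by Cauchy--Schwarz plus second moments rather than independence; this turns out to be exactly sufficient, again because $K\le N$.
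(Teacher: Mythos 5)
Your proof is correct, but it takes a genuinely different route from the paper's for the second inequality (the first inequality is handled identically in both). The paper substitutes the full expansion $\boldsymbol{t}_{N}^{K}=\Lambda \boldsymbol{t}_{N}^{K}A_{N}-\frac{\Lambda}{N}(\boldsymbol{t}_{N}^{K}A_{N},\boldsymbol{1}_{N}^T)\boldsymbol{1}^{T}_{N}+\Lambda\bar{c}_{N}^{K}\boldsymbol{X}_{N}^{T}-\Lambda \boldsymbol{F}^{K}_{N}-\Lambda \frac{K}{N}\boldsymbol{X}_{N}^{T}$ into $(\boldsymbol{t}_{N}^{K}I_K,\boldsymbol{1}_K^T)$, bounds the resulting pieces one by one (via moments of $\boldsymbol{X}_N$, $\boldsymbol{F}_N^K$ and Lemma \ref{cc1}), and in doing so reintroduces $(\boldsymbol{f}_{N}^{K},\boldsymbol{1}_K^T)$ itself with coefficient $Kp/N$; the bound then closes by absorption since $Kp/N\le p<1$. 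You instead apply the fixed-point relation $\boldsymbol{c}_N^K=\boldsymbol{1}_K^T+\Lambda\boldsymbol{c}_N^KA_N$ only once and exploit an exact cancellation: the deterministic parts collapse to $\frac{K^2}{N}(1-(1-\Lambda p)\bar{\ell}_N^K)$, which is small not because of any absorption but because of the sharp $L^2$-rate of Lemma \ref{ellp}, while the remaining term $\Lambda(\boldsymbol{c}_N^K,\boldsymbol{r})$ is centered and handled by Cauchy--Schwarz and binomial variances. Your identity makes the mechanism of the cancellation explicit and avoids the self-referential step, at the cost of leaning more heavily on Lemma \ref{ellp}; the paper's version stays closer in style to the proof of Lemma \ref{cc1}. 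I verified your identity and each of your estimates (in particular $\mathbb{E}\|\boldsymbol{r}\|_2^2\le CK/N$, $\mathbb{E}[(\boldsymbol{1}_N^T,\boldsymbol{r})^2]\le CK/N$, $\mathbb{E}[(\boldsymbol{1}_K^T,\boldsymbol{r})^2]\le CK^2/N^2$, and the reduction $K^{3/2}/N^{3/2}\le K/N$), and they are all sound; you also correctly identified that boundedness of $\bar{\ell}_N^K$ alone would not suffice for the $\frac{K^2}{N}$-term.
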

\begin{proof}
The first inequality is obvious from Lemma \ref{cc1} because $||\boldsymbol{f}_{N}^{K}||
\leq||\boldsymbol{t}_{N}^{K}||$.
For the second inequality, by \eqref{bbg}, we have
\begin{align*}
    &\mathbb{E}\Big[\boldsymbol{1}_{\Omega_{N,K}}\Big|(\boldsymbol{f}_{N}^{K},\boldsymbol{1}_{K}^T)\Big|\Big]=\mathbb{E}\Big[\boldsymbol{1}_{\Omega_{N,K}}\Big|(\ct_{N}^{K}I_{K},\boldsymbol{1}_{K}^T)\Big|\Big]\\
    &=\mathbb{E}\Big[\boldsymbol{1}_{\Omega_{N,K}}\Big|\Big(\Lambda\boldsymbol{t}_{N}^{K}A_{N}I_{K}-\frac{\Lambda}{N}(\boldsymbol{t}_{N}^{K}A_{N},\boldsymbol{1}_{N}^T)\boldsymbol{1}^{T}_{K}+\Lambda\bar{\boldsymbol{c}}_{N}^{K}\boldsymbol{X}_{N}^{T}I_{K}-\Lambda \boldsymbol{F}^{K}_{N}I_{K}-\Lambda \frac{K}{N}\boldsymbol{X}_{N}^{T}I_{K},\boldsymbol{1}_{K}^T\Big)\Big|\Big]\\
   & \le \frac{CK}{N}\mathbb{E}\Big[\Big|(\cX_{N}^{T}I_{K},\boldsymbol{1}_{K}^T)\Big|\Big]+C\mathbb{E}\Big[\Big|(\boldsymbol{F}_{N}^{K}I_{K},\boldsymbol{1}_{K}^T)\Big|\Big]\\
    &\qquad+\frac{CK}{N}\mathbb{E}\Big[\boldsymbol{1}_{\Omega_{N,K}}\Big|(\ct_{N}^{K}A_{N},\boldsymbol{1}_{N}^T)\Big|\Big]+\Lambda\mathbb{E}\Big[\boldsymbol{1}_{\Omega_{N,K}}\Big|(\ct_{N}^{K}A_{N}I_{K},\boldsymbol{1}_{K}^T)\Big|\Big].
\end{align*}
We used that $(N/K)\bar c^K_N=\bar \ell^K_N$ is bounded on $\Omega_{N,K}$.
First,
\begin{align*}
\mathbb{E}\Big[\Big|(\cX_{N}^{T}I_{K},\boldsymbol{1}^T_{K})\Big|^{2}\Big]&=\mathbb{E}\Big[\Big|\sum_{i=1}^{K}X_{N}(i)\Big|^{2}\Big]=\mathbb{E}\Big[\Big|\sum_{i=1}^{K}\Big(L_{N}(i)-p\Big)+K(p-\bar{L}_{N})\Big|^{2}\Big]\\
    &\le 2\mathbb{E}\Big[\Big|\sum_{i=1}^{K}\Big(L_{N}(i)-p\Big)\Big|^{2}\Big]+2K^{2}\mathbb{E}\Big[(p-\bar{L}_{N})^{2}\Big]\le \frac{CK}{N} \leq C,
\end{align*}
using only that $NL_N(1),\dots,NL_N(N)$ are i.i.d. and Binomial$(N,p)$-distributed.
Next,
\begin{align*}
(\cF_{N}^{K}I_{K},\boldsymbol{1}_{K})&=\frac{1}{N}\Big(\sum_{j=1}^{K}\sum_{i=1}^{K}\theta_{ij}-\frac{K}{N}\sum_{i=1}^{K}\sum_{j=1}^{N}\theta_{ij}\Big)
\\&=\frac{1}{N}\Big[\frac{N-K}{N}\sum_{j=1}^{K}\sum_{i=1}^{K}(\theta_{ij}-p)-\frac{K}{N}\sum_{i=1}^{K}\sum_{j=k+1}^{N}(\theta_{ij}-p)\Big],
\end{align*}
so that
$$
\mathbb{E}[|(\cF_{N}^{K}I_K,\boldsymbol{1}^T_{K})|]\le \mathbb{E}\Big[\Big|(\cF_{N}^{K}I_K,\boldsymbol{1}^T_{K})\Big|^{2}\Big]^{\frac{1}{2}}
\leq \frac C N \Big[ \frac{N-K}N K + \frac KN \sqrt{K(N-K)}\Big]
\le \frac{CK}{N}.
$$
Next, since $\sum_{i=1}^N t_N^K(i)=0$, 
 \begin{align*}
     \mathbb{E}\Big[\indiq_{\Omega_{N,K}}\Big|(\ct_{N}^{K}A_{N},\boldsymbol{1}^T_{N})\Big|\Big]&=\mathbb{E}\Big[\indiq_{\Omega_{N,K}}\frac{1}{N}\Big|\sum_{i,j=1}^{N}\theta_{ij}t_{N}^{K}(i)\Big|\Big]\\
     &=\mathbb{E}\Big[\indiq_{\Omega_{N,K}}\frac{1}{N}\Big|\sum_{i,j=1}^{N}(\theta_{ij}-p)t_{N}^{K}(i)\Big|\Big]\\
     &\le \frac{C}{N}\mathbb{E}\Big[\indiq_{\Omega_{N,K}}\sum_{i=1}^{N}\Big(t_{N}^{K}(i)\Big)^{2}\Big]^{\frac{1}{2}}\mathbb{E}\Big[\sum_{i=1}^{N}\Big(\sum_{j=1}^{N}(\theta_{ij}-p)\Big)^{2}\Big]^{\frac{1}{2}}\le C
 \end{align*}
 by Lemma \ref{cc1}. Finally,
 \begin{align*}
 \mathbb{E}\Big[\indiq_{\Omega_{N,K}}\Big|&(\ct_{N}^{K}A_{N}I_{K},\boldsymbol{1}^T_{K})\Big|\Big]=\frac{1}{N}\mathbb{E}\Big[\indiq_{\Omega_{N,K}}\Big|\sum_{i,j=1}^{K}t_{N}^{K}(i)\theta_{ij}\Big|\Big]\\
 &\le \frac{1}{N}\mathbb{E}\Big[\indiq_{\Omega_{N,K}}\Big|\sum_{i,j=1}^{K}t_{N}^{K}(i)(\theta_{ij}-p)\Big|\Big]+\frac{Kp}{N}\mathbb{E}\Big[\indiq_{\Omega_{N,K}}\Big|\sum_{i=1}^{K}t_{N}^{K}(i)\Big|\Big]\\
 &\le \frac 1 N \mathbb{E}\Big[\indiq_{\Omega_{N,K}}\sum_{i=1}^{K}\Big(t_{N}^{K}(i)\Big)^{2}\Big]^{\frac{1}{2}}\mathbb{E}\Big[\sum_{i=1}^{K}\Big(\sum_{j=1}^{K}(\theta_{ij}-p)\Big)^{2}\Big]^{\frac{1}{2}} +\frac{Kp}{N}\mathbb{E}\Big[\indiq_{\Omega_{N,K}}\Big|(\boldsymbol{f}_{N}^{K},\boldsymbol{1}^T_{K})\Big|\Big]\\
 &\le C \frac KN + \frac{Kp}{N}\mathbb{E}\Big[\indiq_{\Omega_{N,K}}\Big|(\boldsymbol{f}_{N}^{K},\boldsymbol{1}^T_{K})\Big|\Big]
 \end{align*}
 by Lemma \ref{cc1}. All this proves that
$$
 \mathbb{E}\Big[\indiq_{\Omega_{N,K}}\Big|(\boldsymbol{f}_{N}^{K},\boldsymbol{1}^T_{K})\Big|\Big]
 \leq C \frac KN + \frac{Kp}{N}\mathbb{E}\Big[\indiq_{\Omega_{N,K}}\Big|(\boldsymbol{f}_{N}^{K},\boldsymbol{1}^T_{K})\Big|\Big],
$$
whence the conclusion since $Kp/N\leq p <1$.
\end{proof}

\begin{lemma}\label{cll}
Assume that $\Lambda p <1$. It holds that
 \begin{align*}
\mathbb{E}\Big[\boldsymbol{1}_{\Omega_{N,K}}\Big|\frac{N}{K^{2}}\Big(\bar{c}_{N}^{K}+\frac{N-K}{N}\Big)^{2}\sum_{j=1}^{K}\ell_{N}(j)+\frac{N}{K^{2}}&\Big(\bar{c}_{N}^{K}-\frac{K}{N}\Big)^{2}\sum_{j=K+1}^{N}\ell_{N}(j)\\&-\frac{N-K}{K}\bar{\ell}_N^K-\frac{1}{(1-\Lambda p)^{3}}
\Big|^{2}\Big]\le \frac{C}{NK}.
  \end{align*}
 \end{lemma}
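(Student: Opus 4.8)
The plan is to reduce the quantity inside the absolute value to a polynomial in the two scalars $\bar{\ell}_N^K$ and $\bar{\ell}_N$ alone, to check that this polynomial takes the value $(1-\Lambda p)^{-3}$ when both scalars equal $\ell:=(1-\Lambda p)^{-1}$, and then to conclude from the $L^2$ bounds on $\bar{\ell}_N^K-\ell$ and $\bar{\ell}_N-\ell$ furnished by Lemmas \ref{ellp} and \ref{lll}. Concretely, I would first rewrite everything in terms of $\bar{\ell}_N^K$ and $\bar{\ell}_N$, using that $\bar{c}_N^K=\frac1N\sum_{j=1}^N\sum_{i=1}^K Q_N(i,j)=\frac KN\bar{\ell}_N^K$, that $\sum_{j=1}^K\ell_N(j)=K\bar{\ell}_N^K$, and that $\sum_{j=K+1}^N\ell_N(j)=N\bar{\ell}_N-K\bar{\ell}_N^K$. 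Then $\bar{c}_N^K+\frac{N-K}N=\frac{N+K(\bar{\ell}_N^K-1)}N$ and $\bar{c}_N^K-\frac KN=\frac{K(\bar{\ell}_N^K-1)}N$, so the expression inside $|\cdot|$ in the statement is exactly $g(\bar{\ell}_N^K,\bar{\ell}_N)-(1-\Lambda p)^{-3}$, where
\[
g(x,y):=\frac{(N+K(x-1))^{2}\,x}{NK}+\frac{(x-1)^{2}\,(Ny-Kx)}{N}-\frac{N-K}{K}\,x.
\]

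Next I would establish the algebraic facts about $g$. Expanding $(N+K(x-1))^2=N^2+2NK(x-1)+K^2(x-1)^2$, one checks that $g(\ell,\ell)=\ell\bigl[(\ell-1)^2+2(\ell-1)+1\bigr]=\ell^3=(1-\Lambda p)^{-3}$: the term $\frac{\ell N}{K}$ coming from the first summand is cancelled by the $-\frac{\ell N}{K}$ of the third, and the $\frac KN$-order terms of the first and second summands cancel each other. Writing $u:=\bar{\ell}_N^K-\ell$, $v:=\bar{\ell}_N-\ell$ and expanding $g(\ell+u,\ell+v)-g(\ell,\ell)$, the same two cancellations recur: the $\frac NK$ in the coefficient of $u$ produced by the first summand is killed by the $\frac NK$ in the coefficient of $u$ in $-\frac{N-K}K x$, and the $\frac KN$-order coefficients of $u$ (and of $v$) coming from the first two summands cancel. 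What is left is a fixed finite sum of monomials $u^av^b$ with $1\le a+b\le 3$, each carrying a coefficient that depends on $N,K,\Lambda,p$ but, the only surviving $N,K$-dependent factors being of the form $\frac KN$, $\frac{N-K}N$ or $\frac{N+K(\ell-1)}N$, is bounded uniformly over $1\le K\le N$.

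Finally, on $\Omega_{N,K}$ both $\bar{\ell}_N^K$ and $\bar{\ell}_N$ are bounded by Lemma \ref{lo} (for $\bar{\ell}_N$ using also $\Omega_{N,K}\subset\Omega_N^1$), hence so are $u$ and $v$, so I can absorb the superfluous powers of $u,v$ into the constants to get $\boldsymbol{1}_{\Omega_{N,K}}|g(\bar{\ell}_N^K,\bar{\ell}_N)-(1-\Lambda p)^{-3}|\le C\,\boldsymbol{1}_{\Omega_{N,K}}(|u|+|v|)$. Squaring, taking expectations and using $(|u|+|v|)^2\le 2u^2+2v^2$, the left-hand side of the statement is at most $C\,\mathbb{E}[\boldsymbol{1}_{\Omega_{N,K}}u^2]+C\,\mathbb{E}[\boldsymbol{1}_{\Omega_{N,K}}v^2]$. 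The first term is $\le C/(NK)$ by Lemma \ref{ellp}; for the second, $\Omega_{N,K}\subset\Omega_N^1$ together with Lemma \ref{lll} gives $\mathbb{E}[\boldsymbol{1}_{\Omega_{N,K}}v^2]\le C/N^2\le C/(NK)$ since $K\le N$. This yields the claimed bound.

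The main obstacle is the second step: carefully tracking the expansion of $g$ around $(\ell,\ell)$ to verify that the a priori dangerous $N/K$ growth cancels exactly --- both at zeroth order and at first order in $u$ --- and that the surviving constant is precisely $(1-\Lambda p)^{-3}$. Once this algebraic identity and the uniform boundedness of all remaining coefficients are in hand, the rest is bookkeeping and a direct application of the already established $L^2$ estimates.
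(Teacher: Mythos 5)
Your proposal is correct and follows essentially the same route as the paper: both reduce the bracketed quantity to a polynomial in $\bar{\ell}_N^K$ and $\bar{\ell}_N$ alone (in fact the $N/K$ and $K/N$ terms cancel exactly, leaving $2x^2-2xy+y-x+x^2y$ with $x=\bar{\ell}_N^K$, $y=\bar{\ell}_N$, which equals $(1-\Lambda p)^{-3}$ at $x=y=(1-\Lambda p)^{-1}$), and then conclude via boundedness on $\Omega_{N,K}$ together with the $L^2$ estimates of Lemmas \ref{ellp} and \ref{lll}. No gaps.
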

\begin{proof}
Recall that $\frac{N}{K}\bar{c}_{N}^{K}=\bar{\ell}_{N}^{K}$, whence

\begin{align*}
&\Big(\bar{c}_{N}^{K}+\frac{N-K}{N}\Big)^{2}\sum_{j=1}^{K}\ell_{N}(j)+\Big(\bar{c}_{N}^{K}-\frac{K}{N}\Big)^{2}\sum_{j=K+1}^{N}\ell_{N}(j)
\\=&\Big(\bar{c}_{N}^{K}-\frac{K}{N}\Big)^{2}\sum_{j=1}^{N}\ell_{N}(j)+2\Big(\bar{c}_{N}^{K}-\frac{K}{N}\Big)\sum_{i=1}^{K}\ell_{N}(i)+\sum_{i=1}^{K}\ell_{N}(i)
\\
=&\Big(\frac{K}{N}\Big)^{2}(\bar{\ell}_{N}^{K}-1)^{2}N\bar{\ell}_{N}+2\frac{K}{N}(\bar{\ell}_{N}^{K}-1)K\bar{\ell}_{N}^{K}+K\bar{\ell}_{N}^{K}
\\=&\frac{K^{2}}{N}(\bar{\ell}_{N}^{K}-1)^{2}\bar{\ell}_{N}+2\frac{K^{2}}{N}(\bar{\ell}_{N}^{K}-1)\bar{\ell}_{N}^{K}+K\bar{\ell}_{N}^{K}
\\=&2\frac{K^{2}}{N}\Big(-\bar{\ell}_{N}^{K}\bar{\ell}_{N}+(\bar{\ell}_{N}^{K})^{2}\Big)+\frac{K^{2}}{N}\Big(
\bar{\ell}_{N}-\bar{\ell}_{N}^{K}\Big)+\frac{K^{2}}{N}(\bar{\ell}_{N}^{K})^{2}\bar{\ell}_{N}-\frac{K^{2}}{N}\bar{\ell}_{N}^{K}+K\bar{\ell}_{N}^{K}.
\end{align*}
Consequently,
\begin{eqnarray*}
\frac{N}{K^{2}}\Big(\bar{c}_{N}^{K}+\frac{N-K}{N}\Big)^{2}\sum_{j=1}^{K}\ell_{N}(j)+\frac{N}{K^{2}}\Big(\bar{c}_{N}^{K}-\frac{K}{N}\Big)^{2}\sum_{j=K+1}^{N}\ell_{N}(j)\\
=2\Big(-\bar{\ell}_{N}^{K}\bar{\ell}_{N}+(\bar{\ell}_{N}^{K})^{2}\Big)+\Big(\bar{\ell}_{N}-\bar{\ell}_{N}^{K}\Big)+\Big(\bar{\ell}_{N}^{K}\Big)^{2}\bar{\ell}_{N}+\frac{N-K}{K}\bar{\ell}_{N}^{K}.
\end{eqnarray*}
On the event $\Omega_{N,K}$, we have $\bar{\ell}_{N}^{K},\ \bar{\ell}_{N}$ are bounded. Hence,
\begin{align*}
\mathbb{E}\Big[\boldsymbol{1}_{\Omega_{N,K}}\Big|-\bar{\ell}_{N}^{K}\bar{\ell}_{N}+(\bar{\ell}_{N}^{K})^{2}\Big|^{2}\Big]
 &=\mathbb{E}\Big[\boldsymbol{1}_{\Omega_{N,K}}\Big|\bar{\ell}_{N}^{K}\Big|^{2}\Big|\bar{\ell}_{N}^{K}-\bar{\ell}_{N}\Big|^{2}\Big]\\ 
 &\le
 C \mathbb{E}\Big[\boldsymbol{1}_{\Omega_{N,K}}\Big|\bar{\ell}_{N}^{K}-\frac{1}{1-\Lambda p}+\frac{1}{1-\Lambda p}-\bar{\ell}_{N}\Big|^{2}\Big]\\
 &\le C \mathbb{E}\Big[\boldsymbol{1}_{\Omega_{N,K}}\Big|\bar{\ell}_{N}^{K}-\frac{1}{1-\Lambda p}\Big|^{2}\Big]+C\mathbb{E}\Big[\boldsymbol{1}_{\Omega_{N,K}}\Big|\frac{1}{1-\Lambda p}-\bar{\ell}_{N}\Big|^{2}\Big]
\le \frac{C}{NK}
\end{align*}
by Lemmas \ref{lll} and \ref{ellp}. Similarly, 
\begin{align*}
&\mathbb{E}\Big[\boldsymbol{1}_{\Omega_{N,K}}\Big(\Big|(\bar{\ell}_{N}^{K})^{2}\bar{\ell}_{N}-\frac{1}{(1-\Lambda p)^{3}}\Big|^{2} + (\bar{\ell}_{N}-\bar{\ell}_{N}^K)^2   \Big)\Big]\\ 
\le& C\mathbb{E}\Big[\boldsymbol{1}_{\Omega_{N,K}}\Big|\bar{\ell}_{N}^{K}-\frac{1}{1-\Lambda p}\Big|^{2}\Big]+C\mathbb{E}\Big[\boldsymbol{1}_{\Omega_{N,K}}\Big|\frac{1}{1-\Lambda p}-\bar{\ell}_{N}\Big|^{2}\Big]
\le \frac{C}{NK}.
\end{align*}
The conclusion follows.
\end{proof}

Here is the main result of this subsection.

\begin{lemma}\label{WWW}
Assume that $\Lambda p <1$. We have that
$$
\mathbb{E}\Big[\boldsymbol{1}_{\Omega_{N,K}}\Big|\mathcal{X}^{N,K}_{\infty,\infty}-\frac{\mu}{(1-\Lambda p)^{3}}\Big|\Big]\le \frac{C}{K}.
$$
\end{lemma}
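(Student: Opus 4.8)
The plan is to reduce the statement, via the definition of $\mathcal{X}^{N,K}_{\infty,\infty}$, to proving
\[
\mathbb{E}\Big[\boldsymbol{1}_{\Omega_{N,K}}\Big|\tfrac{\mu N}{K^{2}}\textstyle\sum_{j=1}^{N}(c_{N}^{K}(j))^{2}\ell_{N}(j)-\tfrac{\mu(N-K)}{K}\bar{\ell}_{N}^{K}-\tfrac{\mu}{(1-\Lambda p)^{3}}\Big|\Big]\le \frac{C}{K}.
\]
To this end, recall that by definition $\boldsymbol{t}_{N}^{K}=\boldsymbol{c}_{N}^{K}-\bar c_{N}^{K}\boldsymbol{1}_{N}^{T}-\boldsymbol{1}_{K}^{T}+\tfrac{K}{N}\boldsymbol{1}_{N}^{T}$, so that $c_{N}^{K}(j)=g_{N}^{K}(j)+t_{N}^{K}(j)$ with $g_{N}^{K}(j):=\bar c_{N}^{K}+\boldsymbol{1}_{\{j\le K\}}-\tfrac{K}{N}$; explicitly $g_{N}^{K}(j)=\bar c_{N}^{K}+\tfrac{N-K}{N}$ for $j\le K$ and $g_{N}^{K}(j)=\bar c_{N}^{K}-\tfrac{K}{N}$ for $j>K$. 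Expanding the square, $\sum_{j}(c_{N}^{K}(j))^{2}\ell_{N}(j)=\sum_{j}(g_{N}^{K}(j))^{2}\ell_{N}(j)+2\sum_{j}g_{N}^{K}(j)t_{N}^{K}(j)\ell_{N}(j)+\sum_{j}(t_{N}^{K}(j))^{2}\ell_{N}(j)$, and I would treat the three pieces separately.

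The leading piece is exactly the quantity controlled by Lemma~\ref{cll}: since $\sum_{j}(g_{N}^{K}(j))^{2}\ell_{N}(j)=(\bar c_{N}^{K}+\tfrac{N-K}{N})^{2}\sum_{j=1}^{K}\ell_{N}(j)+(\bar c_{N}^{K}-\tfrac{K}{N})^{2}\sum_{j=K+1}^{N}\ell_{N}(j)$, that lemma together with the Cauchy--Schwarz (Jensen) inequality gives that the expectation of $\boldsymbol{1}_{\Omega_{N,K}}|\tfrac{\mu N}{K^{2}}\sum_{j}(g_{N}^{K}(j))^{2}\ell_{N}(j)-\tfrac{\mu(N-K)}{K}\bar\ell_{N}^{K}-\tfrac{\mu}{(1-\Lambda p)^{3}}|$ is at most $\mu\sqrt{C/(NK)}\le C/K$, using $K\le N$. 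For the quadratic remainder I would use that $\ell_{N}(j)\le C$ on $\Omega_{N,K}$ (Lemma~\ref{lo}), so $\tfrac{\mu N}{K^{2}}\sum_{j}(t_{N}^{K}(j))^{2}\ell_{N}(j)\le \tfrac{C N}{K^{2}}\|\boldsymbol{t}_{N}^{K}\|_{2}^{2}$, whose expectation on $\Omega_{N,K}$ is at most $\tfrac{CN}{K^{2}}\cdot\tfrac{CK^{2}}{N^{2}}=C/N\le C/K$ by Lemma~\ref{cc1}.

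The delicate term is the cross term, where a brute-force Cauchy--Schwarz bound only yields $C/\sqrt K$; the point is to exploit the cancellations $\sum_{j}t_{N}^{K}(j)=0$ and the sharp bound of Lemma~\ref{f} on $(\boldsymbol{f}_{N}^{K},\boldsymbol{1}_{K}^{T})=\sum_{j\le K}t_{N}^{K}(j)$. Writing $g_{N}^{K}(j)=(\bar c_{N}^{K}-\tfrac{K}{N})+\boldsymbol{1}_{\{j\le K\}}$, the cross term splits as $(\bar c_{N}^{K}-\tfrac{K}{N})\sum_{j}t_{N}^{K}(j)\ell_{N}(j)+\sum_{j\le K}t_{N}^{K}(j)\ell_{N}(j)$. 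In the first summand $\sum_{j}t_{N}^{K}(j)\ell_{N}(j)=\langle\boldsymbol{t}_{N}^{K},\boldsymbol{x}_{N}\rangle$ since $\sum_{j}t_{N}^{K}(j)=0$, and $|\bar c_{N}^{K}-\tfrac{K}{N}|=\tfrac{K}{N}|\bar\ell_{N}^{K}-1|\le \tfrac{CK}{N}$ on $\Omega_{N,K}$; with $\mathbb{E}[\boldsymbol{1}_{\Omega_{N,K}}\|\boldsymbol{t}_{N}^{K}\|_{2}^{2}]\le CK^{2}/N^{2}$ (Lemma~\ref{cc1}) and $\mathbb{E}[\boldsymbol{1}_{\Omega_{N,K}\cap\mathcal{A}_{N}}\|\boldsymbol{x}_{N}\|_{2}^{2}]\le C$ (from Lemma~\ref{xxx}(i)--(ii), since $\Omega_{N,K}\subset\Omega_{N}^{1}$), this contributes at most $\tfrac{C}{K}\,\mathbb{E}[\boldsymbol{1}_{\Omega_{N,K}\cap\mathcal{A}_N}\|\boldsymbol{t}_{N}^{K}\|_2\|\boldsymbol{x}_N\|_2]\le C/N\le C/K$ to $\tfrac{\mu N}{K^{2}}(\cdots)$. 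In the second summand, writing $\ell_{N}(j)=(\ell_{N}(j)-\bar\ell_{N}^{K})+\bar\ell_{N}^{K}$ for $j\le K$ gives $\sum_{j\le K}t_{N}^{K}(j)\ell_{N}(j)=\langle\boldsymbol{f}_{N}^{K},\boldsymbol{x}_{N}^{K}\rangle+\bar\ell_{N}^{K}(\boldsymbol{f}_{N}^{K},\boldsymbol{1}_{K}^{T})$; invoking Lemma~\ref{f} ($\mathbb{E}[\boldsymbol{1}_{\Omega_{N,K}}\|\boldsymbol{f}_{N}^{K}\|_{2}^{2}]\le CK^{2}/N^{2}$ and $\mathbb{E}[\boldsymbol{1}_{\Omega_{N,K}}|(\boldsymbol{f}_{N}^{K},\boldsymbol{1}_{K}^{T})|]\le CK/N$), the bound $\mathbb{E}[\boldsymbol{1}_{\Omega_{N,K}\cap\mathcal{A}_{N}}\|\boldsymbol{x}_{N}^{K}\|_{2}^{2}]\le CK/N$ obtained in the proof of Lemma~\ref{xLX}, and boundedness of $\bar\ell_{N}^{K}$ on $\Omega_{N,K}$, this piece contributes at most $\tfrac{\mu N}{K^{2}}(C\tfrac{K}{N}\sqrt{K/N}+C\tfrac{K}{N})\le C/K$ to $\tfrac{\mu N}{K^{2}}(\cdots)$, again because $K\le N$. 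Each term in this paragraph is estimated on $\Omega_{N,K}\cap\mathcal{A}_N$.

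Finally I would reinstate the event $\mathcal{A}_{N}^{c}$: on $\Omega_{N,K}$ one has $|c_{N}^{K}(j)|\le\sum_{i}|Q_{N}(i,j)|\le |||Q_{N}|||_{1}\le C$ and $\ell_{N}(j)\le C$ (Lemma~\ref{lo}), hence $|\mathcal{X}^{N,K}_{\infty,\infty}-\mu/(1-\Lambda p)^{3}|\le C N^{2}$ deterministically on $\Omega_{N,K}$; since $P(\mathcal{A}_{N}^{c})\le C_{4}N^{-4}$ by Lemma~\ref{lo}, the contribution of $\boldsymbol{1}_{\Omega_{N,K}\cap\mathcal{A}_{N}^{c}}$ is at most $C/N^{2}\le C/K$. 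Summing the four bounds (leading term, quadratic remainder, cross term, exceptional event) yields the displayed estimate, and then the lemma. The only genuinely non-routine step is the cross term: it is crucial to rewrite $\sum_{j\le K}t_{N}^{K}(j)\ell_{N}(j)$ so as to isolate $(\boldsymbol{f}_{N}^{K},\boldsymbol{1}_{K}^{T})$ and invoke the refined Lemma~\ref{f}, since the crude estimate $|\sum_{j\le K}t_{N}^{K}(j)|\le\sqrt K\,\|\boldsymbol{t}_{N}^{K}\|_{2}$ would lose a factor $\sqrt K$ and deliver only the rate $1/\sqrt K$.
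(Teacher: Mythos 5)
Your proposal is correct and follows essentially the same route as the paper: the same three-way decomposition of $\sum_j(c_N^K(j))^2\ell_N(j)$ into the $g$-square term (handled by Lemma~\ref{cll}), the $t$-square term (Lemma~\ref{cc1}), and the cross term split exactly as the paper's $I^{2,1}+I^{2,2}$, with the same key use of $\sum_j t_N^K(j)=0$ and of the refined bound on $(\boldsymbol{f}_N^K,\boldsymbol{1}_K^T)$ from Lemma~\ref{f}. Your explicit treatment of $\mathcal{A}_N^c$ is a minor extra care the paper leaves implicit; otherwise the arguments coincide.
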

\begin{proof}
By definition,
\begin{align*}
  \mathcal{X}^{N,K}_{\infty,\infty}-\frac{\mu}{(1-\Lambda p)^{3}}&=\frac{\mu N}{K^{2}}\sum_{j=1}^{N}\Big(c_{N}^{K}(j)\Big)^{2}\ell_{N}(j)-\frac{\mu(N-K)}{K}\bar{\ell}_N^K-\frac{\mu}{(1-\Lambda p)^{3}}=\mu\sum_{\alpha=1}^3 I_{N,K}^\alpha,
\end{align*}
where
\begin{align*}
I_{N,K}^1=&\frac{N}{K^{2}}\sum_{j=1}^{K}\Big[c_{N}^{K}(j)-\bar{c}_{N}^{K}-\frac{N-K}{N}\Big]^{2}\ell_{N}(j)+\frac{N}{K^{2}}\sum_{j=K+1}^{N}\Big[c_{N}^{K}(j)-\bar{c}_{N}^{K}+\frac{K}{N}\Big]^{2}\ell_{N}(j),\\
I_{N,K}^2=&2\frac{N}{K^{2}}\Big[\bar{c}_{N}^{K}+\frac{N-K}{N}\Big]\sum_{j=1}^{K}\ell_{N}(j)\Big[c_{N}^{K}(j)-\bar{c}_{N}^{K}-\frac{N-K}{N}\Big]\\
&+2\frac{N}{K^{2}}\Big[\bar{c}_{N}^{K}-\frac{K}{N}\Big]\sum_{j=K+1}^{N}\ell_{N}(j)\Big[c_{N}^{K}(j)-\bar{c}_{N}^{K}+\frac{K}{N}\Big],
\\
I_{N,K}^3=&\frac{N}{K^{2}}\Big[\bar{c}_{N}^{K}+\frac{N-K}{N}\Big]^{2}\sum_{j=1}^{K}\ell_{N}(j)+\frac{N}{K^{2}}\Big[\bar{c}_{N}^{K}-\frac{K}{N}\Big]^{2}\sum_{j=K+1}^{N}\ell_{N}(j)-\frac{N-K}{K}\bar{\ell}_N^K -\frac{1}{(1-\Lambda p)^{3}}.
\end{align*}
By Lemma \ref{cc1}, $\bar{\ell}_{N}$ and $\ell_{N}(j)$ are bounded  on the set $\Omega_{N,K}$ for
any $j=1,...,N$, whence
\begin{align*}
\mathbb{E}\Big[\boldsymbol{1}_{\Omega_{N,K}}|I_{N,K}^1| \Big]
\le  C \frac{N}{K^{2}}\mathbb{E}\Big[\boldsymbol{1}_{\Omega_{N,K}}||\cc_{N}^{K}-\bar{c}_{N}^{K}\boldsymbol{1}^{T}_{N}-\boldsymbol{1}^{T}_{K}+\frac{K}{N}\boldsymbol{1}^{T}_{N}||^{2}_{2}\Big]
=C \frac{N}{K^{2}}\mathbb{E}\Big[\boldsymbol{1}_{\Omega_{N,K}}||\boldsymbol{t}_{N}^{K}||^{2}_{2}\Big]
\le \frac{C}{N}. 
\end{align*}
Recall the result from Lemma \ref{cll}: we have
\begin{align*}
\mathbb{E}\Big[\boldsymbol{1}_{\Omega_{N,K}}|I^3_{N,K}|\Big] \leq  \frac{C}{\sqrt{NK}} \leq \frac C K.
\end{align*}
Next, we have $I^2_{N,K}= 2I^{2,1}_{N,K}+2I^{2,2}_{N,K}$, where
\begin{align*}
I^{2,1}_{N,K}= & \frac N{K^2}
\sum_{j=1}^{K}\ell_{N}(j)\Big[c_{N}^{K}(j)-\bar{c}_{N}^{K}-\frac{N-K}{N}\Big],\\
I^{2,2}_{N,K}= &\frac N{K^2}\Big[\bar{c}_{N}^{K}-\frac{K}{N}\Big]\Big\{\sum_{j=1}^{K}\ell_{N}(j)\Big[c_{N}^{K}(j)-\bar{c}_{N}^{K}-\frac{N-K}{N}\Big]+\sum_{j=K+1}^{N}\ell_{N}(j)\Big[c_{N}^{K}(j)-\bar{c}_{N}^{K}+\frac{K}{N}\Big]\Big\}.
\end{align*}
Since
\begin{align*}                                        
&\sum_{j=1}^{K}\Big[c_{N}^{K}(j)-\bar{c}_{N}^{K}-\frac{N-K}{N}\Big]+\sum_{j=K+1}^{N}\Big[c_{N}^{K}(j)-\bar{c}_{N}^{K}+\frac{K}{N}\Big]=0,
\end{align*}
we may write
\begin{align*}
I_{N,K}^{2,2}=&
\frac N{K^2}\Big[\bar{c}_{N}^{K}-\frac{K}{N}\Big]\Big\{\sum_{j=1}^{K}\Big(\ell_{N}(j)-\bar{\ell}_{N}\Big)\Big[c_{N}^{K}(j)-\bar{c}_{N}^{K}-\frac{N-K}{N}\Big]\\
&\hskip4cm +\sum_{j=K+1}^{N}\Big(\ell_{N}(j)-\bar{\ell}_{N}\Big)\Big[c_{N}^{K}(j)-\bar{c}_{N}^{K}+\frac{K}{N}\Big]\Big\}
\\=&\frac N{K^2}\Big[\bar{c}_{N}^{K}-\frac{K}{N}\Big]\Big\{\sum_{j=1}^{K}x_{N}(j)\Big[c_{N}^{K}(j)-\bar{c}_{N}^{K}-\frac{N-K}{N}\Big]+\sum_{j=K+1}^{N}x_{N}(j)\Big[c_{N}^{K}(j)-\bar{c}_{N}^{K}+\frac{K}{N}\Big]\Big\}\\
=&\frac N{K^2}\Big[\bar{c}_{N}^{K}-\frac{K}{N}\Big] (\boldsymbol{x}_N,\boldsymbol{t}_N).
\end{align*}
Recalling that $\bar{c}_{N}^{K}=K\bar{\ell}_{N}^{K}/N$ and that $\bar{\ell}_{N}^{K}$ is bounded on
$\Omega_{N,K}$, we conclude that $\indiq_{\Omega_{N,K}}|I_{N,K}^{2,2}| \leq C ||\boldsymbol{x}_N||_2||\boldsymbol{t}_N||_2/K$.
Using Lemmas \ref{xxx} and \ref{cc1}, we readily conclude that
$\E[\indiq_{\Omega_{N,K}} |I_{N,K}^{2,2}|]\leq C/N$.
Finally,
\begin{align*}
\mathbb{E}[\boldsymbol{1}_{\Omega_{N,K}}|I_{N,K}^{2,1}|]
\le& \frac{N}{K^{2}}\mathbb{E}\Big[\boldsymbol{1}_{\Omega_{N,K}}\Big|\sum_{j=1}^{K}(\ell_{N}(j)-\bar{\ell}_{N}^{K})\Big[c_{N}^{K}(j)-\bar{c}_{N}^{K}-\frac{N-K}{N}\Big]\Big|\Big]\\
&+\frac{N}{K^{2}}\mathbb{E}\Big[\boldsymbol{1}_{\Omega_{N,K}}\Big|\sum_{j=1}^{K}\bar{\ell}_{N}^{K}\Big[c_{N}^{K}(j)-\bar{c}_{N}^{K}-\frac{N-K}{N}\Big]\Big|\Big]\\
\le& \frac{N}{K^{2}}\mathbb{E}\Big[\boldsymbol{1}_{\Omega_{N,K}}\|\cx_{N}\|^{2}_{2}\Big]^{\frac{1}{2}}\mathbb{E}\Big[\boldsymbol{1}_{\Omega_{N,K}}\|\cf_{N}^{K}\|^{2}_{2}\Big]^{\frac{1}{2}}+\frac{N}{K^{2}}\mathbb{E}\Big[\boldsymbol{1}_{\Omega_{N,K}}\Big|(\cf_{N}^{K},\boldsymbol{1}^T_{K})\Big|\Big]\le \frac{C}{K}
\end{align*}
by Lemma \ref{f}.
The proof is complete.
\end{proof}

\section{Some auxilliary processes}

We first introduce a family of martingales: for $i=1,\dots,N$, recalling \eqref{sssy},
$$
M_{t}^{i,N}=\int_{0}^{t}\int_{0}^{\infty}  \boldsymbol{1}_{\{z\le\lambda_{s}^{i,N}\}}\widetilde{\pi}^{i}(ds,dz).
$$
where $\widetilde{\pi}^{i}(ds,dz)=\pi^i(ds,dz)-dsdz$.
We also introduce the family of centered   processes $U_{t}^{i,N}=Z_{t}^{i,N}-\mathbb{E}_{\theta}[Z_{t}^{i,N}]$.

\vip

We  denote  by $\boldsymbol{Z}_{t}^{N}$ (resp. $\boldsymbol{U}_{t}^{N}$, $\boldsymbol{M}_{t}^{N}$) the $N$ dimensional
vector  with  coordinates $Z_{t}^{i,N}$ (resp. $U_{t}^{i,N}$, $M_{t}^{i,N}$) and
set 
$$
\boldsymbol{Z}_{t}^{N,K}=I_{K}\boldsymbol{Z}_{t}^{N},\quad \boldsymbol{U}_{t}^{N,K}=I_{K}\boldsymbol{U}_{t}^{N},
$$
as well as $\bar{Z}^{N,K}_{t}=K^{-1}\sum_{i=1}^{K}Z_{t}^{i,N}$ and $\bar{U}^{N,K}_{t}=K^{-1}\sum_{i=1}^{K}U_{t}^{i,N}$.
By \cite[Remark 10 and Lemma 11]{A}, we have the following  equalities
\begin{align}
\label{ee1}&\mathbb{E}_{\theta}[\boldsymbol{Z}_{t}^{N,K}]=\mu\sum_{n\ge0}\Big[\int_{0}^{t}s\phi^{*n}(t-s)ds\Big]I_{K}A_{N}^{n}\boldsymbol{1}_{N},\\
\label{ee2}&\boldsymbol{U}_{t}^{N,K}=\sum_{n\ge0}\int_{0}^{t}\phi^{*n}(t-s)I_{K}A_{N}^{n}\boldsymbol{M}_{s}^{N}ds,\\
\label{ee3}&[M^{i,N},M^{j,N}]_{t}=\boldsymbol{1}_{\{i=j\}}Z_{t}^{i,N}.
\end{align}
We recall that $\phi^{*0}=\delta_0$, whence in particular
$\int_{0}^{t}s\phi^{*0}(t-s)ds=t$.

\begin{lemma}\label{Zt}
Assume $H(q)$ for some $q \ge 1$. There exists a constant $C$ such that

(i) for all $r$ in $[1,\infty]$, all  $t \ge0$, a.s.,
$$
\boldsymbol{1}_{\Omega_{N,K}}\|\mathbb{E}_{\theta}[\boldsymbol{Z}_{t}^{N,K}]\|_{r}\le CtK^{\frac{1}{r}},
$$

(ii) for all $r$ in $[1, \infty]$,  all $t \ge s \ge 0$, a.s.,
$$
\boldsymbol{1}_{\Omega_{N,K}}\|\mathbb{E}_{\theta}[\boldsymbol{Z}^{N,K}_{t}-\boldsymbol{Z}_{s}^{N,K}-\mu(t-s)\boldsymbol{\ell}_{N}^{K}]\|_{r}\le C(\min\{1,s^{1-q}\})K^{\frac{1}{r}}.
$$
\end{lemma}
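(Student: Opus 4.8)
The plan is to work on the event $\Omega_{N,K}$ throughout, and to exploit the series representation \eqref{ee1} together with Lemma \ref{EZ} and the operator-norm bounds built into $\Omega_{N,K}$. The starting point is a termwise estimate. On $\Omega_{N,K}\subset\Omega_N^1\cap\mathcal{F}_N^{K,1}$ one has, with $a=(1+\Lambda p)/2<1$, that $\Lambda\,|||A_N|||_r\le a$ for all $r\in[1,\infty]$ and $\Lambda\,|||I_KA_N|||_r\le (K/N)^{1/r}a$ for all $r\in[1,\infty)$; for $r=\infty$ one moreover has $|||I_KA_N|||_\infty\le|||A_N|||_\infty$, since $I_K$ only deletes rows. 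Writing $I_KA_N^n=(I_KA_N)A_N^{n-1}$ for $n\ge1$, and using $\|\boldsymbol{1}_N\|_r=N^{1/r}$, $\|I_K\boldsymbol{1}_N\|_r=\|\boldsymbol{1}_K\|_r=K^{1/r}$, submultiplicativity of the operator norm gives, on $\Omega_{N,K}$,
$$\|I_KA_N^n\boldsymbol{1}_N\|_r\le|||I_KA_N|||_r\,|||A_N|||_r^{\,n-1}\,N^{1/r}\le K^{1/r}\Big(\tfrac a\Lambda\Big)^n\quad(n\ge1),\qquad \|I_K\boldsymbol{1}_N\|_r=K^{1/r}.$$
In particular $\sum_{n\ge0}\Lambda^nA_N^n$ converges in operator norm to $Q_N$ on $\Omega_{N,K}$, so that $\sum_{n\ge0}\Lambda^nI_KA_N^n\boldsymbol{1}_N=I_KQ_N\boldsymbol{1}_N=I_K\boldsymbol{\ell}_N=\boldsymbol{\ell}_N^K$.

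For (i) I would start from \eqref{ee1} and the triangle inequality, and use the elementary bound $\int_0^ts\phi^{*n}(t-s)ds\le t\int_0^\infty\phi^{*n}(u)\,du=t\Lambda^n$ (from $s\le t$ on $[0,t]$ and $\int_0^\infty\phi^{*n}=\Lambda^n$, with $\phi^{*0}=\delta_0$). Together with the termwise estimate this yields, on $\Omega_{N,K}$,
$$\|\Et[\boldsymbol{Z}_t^{N,K}]\|_r\le\mu\sum_{n\ge0}\Big[\int_0^ts\phi^{*n}(t-s)ds\Big]\|I_KA_N^n\boldsymbol{1}_N\|_r\le\mu tK^{1/r}\Big(1+\sum_{n\ge1}a^n\Big)=\frac{\mu}{1-a}\,tK^{1/r},$$
which is the assertion with $C=\mu/(1-a)$.

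For (ii) I would invoke Lemma \ref{EZ} to isolate the leading term: for $t\ge s\ge0$,
$$\int_0^tu\phi^{*n}(t-u)\,du-\int_0^su\phi^{*n}(s-u)\,du=\Lambda^n(t-s)+\varepsilon_n(t)-\varepsilon_n(s).$$
Hence \eqref{ee1}, combined with $\varepsilon_0\equiv0$ and the identity $\sum_{n\ge0}\Lambda^nI_KA_N^n\boldsymbol{1}_N=\boldsymbol{\ell}_N^K$ established above, gives on $\Omega_{N,K}$
$$\Et[\boldsymbol{Z}_t^{N,K}-\boldsymbol{Z}_s^{N,K}]-\mu(t-s)\boldsymbol{\ell}_N^K=\mu\sum_{n\ge1}\big(\varepsilon_n(t)-\varepsilon_n(s)\big)\,I_KA_N^n\boldsymbol{1}_N.$$
Using $|\varepsilon_n(t)-\varepsilon_n(s)|\le\varepsilon_n(t)+\varepsilon_n(s)$ and the two-sided control $0\le\varepsilon_n(u)\le C\min\{n^q\Lambda^nu^{1-q},\,n\Lambda^nk\}$ from Lemma \ref{EZ}, a short case split --- $s\ge1$ (use the $u^{1-q}$ bound, non-increasing in $u$ since $q\ge1$, so $\varepsilon_n(t),\varepsilon_n(s)\le Cn^q\Lambda^ns^{1-q}=Cn^q\Lambda^n\min\{1,s^{1-q}\}$) versus $s<1$ (use the $n\Lambda^nk$ bound, and $\min\{1,s^{1-q}\}=1$) --- shows $|\varepsilon_n(t)-\varepsilon_n(s)|\le Cn^q\Lambda^n\min\{1,s^{1-q}\}$ for every $n\ge1$. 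Plugging this and the termwise bound into the last display,
$$\|\Et[\boldsymbol{Z}_t^{N,K}-\boldsymbol{Z}_s^{N,K}]-\mu(t-s)\boldsymbol{\ell}_N^K\|_r\le C\mu K^{1/r}\min\{1,s^{1-q}\}\sum_{n\ge1}n^qa^n=CK^{1/r}\min\{1,s^{1-q}\},$$
the series being finite because $a<1$.

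The only genuinely delicate point is the uniform-in-$n$ estimate $|\varepsilon_n(t)-\varepsilon_n(s)|\le Cn^q\Lambda^n\min\{1,s^{1-q}\}$: for each regime of $s$ and $n$ one must select the appropriate one of the two competing bounds supplied by Lemma \ref{EZ}, and then verify that the resulting sum $\sum_{n\ge1}n^qa^n$ converges --- which is exactly where it is used that $a=(1+\Lambda p)/2<1$, a property that holds precisely on $\Omega_{N,K}$. Everything else reduces to submultiplicativity of the operator norms, the triangle inequality, and summing a geometric (resp. polynomial-times-geometric) series.
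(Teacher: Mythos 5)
Your proposal is correct and follows essentially the same route as the paper: termwise operator-norm bounds $\Lambda^n|||I_KA_N^n|||_r\le (K/N)^{1/r}a^n$ on $\Omega_{N,K}$, the crude bound $\int_0^t s\phi^{*n}(t-s)\,ds\le t\Lambda^n$ for (i), and for (ii) the decomposition from Lemma \ref{EZ} with $\sum_{n\ge0}\Lambda^nI_KA_N^n\boldsymbol{1}_N=\boldsymbol{\ell}_N^K$ plus the estimate $|\varepsilon_n(t)-\varepsilon_n(s)|\le Cn^q\Lambda^n\min\{1,s^{1-q}\}$ and the convergent series $\sum n^qa^n$. Your explicit case split justifying that last estimate, and the remark handling $r=\infty$ separately, are details the paper leaves implicit, but the argument is the same.
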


\begin{proof}
(i) We start from \eqref{ee1}.
Recall that $\Lambda=\int_{0}^{\infty}\phi(s)ds,$ whence  
$$
\int_{0}^{\infty}\phi^{*n}(s) ds \le\Lambda^{n},\qquad \int_{0}^{t}s\phi^{*n}(s) ds
\le t\int_{0}^{\infty}\phi^{*n}(s) ds\le t\Lambda^{n}.
$$ 
So on the event  $\Omega_{N,K}$, on which we have $\Lambda |||I_K A_N|||_r\leq (K/N)^{1/r}$
and $\Lambda |||A_N|||_r \leq a<1$, we have (observe that $||\indiq_K||_r=K^{1/r}$)
\begin{align*}
\|\mathbb{E}_{\theta}[\boldsymbol{Z}_{t}^{N,K}]\|_{r}&\le\mu tK^\frac{1}{r} + \mu t\sum_{n\ge1}\Lambda^{n}|||I_{K}A_{N}^{n}|||_{r}\|\boldsymbol{1}_{N}\|_{r}\\
&\le\mu tK^\frac{1}{r} + \mu t\sum_{n\ge1}\Lambda^{n}|||I_{K}A_{N}|||_{r}|||A_{N}|||_{r}^{n-1}\|\boldsymbol{1}_{N}\|_{r}
\leq C t K^{1/r}.
\end{align*}

(ii) By \eqref{ee1} and Lemma \ref{EZ}, we have 
$$
\mathbb{E}_{\theta}[\boldsymbol{Z}_{t}^{N,K}]-\mathbb{E}_{\theta}[\boldsymbol{Z}_{s}^{N,K}]=\mu(t-s)\sum_{n\ge0}\Lambda^{n}I_{K}A_{N}^{n}\boldsymbol{1}_{N}+\mu\Big(\sum_{n\ge0}[\varepsilon_{n}(t)-\varepsilon_{n}(s)]I_{K}A_{N}^{n}\boldsymbol{1_{N}}\Big)
$$
with  $0\le\varepsilon_{n}(t)\le C\min\{n^{q}\Lambda^{n}t^{1-q},n\Lambda^{n}k\}$.
Since $\sum_{n\ge0}\Lambda^{n}I_{K}A_{N}^{n}\boldsymbol{1_{N}}=I_{K}Q_{N}\boldsymbol{1}_{N}=\boldsymbol{\ell}_{N}^{K}$
on the event $\Omega_{N,K}$, 
\begin{align*}
&\|\mathbb{E}_{\theta}[\boldsymbol{Z}_{t}^{N,K}]-\mathbb{E}_{\theta}[\boldsymbol{Z}_{s}^{N,K}]-\mu(t-s)\boldsymbol{\ell}_{N}^{K}\|_{r}\\
  \le& C(\min\{1,s^{1-q}\})\|\boldsymbol{1}_{N}\|_{r}\sum_{n\ge 0}n^{q}\Lambda^{n}|||I_{K}A_{N}^{n}|||_{r}\\
  \le&  C(\min\{1,s^{1-q}\})N^{1/r}\Big(\sum_{n\ge 1}n^{q}\Lambda^{n}|||I_{K}A_{N}|||_{r}|||A_{N}|||^{n-1}_{r}\Big)
  \le C\min\{1,s^{1-q}\}K^{\frac{1}{r}}.
\end{align*}
We used the very same arguments as in point (i).
\end{proof}

\section{The first estimator in the subcritical case}
Here we prove that
$\varepsilon _{t}^{N,K}=t^{-1}(\bar{Z}_{2t}^{N,K}-\bar{Z}_{t}^{N,K})\simeq \frac{\mu}{1-\Lambda p}$ and to study
the rate of convergence.

\begin{theorem}\label{lmain}
Assume $(H(q))$ for some $q\geq 1$.
There are some positive constants $C$, $C^{'}$ depending only on $p$, $\mu,\ \phi$ and $q$ such that for all 
$\varepsilon \in (0,1)$, all $N\ge K\ge 1,\ $ all $t$ $\ge 1$,
$$
P\Big( \Big|\varepsilon^{N,K}_{t}-\frac{\mu}{1-\Lambda p}\Big|\ge\varepsilon\Big)\le CNe^{-C^{'}K}+\frac{C}{\varepsilon}
\Big(\frac{1}{\sqrt{NK}}+\frac{1}{\sqrt{Kt}}+\frac{1}{t^{q}}\Big).
$$
\end{theorem}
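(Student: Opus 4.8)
The plan is to work throughout on the event $\Omega_{N,K}$, which by Lemma~\ref{ONK} satisfies $P(\Omega_{N,K}^{c})\le CNe^{-cK}$. Since $P(|\varepsilon^{N,K}_{t}-\mu/(1-\Lambda p)|\ge\varepsilon)\le P(\Omega_{N,K}^{c})+\varepsilon^{-1}\E[\boldsymbol{1}_{\Omega_{N,K}}|\varepsilon^{N,K}_{t}-\mu/(1-\Lambda p)|]$ by Markov's inequality, it suffices to bound the last expectation by $C(t^{-q}+(NK)^{-1/2}+(Kt)^{-1/2})$. Writing $Z^{i,N}_{t}=\E_{\theta}[Z^{i,N}_{t}]+U^{i,N}_{t}$ and recalling that $\boldsymbol{1}_{N}^{T}\boldsymbol{\ell}_{N}^{K}=K\bar{\ell}_{N}^{K}$, I decompose
\begin{align*}
\varepsilon^{N,K}_{t}-\frac{\mu}{1-\Lambda p}
={}&\frac{1}{Kt}\boldsymbol{1}_{N}^{T}\big(\E_{\theta}[\boldsymbol{Z}^{N,K}_{2t}-\boldsymbol{Z}^{N,K}_{t}]-\mu t\boldsymbol{\ell}_{N}^{K}\big)\\
&+\mu\Big(\bar{\ell}_{N}^{K}-\frac{1}{1-\Lambda p}\Big)+\frac{1}{t}\big(\bar{U}^{N,K}_{2t}-\bar{U}^{N,K}_{t}\big).
\end{align*}

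The first summand is controlled on $\Omega_{N,K}$ by Lemma~\ref{Zt}(ii), applied with $t\mapsto 2t$, $s\mapsto t$ and $r=1$: this gives $\boldsymbol{1}_{\Omega_{N,K}}|\boldsymbol{1}_{N}^{T}(\cdots)|\le CK\min\{1,t^{1-q}\}$, hence, after division by $Kt$ and using $t\ge 1$, $q\ge 1$, the first summand is $\le C/t^{q}$ on $\Omega_{N,K}$. The second summand satisfies $\E[\boldsymbol{1}_{\Omega_{N,K}}|\mu(\bar{\ell}_{N}^{K}-(1-\Lambda p)^{-1})|]\le C(NK)^{-1/2}$ by Lemma~\ref{ellp} and Cauchy--Schwarz.

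The main work is the third, martingale summand. Using \eqref{ee2} and $\bar{U}^{N,K}_{t}=K^{-1}\boldsymbol{1}_{N}^{T}\boldsymbol{U}^{N,K}_{t}$,
\begin{align*}
\bar{U}^{N,K}_{t}=\frac{1}{K}\sum_{n\ge 0}\int_{0}^{t}\phi^{*n}(t-s)\sum_{j=1}^{N}b^{(n)}_{N,K}(j)M^{j,N}_{s}\,ds,\qquad b^{(n)}_{N,K}(j):=\sum_{i=1}^{K}A_{N}^{n}(i,j),
\end{align*}
so that $\boldsymbol{b}^{(n)}_{N,K}=(I_{K}A_{N}^{n})^{*}\boldsymbol{1}_{N}$. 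By \eqref{ee3} and optional stopping, $\E_{\theta}[M^{j,N}_{s}M^{k,N}_{s'}]=\boldsymbol{1}_{\{j=k\}}\E_{\theta}[Z^{j,N}_{s\wedge s'}]$, and since $\phi\ge 0$ and $A_{N}\ge 0$ all quantities below are nonnegative, so that by Tonelli
\begin{align*}
\E_{\theta}\big[(\bar{U}^{N,K}_{t})^{2}\big]=\frac{1}{K^{2}}\sum_{n,m\ge 0}\int_{0}^{t}\!\!\int_{0}^{t}\phi^{*n}(t-s)\phi^{*m}(t-s')\sum_{j=1}^{N}b^{(n)}_{N,K}(j)b^{(m)}_{N,K}(j)\,\E_{\theta}[Z^{j,N}_{s\wedge s'}]\,ds\,ds'.
\end{align*}
On $\Omega_{N,K}\subset\Omega_{N}^{1}$ one has $\E_{\theta}[Z^{j,N}_{s}]\le\mu s\sum_{n\ge 0}\Lambda^{n}|||A_{N}|||_{\infty}^{n}\le Cs$ for all $j$ (since $\Lambda|||A_{N}|||_{\infty}\le a<1$, which in particular handles the unobserved coordinates $j>K$), while $\int_{0}^{t}\phi^{*n}(t-s)\,ds\le\Lambda^{n}$ and $\|\boldsymbol{b}^{(n)}_{N,K}\|_{2}\le|||I_{K}A_{N}^{n}|||_{2}\sqrt{N}\le\sqrt{K}\,(a/\Lambda)^{n}$ for $n\ge 1$ (and $=\sqrt{K}$ for $n=0$). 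Bounding $\sum_{j}b^{(n)}_{N,K}(j)b^{(m)}_{N,K}(j)\le\|\boldsymbol{b}^{(n)}_{N,K}\|_{2}\|\boldsymbol{b}^{(m)}_{N,K}\|_{2}$, replacing $\E_{\theta}[Z^{j,N}_{s\wedge s'}]$ by $Ct$, and summing the geometric series $\sum_{n\ge 0}\Lambda^{n}\|\boldsymbol{b}^{(n)}_{N,K}\|_{2}\le C\sqrt{K}$, I obtain $\E_{\theta}[(\bar{U}^{N,K}_{t})^{2}]\le Ct/K$ on $\Omega_{N,K}$, whence $\E_{\theta}[(\bar{U}^{N,K}_{2t}-\bar{U}^{N,K}_{t})^{2}]\le Ct/K$ and therefore $\E[\boldsymbol{1}_{\Omega_{N,K}}|t^{-1}(\bar{U}^{N,K}_{2t}-\bar{U}^{N,K}_{t})|]\le(C/(Kt))^{1/2}$ by conditional Jensen and Cauchy--Schwarz.

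Collecting the three bounds gives $\E[\boldsymbol{1}_{\Omega_{N,K}}|\varepsilon^{N,K}_{t}-\mu/(1-\Lambda p)|]\le C(t^{-q}+(NK)^{-1/2}+(Kt)^{-1/2})$, and the claim follows by Markov's inequality together with $P(\Omega_{N,K}^{c})\le CNe^{-cK}$ (the constant $c$ being renamed $C'$). The main obstacle is the variance bound for the martingale term: one must simultaneously exploit the nonnegativity of $\phi$ and $A_{N}$ to replace $Z^{j,N}_{s\wedge s'}$ by the uniform-in-$j$ bound $Ct$, and use the operator-norm estimates defining $\Omega_{N,K}$ --- in particular $|||I_{K}A_{N}|||_{2}\le(K/N)^{1/2}a/\Lambda$ --- to make the double sum over $n,m$ converge with the gain factor $K^{-1}$ that produces the $1/\sqrt{Kt}$ rate.
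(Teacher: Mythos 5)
Your proposal is correct and follows essentially the same route as the paper: the same decomposition into the deterministic bias (controlled by Lemma~\ref{Zt}(ii), giving $t^{-q}$), the matrix term $\bar{\ell}_N^K-(1-\Lambda p)^{-1}$ (Lemma~\ref{ellp}, giving $(NK)^{-1/2}$), and the martingale fluctuation with the variance bound $\E_\theta[(\bar U_t^{N,K})^2]\le Ct/K$ on $\Omega_{N,K}$, followed by Markov's inequality and Lemma~\ref{ONK}. The only cosmetic difference is that you control the martingale term via $|||I_KA_N^n|||_2$ and Cauchy--Schwarz over $j$ where the paper uses $|||I_KA_N^n|||_1$ and a sup over $j$; both yield the same $Ct/K$ bound.
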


\begin{lemma}\label{eU}
Assume $(H(q))$ for some $q\geq 1$. There is a constant $C>0$ such that a.s.,
$$
(i) \;\;  \boldsymbol{1}_{\Omega_{N,K}}\Big|\mathbb{E}_{\theta}[\varepsilon_{t}^{N,K}]-\mu\bar{\ell}_{N}^{K}\Big|\le \frac{C}{t^{q}}, \quad
(ii)\;\;\boldsymbol{1}_{\Omega_{N,K}}\mathbb{E}_{\theta}[|\bar{U}_{t}^{N,K}|^{2}]\le\frac{Ct}{K}.
$$

\end{lemma}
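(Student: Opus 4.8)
The plan is to treat the two bounds separately: (i) will follow at once from Lemma~\ref{Zt}(ii), while (ii) will come from expanding the conditional second moment of $\bar U^{N,K}_t$ and using that the martingales $M^{i,N}$ are pairwise orthogonal.

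For (i), since $\varepsilon^{N,K}_t=t^{-1}(\bar Z^{N,K}_{2t}-\bar Z^{N,K}_t)$, $\bar Z^{N,K}_t=K^{-1}\sum_{i\le K}Z^{i,N}_t$ and $\bar\ell^K_N=K^{-1}\sum_{i\le K}\ell_N(i)$, one has $\mathbb{E}_\theta[\varepsilon^{N,K}_t]-\mu\bar\ell^K_N=(tK)^{-1}\sum_{i\le K}(\mathbb{E}_\theta[Z^{i,N}_{2t}-Z^{i,N}_t]-\mu t\,\ell_N(i))$; since the vectors $\boldsymbol{Z}^{N,K}_{2t}-\boldsymbol{Z}^{N,K}_t-\mu t\boldsymbol{\ell}^K_N$ vanish outside $\{1,\dots,K\}$, the modulus of this is at most $(tK)^{-1}\|\mathbb{E}_\theta[\boldsymbol{Z}^{N,K}_{2t}-\boldsymbol{Z}^{N,K}_t-\mu t\boldsymbol{\ell}^K_N]\|_1$. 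Lemma~\ref{Zt}(ii), applied with $(2t,t)$ in place of $(t,s)$ and $r=1$, bounds this by $(tK)^{-1}C\min\{1,t^{1-q}\}K\le Ct^{-q}$ on $\Omega_{N,K}$ (using $t\ge1$), which is (i).

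For (ii), I would start from $\bar U^{N,K}_t=K^{-1}\boldsymbol{1}_N^T\boldsymbol{U}^{N,K}_t$ and \eqref{ee2} (together with $\boldsymbol{1}_N^T I_K=\boldsymbol{1}_K^T$) to write
\[
\bar U^{N,K}_t=\frac1K\sum_{n\ge0}\sum_{j=1}^N(\boldsymbol{1}_K^T A_N^n)_j\int_0^t\phi^{*n}(t-s)\,M^{j,N}_s\,ds,
\]
then expand the square and take $\mathbb{E}_\theta$. For $j\ne l$ the contribution vanishes because $[M^{j,N},M^{l,N}]=0$ makes $M^{j,N}M^{l,N}$ a martingale, so $\mathbb{E}_\theta[M^{j,N}_sM^{l,N}_u]=0$; for $j=l$, using that $M^{j,N}$ is a square-integrable martingale (on $\Omega_{N,K}$, since $\mathbb{E}_\theta[\langle M^{j,N}\rangle_t]=\mathbb{E}_\theta[Z^{j,N}_t]<\infty$) with $[M^{j,N}]_t=Z^{j,N}_t$, one gets $\mathbb{E}_\theta[M^{j,N}_sM^{j,N}_u]=\mathbb{E}_\theta[Z^{j,N}_{s\wedge u}]$. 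All resulting terms being nonnegative, I would then bound $\mathbb{E}_\theta[Z^{j,N}_{s\wedge u}]\le\|\mathbb{E}_\theta[\boldsymbol{Z}^N_t]\|_\infty\le Ct$ on $\Omega_{N,K}\subset\Omega_N^1$ (from \eqref{ee1} with $K=N$ and $\Lambda|||A_N|||_\infty\le a<1$), use $\int_0^t\phi^{*n}(t-s)ds\le\Lambda^n$ and the Cauchy--Schwarz inequality in $j$, thereby reducing everything to showing $\boldsymbol{1}_{\Omega_{N,K}}\sum_{n\ge0}\Lambda^n\|\boldsymbol{1}_K^T A_N^n\|_2\le C\sqrt K$. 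For $n\ge1$, writing $\boldsymbol{1}_K^T A_N^n=\boldsymbol{1}_N^T(I_K A_N)A_N^{n-1}$ and transposing gives $\Lambda^n\|\boldsymbol{1}_K^T A_N^n\|_2\le(\Lambda|||I_K A_N|||_2)(\Lambda|||A_N|||_2)^{n-1}\sqrt N\le a^n\sqrt K$ on $\Omega_{N,K}$ (the case $n=0$ being immediate), and summing the geometric series yields $\boldsymbol{1}_{\Omega_{N,K}}\mathbb{E}_\theta[|\bar U^{N,K}_t|^2]\le Ct/K$.

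I do not expect a serious obstacle here: the only step requiring a little care is the martingale identity $\mathbb{E}_\theta[M^{j,N}_sM^{j,N}_u]=\mathbb{E}_\theta[Z^{j,N}_{s\wedge u}]$ with its square-integrability prerequisite, and after that everything is a Tonelli interchange plus the operator-norm bounds built into the definition of $\Omega_{N,K}$ plus a geometric summation with ratio $a<1$, exactly in the style of Lemma~\ref{Zt}(i).
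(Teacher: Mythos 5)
Your proposal is correct and follows essentially the same route as the paper: part (i) is exactly the paper's one-line application of Lemma~\ref{Zt}(ii) with $r=1$, and part (ii) rests on the same ingredients — the representation \eqref{ee2}, the orthogonality $[M^{i,N},M^{j,N}]_t=\boldsymbol{1}_{\{i=j\}}Z^{i,N}_t$ from \eqref{ee3}, the bound $\mathbb{E}_\theta[Z^{j,N}_s]\le Cs$ from Lemma~\ref{Zt}(i), $\int_0^t\phi^{*n}\le\Lambda^n$, and the geometric summation with ratio $a<1$ coming from the norm bounds defining $\Omega_{N,K}$. The only (cosmetic) difference is that the paper first applies the Minkowski inequality in $L^2(\mathbb{P}_\theta)$ over the sum in $n$ and works with the $|||\cdot|||_1$ operator norms, whereas you expand the full square and use Cauchy--Schwarz in $j$ with the $|||\cdot|||_2$ norms; both yield $Ct/K$.
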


\begin{proof}
By  Lemma \ref{Zt} (ii), 
$$
\Big|\mathbb{E}_{\theta}[\varepsilon_{t}^{N,K}]-\mu\bar{\ell}_{N}^{K}\Big| \le\frac{1}{K}\Big\|\mathbb{E}_{\theta}\Big[\frac{\boldsymbol{Z}_{2t}^{N,K}-\boldsymbol{Z}_{t}^{N,K}}{t}\Big]-\mu\boldsymbol{\ell}_{N}^{K}\Big\|_{1}\le\frac{C}{t^{q}}.
$$
which proves (i).
Using  (\ref{ee2}),
$$
\bar{U}_{t}^{N,K}=\frac{1}{K}\sum_{n\ge 0}\int_{0}^{t}\phi^{*n}(t-s)\sum_{i=1}^{K}\sum_{j=1}^{N}A_{N}^{n}(i,j)M_{s}^{j,N}ds.
$$ 
Recalling (\ref{ee3}), it is obvious that for $n\geq 1$,
$$
\Et\Big[\Big(\sum_{i=1}^K\sum_{j=1}^N A_N^n(i,j)M^{j,N}_s\Big)^2\Big]= \sum_{j=1}^N \Big( \sum_{i=1}^K A_N^n(i,j)\Big)^2 
\Et[Z^{j,N}_s] \leq |||I_KA_N|||_1^{2n} \sum_{j=1}^N\Et[Z^{j,N}_s].
$$ 
By Lemma \ref{Zt}-(i) with $r=1$, we have $\boldsymbol{1}_{\Omega_{N,K}}\mathbb{E}_{\theta}[\sum_{i=1}^NZ_t^{i,N}]\le CtN$ and $\boldsymbol{1}_{\Omega_{N,K}}\mathbb{E}_{\theta}[\sum_{i=1}^KZ_t^{i,N}]\le CtK.$ Thus on $\Omega_{N,K}$,
\begin{align*} 
&\mathbb{E}_{\theta}[|\bar{U}_{t}^{N,K}|^{2}]^{\frac{1}{2}}\\
&\le  \frac{1}{K}\mathbb{E}_{\theta}\Big[\Big(\sum_{i=1}^KM_t^{i,N}\Big)^2\Big]^{\frac{1}{2}}+\frac{1}{K}\sum_{n\geq 1} \int_0^t \phi^{\star n}(t-s) 
\Et\Big[\Big(\sum_{i=1}^K\sum_{j=1}^N A_N^n(i,j)M^{j,N}_s\Big)^2\Big]^{1/2} ds\\
&\le \frac{1}{K}\mathbb{E}_{\theta}\Big[\sum_{i=1}^KZ_t^{i,N}\Big]^{\frac{1}{2}}+\frac{C}{K}\sum_{n\ge1}|||I_{K}A_{N}^{n}|||_{1}\int^{t}_{0}\mathbb{E}_{\theta}\Big[\sum_{i=1}^NZ_s^{i,N}\Big]^{\frac{1}{2}}\phi^{*n}(t-s)ds\\ 
&\le \frac{C\sqrt t}{\sqrt{K}}+C \frac{(tN)^{\frac{1}{2}}}{K}\sum_{n\ge1}\Lambda^{n}|||I_{K}A_{N}|||_{1}|||A_{N}|||_{1}^{n-1}.
\end{align*}
We used that $\int_0^t \sqrt s \phi^{*n}(t-s)ds \leq \sqrt t \int_0^t \phi^{*n}(t-s)ds \leq \sqrt t \Lambda^n$.
As a conclusion, still on $\Omega_{N,K}$, since $|||I_{K}A_{N}|||_{1} \leq C K/N$ and
$\Lambda|||A_N|||_1 \leq a<1$,
$$
\mathbb{E}_{\theta}[|\bar{U}_{t}^{N,K}|^{2}]^{\frac{1}{2}}\le C\sqrt t\Big(\frac{1}{\sqrt{K}}+\frac{1}{\sqrt{N}}\Big)\le \frac{C\sqrt t}{\sqrt{K}}
$$
as desired.
\end{proof}

\begin{lemma}\label{emain}
Assume $(H(q))$ for some $q\geq 1$. There is $C>0$ such that a.s.,
$$
\boldsymbol{1}_{\Omega_{N,K}}\mathbb{E}_{\theta}\Big[\Big|\varepsilon_{t}^{N,K}-\mu\bar{\ell}_{N}^{K}\Big|^{2}\Big]\le C\Big(\frac{1}{t^{2q}}+\frac{1}{tK}\Big).
$$
\end{lemma}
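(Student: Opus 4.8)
The plan is to decompose $\varepsilon_t^{N,K}$ into its $\theta$-conditional mean plus a centered fluctuation, and then invoke the two parts of Lemma~\ref{eU}. Recall that $U_t^{i,N}=Z_t^{i,N}-\mathbb{E}_\theta[Z_t^{i,N}]$, so that $\bar Z_t^{N,K}=\mathbb{E}_\theta[\bar Z_t^{N,K}]+\bar U_t^{N,K}$. Applying this at times $2t$ and $t$ and dividing by $t$, we obtain
$$
\varepsilon_t^{N,K}-\mu\bar\ell_N^K \;=\; \bigl(\mathbb{E}_\theta[\varepsilon_t^{N,K}]-\mu\bar\ell_N^K\bigr) + \frac1t\bigl(\bar U_{2t}^{N,K}-\bar U_t^{N,K}\bigr).
$$
Since $\Omega_{N,K}$ and $\bar\ell_N^K$ are $\sigma(\theta_{ij})$-measurable, the indicator $\boldsymbol{1}_{\Omega_{N,K}}$ commutes with $\mathbb{E}_\theta$, and the first term on the right is itself $\sigma(\theta_{ij})$-measurable; conditioning on $\theta$ therefore removes the cross term between the two summands.

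Next I would use $(a+b)^2\le 2a^2+2b^2$ to write
$$
\boldsymbol{1}_{\Omega_{N,K}}\mathbb{E}_\theta\bigl[|\varepsilon_t^{N,K}-\mu\bar\ell_N^K|^2\bigr] \le 2\,\boldsymbol{1}_{\Omega_{N,K}}\bigl(\mathbb{E}_\theta[\varepsilon_t^{N,K}]-\mu\bar\ell_N^K\bigr)^2 + \frac{2}{t^2}\,\boldsymbol{1}_{\Omega_{N,K}}\mathbb{E}_\theta\bigl[|\bar U_{2t}^{N,K}-\bar U_t^{N,K}|^2\bigr].
$$
For the first term, Lemma~\ref{eU}(i) gives $\boldsymbol{1}_{\Omega_{N,K}}|\mathbb{E}_\theta[\varepsilon_t^{N,K}]-\mu\bar\ell_N^K|\le C t^{-q}$, so its square is $\le C t^{-2q}$. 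For the second term, I would bound $|\bar U_{2t}^{N,K}-\bar U_t^{N,K}|^2\le 2|\bar U_{2t}^{N,K}|^2+2|\bar U_t^{N,K}|^2$ and apply Lemma~\ref{eU}(ii) at times $2t$ and $t$, which yields $\boldsymbol{1}_{\Omega_{N,K}}\mathbb{E}_\theta[|\bar U_{2t}^{N,K}-\bar U_t^{N,K}|^2]\le C t/K$; dividing by $t^2$ gives a contribution $\le C/(tK)$. Summing the two bounds produces the asserted estimate $C(t^{-2q}+(tK)^{-1})$.

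There is essentially no obstacle here: the lemma is a repackaging of the conditional bias estimate (Lemma~\ref{eU}(i)) together with the conditional second-moment estimate on the centered part (Lemma~\ref{eU}(ii)). The only point requiring a little care is the measurability bookkeeping — noting that $\Omega_{N,K}$, $\bar\ell_N^K$ and $\mathbb{E}_\theta[\varepsilon_t^{N,K}]$ all depend only on $(\theta_{ij})$ so that the indicator passes through $\mathbb{E}_\theta$ — and the trivial observation that conditioning on $\theta$ kills the cross term between the bias and the fluctuation.
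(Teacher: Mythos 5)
Your proposal is correct and follows essentially the same route as the paper: split off the conditional bias $\mathbb{E}_\theta[\varepsilon_t^{N,K}]-\mu\bar\ell_N^K$ from the fluctuation $t^{-1}(\bar U_{2t}^{N,K}-\bar U_t^{N,K})$, apply $(a+b)^2\le 2a^2+2b^2$, and invoke Lemma \ref{eU}(i) and (ii) respectively. The only cosmetic difference is that the paper does not bother to note that the cross term vanishes, since the elementary inequality already suffices.
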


\begin{proof}
It suffices to write 
\begin{align*}
\mathbb{E}_{\theta}\Big[\Big|\varepsilon_{t}^{N,K}-\mu\bar{\ell}_{N}^{K}\Big|^{2}\Big]&\le2\mathbb{E}_{\theta}\Big[\Big\lvert\varepsilon^{N,K}_{t}-\mathbb{E}_{\theta}[\varepsilon_{t}^{N,K}]\Big\lvert^{2}\Big]+2\Big|\mathbb{E}_{\theta}[\varepsilon_{t}^{N,K}]-\mu\bar{\ell}_{N}^{K}\Big|^{2}\\
 &\le\frac{4}{t^{2}}\Big(\mathbb{E}_{\theta}[|\bar{U}_{2t}^{N,K}|^{2}] +\mathbb{E}_{\theta}[|\bar{U}_{t}^{N,K}|^{2}]\Big)+2\Big|\mathbb{E}_{\theta}[\varepsilon_{t}^{N,K}]-\mu\bar{\ell}_{N}^{K}\Big|^{2}
  \end{align*}
and to use Lemma \ref{eU}.
\end{proof}

Finally, we can give the proof of Theorem \ref{lmain}.

\begin{proof}
By Lemmas \ref{ellp} and \ref{emain},  we have 
\begin{align*}
\mathbb{E} \Big[\boldsymbol{1}_{\Omega_{N,K}}\Big|\varepsilon_{t}^{N,K}-\frac{\mu}{1-\Lambda p}\Big| \Big]
&\le\mathbb{E}\Big[\boldsymbol{1}_{\Omega_{N,K}}\Big|\varepsilon_{t}^{N,K}-\mu\bar{\ell}_{N}^{K}\Big|^2\Big]^\frac{1}{2}+\mu\mathbb{E}\Big[\boldsymbol{1}_{\Omega_{N,K}}\Big|\bar{\ell}_{N}^{K}-\frac{1}{1-\Lambda p}\Big|^2\Big]^\frac{1}{2}\\
&\le C\Big(\frac{1}{\sqrt{Kt}}+\frac{1}{t^{q}}+\frac{1}{\sqrt{NK}}\Big).
\end{align*}
By Chebyshev's Inequality,  we deduce
\begin{align*}
P\Big(\Big|\varepsilon ^{N,K}_{t}-\frac{\mu}{1-\Lambda p}\Big|\ge\varepsilon\Big)
&\le P(\Omega_{N,K}^{c})+P\Big(\Big\{\Big|\varepsilon ^{N,K}_{t}-\frac{\mu}{1-\Lambda p}\Big|\ge\varepsilon\Big\}\cap\Omega_{N,K}\Big)\
\\
&\le P(\Omega_{N,K}^{c})+\frac{1}{\varepsilon}\mathbb{E} \Big[\boldsymbol{1}_{\Omega_{N,K}}\Big|\varepsilon_{t}^{N,K}-\frac{\mu}{1-\Lambda p}\Big| \Big]\\
&\le CNe^{-C^{'}K}+\frac{C}{\varepsilon}\Big(\frac{1}{\sqrt{NK}}+\frac{1}{t^{q}}+\frac{1}{\sqrt{Kt}}\Big)
\end{align*}
by Lemma \ref{ONK}.
\end{proof}

\section{The second estimator in the subcritical case}
 
We now prove that $\mathcal{V} _{t}^{N,K}
:=\frac{N}{K}\sum_{i=1}^{K}\Big[\frac{Z_{2t}^{i,N}-Z_{t}^{i,N}}t-\varepsilon_{t}^{N,K}\Big]^{2}-\frac{N}{t}\varepsilon_{t}^{N,K}\simeq \frac{\mu^{2}\Lambda^{2}p(1-p)}{(1-\Lambda p)^{2}}$.

\begin{theorem}\label{vv}
Assume $H(q)$ for some $q\ge 1$. There is $C>0$ such that for all $ t\ge 1$, a.s.,
\begin{align*}
\boldsymbol{1}_{\Omega_{N,K}}\mathbb{E}_{\theta}\Big[\Big|\mathcal{V}_{t}^{N,K}-\mathcal{V}_{\infty}^{N,K}\Big|\Big]\le C\Big(\frac{N}{t\sqrt{K}}+\frac{N}{t^{q}}+\frac{N}{K\sqrt{t}}\|\boldsymbol{x}_{N}^{K}\|_{2}\Big),\ 
\hbox{where}\ \mathcal{V}_{\infty}^{N,K}:=\frac{\mu^2 N}{K}\|\cx_N^K\|^2_2.
\end{align*}
\end{theorem}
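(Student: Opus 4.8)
The plan is to expand $\mathcal{V}^{N,K}_t$ about the correct centering and compare it, block by block, with $\mathcal{V}^{N,K}_\infty=\mu^2\frac NK\|\boldsymbol{x}_N^K\|_2^2$, always conditionally on $\theta$ and on the event $\Omega_{N,K}$. Set $Y_i:=Z^{i,N}_{2t}-Z^{i,N}_t$, $a_i:=\mathbb{E}_{\theta}[Y_i]$ and $\zeta_i:=Y_i-a_i=U^{i,N}_{2t}-U^{i,N}_t$ for $i\le K$, so that $\varepsilon^{N,K}_t=(Kt)^{-1}\sum_{i\le K}Y_i$ and, using $\sum_{i\le K}x_N^K(i)=0$,
\[
\mathcal{V}^{N,K}_t=\frac{N}{Kt^2}\Big[\sum_{i\le K}Y_i^2-\frac1K\Big(\sum_{i\le K}Y_i\Big)^2-\sum_{i\le K}Y_i\Big].
\]
On $\Omega_{N,K}$ one has $\sup_{i\le K}\ell_N(i)\le C$ (Lemma \ref{lo}) and $\sup_{i\le K}|a_i-\mu t\,\ell_N(i)|\le Ct^{1-q}$ (Lemma \ref{Zt}(ii) with $r=\infty$), whence $0\le a_i,\mu t\,\ell_N(i)\le Ct$; since $(b_i)\mapsto\sum_{i\le K}b_i^2-\frac1K(\sum_{i\le K}b_i)^2$ is Lipschitz on $\{|b_i|\le Ct\}$ with constant $O(Kt)$, replacing each $a_i$ by $\mu t\,\ell_N(i)$ shows that $\mathcal{V}^{N,K}_\infty$ differs from $\frac{N}{Kt^2}\big[\sum_{i\le K}a_i^2-\frac1K(\sum_{i\le K}a_i)^2\big]$ by at most $CN/t^q$. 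Substituting $Y_i=a_i+\zeta_i$, expanding, and cancelling $\frac{N}{Kt^2}\big[\sum a_i^2-\frac1K(\sum a_i)^2\big]$, it remains to bound $\mathbb{E}_{\theta}$ of $\frac{N}{Kt^2}$ times
\[
\underbrace{2\sum_{i\le K}(a_i-\bar a)\zeta_i}_{\mathrm{(I)}}\;+\;\underbrace{\Big(\sum_{i\le K}\zeta_i^2-\sum_{i\le K}a_i-\sum_{i\le K}\zeta_i\Big)}_{\mathrm{(II)}}\;-\;\underbrace{\frac1K\Big(\sum_{i\le K}\zeta_i\Big)^2}_{\mathrm{(III)}},\qquad\bar a:=\tfrac1K\sum_{i\le K}a_i.
\]

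The key tools are the splitting $\zeta_i=W_i+V_i$, $W_i:=M^{i,N}_{2t}-M^{i,N}_t$, together with two facts: (a) $\mathbb{E}_{\theta}[W_iW_j]=\boldsymbol{1}_{\{i=j\}}a_i$, by \eqref{ee3}; (b) $\boldsymbol{1}_{\Omega_{N,K}}\mathbb{E}_{\theta}[\sum_{i\le K}V_i^2]\le CKt/N$, which follows from \eqref{ee2} — writing $(V_i)_{i\le K}=I_K\sum_{n\ge1}\big[\int_0^{2t}\phi^{*n}(2t-s)A_N^n\boldsymbol{M}^N_s\,ds-\int_0^{t}\phi^{*n}(t-s)A_N^n\boldsymbol{M}^N_s\,ds\big]$, using \eqref{ee3} and $\mathbb{E}_{\theta}[Z^{j,N}_s]\le Cs$ (Lemma \ref{Zt}(i), also with $K$ replaced by $N$) to get $\mathbb{E}_{\theta}[\|I_KA_N^n\boldsymbol{M}^N_s\|_2^2]\le Cs\sum_{i\le K}\|e_i^{T}A_N^n\|_2^2$, and then $\|e_i^{T}A_N^n\|_2^2\le|||A_N|||_2^{2(n-1)}L_N(i)/N\le(a/\Lambda)^{2(n-1)}/N$ on $\Omega_{N,K}$ (as $L_N(i)\le1$), before summing the geometric series. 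Block $\mathrm{(III)}$: $\sum_{i\le K}\zeta_i=K(\bar U^{N,K}_{2t}-\bar U^{N,K}_t)$, so Lemma \ref{eU}(ii) gives $\mathbb{E}_{\theta}[(\sum_{i\le K}\zeta_i)^2]\le CKt$ and $\frac{N}{Kt^2}\mathbb{E}_{\theta}[\mathrm{(III)}]\le CN/(t\sqrt K)$. Block $\mathrm{(I)}$: write $a_i-\bar a=\mu t\,x_N^K(i)+t(\epsilon_i-\bar\epsilon)$ with $\sup_{i\le K}|\epsilon_i|\le Ct^{-q}$; since $\mathbb{E}_{\theta}[\zeta_i]=0$, $\mathbb{E}_{\theta}[(\sum_{i\le K}x_N^K(i)\zeta_i)^2]\le2\sum_{i\le K}x_N^K(i)^2a_i+2\|\boldsymbol{x}_N^K\|_2^2\,\mathbb{E}_{\theta}[\sum_{i\le K}V_i^2]\le Ct\,\|\boldsymbol{x}_N^K\|_2^2$ by (a), (b) and $a_i\le Ct$, while the $\epsilon$-part is $\le CN/t^q$ (using $\mathbb{E}_{\theta}[\zeta_i^2]\le Ct$), so $\mathrm{(I)}$ contributes $\le\frac{CN}{K\sqrt t}\|\boldsymbol{x}_N^K\|_2+\frac{CN}{t^q}$.

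Block $\mathrm{(II)}$ is the delicate one and encodes the ``given $\theta$, $Z^{i,N}$ is almost Poisson'' heuristic. Using $\zeta_i=W_i+V_i$ and $\sum_{i\le K}([M^{i,N}]_{2t}-[M^{i,N}]_t)=\sum_{i\le K}(Z^{i,N}_{2t}-Z^{i,N}_t)=\sum_{i\le K}(a_i+\zeta_i)$, one gets the exact identity $\mathrm{(II)}=N_{2t}+2\sum_{i\le K}W_iV_i+\sum_{i\le K}V_i^2$, where
\[
N_s:=\sum_{i\le K}\Big[(M^{i,N}_s-M^{i,N}_t)^2-\big([M^{i,N}]_s-[M^{i,N}]_t\big)\Big]=2\sum_{i\le K}\int_t^s(M^{i,N}_{u-}-M^{i,N}_t)\,dM^{i,N}_u
\]
is a martingale on $[t,2t]$ with $N_t=0$. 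Because distinct $Z^{i,N}$ a.s.\ have no common jump, its bracket is $[N]_{2t}=4\sum_{i\le K}\int_t^{2t}(M^{i,N}_{u-}-M^{i,N}_t)^2\,dZ^{i,N}_u\le4\sum_{i\le K}\big(\sup_{t\le u\le2t}|M^{i,N}_u-M^{i,N}_t|\big)^2(Z^{i,N}_{2t}-Z^{i,N}_t)$, so $\mathbb{E}_{\theta}[N_{2t}^2]=\mathbb{E}_{\theta}[[N]_{2t}]\le CKt^2$ by Cauchy--Schwarz, Doob's $L^4$ maximal inequality, the Burkholder--Davis--Gundy inequality and the bound $\mathbb{E}_{\theta}[(Z^{i,N}_{2t}-Z^{i,N}_t)^2]\le Ct^2$ — itself a consequence of Lemma \ref{Zt}(i) and a one-coordinate variant of Lemma \ref{eU}(ii), which give $\mathbb{E}_{\theta}[Z^{i,N}_{2t}]\le Ct$ and $\mathbb{E}_{\theta}[(U^{i,N}_t)^2]\le Ct$. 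Hence $\mathbb{E}_{\theta}[|N_{2t}|]\le C\sqrt K\,t$, and combining this with (b) and Cauchy--Schwarz for the mixed terms yields $\frac{N}{Kt^2}\mathbb{E}_{\theta}[|\mathrm{(II)}|]\le CN/(t\sqrt K)$. Collecting $\mathrm{(I)},\mathrm{(II)},\mathrm{(III)}$ then gives the announced bound. The one genuinely non-routine step is this martingale--bracket argument for the fluctuation of $\sum_{i\le K}W_i^2$: it circumvents a direct analysis of the correlations $\mathrm{Cov}_{\theta}(W_i^2,W_j^2)$ between distinct particles (which would require quantifying their $O(1/N)$ coupling), and it goes through with no $L^2$-integrability assumption on $\phi$.
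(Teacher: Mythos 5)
Your proof is correct and reaches the stated bound, but it takes a genuinely different route through the hardest part. The paper decomposes $|\mathcal{V}_t^{N,K}-\mathcal{V}_\infty^{N,K}|$ into the terms $\Delta_t^{N,K,1},\dots,\Delta_t^{N,K,32}$ and controls the central fluctuation term $\Delta_t^{N,K,211}$ (the deviation of $\sum_{i\le K}(U^{i,N}_{2t}-U^{i,N}_t)^2$ from its conditional mean) by computing its second moment from the fourth-order covariance estimates $\mathrm{Cov}_\theta(M_r^{k,N}M_s^{l,N},M_u^{a,N}M_v^{b,N})$ of Lemma \ref{1234}, summing over the six regimes $R_1^K,\dots,R_6^K$; the companion mean-discrepancy term $\Delta_t^{N,K,212}$ is handled via \cite[Lemma 21]{A}. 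You instead exploit the exact identity $\mathrm{(II)}=N_{2t}+2\sum_{i\le K}W_iV_i+\sum_{i\le K}V_i^2$, where the cancellation $\sum_{i\le K}([M^{i,N}]_{2t}-[M^{i,N}]_t)=\sum_{i\le K}(a_i+\zeta_i)$ turns the Poissonian correction into the terminal value of the martingale $N_s=2\sum_{i\le K}\int_t^s(M^{i,N}_{u-}-M^{i,N}_t)\,dM^{i,N}_u$, and you bound $\mathbb{E}_\theta[N_{2t}^2]=\mathbb{E}_\theta[[N]_{2t}]\le CKt^2$ using only the absence of common jumps, Doob/BDG, and $\mathbb{E}_\theta[(Z^{i,N}_{2t}-Z^{i,N}_t)^2]\le Ct^2$. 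This is shorter and avoids quantifying the $O(1/N)$ cross-particle correlations for this block entirely; the price is a cruder bound $C\sqrt{N}/t$ (versus the paper's $C/t$ for $\Delta_t^{N,K,212}$) on the mean discrepancy via Cauchy--Schwarz on $\sum W_iV_i$, which is harmless since $\sqrt{N}/t\le N/(t\sqrt{K})$. Likewise your block $\mathrm{(I)}$ uses the orthogonality $\mathbb{E}_\theta[W_iW_j]=\boldsymbol{1}_{\{i=j\}}a_i$ together with the crude remainder estimate $\mathbb{E}_\theta[\sum_{i\le K}V_i^2]\le CKt/N$ in place of Lemma \ref{1234}(i), and your algebraic recentering at the empirical mean absorbs the paper's $\Delta_t^{N,K,1}$. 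Two small points deserve to be written out if you formalize this: the single-coordinate bound $\mathbb{E}_\theta[(U^{i,N}_s)^2]\le Cs$ on $\Omega_{N,K}$ (the paper only states the averaged version in Lemma \ref{eU}(ii), but it follows from $\mathbb{E}_\theta[(M^{i,N}_s)^2]=\mathbb{E}_\theta[Z^{i,N}_s]\le Cs$ plus your fact (b)), and the justification that $N_s$ is a true square-integrable martingale so that $\mathbb{E}_\theta[N_{2t}^2]=\mathbb{E}_\theta[[N]_{2t}]$; neither is a gap.
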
 

We write  
$|\mathcal{V}_{t}^{N,K}-\mathcal{V}_{\infty}^{N,K}|\le \Delta_{t}^{N,K,1}+\Delta_{t}^{N,K,2}+\Delta_{t}^{N,K,3},$
where 
\begin{align*}
\Delta_{t}^{N,K,1}&=\frac{N}{K}\Big|\sum_{i=1}^{K}\Big[(Z_{2t}^{i,N}-Z_{t}^{i,N})/t-\varepsilon_{t}^{N,K}\Big]^{2}-\sum_{i=1}^{K}\Big[(Z_{2t}^{i,N}-Z_{t}^{i,N})/t-\mu\bar{\ell}_{N}^{K}\Big]^{2}\Big|, \\
\Delta_{t}^{N,K,2}&=\frac{N}{K}\Big|\sum_{i=1}^{K}\Big[(Z_{2t}^{i,N}-Z_{t}^{i,N})/t-\mu\ell_{N}(i)\Big]^{2}-(K/t)\varepsilon_{t}^{N,K}\Big|,\\
\Delta_{t}^{N,K,3}&=2\frac{N}{K}\Big|\sum_{i=1}^{K}\Big[Z_{2t}^{i,N}-Z_{t}^{i,N}-\mu\ell_{N}(i)\Big]\Big[\mu\ell_{N}(i)-\mu\bar{\ell}_{N}^{K}\Big]\Big|.
\end{align*}
We also write 
$\Delta_{t}^{N,K,2}\le \Delta_{t}^{N,K,21}+\Delta_{t}^{N,K,22}+\Delta_{t}^{N,K,23},$
where
\begin{align*}
\Delta_{t}^{N,K,21}&=\frac{N}{K}\Big|\sum_{i=1}^{K}\Big[(Z_{2t}^{i,N}-Z_{t}^{i,N})/t-\mathbb{E}_{\theta}[Z_{2t}^{i,N}-Z_{t}^{i,N}]/t\Big]^{2}-(K/t)\varepsilon_{t}^{N,K}\Big|,\\
\Delta_{t}^{N,K,22}&=\frac{N}{K}\Big|\sum_{i=1}^{K}\Big\{\mathbb{E_{\theta}}[(Z_{2t}^{i,N}-Z_{t}^{i,N})/t]-\mu\ell_{N}(i)\Big\}^{2}\Big|,\\
\Delta_{t}^{N,K,23}&=2\frac{N}{K}\Big|\sum_{i=1}^{K}\Big[(Z_{2t}^{i,N}-Z_{t}^{i,N})/t-\mathbb{E}_{\theta}(Z_{2t}^{i,N}-Z_{t}^{i,N})/t\Big]\Big[\mathbb{E}_{\theta}(Z_{2t}^{i,N}-Z_{t}^{i,N})/t-\mu\ell_{N}(i)\Big]\Big|.
\end{align*}
We next write  $\Delta_{t}^{N,K,21}\le \Delta_{t}^{N,K,211}+\Delta_{t}^{N,K,212}+\Delta_{t}^{N,K,213}$, where
\begin{align*}
\Delta_{t}^{N,K,211}&= \frac{N}{K}\Big|\sum_{i=1}^{K}\Big\{(U_{2t}^{i,N}-U_{t}^{i,N})^{2}/t^{2}-\mathbb{E_{\theta}}[(U_{2t}^{i,N}-U_{t}^{i,N})^{2}/t^{2}]\Big\}\Big|,\\
 \Delta_{t}^{N,K,212}&= \frac{N}{K}\Big|\sum_{i=1}^{K}\mathbb{E_{\theta}}[(U_{2t}^{i,N}-U_{t}^{i,N})^{2}/t^{2}]-\mathbb{E_{\theta}}[K\varepsilon_{t}^{N,K}/t]\Big|,\\
 \Delta_{t}^{N,K,213}&= \frac{N}{K}\Big|K\varepsilon_{t}^{N,K}/t-\mathbb{E_{\theta}}[K\varepsilon_{t}^{N,K}/t]\Big|,
\end{align*}
At the last, we write $\Delta_{t}^{N,K,3}\le \Delta_{t}^{N,K,31}+\Delta_{t}^{N,K,32}$, where
\begin{align*}
\Delta_{t}^{N,K,31}&=2\frac{N}{K}\Big|\sum_{i=1}^{K}\Big[(Z_{2t}^{i,N}-Z_{t}^{i,N})/t-\mathbb{E_{\theta}}[(Z_{2t}^{i,N}-Z_{t}^{i,N})/t]\Big]\Big[\mu\ell_{N}(i)-\mu\bar{\ell}_{N}^{K}\Big]\Big|,\\
 \Delta_{t}^{N,K,32}&=2\Big|\sum_{i=1}^{K}\Big[\mathbb{E_{\theta}}[(Z_{2t}^{i,N}-Z_{t}^{i,N})/t]-\mu\ell_{N}(i)\Big]\Big[\mu\ell_{N}(i)-\mu\bar{\ell}_{N}^{K}\Big]\Big|.
\end{align*}

\begin{lemma}\label{12345}
Assume $H(q)$ for some $q\ge 1$. Then, on the set $\Omega_{N,K}$, for $t\ge 1$, a.s.,

(i) $\mathbb{E}_{\theta}[\Delta_{t}^{N,K,1}]\le C(Nt^{-2q}+NK^{-1}t^{-1}),$

(ii) $\mathbb{E}_{\theta}[\Delta_{t}^{N,K,22}]\le CN/t^{2q},$

(iii) $\mathbb{E}_{\theta}[\Delta_{t}^{N,K,23}]\le CN/t^{q},$

(iv) $\mathbb{E}_{\theta}[\Delta_{t}^{N,K,213}]\le CNK^{-\frac{1}{2}}t^{-\frac{3}{2}},$

(v) $\mathbb{E}_{\theta}[\Delta_{t}^{N,K,32}]\le CN/t^{q}.$
\end{lemma}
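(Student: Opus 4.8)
The plan is to reduce all five bounds to moment estimates already available — Lemmas \ref{Zt}, \ref{eU} and \ref{emain} — plus one auxiliary estimate on the whole centred vector $\boldsymbol{U}_t^{N,K}$; throughout we work on $\Omega_{N,K}$ with $t\ge 1$, so that $\min\{1,t^{1-q}\}=t^{1-q}$ since $q\ge 1$. Two of the bounds are essentially immediate. Expanding the square in $\Delta_t^{N,K,1}$ and using that $\varepsilon_t^{N,K}=K^{-1}\sum_{i\le K}(Z_{2t}^{i,N}-Z_t^{i,N})/t$ is exactly the empirical mean of the quantities being squared, all cross terms cancel and one gets the identity $\Delta_t^{N,K,1}=N\,(\varepsilon_t^{N,K}-\mu\bar\ell_N^K)^2$; applying $\mathbb{E}_\theta$ and Lemma \ref{emain} then gives (i). Likewise $\Delta_t^{N,K,213}=(N/t)\,|\varepsilon_t^{N,K}-\mathbb{E}_\theta[\varepsilon_t^{N,K}]|=(N/t^2)\,|\bar U_{2t}^{N,K}-\bar U_t^{N,K}|$, so Jensen's inequality and Lemma \ref{eU}(ii) give $\mathbb{E}_\theta[\Delta_t^{N,K,213}]\le (N/t^2)\big(\mathbb{E}_\theta[|\bar U_{2t}^{N,K}-\bar U_t^{N,K}|^2]\big)^{1/2}\le C\,N\,K^{-1/2}t^{-3/2}$, which is (iv).

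The remaining three rest on one consequence of Lemma \ref{Zt}(ii): applied with $s=t$ over the horizon $[t,2t]$ and divided by $t$, it gives, on $\Omega_{N,K}$,
\[
\Big\|\mathbb{E}_\theta\big[(\boldsymbol{Z}_{2t}^{N,K}-\boldsymbol{Z}_t^{N,K})/t-\mu\boldsymbol{\ell}_N^K\big]\Big\|_2\le C\,t^{-q}K^{1/2},
\qquad\hbox{i.e.}\qquad
\sum_{i=1}^{K}\big\{\mathbb{E}_\theta[(Z_{2t}^{i,N}-Z_t^{i,N})/t]-\mu\ell_N(i)\big\}^2\le C\,t^{-2q}K .
\]
Estimate (ii) follows at once. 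Since $\Delta_t^{N,K,32}$ is $\theta$-measurable, Cauchy--Schwarz bounds it by $2\mu\big(\sum_{i\le K}\{\cdots\}^2\big)^{1/2}\|\boldsymbol{x}_N^K\|_2\le C\,t^{-q}K^{1/2}\cdot C\sqrt K$, where $\|\boldsymbol{x}_N^K\|_2\le C\sqrt K$ comes from $\sup_i\ell_N(i)\le C$ on $\Omega_{N,K}$ (Lemma \ref{lo}); this gives $\Delta_t^{N,K,32}\le C\,K\,t^{-q}\le CN/t^q$, i.e. (v). For (iii) I would write $\Delta_t^{N,K,23}=(2N/K)\big|\sum_{i\le K}W_ie_i\big|$ with $W_i=(U_{2t}^{i,N}-U_t^{i,N})/t$ centred given $\theta$ and $e_i=\mathbb{E}_\theta[(Z_{2t}^{i,N}-Z_t^{i,N})/t]-\mu\ell_N(i)$ $\theta$-measurable; Cauchy--Schwarz and Jensen then give
\[
\mathbb{E}_\theta[\Delta_t^{N,K,23}]\le \frac{2N}{K}\Big(\sum_{i\le K}e_i^2\Big)^{1/2}\Big(\mathbb{E}_\theta\big[\sum_{i\le K}W_i^2\big]\Big)^{1/2}
\le C\,\frac{N}{K}\,t^{-q}K^{1/2}\cdot\frac1t\Big(\mathbb{E}_\theta[\|\boldsymbol{U}_{2t}^{N,K}\|_2^2]+\mathbb{E}_\theta[\|\boldsymbol{U}_t^{N,K}\|_2^2]\Big)^{1/2}.
\]

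It then remains to prove the auxiliary estimate $\boldsymbol{1}_{\Omega_{N,K}}\mathbb{E}_\theta[\|\boldsymbol{U}_t^{N,K}\|_2^2]\le CtK$ — the vector analogue of Lemma \ref{eU}(ii), proved the same way. From the series representation \eqref{ee2}, Minkowski's inequality in $L^2(\mathbb{E}_\theta;\mathbb{R}^K)$ and the orthogonality \eqref{ee3} one has $\mathbb{E}_\theta[\|I_KA_N^n\boldsymbol{M}_s^N\|_2^2]=\sum_j\big(\sum_{i\le K}A_N^n(i,j)^2\big)\mathbb{E}_\theta[Z_s^{j,N}]$; on $\Omega_{N,K}$ one bounds $\sum_{i\le K}A_N^n(i,j)^2\le|||A_N^n|||_\infty\,|||I_KA_N^n|||_1\le(a/\Lambda)^{2n}(K/N)$ for $n\ge1$ (the term $n=0$ is handled directly), and $\sum_j\mathbb{E}_\theta[Z_s^{j,N}]\le CsN$ by Lemma \ref{Zt}(i) with $r=1$; hence $\mathbb{E}_\theta[\|I_KA_N^n\boldsymbol{M}_s^N\|_2^2]\le C(a/\Lambda)^{2n}sK$, and since $\int_0^t\phi^{*n}(t-s)\sqrt s\,ds\le\Lambda^n\sqrt t$ and $\sum_n a^n<\infty$, the estimate closes. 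Plugging $\mathbb{E}_\theta[\|\boldsymbol{U}_t^{N,K}\|_2^2]\le CtK$ into the last display yields $\mathbb{E}_\theta[\Delta_t^{N,K,23}]\le CN\,t^{-q-1/2}\le CN/t^q$, which is (iii).

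The only step needing any care is this auxiliary $L^2$-bound on $\|\boldsymbol{U}_t^{N,K}\|_2$: one has to control the sums $\sum_{i\le K}A_N^n(i,j)^2$ rather than $\big(\sum_{i\le K}A_N^n(i,j)\big)^2$ (which is what appears when only the average $\bar U_t^{N,K}$ enters, as in Lemma \ref{eU}(ii)), and check that the extra factor $|||A_N^n|||_\infty$ is absorbed by $\int_0^t\phi^{*n}(t-s)\,ds\le\Lambda^n$ together with $\Lambda|||A_N|||_\infty\le a<1$ on $\Omega_{N,K}$; everything else is bookkeeping of the kind already carried out in the preceding sections.
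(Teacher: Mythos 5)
Your proposal is correct. Items (i), (ii) and (iv) coincide with the paper's argument: the same algebraic identity $\Delta_t^{N,K,1}=N(\varepsilon_t^{N,K}-\mu\bar\ell_N^K)^2$ followed by Lemma \ref{emain}, Lemma \ref{Zt}(ii) in $\ell^2$, and Lemma \ref{eU}(ii) respectively. For (iii) and (v) you take a genuinely different (if mild) route: you pair the two factors by Cauchy--Schwarz in $\ell^2$, using $\|\mathbb{E}_\theta[(\boldsymbol{Z}_{2t}^{N,K}-\boldsymbol{Z}_t^{N,K})/t]-\mu\boldsymbol{\ell}_N^K\|_2\le Ct^{-q}\sqrt K$ against $\|\boldsymbol{x}_N^K\|_2\le C\sqrt K$ for (v) and against an auxiliary second-moment bound $\mathbb{E}_\theta[\|\boldsymbol{U}_t^{N,K}\|_2^2]\le CtK$ for (iii). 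The paper instead pairs $\ell^\infty$ against $\ell^1$: for (iii) it takes the sup norm of the deterministic factor (Lemma \ref{Zt}(ii) with $r=\infty$) and controls the $\ell^1$ norm of the centred increments by $2t^{-1}\|\mathbb{E}_\theta[\boldsymbol{Z}_{2t}^{N,K}+\boldsymbol{Z}_t^{N,K}]\|_1\le CK$, via the elementary inequality $\mathbb{E}_\theta|X-\mathbb{E}_\theta X|\le 2\,\mathbb{E}_\theta X$ for $X\ge0$; for (v) it bounds $|\mu\ell_N(i)-\mu\bar\ell_N^K|$ uniformly and uses Lemma \ref{Zt}(ii) with $r=1$. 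The paper's route avoids any new lemma; yours requires the vector bound on $\boldsymbol{U}_t^{N,K}$, which you prove correctly (and which also follows in one line from Lemma \ref{1234}(i), since $\mathbb{E}_\theta[(U_t^{i,N})^2]\le Ct(1+N^{-1})$ on $\Omega_{N,K}$), and in exchange yields a marginally sharper bound $CNt^{-q-1/2}$ in (iii). Both arguments deliver the stated estimates.
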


\begin{proof}
(i)  Recalling the definition  $\varepsilon _{t}^{N,K}=t^{-1}(\bar{Z}_{2t}^{N,K}-\bar{Z}_{t}^{N,K})$,
\begin{align*}
\Delta_{t}^{N,K,1}
&=\frac{N}{K}\Big|\sum_{i=1}^{K}\Big[\mu\bar{\ell}_{N}^{K}-\varepsilon_{t}^{N,K}\Big]\Big[2(Z_{2t}^{i,N}-Z_{t}^{i,N})/t-\mu\bar{\ell}_{N}^{K}-\varepsilon_{t}^{N,K}\Big]\Big|
=N\Big(\varepsilon_{t}^{N,K}-\mu\bar{\ell}_{N}^{K}\Big)^{2},
\end{align*}
whence by Lemma \ref{emain},
$$\mathbb{E}_{\theta}[\Delta_{t}^{N,1,K}]=N\mathbb{E}_{\theta}\Big[\Big(\varepsilon_{t}^{N,K}-\mu\bar{\ell}_{N}^{K}\Big)^{2}\Big]\le C\Big(Nt^{-2q}+ NK^{-1}t^{-1}\Big).$$
(ii) We use  Lemma \ref{Zt}-(ii) with $r=2$:
\begin{align*}
\mathbb{E}_{\theta}[\Delta_{t}^{N,K,22}]=\frac{N}{K}\sum_{i=1}^{K}\Big\{\mathbb{E}_{\theta}[(Z_{2t}^{i,N}-Z_{t}^{i,N})/t]-\mu\ell_{N}(i)\Big\}^{2}
\le  CN/t^{2q}.
\end{align*}
(iii) By Lemma \ref{Zt}-(i) with $r=\infty$ and  \ref{Zt}-(ii),
\begin{align*}
\mathbb{E}_{\theta}[\Delta_{t}^{N,K,23}]
\le \frac{4N}{K}\Big\|\mathbb{E}_{\theta}\Big[\Big(\boldsymbol{Z}_{2t}^{N,K}-\boldsymbol{Z}_{t}^{N,K}\Big)/t\Big]-\mu\ell_{N}^{K}\Big\|_{\infty}\Big\|\mathbb{E}_{\theta}\Big[\boldsymbol{Z}_{2t}^{N,K}+\boldsymbol{Z}_{t}^{N,K}\Big]\Big\|_{1}t^{-1}\le \frac{CN}{t^{q}}.
\end{align*}
(iv) Since 
$$\Delta_{t}^{N,K,213}=Nt^{-2}\Big|\bar{U}_{2t}^{N,K}-\bar{U}^{N,K}_{t}\Big|\le Nt^{-2}\Big(|\bar{U}^{N,K}_{2t}|+|\bar{U}^{N,K}_{t}|\Big)$$
and thanks to Lemma \ref{eU}-(ii), we deduce that 
$$\mathbb{E_{\theta}}[\Delta_{t}^{N,K,213}]\le CNK^{-\frac{1}{2}}t^{-\frac{3}{2}}.$$
(v) Since $\max_{j=1,...,N}[\ell_{N}(j)]$ is bounded  on the set $\Omega_{N,K}$, by Lemma \ref{Zt}-(ii) with $r=1$,
$$
\mathbb{E}_{\theta}[\Delta_{t}^{N,K,32}]\le C \mathbb{E}_{\theta}\Big[\frac{N}{Kt}\|\boldsymbol{Z}_{2t}^{N,K}-\boldsymbol{Z}_{t}^{N,K}-\mu t\boldsymbol{\ell}_{N}^{K}\|_{1}\Big]\le \frac{CN}{t^{q}}.
$$
The proof is complete.
\end{proof}

\begin{lemma}\label{212}
Assume $H(q)$ for some $q\ge 1$. We have, for all $t\ge 1$, on the set $\Omega_{N,K}$, a.s.
$$\mathbb{E}_{\theta}[\Delta_{t}^{N,K,212}]\le C/t.$$
\end{lemma}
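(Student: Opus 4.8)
The plan is to pass, on the time interval $[t,2t]$, to the martingale--compensator decomposition of $Z^{i,N}$, to expand the centred compensator part through the Neumann series \eqref{ee2}, and then to exploit not merely operator‑norm bounds on $A_N$ but the fact that every entry of $A_N$ — and, after balancing with its norm, of its powers — is $O(1/N)$. As a first step, since $U^{i,N}_v=Z^{i,N}_v-\mathbb{E}_\theta[Z^{i,N}_v]$ one has $\mathbb{E}_\theta[(U^{i,N}_{2t}-U^{i,N}_t)^2]=\mathbb{E}_\theta[(Z^{i,N}_{2t}-Z^{i,N}_t)^2]-(\mathbb{E}_\theta[Z^{i,N}_{2t}-Z^{i,N}_t])^2$, while $\mathbb{E}_\theta[K\varepsilon^{N,K}_t/t]=t^{-2}\sum_{i=1}^K\mathbb{E}_\theta[Z^{i,N}_{2t}-Z^{i,N}_t]$. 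Setting $M_i:=M^{i,N}_{2t}-M^{i,N}_t$ and $R_i:=(U^{i,N}_{2t}-U^{i,N}_t)-M_i$ (the centred increment over $[t,2t]$ of the compensator of $Z^{i,N}$), the relation \eqref{ee3} and the martingale property give $\mathbb{E}_\theta[M_i^2]=\mathbb{E}_\theta[Z^{i,N}_{2t}-Z^{i,N}_t]$, so that
$$\Delta^{N,K,212}_t=\frac{N}{Kt^2}\Big|\sum_{i=1}^K\big(2\mathbb{E}_\theta[M_iR_i]+\mathbb{E}_\theta[R_i^2]\big)\Big|,$$
and it suffices to show $\sum_{i=1}^K\big(|\mathbb{E}_\theta[M_iR_i]|+\mathbb{E}_\theta[R_i^2]\big)\le CKt/N$ on $\Omega_{N,K}$.

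Second, I would expand $R_i$ via \eqref{ee2} with $K=N$ (using $\phi^{*0}=\delta_0$): from $U^{i,N}_v-M^{i,N}_v=\sum_{n\ge1}\sum_k A_N^n(i,k)\int_0^v\phi^{*n}(v-u)M^{k,N}_u\,du$ one gets $R_i=\sum_{n\ge1}\sum_k A_N^n(i,k)g_n(k)$, where $g_n(k):=\int_0^{2t}h_n(u)M^{k,N}_u\,du$, $h_n(u):=\phi^{*n}(2t-u)-\indiq_{\{u\le t\}}\phi^{*n}(t-u)$, so $\int_0^{2t}|h_n(u)|\,du\le2\Lambda^n$. By \eqref{ee3} the martingales $M^{k,N}$ are pairwise orthogonal, and a short conditioning argument yields $\mathbb{E}_\theta[M^{k,N}_uM^{k',N}_{u'}]=\indiq_{\{k=k'\}}\mathbb{E}_\theta[Z^{k,N}_{\min(u,u')}]$ and, for $u\le 2t$, $\mathbb{E}_\theta[M_iM^{k,N}_u]=\indiq_{\{k=i\}}\indiq_{\{u>t\}}\mathbb{E}_\theta[Z^{i,N}_u-Z^{i,N}_t]$. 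Since $\mathbb{E}_\theta[Z^{k,N}_r]\le\mu r\,\ell_N(k)\le Cr$ for every $k$ on $\Omega^1_N$ (by \eqref{ee1} and Lemma \ref{lo}), this gives $\mathbb{E}_\theta[g_n(k)g_m(k')]=0$ for $k\ne k'$, $|\mathbb{E}_\theta[g_n(k)g_m(k)]|\le Ct\Lambda^{n+m}$, and $0\le\mathbb{E}_\theta[M_ig_n(k)]=\indiq_{\{k=i\}}\int_t^{2t}\phi^{*n}(2t-u)\mathbb{E}_\theta[Z^{i,N}_u-Z^{i,N}_t]\,du\le\indiq_{\{k=i\}}Ct\Lambda^n$.

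Third, on $\Omega_{N,K}$ one has the \emph{deterministic} entrywise bound $A_N^m(i,k)\le N^{-1}(a/\Lambda)^{m-1}$ for $m\ge1$ (because $A_N(i,\ell)\le1/N$ for all $\ell$, while $\sum_\ell A_N^{m-1}(\ell,k)=\big((A_N^{m-1})^{T}\bun\big)_k\le|||A_N|||_1^{\,m-1}\le(a/\Lambda)^{m-1}$), together with $\sum_k A_N^n(i,k)=(A_N^n\bun)_i\le(a/\Lambda)^n$; hence $\sum_k A_N^n(i,k)A_N^m(i,k)\le N^{-1}(a/\Lambda)^{n+m-1}$ for $n,m\ge1$ and $A_N^n(i,i)\le N^{-1}(a/\Lambda)^{n-1}$. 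Inserting these into $\mathbb{E}_\theta[R_i^2]=\sum_{n,m\ge1}\sum_k A_N^n(i,k)A_N^m(i,k)\,\mathbb{E}_\theta[g_n(k)g_m(k)]$ and $\mathbb{E}_\theta[M_iR_i]=\sum_{n\ge1}A_N^n(i,i)\int_t^{2t}\phi^{*n}(2t-u)\mathbb{E}_\theta[Z^{i,N}_u-Z^{i,N}_t]\,du$, and using the bounds of Step 2, I get $\mathbb{E}_\theta[R_i^2]\le Ct\sum_{n,m\ge1}\Lambda^{n+m}N^{-1}(a/\Lambda)^{n+m-1}\le Ct/N$ and $0\le\mathbb{E}_\theta[M_iR_i]\le Ct\sum_{n\ge1}\Lambda^nN^{-1}(a/\Lambda)^{n-1}\le Ct/N$, the series converging since $a=(1+\Lambda p)/2<1$. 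Summing over $i\le K$ and returning to the identity of Step 1 yields $\Delta^{N,K,212}_t\le\frac{N}{Kt^2}\cdot\frac{CKt}{N}=\frac{C}{t}$.

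The delicate point is the estimate for $\sum_{i\le K}\mathbb{E}_\theta[R_i^2]$: controlling $\sum_k A_N^n(i,k)A_N^m(i,k)$ by operator norms alone — say via $\big(\sum_k A_N^n(i,k)^2\big)^{1/2}\big(\sum_k A_N^m(i,k)^2\big)^{1/2}\le(a/\Lambda)^{n+m}$ — yields only $\mathbb{E}_\theta[R_i^2]\le Ct$, hence the useless $\Delta^{N,K,212}_t\le CN/t$. Recovering the missing factor $1/N$ requires the genuinely entrywise smallness $A_N^m(i,k)\le N^{-1}(a/\Lambda)^{m-1}$, legitimate on $\Omega_{N,K}$ because $A_N=N^{-1}\theta$ has all its entries $\le1/N$ while $|||A_N|||_1\le a/\Lambda$ keeps the geometric decay; simultaneously using both features inside the double sum over $n,m$ is the heart of the argument.
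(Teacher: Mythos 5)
Your proof is correct and follows essentially the same route as the paper: the paper makes the identical decomposition $U^{i,N}_{2t}-U^{i,N}_t=M_i+R^{i,N}_t$ with $\mathbb{E}_\theta[M_i^2]=\mathbb{E}_\theta[Z^{i,N}_{2t}-Z^{i,N}_t]$, and then simply cites \cite[Lemma 21]{A} for the two key bounds $|\mathbb{E}_\theta[M_iR^{i,N}_t]|\le Ct/N$ and $\mathbb{E}_\theta[(R^{i,N}_t)^2]\le Ct/N$. Your Steps 2--3 supply a self-contained and correct proof of exactly those cited bounds, and your closing observation --- that the factor $1/N$ comes from the entrywise bound $A_N^m(i,k)\le N^{-1}(a/\Lambda)^{m-1}$ combined with the $\ell^1$ operator norm, not from operator norms alone --- is precisely the point that makes the argument work.
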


\begin{proof}
We  write
$\mathbb{E}_{\theta}[\Delta_{t}^{N,K,212}]\le t^{-2}\frac{N}{K}\sum_{i=1}^{K}a_{i}$,  where  
$a_{i}=|\mathbb{E}_{\theta}[(U_{2t}^{i,N}-U_{t}^{i,N})^{2}-(Z_{2t}^{i,N}-Z_{t}^{i,N})]|$, and then
$$a_{i}=b_{i}+d_{i}
\quad \hbox{where} \quad  a_i=\mathbb{E}_{\theta}[(R_{t}^{i,N})^{2}] \quad \hbox{and} \quad
b_i=2\mathbb{E}_{\theta}[(M_{2t}^{i,N}-M_{t}^{i,N})R_{t}^{i,N}],
$$ 
where, recalling \eqref{ee2}, we have $U_{2t}^{i,N}-U_{t}^{i,N}=M_{2t}^{i,N}-M_{t}^{i,N}+R_{t}^{i,N}$, with
\begin{align*}
R_{t}^{i,N}=\sum_{n\ge 1}\int_{0}^{2t}\beta_{n}(t,2t,s)\sum_{j=1}^{N}A_{N}^{n}(i,j)M_{s}^{j,N}ds \quad \hbox{with}\quad
\beta_n(t,2t,s)=\phi^{\star n}(2t-s) - \phi^{\star n}(t-s).
\end{align*}
This uses that $\mathbb{E}_{\theta}[(M_{2t}^{i,N}-M_{t}^{i,N})^2]=\mathbb{E}_{\theta}[Z_{2t}^{i,N}-Z_{t}^{i,N}]$
by \eqref{ee3}.
By the proof of \cite[Lemma 21, lines 10 and 15]{A}, we have $b_{i}\le CtN^{-1}$ and $d_{i}\le CtN^{-1}$,
whence the conclusion.
\end{proof}

Before considering the term  $\Delta_{t}^{N,K,31}$,  we review \cite[Lemma 22]{A}
(observing that $\Omega_{N,K}\subset\Omega_{N}^{1}$). 

\begin{lemma}\label{1234}
Assume $H(q)$ for some $q\ge 1$. Then for all $t \ge 1$ and $k, l, a, b \in\{1,\dots ,N\},$  all $r, s, u, v 
\in[0,t]$, on the set $\Omega_{N,K}$ a.s,
\vip
(i) $|\mathrm{Cov}_{\theta}(Z_{r}^{k,N},Z_{s}^{l,N})|=|\mathrm{Cov}_{\theta}(U_{r}^{k,N},U_{s}^{l,N})|\le Ct(N^{-1}+\boldsymbol{1}_{\{k=l\}}),$ 
\vip
(ii) $|\mathrm{Cov}_{\theta}(Z_{r}^{k,N},M_{s}^{l,N})|=|\mathrm{Cov}_{\theta}(U_{r}^{k,N},M_{s}^{l,N})|\le Ct(N^{-1}+\boldsymbol{1}_{\{k=l\}}),$
\vip
(iii) $|\mathrm{Cov}_{\theta}(Z_{r}^{k,N},\int_{0}^{s}M_{\tau-}dM_{\tau}^{l,N})|=|\mathrm{Cov}_{\theta}(U_{r}^{k,N},\int_{0}^{s}M_{\tau-}dM_{\tau}^{l,N})|\le Ct^{\frac{3}{2}}(N^{-1}+\boldsymbol{1}_{\{k=l\}}),$
\vip
(iv) $|\mathbb{E_{\theta}}[M_{r}^{k,N}M_{s}^{k,N}M_{u}^{l,N}]|\le \frac{Ct}{N},$ if  $\#\{k,l\}=2,$
\vip
(v) $|\mathrm{Cov}_{\theta}(M_{r}^{k,N}M_{s}^{l,N},M_{u}^{a,N}M_{v}^{b,N})|=0,$ if  $\#\{k,l,a,b\}=4,$
\vip
(vi) $|\mathrm{Cov}_{\theta}(M_{r}^{k,N}M_{s}^{l,N},M_{u}^{a,N}M_{v}^{b,N})|\le Ct/N^{2},$  if  $\#\{k,a,b\}=3,$
\vip
(vii) $|\mathrm{Cov}_{\theta}(M_{r}^{k,N}M_{s}^{l,N},M_{u}^{a,N}M_{v}^{a,N})|\le CN^{-1}t^{\frac{3}{2}},$  if  $\#\{k,a\}=2,$
\vip
(viii) $|\mathrm{Cov}_{\theta}(M_{r}^{k,N}M_{s}^{l,N},M_{u}^{a,N}M_{v}^{b,N})|\le Ct^{2}.$
\end{lemma}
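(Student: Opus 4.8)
The plan is to reduce the statement to \cite[Lemma 22]{A}, which establishes precisely these eight bounds but on the event $\Omega_N^1$ in place of $\Omega_{N,K}$. Since $\Omega_{N,K}\subset \Omega_N^1$ by Lemma \ref{lo}, any estimate holding a.s.\ on $\Omega_N^1$ holds a.s.\ on $\Omega_{N,K}$, and this yields (i)--(viii) at once.

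For the reader's convenience I indicate the strategy behind such bounds. First I would expand, via \eqref{ee2}, each centered process as the convergent series $\boldsymbol{U}_t^N=\sum_{n\ge 0}\int_0^t \phi^{*n}(t-s)A_N^n\boldsymbol{M}_s^N\, ds$, so that every covariance involving the $U$'s becomes a double series of space-time integrals of covariances of the $M^{j,N}$'s against the kernels $\phi^{*n}$. On $\Omega_N^1$ one has $\Lambda|||A_N|||_r\le a<1$ for all $r$, together with $\int_0^t\phi^{*n}(t-s)\, ds\le \Lambda^n$ and the a priori bound $\E_\theta[Z_t^{i,N}]\le Ct$; moreover --- and this is the source of all the $N^{-1}$ gains --- one has the pathwise entrywise estimate $\Lambda^n A_N^n(k,j)\le CN^{-1}a^{n-1}$ for every $n\ge 1$, obtained by factoring out one entry $A_N(k,j_1)\le 1/N$ and bounding the remaining column sum of $A_N^{n-1}$ by $|||A_N^{n-1}|||_1\le (a/\Lambda)^{n-1}$. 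Combining these with the bracket identity \eqref{ee3}, which gives $\E_\theta[M_\sigma^{j,N}M_\tau^{j',N}]=\boldsymbol{1}_{\{j=j'\}}\E_\theta[Z^{j,N}_{\sigma\wedge\tau}]$, yields (i) and (ii) immediately: the $n=m=0$ term produces $\boldsymbol{1}_{\{k=l\}}$, every other term carries a factor $N^{-1}$ and a geometric weight, and the resulting series sums to $Ct(N^{-1}+\boldsymbol{1}_{\{k=l\}})$.

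For (iii)--(viii) one needs third and fourth conditional moments of the martingales. Here I would use the It\^o product formula for purely discontinuous martingales together with \eqref{ee3}: from $M_t^k M_t^l=\int_0^t M_{s-}^k\, dM_s^l+\int_0^t M_{s-}^l\, dM_s^k+\boldsymbol{1}_{\{k=l\}}Z_t^k$, taking $\E_\theta$ kills the stochastic integrals, and iterating this device reduces products of three or four martingales to integrals of lower-order moments against the (at most linearly growing) compensators of the $Z^{i,N}$. The number of distinct indices then controls how many factors $N^{-1}$ are extracted: four distinct indices give a vanishing cross-covariance in (v), since all cross-brackets are zero and no $Z$-term ever links the two pairs; three distinct indices give $N^{-2}$ in (vi) and $N^{-1}$ in (iv) and (vii), according to the multiplicities; (viii) is the crude Cauchy--Schwarz bound $Ct^2$ using $\E_\theta[(M_t^{i,N})^4]\le Ct^2$, itself a consequence of the Burkholder--Davis--Gundy inequality and \eqref{ee3}; and the powers $t^{3/2}$ in (iii) and (vii) come from pairing a second moment ($\sim t$) with the square root of a fourth moment ($\sim t$) via Cauchy--Schwarz applied to the stochastic-integral remainders.

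The main obstacle is the bookkeeping for (iii)--(viii): one must keep the $\phi^{*n}$-series geometrically convergent (through the operator-norm control on $\Omega_N^1$) while simultaneously extracting the correct power of $N^{-1}$ from each configuration of coinciding indices, and the repeated use of the product formula generates a large number of cross-terms that all have to be tracked carefully. Since exactly this computation is carried out in \cite[Lemma 22]{A}, I simply invoke that lemma, using Lemma \ref{lo} for the inclusion $\Omega_{N,K}\subset\Omega_N^1$.
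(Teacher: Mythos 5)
Your proposal is correct and matches the paper exactly: the paper does not reprove these bounds but simply quotes \cite[Lemma 22]{A}, noting the inclusion $\Omega_{N,K}\subset\Omega_{N}^{1}$, which is precisely your reduction. The additional sketch of the underlying computation is consistent with the cited argument but is not needed.
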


\begin{lemma}\label{31}
Assume $H(q)$ for some $q\ge$ 1. Then for $t\ge 1$ on  $\Omega_{N,K}$ a.s.,
$$
\mathbb{E_{\theta}}[(\Delta_{t}^{N,K,31})^{2}]\le \frac{CN^{2}}{tK^{2}}\sum_{i=1}^{K}\Big(\ell_{N}(i)-\bar{\ell}_{N}^{K}\Big)^{2}.
$$
\end{lemma}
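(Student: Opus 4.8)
The plan is to reduce everything to a second-moment computation on the centered processes $U^{i,N}$ and then invoke the covariance bound of Lemma \ref{1234}(i). First I would note that, since $U^{i,N}_t=Z^{i,N}_t-\mathbb{E}_\theta[Z^{i,N}_t]$ and the weights $\ell_N(i)-\bar\ell_N^K$ are $\sigma(\theta_{ij})$-measurable, the definition of $\Delta^{N,K,31}_t$ rewrites, on $\Omega_{N,K}$, as
$$
\Delta^{N,K,31}_t=\frac{2\mu N}{Kt}\Big|\sum_{i=1}^K\big(U^{i,N}_{2t}-U^{i,N}_t\big)\big(\ell_N(i)-\bar\ell_N^K\big)\Big|.
$$
Squaring, taking $\mathbb{E}_\theta$ (under which each $U^{i,N}$ is centered), and pulling the $\theta$-measurable weights out of the conditional expectation gives
$$
\mathbb{E}_\theta\big[(\Delta^{N,K,31}_t)^2\big]=\frac{4\mu^2N^2}{K^2t^2}\sum_{i,j=1}^K\big(\ell_N(i)-\bar\ell_N^K\big)\big(\ell_N(j)-\bar\ell_N^K\big)\,\mathrm{Cov}_\theta\big(U^{i,N}_{2t}-U^{i,N}_t,\,U^{j,N}_{2t}-U^{j,N}_t\big).
$$

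Next I would bound the covariance factor. Expanding $\mathrm{Cov}_\theta(U^{i,N}_{2t}-U^{i,N}_t,U^{j,N}_{2t}-U^{j,N}_t)$ into the four covariances $\mathrm{Cov}_\theta(U^{i,N}_r,U^{j,N}_s)$ with $r,s\in\{t,2t\}\subset[0,2t]$ and applying Lemma \ref{1234}(i) to each (legitimate since $\Omega_{N,K}\subset\Omega_N^1$, with the role of $t$ there played by $2t$, which only changes the constant), one obtains
$$
\big|\mathrm{Cov}_\theta\big(U^{i,N}_{2t}-U^{i,N}_t,U^{j,N}_{2t}-U^{j,N}_t\big)\big|\le Ct\big(N^{-1}+\boldsymbol{1}_{\{i=j\}}\big).
$$
Inserting this splits the estimate into a diagonal part, which is exactly $\frac{CN^2}{K^2t}\sum_{i=1}^K(\ell_N(i)-\bar\ell_N^K)^2$ — the desired bound — and an off-diagonal part. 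For the latter I would use Cauchy--Schwarz, $\big(\sum_{i=1}^K|\ell_N(i)-\bar\ell_N^K|\big)^2\le K\sum_{i=1}^K(\ell_N(i)-\bar\ell_N^K)^2$, producing a term of order $\frac{CN}{Kt}\sum_{i=1}^K(\ell_N(i)-\bar\ell_N^K)^2$, which is dominated by the diagonal one since $K\le N$ forces $\frac{N}{Kt}\le\frac{N^2}{K^2t}$. Summing the two pieces gives the claim.

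There is no real obstacle in this argument: it is essentially a bookkeeping exercise once Lemma \ref{1234}(i) is in hand. The only two points that require a moment's care are verifying that Lemma \ref{1234}(i) may indeed be applied at the larger time $2t$, and noticing that the off-diagonal contribution is genuinely subdominant thanks to $K\le N$. If one preferred not to cite Lemma \ref{1234}, the same covariance bound could be re-derived directly from the representations \eqref{ee2} and \eqref{ee3}, but that would be redundant here.
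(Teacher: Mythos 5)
Your proposal is correct and follows essentially the same route as the paper: the identical expansion of $\mathbb{E}_\theta[(\Delta_t^{N,K,31})^2]$ into a double sum of covariances of $U^{i,N}_{2t}-U^{i,N}_t$, followed by the bound $Ct(N^{-1}+\boldsymbol{1}_{\{i=j\}})$ from Lemma \ref{1234}(i). The only cosmetic difference is that you handle the off-diagonal cross terms by Cauchy--Schwarz whereas the paper uses $ab\le\frac12(a^2+b^2)$; both yield the same subdominant term controlled by $K\le N$.
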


\begin{proof}
By definition of $\Delta_{t}^{N,K,31},$
$$
\mathbb{E_{\theta}}[(\Delta_{t}^{N,K,31})^{2}]=\frac{4\mu^{2}N^{2}}{t^{2}K^{2}}\sum_{i,j=1}^{K}(\ell_{N}(i)-\bar{\ell}_{N}^{K})(\ell_{N}(j)-\bar{\ell}_{N}^{K})\mathrm{Cov}_{\theta}(U_{2t}^{i,N}-U_{t}^{i,N},U_{2t}^{j,N}-U_{t}^{j,N}).
$$
By Lemma \ref{1234} (i), we have
$
\mathrm{Cov}_{\theta}[U_{2t}^{i,N}-U_{t}^{i,N},U_{2t}^{j,N}-U_{t}^{j,N}]\le Ct(\boldsymbol{1}_{\{i=j\}}+\frac{1}{N}).
$
We deduce that
\begin{align*}
\mathbb{E}_{\theta}[(\Delta_{t}^{N,K,31})^{2}]&\le\frac{C\mu^{2}N^{2}}{t^{2}K^{2}}t
\sum_{i,j=1}^{K}\Big(\boldsymbol{1}_{\{i=j\}}+\frac{1}{N}\Big)\Big\{[\ell_{N}(i)-\bar{\ell}_{N}^{K}]^{2}+[\ell_{N}(j)-\bar{\ell}_{N}^{K}]^{2}\Big\}\\
&\le\frac{C}{t}\frac{N^{2}}{K^{2}}\sum_{i=1}^{K}\Big(\ell_{N}(i)-\bar{\ell}_{N}^{K}\Big)^{2}.
\end{align*}
We ,finally used that $K/N\leq 1$.
\end{proof}

Next, we deal with the term $\Delta_{t}^{N,K,211}$.
\begin{lemma}\label{211}
Assume $H(q)$ for some $q\ge$1. Then for all $t\ge 1$, a.s. on the set $\Omega_{N,K}$, we have
$$\mathbb{E_{\theta}}[(\Delta_{t}^{N,K,211})^{2}]\le \frac{CN^{2}}{Kt^{2}}.$$
\end{lemma}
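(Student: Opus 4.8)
The plan is to set $W_t^{i,N}:=U_{2t}^{i,N}-U_t^{i,N}$, so that
\[
\Delta_t^{N,K,211}=\frac{N}{Kt^2}\Big|\sum_{i=1}^K\big[(W_t^{i,N})^2-\mathbb{E}_\theta[(W_t^{i,N})^2]\big]\Big|,
\qquad
\mathbb{E}_\theta[(\Delta_t^{N,K,211})^2]=\frac{N^2}{K^2t^4}\sum_{i,i'=1}^K\mathrm{Cov}_\theta\big((W_t^{i,N})^2,(W_t^{i',N})^2\big).
\]
It therefore suffices to establish the per-pair estimate
\[
\boldsymbol{1}_{\Omega_{N,K}}\,\mathrm{Cov}_\theta\big((W_t^{i,N})^2,(W_t^{i',N})^2\big)\le Ct^2\Big(\boldsymbol{1}_{\{i=i'\}}+\frac1N\Big)\qquad\text{for all }i,i'\in\{1,\dots,K\},
\]
since then $\sum_{i,i'=1}^K\mathrm{Cov}_\theta(\cdots)\le CKt^2+CK^2t^2/N\le CKt^2$ because $K\le N$, which gives exactly $\mathbb{E}_\theta[(\Delta_t^{N,K,211})^2]\le CN^2/(Kt^2)$.

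For the per-pair estimate I would reuse the decomposition appearing in the proof of Lemma~\ref{212}, namely $W_t^{i,N}=(M_{2t}^{i,N}-M_t^{i,N})+R_t^{i,N}$ with $R_t^{i,N}=\sum_{n\ge1}\int_0^{2t}\beta_n(t,2t,s)\sum_{j=1}^NA_N^n(i,j)M_s^{j,N}\,ds$ and $\beta_n(t,2t,s)=\phi^{\star n}(2t-s)-\phi^{\star n}(t-s)$. On $\Omega_{N,K}$ one has $\int_0^{2t}|\beta_n(t,2t,s)|\,ds\le2\Lambda^n$ and $\sum_{j}A_N^n(i,j)\le|||A_N^n|||_\infty\le(a/\Lambda)^n$, so the kernel defining $R_t^{i,N}$ has total $(ds\otimes\text{index})$-mass bounded by $\sum_{n\ge1}2a^n<\infty$. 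Expanding the two squares, $\mathrm{Cov}_\theta((W_t^{i,N})^2,(W_t^{i',N})^2)$ becomes a fourfold combination, with bounded total coefficient, of the quantities $\mathrm{Cov}_\theta(M_r^{k,N}M_s^{l,N},M_u^{a,N}M_v^{b,N})$ (the only singular part being the ``$n=0$'' contribution, which pins one index to $i$ or $i'$ and one time to $t$ or $2t$). One then estimates exactly as in the full-observation analogue in \cite{A}: split according to the number of distinct indices among $\{k,l,a,b\}$ and invoke Lemma~\ref{1234} --- the four-distinct-index terms vanish by (v); the three-distinct-index terms are $O(t/N^2)$ or $O(t^{3/2}/N)$ by (vi)--(vii); and the terms with at most two distinct indices are $O(t^2)$ by (viii). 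When $i\ne i'$ the two $\delta$-kernels carry the distinct labels $i$ and $i'$, so the configurations that would only be controlled by the crude bound (viii) still possess genuine cross-index structure, and using $\langle M^{i,N},M^{i',N}\rangle\equiv0$ (a consequence of \eqref{ee3}) together with Lemma~\ref{1234}(i)--(iii) for the residual cross-covariances of $Z$ with $Z$, of $Z$ with $\int M\,dM$, and of $\int M\,dM$ with $\int M\,dM$, one sees each such contribution is in fact $O(t^{3/2}/N)\le Ct^2/N$. When $i=i'$ one simply bounds $\mathrm{Var}_\theta((W_t^{i,N})^2)\le\mathbb{E}_\theta[(W_t^{i,N})^4]$ and uses $\mathbb{E}_\theta[(M_{2t}^{i,N}-M_t^{i,N})^4]\le C\mathbb{E}_\theta[(Z_{2t}^{i,N}-Z_t^{i,N})^2]\le Ct^2$ (Burkholder--Davis--Gundy, Lemma~\ref{1234}(i) and Lemma~\ref{Zt}(i)) together with the analogous fourth-moment bound for $R_t^{i,N}$, which yields $O(t^2)$ and completes the per-pair estimate.

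The conclusion then follows from the summation displayed in the first paragraph. The main obstacle is entirely contained in the second paragraph: carrying out the combinatorial bookkeeping of the fourfold index sum and verifying that every off-diagonal configuration that is not annihilated by Lemma~\ref{1234}(v) genuinely gains a factor $1/N$ rather than only the crude $t^2$ of (viii). This is where the orthogonality $\langle M^{i,N},M^{i',N}\rangle\equiv0$ and the geometric smallness on $\Omega_{N,K}$ of the kernels defining $R_t^{i,N}$ are decisive; apart from the restriction of the outer sum to $i\le K$ --- which is precisely what replaces the factor $N$ by $K$ in the final bound --- the argument is the one used in \cite{A} for the case $K=N$.
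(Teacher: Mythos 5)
Your proposal is correct and follows essentially the same route as the paper: both reduce $\mathbb{E}_\theta[(\Delta_{t}^{N,K,211})^{2}]$ to the double sum of covariances $\mathrm{Cov}_\theta\big((U_{2t}^{i,N}-U_{t}^{i,N})^{2},(U_{2t}^{i',N}-U_{t}^{i',N})^{2}\big)$, expand each one via the decomposition $U=M+R$ into weighted combinations of $\mathrm{Cov}_\theta(M_r^{k,N}M_s^{l,N},M_u^{a,N}M_v^{b,N})$ with weight $(\mathbf{1}_{\{i=k\}}+CN^{-1})$ attached to each index (the bound the paper quotes from \cite[Lemma 24]{A}), and then invoke Lemma \ref{1234}(v)--(viii). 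The only difference is bookkeeping: you prove a per-pair bound $Ct^{2}(\mathbf{1}_{\{i=i'\}}+N^{-1})$ and sum afterwards, whereas the paper sums first and splits the global sum into the six terms $R_1^K,\dots,R_6^K$; both give $\sum_{i,i'}a_{ii'}\le CKt^{2}$ and hence the stated estimate.
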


\begin{proof}
First,  $\mathbb{E_{\theta}}[(\Delta_{t}^{N,K,211})^{2}]= \frac{N^{2}}{K^{2}t^{4}}\sum_{i,j=1}^{K}a_{ij}$,
where $a_{ij}=\mathrm{Cov}_{\theta}[(U_{2t}^{i,N}-U_{t}^{i,N})^{2},(U_{2t}^{j,N}-U_{t}^{j,N})^{2}]$.
Let $\Gamma_{k,l,a,b}(t)=\sup_{r,s,u,v\in [0,2t]}|\mathrm{Cov}_{\theta}(M_{r}^{k,N}M_{s}^{l,N},M_{u}^{a,N}M_{v}^{b,N})|$. 
By the proof of \cite[Lemma 24 lines 9 to 12]{A}, we have 
\begin{align*}
a_{ij}\le C\sum_{k,l,a,b=1}^{N}(\boldsymbol{1}_{\{i=k\}}+N^{-1})(\boldsymbol{1}_{\{i=l\}}+N^{-1})(\boldsymbol{1}_{\{j=a\}}+N^{-1})(\boldsymbol{1}_{\{j=b\}}+N^{-1})\Gamma_{k,l,a,b}(t).
\end{align*}
Hence,
$$
\sum_{i,j=1}^{K}a_{ij}\le C[R^{K}_{1}+R^{K}_{2}+R^{K}_{3}+R^{K}_{4}+R^{K}_{5}+R^{K}_{6}],
$$
where
\begin{align*}
R^{K}_{1}&=\frac{1}{N^4}\sum_{i,j=1}^{K}\sum_{k,l,a,b=1}^{N}\Gamma_{k,l,a,b}(t)=\frac{K^{2}}{N^{4}}\sum_{k,l,a,b=1}^{N}\Gamma_{k,l,a,b}(t),\\
R^{K}_{2}&=\frac{1}{N^3}\sum_{i,j=1}^{K}\sum_{k,l,a,b=1}^{N}\boldsymbol{1}_{\{i=k\}}\Gamma_{k,l,a,b=1}(t)=\frac{K}{N^{3}}\sum_{i=1}^{K}\sum_{l,a,b=1}^{N}\Gamma_{i,l,a,b}(t),\\
R^{K}_{3}&=\frac{1}{N^2}\sum_{i,j=1}^{K}\sum_{k,l,a,b=1}^{N}\boldsymbol{1}_{\{i=k\}}\boldsymbol{1}_{\{j=a\}}\Gamma_{k,l,a,b}(t)=\frac{1}{N^{2}}\sum_{k,a=1}^{K}\sum_{b,l=1}^{N}\Gamma_{k,l,a,b}(t),\\
R^{K}_{4}&=\frac{1}{N^2}\sum_{i,j=1}^{K}\sum_{k,l,a,b=1}^{N}\boldsymbol{1}_{\{i=k\}}\boldsymbol{1}_{\{i=l\}}\Gamma_{k,l,a,b}(t)=\frac{K}{N^{2}}\sum_{k=1}^{K}\sum_{a,b=1}^{N}\Gamma_{k,k,a,b}(t),\\
R^{K}_{5}&=\frac{1}{N}\sum_{i,j=1}^{K}\sum_{k,l,a,b=1}^{N}\boldsymbol{1}_{\{i=k\}}\boldsymbol{1}_{\{i=l\}}\boldsymbol{1}_{\{j=a\}}\Gamma_{k,l,a,b}(t)=\frac{1}{N}\sum_{k,a=1}^{K}\sum_{b=1}^{N}\Gamma_{k,k,a,b}(t),\\
R^{K}_{6}&=\sum_{i,j=1}^{K}\sum_{k,l,a,b=1}^{N}\boldsymbol{1}_{\{i=k\}}\boldsymbol{1}_{\{i=l\}}\boldsymbol{1}_{\{j=a\}}\boldsymbol{1}_{\{j=b\}}\Gamma_{k,l,a,b}(t)
=\sum_{k,a=1}^{K}\Gamma_{k,k,a,a}(t).
\end{align*}
By Lemma \ref{1234}-(v)-(viii), we  see that $\Gamma_{k,l,a,b}(t)\le Ct^{2}\boldsymbol{1}_{\{\#\{k,l,a,b\}<4\}}$,
so that 
$$
R^{K}_{1}\le Ct^{2}\frac{K^{2}}{N}, \quad R^{K}_{2}\le Ct^{2}\frac{K^{2}}{N}, \quad \hbox{and} \quad 
R^{K}_{3}\le Ct^{2}K.
$$ 
Also, from Lemma \ref{1234}-(vi)-(viii),  we have 
$\Gamma_{k,k,a,b}(t)\le C(\boldsymbol{1}_{\{\#\{k,a,b\}=3\}}N^{-2}t+\boldsymbol{1}_{\{\#\{k,a,b\}<3\}}t^{2})$, whence
$$
R_{4}^{K}\le C\Big(\frac{K^{2}}{N^{2}}t+\frac{K^{2}}{N}t^{2}\Big)\le C\frac{K^{2}}{N}t^{2}
\quad \hbox{and} \quad 
R_{5}^{K}\le C\Big(Kt^{2}+\frac{K^{2}}{N^{2}}t\Big)\le CKt^{2}. 
$$
Finally, notice that from Lemma \ref{1234}-(vii)-(viii), 
$\Gamma_{k,k,a,a}(t)\le C(\boldsymbol{1}_{\{\#\{k,a\}=2\}}N^{-1}t^{\frac{3}{2}}+\boldsymbol{1}_{\{\#\{k,a\}=1\}}t^{2})$, 
so that
$$
R_{6}^{K}\le C\Big(\frac{K^{2}}{N}t^{\frac{3}{2}}+Kt^{2}\Big)\le CKt^{2}.
$$ 
All in all, we deduce that $\sum_{i,j}^{K}a_{ij}\le CKt^{2}.$
\end{proof}

Then we can give prove of Theorem \ref{vv}.
\vip
\begin{proof}
Recalling that 
\begin{align*}
|\mathcal{V}_{t}^{N,K}-\mathcal{V}_{\infty}^{N,K}|=&\Delta_{t}^{N,K,1}+\Delta_{t}^{N,K,211}+\Delta_{t}^{N,K,212}+\Delta_{t}^{N,K,213}+\Delta_{t}^{N,K,22}\\
&+\Delta_{t}^{N,K,23}+\Delta_{t}^{N,K,31}+\Delta_{t}^{N,K,32},
\end{align*} 
Lemmas \ref{12345}, \ref{212}, \ref{31} and \ref{211} allow us to conclude that
\begin{align*}
\boldsymbol{1}_{\Omega_{N,K}}\mathbb{E}_{\theta}[|\mathcal{V}_{t}^{N,K}-\mathcal{V}_{\infty}^{N,K}|] &
\le C\Big(\frac{N}{t\sqrt{K}}+\frac{N}{K^{\frac{1}{2}}t^{\frac{3}{2}}}+\frac{N}{t^{q}}+\frac{N}{t^{2q}}
+\frac{N}{tK}+\frac{N}{K\sqrt{t}}\Big[\sum_{i=1}^{K}(\ell_{N}(i)-\bar{\ell}_{N}^{K})^{2}\Big]^{\frac{1}{2}}\Big) \\
&\le C\Big(\frac{N}{t\sqrt{K}}+\frac{N}{t^{q}}+\frac{N}{K\sqrt{t}}\|\boldsymbol{x}_{N}^{K}\|_{2}\Big)
\end{align*}
as desired.
\end{proof}

\begin{cor}\label{vmain}
Assume $H(q)$ for some $q > 3$. There exists some constants $C>0$ and $C'>0$ 
depending only on $p$, $\mu$, $\phi$, $q$ such that for all $\varepsilon\in (0,1)$, such that, for $t\geq 1$,
$$
P\Big(\Big|\mathcal{V}_{t}^{N,K}-\frac{\mu^{2}\Lambda^{2}p(1-p)}{(1-\Lambda p)^{2}}\Big|\ge\varepsilon\Big)\le CNe^{-C'K}+\frac{C}{\varepsilon}\Big(\frac{1}{\sqrt{K}}+\frac{N}{t\sqrt{K}}\Big).
$$
\end{cor}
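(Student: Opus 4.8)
The plan is to derive the corollary from the two hard results already in hand: Theorem~\ref{vv}, which says that on $\Omega_{N,K}$ the quantity $\mathcal{V}_t^{N,K}$ is close in conditional $L^1$-sense to the ``time-$\infty$'' object $\mathcal{V}_\infty^{N,K}=\mu^2\frac{N}{K}\|\boldsymbol{x}_N^K\|_2^2$, and Lemma~\ref{mainM2}, which says that $\frac{N}{K}\|\boldsymbol{x}_N^K\|_2^2$ is within $O(K^{-1/2})$ in $L^1$ of $\Lambda^2p(1-p)/(1-\Lambda p)^2$. Writing $L:=\mu^2\Lambda^2p(1-p)/(1-\Lambda p)^2$, the whole argument takes place on $\Omega_{N,K}$ (probability $\ge 1-CNe^{-cK}$ by Lemma~\ref{ONK}) and, where Lemma~\ref{mainM2} is invoked, on $\Omega_{N,K}\cap\cA_N$, the complement $\cA_N^c$ being absorbed using $P(\cA_N^c)\le C_\alpha N^{-\alpha}$ with $\alpha$ taken large (Lemma~\ref{lo}). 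The triangle inequality $|\mathcal{V}_t^{N,K}-L|\le|\mathcal{V}_t^{N,K}-\mathcal{V}_\infty^{N,K}|+|\mathcal{V}_\infty^{N,K}-L|$ organizes the estimate.

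First I would bound $\mathbb{E}[\boldsymbol{1}_{\Omega_{N,K}}|\mathcal{V}_\infty^{N,K}-L|]$. On $\Omega_{N,K}\cap\cA_N$ this is $\le\mu^2C/\sqrt K$ directly from Lemma~\ref{mainM2}. On $\Omega_{N,K}\cap\cA_N^c$ one uses that $\sup_i\ell_N(i)\le C$ on $\Omega_{N,K}$ (Lemma~\ref{lo}), whence $\|\boldsymbol{x}_N^K\|_2^2\le CK$ and $\mathcal{V}_\infty^{N,K}\le CN$; multiplying by $P(\cA_N^c)\le C_\alpha N^{-\alpha}$ with, say, $\alpha=3$ gives a contribution $\le C/\sqrt K$. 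Hence $\mathbb{E}[\boldsymbol{1}_{\Omega_{N,K}}|\mathcal{V}_\infty^{N,K}-L|]\le C/\sqrt K$.

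Next, taking $\mathbb{E}_\theta$ and then $\mathbb{E}$ in Theorem~\ref{vv} (note $\Omega_{N,K}$ and $\|\boldsymbol{x}_N^K\|_2$ are $\sigma((\theta_{ij}))$-measurable) yields
$\mathbb{E}[\boldsymbol{1}_{\Omega_{N,K}}|\mathcal{V}_t^{N,K}-\mathcal{V}_\infty^{N,K}|]\le C(\frac{N}{t\sqrt K}+\frac{N}{t^q})+\frac{CN}{K\sqrt t}\mathbb{E}[\boldsymbol{1}_{\Omega_{N,K}}\|\boldsymbol{x}_N^K\|_2]$. For the last expectation I would again split on $\cA_N$: on $\Omega_{N,K}\cap\cA_N$, Cauchy--Schwarz with the bound $\mathbb{E}[\boldsymbol{1}_{\Omega_{N,K}\cap\cA_N}\|\boldsymbol{x}_N^K\|_2^2]\le CK/N$ (which follows from Lemma~\ref{mainM2}) gives $\mathbb{E}[\boldsymbol{1}_{\Omega_{N,K}\cap\cA_N}\|\boldsymbol{x}_N^K\|_2]\le C\sqrt{K/N}$, and on $\cA_N^c$ the term is negligible as before. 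Thus $\frac{CN}{K\sqrt t}\cdot C\sqrt{K/N}=C\frac{\sqrt N}{\sqrt{Kt}}\le\frac{C}{2\sqrt K}(1+\tfrac Nt)$ since $\sqrt{N/t}\le\frac12(1+N/t)$, so it is absorbed into $\frac{1}{\sqrt K}+\frac{N}{t\sqrt K}$. Combining with the previous paragraph, $\mathbb{E}[\boldsymbol{1}_{\Omega_{N,K}}|\mathcal{V}_t^{N,K}-L|]\le C(\frac{1}{\sqrt K}+\frac{N}{t\sqrt K}+\frac{N}{t^q})$.

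Finally, Chebyshev's inequality and Lemma~\ref{ONK} give $P(|\mathcal{V}_t^{N,K}-L|\ge\varepsilon)\le P(\Omega_{N,K}^c)+\varepsilon^{-1}\mathbb{E}[\boldsymbol{1}_{\Omega_{N,K}}|\mathcal{V}_t^{N,K}-L|]\le CNe^{-C'K}+\frac{C}{\varepsilon}(\frac{1}{\sqrt K}+\frac{N}{t\sqrt K}+\frac{N}{t^q})$, and it remains to dispose of the $N/t^q$ term. Here I would use $q>3$ and $K\le N$: if $K\le t^{2(q-1)}$ then $\sqrt K\le t^{q-1}$, so $\frac{N}{t^q}\le\frac{N}{t\sqrt K}$ and the term is already absorbed; if $K>t^{2(q-1)}$ then $\sqrt N\ge\sqrt K>t^{q-1}\ge t$, hence $\frac{N}{t\sqrt K}\ge\frac{\sqrt N}{t}>1$, so the asserted right-hand side exceeds $1$ and the bound holds trivially. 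Relabelling constants gives the statement. I expect the only real ``obstacle'' to be bookkeeping: consistently passing between $\Omega_{N,K}$, $\Omega_{N,K}\cap\cA_N$ and their complements, and the elementary but slightly fiddly juggling of the $N/t^q$ and $\sqrt N/\sqrt{Kt}$ error terms; all the genuine difficulty is already packaged inside Theorem~\ref{vv} and Lemma~\ref{mainM2}.
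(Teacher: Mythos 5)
Your proposal is correct and follows essentially the same route as the paper: the triangle inequality through $\mathcal{V}_\infty^{N,K}$, Theorem~\ref{vv} plus Lemma~\ref{mainM2}, absorption of the cross term $\sqrt{N}/\sqrt{Kt}$ by AM--GM into $\frac{1}{\sqrt K}+\frac{N}{t\sqrt K}$, Chebyshev with Lemma~\ref{ONK}, and a case split to eliminate $N/t^q$. Your handling of $\cA_N$ versus $\cA_N^c$ is in fact slightly more careful than the paper's (which applies Lemma~\ref{mainM2} on $\Omega_{N,K}$ without comment), and your case split on $K$ versus $t^{2(q-1)}$ is a valid variant of the paper's split on $t$ versus $\sqrt K$.
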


\begin{proof}
By Theorem \ref{vv} and Lemma \ref{mainM2} (since $\mathcal{V}_{\infty}^{N,K}=\mu^2\frac{N}{K}
||\boldsymbol{x}^N_K||^2=
\mu^2\frac{N}{K}\sum_{i=1}^{K}
(\ell_{N}(i)-\bar{\ell}_{N}^{K})^{2}$), we have 
\begin{align*}
&\mathbb{E}\Big[\boldsymbol{1}_{\Omega_{N,K}}\Big|\mathcal{V}_{t}^{N,K}-\frac{\mu^{2}\Lambda^{2}p(1-p)}{(1-\Lambda p)^{2}}\Big|\Big]\\ 
&\le \mathbb{E}[\boldsymbol{1}_{\Omega_{N,K}}|\mathcal{V}_{t}^{N,K}-\mathcal{V}_{\infty}^{N,K}|]+\mu^{2}\mathbb{E}\Big[\boldsymbol{1}_{\Omega_{N,K}}\Big|\frac{N}{K}\sum_{i=1}^{K}\Big(\ell_{N}(i)-\bar{\ell}_{N}^{K}\Big)^{2}-\frac{\Lambda^{2}p(1-p)}{(1-\Lambda p)^{2}}\Big|\Big]\\ 
&\le C\mathbb{E}\Big[\boldsymbol{1}_{\Omega_{N,K}}\Big(\frac{N}{t\sqrt{K}}+\frac{N}{t^{q}}+\frac{N}{K\sqrt{t}}\|\boldsymbol{x}_{N}^{K}\|_{2}\Big)\Big] +\frac{C}{\sqrt{K}}\\
&\le C\Big(\frac{1}{\sqrt{K}}+\frac{\sqrt{N}}{\sqrt{tK}}+\frac{N}{t^{q}}+\frac{N}{t\sqrt{K}}\Big).
\end{align*}
By the classical inequality $\frac{N}{t\sqrt{K}}+\frac{1}{\sqrt{K}}\ge 2\frac{\sqrt{N}}{\sqrt{Kt}}$, we end with
$$\mathbb{E}\Big[\boldsymbol{1}_{\Omega_{N,K}}\Big|\mathcal{V}_{t}^{N,K}-\frac{\mu^{2}\Lambda^{2}p(1-p)}{(1-\Lambda p)^{2}}\Big|\Big]\le C\Big(\frac{N}{t^{q}}+\frac{N}{t\sqrt{K}}+\frac{1}{\sqrt{K}}\Big).$$  
Using Lemma \ref{ONK} and Chebyshev's inequality,  we conclude that
$$
P\Big(\Big|\mathcal{V}_{t}^{N,K}-\frac{\mu^{2}\Lambda^{2}p(1-p)}{(1-\Lambda p)^{2}}\Big|\ge\varepsilon\Big)\le CNe^{-CK}+\frac{C}{\varepsilon}\Big(\frac{1}{\sqrt{K}}+\frac{N}{t^{q}}+\frac{N}{t\sqrt{K}}\Big).
$$ 
Next, we get rid of the term $\frac{N}{t^{q}}$. 
We assume without loss of generality that $C\geq 1$. 
When $t\le \sqrt{K}$, then $\frac{N}{t\sqrt{K}}\ge 1$, so that 
$$P\Big(\Big|\mathcal{V}_{t}^{N,K}-\mu^{2}\Lambda^{2}p(1-p)/(1-\Lambda p)^{2}\Big|\ge\varepsilon\Big)\le 1\le CNe^{-CK}+\frac{C}{\varepsilon}\Big(\frac{1}{\sqrt{K}}+\frac{N}{t\sqrt{K}}\Big).$$ 
When now $t\ge\sqrt{K}$, then $\frac{N}{t\sqrt{K}}\ge \frac Nt \geq \frac{N}{t^{q}}$. 
So 
$$P\Big(\Big|\mathcal{V}_{t}^{N,K}-\mu^{2}\Lambda^{2}p(1-p)/(1-\Lambda p)^{2}\Big|\ge\varepsilon\Big)
\le CNe^{-CK}+\frac{C}{\varepsilon}\Big(\frac{1}{\sqrt{K}}+\frac{N}{t\sqrt{K}}\Big).$$ 
This completes the proof.
\end{proof}

\section{The third estimator in the subcritical case}
Recall that by definition,
\begin{gather*}
\mathcal{W}_{\Delta,t}^{N,K}=2\mathcal{Z}_{2\Delta,t}^{N,K}-\mathcal{Z}_{\Delta,t}^{N,K},\quad \mathcal{Z}_{\Delta,t}^{N,K}=\frac{N}{t}\sum_{i=\frac{t}{\Delta}+1}^{\frac{2t}{\Delta}}\Big(\bar{Z}_{i\Delta}^{N,K}-\bar{Z}_{(i-1)\Delta}^{N,K}-\Delta\varepsilon_{t}^{N,K}\Big)^{2},\\
\mathcal{X}_{\Delta,t}^{N,K}=\mathcal{W}_{\Delta,t}^{N,K}-\frac{N-K}{K}\varepsilon_t^{N,K}.
\end{gather*}
The goal of this section is to check that 
$\mathcal{X}_{\Delta,t}^{N,K}\simeq\frac{\mu}{(1-\Lambda p)^{3}}$, and more precisely to prove the following
estimate. 
\begin{theorem}\label{noname}
Assume $H(q)$ for some $q\geq 3$. Then a.s., for all $t\geq 4$ and all $\Delta \in [1,t/4]$
such that $t/(2\Delta)$ is a positive integer,
$$
\mathbb{E}\Big[\boldsymbol{1}_{\Omega_{N,K}}\Big|\mathcal{X}_{\Delta,t}^{N,K}-\frac{\mu}{(1-\Lambda p)^{3}}\Big|\Big]\le C\Big(\frac{N}{K}\sqrt{\frac{\Delta}{t}}+\frac{N^{2}}{K\Delta^{\frac{1}{2}(q+1)}}+\frac{Nt}{K\Delta^{\frac{q}{2}+1}}+\frac{N}{K\sqrt{Kt}}\Big).
$$ 
\end{theorem}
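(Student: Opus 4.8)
The plan is to imitate the treatment of the second estimator (Theorem~\ref{vv} combined with Lemma~\ref{mainM2}): prove a ``dynamic'' conditional estimate comparing $\mathcal{X}_{\Delta,t}^{N,K}$ with the matrix quantity $\mathcal{X}_{\infty,\infty}^{N,K}$, and then invoke Lemma~\ref{WWW}. Since $\mathcal{X}_{\Delta,t}^{N,K}=\mathcal{W}_{\Delta,t}^{N,K}-\tfrac{N-K}{K}\varepsilon_{t}^{N,K}$ and $\mathcal{X}_{\infty,\infty}^{N,K}=\mathcal{W}_{\infty,\infty}^{N,K}-\tfrac{\mu(N-K)}{K}\bar{\ell}_{N}^{K}$, on $\Omega_{N,K}$ one has
\[
\Big|\mathcal{X}_{\Delta,t}^{N,K}-\tfrac{\mu}{(1-\Lambda p)^{3}}\Big|\le\Big|\mathcal{W}_{\Delta,t}^{N,K}-\mathcal{W}_{\infty,\infty}^{N,K}\Big|+\tfrac{N}{K}\big|\varepsilon_{t}^{N,K}-\mu\bar{\ell}_{N}^{K}\big|+\Big|\mathcal{X}_{\infty,\infty}^{N,K}-\tfrac{\mu}{(1-\Lambda p)^{3}}\Big| .
\]
Multiplying by $\boldsymbol{1}_{\Omega_{N,K}}$ and taking expectations, the last term is $\le C/K$ by Lemma~\ref{WWW}, while the middle one is $\le\tfrac{N}{K}\mathbb{E}\big[\boldsymbol{1}_{\Omega_{N,K}}\mathbb{E}_{\theta}[|\varepsilon_{t}^{N,K}-\mu\bar{\ell}_{N}^{K}|^{2}]^{1/2}\big]\le C\big(\tfrac{N}{Kt^{q}}+\tfrac{N}{K\sqrt{Kt}}\big)$ by Lemma~\ref{emain}; for $q\ge 3$ and $1\le\Delta\le t/4$ both are easily dominated by the right-hand side of the statement (using $1\le K\le N$). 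Everything thus reduces to bounding $\mathbb{E}[\boldsymbol{1}_{\Omega_{N,K}}|\mathcal{W}_{\Delta,t}^{N,K}-\mathcal{W}_{\infty,\infty}^{N,K}|]$, where $\mathcal{W}_{\infty,\infty}^{N,K}=\frac{\mu N}{K^{2}}\sum_{j=1}^{N}(c_{N}^{K}(j))^{2}\ell_{N}(j)$ (cf.\ the proof of Lemma~\ref{WWW}).

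Next I would unfold $\mathcal{W}_{\Delta,t}^{N,K}=2\mathcal{Z}_{2\Delta,t}^{N,K}-\mathcal{Z}_{\Delta,t}^{N,K}$ and analyse $\mathcal{Z}_{\Delta',t}^{N,K}$ for $\Delta'\in\{\Delta,2\Delta\}$. Exactly as in the computation of $\Delta_{t}^{N,K,1}$ in Lemma~\ref{12345}(i), using $\sum_{i}(\bar{Z}_{i\Delta'}^{N,K}-\bar{Z}_{(i-1)\Delta'}^{N,K})=t\varepsilon_{t}^{N,K}$, one gets the exact identity $\mathcal{Z}_{\Delta',t}^{N,K}=\frac{N}{t}\sum_{i}\big(\bar{Z}_{i\Delta'}^{N,K}-\bar{Z}_{(i-1)\Delta'}^{N,K}-\Delta'\mu\bar{\ell}_{N}^{K}\big)^{2}-N\Delta'\big(\varepsilon_{t}^{N,K}-\mu\bar{\ell}_{N}^{K}\big)^{2}$, and the extra term, after the combination $2(\cdot)_{2\Delta}-(\cdot)_{\Delta}$, equals $-3N\Delta(\varepsilon_{t}^{N,K}-\mu\bar{\ell}_{N}^{K})^{2}$, with conditional expectation $\le CN\Delta(t^{-2q}+(tK)^{-1})$ by Lemma~\ref{emain} (dominated). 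Then I would write each block increment as $\bar{Z}_{i\Delta'}^{N,K}-\bar{Z}_{(i-1)\Delta'}^{N,K}-\Delta'\mu\bar{\ell}_{N}^{K}=\delta U_{i}+e_{i}$, with $\delta U_{i}:=\bar{U}_{i\Delta'}^{N,K}-\bar{U}_{(i-1)\Delta'}^{N,K}$ the centered part and $e_{i}:=\mathbb{E}_{\theta}[\bar{Z}_{i\Delta'}^{N,K}-\bar{Z}_{(i-1)\Delta'}^{N,K}]-\Delta'\mu\bar{\ell}_{N}^{K}$. Since the left endpoint of every block is $\ge t-2\Delta\ge t/2$, Lemma~\ref{Zt}(ii) with $r=1$ gives $|e_{i}|\le C\min\{1,((i-1)\Delta')^{1-q}\}$; expanding the square, the terms $\frac{N}{t}\sum_{i}e_{i}^{2}$ and (by Cauchy--Schwarz together with $\mathbb{E}_{\theta}[(\delta U_{i})^{2}]\le Ct/K$ from Lemma~\ref{eU}(ii)) $\frac{2N}{t}\sum_{i}|\delta U_{i}e_{i}|$ are controlled and feed into the $\frac{N^{2}}{K\Delta^{(q+1)/2}}$ and $\frac{Nt}{K\Delta^{q/2+1}}$ terms. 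What remains is the main term $\frac{N}{t}\sum_{i}(\delta U_{i})^{2}$.

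For the main term I would split $\frac{N}{t}\sum_{i}(\delta U_{i})^{2}=\frac{N}{t}\sum_{i}\mathrm{Var}_{\theta}(\delta U_{i})+\frac{N}{t}\sum_{i}\big[(\delta U_{i})^{2}-\mathrm{Var}_{\theta}(\delta U_{i})\big]$. The fluctuation term is estimated in conditional $L^{2}$: expanding each $\delta U_{i}$ through the representation \eqref{ee2} as a combination of the $M^{j,N}$ with coefficients bounded on $\Omega_{N,K}$ (by the $I_{K}$-refined operator norm bounds), one is led to $\mathrm{Cov}_{\theta}(M_{r}^{k,N}M_{s}^{l,N},M_{u}^{a,N}M_{v}^{b,N})$, and the case analysis of Lemma~\ref{1234}(v)--(viii) applies as in the proof of Lemma~\ref{211}; the new input is a decorrelation-across-blocks estimate, $\sum_{i'}|\mathrm{Cov}_{\theta}((\delta U_{i})^{2},(\delta U_{i'})^{2})|\le C(\Delta')^{2}/K^{2}$ on $\Omega_{N,K}$, which uses that $\delta U_{i'}$ sees the dynamics of block $i$'s window only through the fast-decaying Hawkes memory (finite $q$-th moment of $\phi$). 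This yields that the conditional second moment of $\frac{N}{t}\sum_{i}[(\delta U_{i})^{2}-\mathrm{Var}_{\theta}(\delta U_{i})]$ is $\le C\frac{N^{2}\Delta}{tK^{2}}$, hence a contribution $\le C\frac{N}{K}\sqrt{\Delta/t}$, the first term of the statement. For the conditional means, combining \eqref{ee2}, $[M^{i,N},M^{j,N}]_{t}=\boldsymbol{1}_{\{i=j\}}Z_{t}^{i,N}$ and Lemma~\ref{EZ} one expands $\mathrm{Var}_{\theta}(\delta U_{i})=\frac{\mu\Delta'}{K^{2}}\sum_{j}(c_{N}^{K}(j))^{2}\ell_{N}(j)+h_{N}^{K}+\mathrm{err}_{i}$, where $h_{N}^{K}$ is the $\Delta'$- and $i$-independent constant coming from the $-n\Lambda^{n}k$ term of Lemma~\ref{EZ} (the finite memory beyond the block), and $\mathrm{err}_{i}$ is a block-summable remainder governed by $\int_{0}^{\infty}s^{q}\phi(s)ds$. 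Summing over the $\sim t/\Delta'$ blocks, $\frac{N}{t}\sum_{i}\mathrm{Var}_{\theta}(\delta U_{i})=\mathcal{W}_{\infty,\infty}^{N,K}+\frac{N}{\Delta'}h_{N}^{K}+(\text{remainder})$; the Richardson combination $2(\cdot)_{2\Delta}-(\cdot)_{\Delta}$ cancels $\frac{N}{\Delta'}h_{N}^{K}$ (this is exactly the point of defining $\mathcal{W}$), and the remainders, after multiplication by $N/t$, summation, and passage from $\mathbb{E}_{\theta}$ to $\mathbb{E}$ via the matrix estimates of Lemmas~\ref{cc1},~\ref{cll} (and $P(\mathcal{A}_{N})\ge 1-C/N^{\alpha}$), contribute the remaining $\frac{N^{2}}{K\Delta^{(q+1)/2}}+\frac{Nt}{K\Delta^{q/2+1}}$ terms.

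The main obstacle is the second-order expansion of $\mathrm{Var}_{\theta}(\delta U_{i})$: one must isolate the exact $\Delta'$-linear coefficient $\frac{\mu}{K^{2}}\sum_{j}(c_{N}^{K}(j))^{2}\ell_{N}(j)$ — so that its block-sum is precisely $\mathcal{W}_{\infty,\infty}^{N,K}$ — and show that the rest splits into a genuinely $\Delta'$-independent part $h_{N}^{K}$ (which Richardson removes) and a summably small tail error of the right order; this requires carefully tracking the convolutions $\phi^{*n}$ and their tails past each block while keeping the extra factors $K/N$ supplied by the $I_{K}$-refined bounds on $\Omega_{N,K}$, and is the $K<N$ analogue of the corresponding step in \cite{A}. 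A secondary technical point, also mirroring \cite{A}, is the decorrelation-across-blocks estimate needed to gain the $\sqrt{\Delta/t}$ factor. Everything else — the algebraic identities, the use of Lemmas~\ref{emain},~\ref{Zt},~\ref{eU},~\ref{1234}, and the final verification that the accumulated errors add up to exactly the four stated terms — is routine.
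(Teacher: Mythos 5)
Your proposal is correct and follows essentially the same route as the paper: the same initial reduction to $|\mathcal{W}_{\Delta,t}^{N,K}-\mathcal{W}_{\infty,\infty}^{N,K}|$ plus the $\varepsilon_t^{N,K}$-term (Lemma \ref{emain}) plus $|\mathcal{X}_{\infty,\infty}^{N,K}-\mu/(1-\Lambda p)^3|$ (Lemma \ref{WWW}), and then the same four-way split of the block sum — the exact algebraic identity for replacing $\varepsilon_t^{N,K}$ by $\mu\bar\ell_N^K$, the bias of the block increments via Lemma \ref{Zt}(ii), the fluctuation term controlled through the martingale covariance estimates of Lemma \ref{1234} with decorrelation across blocks (the paper's Lemmas \ref{Udelta} and \ref{dk3}), and the expansion $\mathrm{Var}_\theta(\delta U_i)=\frac{\Delta'}{N}\mathcal{W}_{\infty,\infty}^{N,K}-\mathcal{Y}^{N,K}+r$ whose $\Delta'$-independent constant is killed by the combination $2(\cdot)_{2\Delta}-(\cdot)_{\Delta}$ (the paper's Lemmas \ref{UUW} and \ref{DNK4}). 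The only differences are organizational (you perform the Richardson combination before splitting into bias and fluctuation, while the paper treats $\mathcal{Z}_{\Delta,t}$ and $\mathcal{Z}_{2\Delta,t}$ separately), and the two technical points you flag as the main obstacles are exactly the ones the paper resolves in Lemmas \ref{UUW} and \ref{Udelta}.
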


In the whole section, we assume that $t\geq 4$ and that $\Delta \in [1,t/4]$ is such that
$t/(2\Delta)$ is a positive integer.
First, we recall that 
$\mathcal{W}^{N,K}_{\infty,\infty}:=\frac{\mu N}{K^{2}}\sum_{j=1}^{N}(c_{N}^{K}(j))^{2}\ell_{N}(j)$
and write
\begin{align*}
|\mathcal{X}_{\Delta,t}^{N,K}-&\mathcal{X}_{\infty,\infty}^{N,K}|\le |\mathcal{W}_{\Delta,t}^{N,K}-\mathcal{W}_{\infty,\infty}^{N,K}|+\frac{N-K}{K}\Big|\varepsilon_t^{N,K}-\bar{\ell}_N^K\Big|\\
\le& D_{\Delta,t}^{N,K,1}+2D_{2\Delta,t}^{N,K,1}+D_{\Delta,t}^{N,K,2}+2D_{2\Delta,t}^{N,K,2}+D_{\Delta,t}^{N,K,3}+2D_{2\Delta,t}^{N,K,3}+D_{\Delta,t}^{N,K,4}+\frac{N}{K}\Big|\varepsilon_t^{N,K}-\bar{\ell}_N^K\Big|,
\end{align*}
where
\begin{align*}
D_{\Delta,t}^{N,K,1}=&\frac{N}{t}\Big|\sum_{a=\frac{t}{\Delta}+1}^{\frac{2t}{\Delta}}\Big(\bar{Z}_{a\Delta}^{N,K}-\bar{Z}_{(a-1)\Delta}^{N,K}-\Delta\varepsilon_{t}^{N,K}\Big)^{2}-\sum_{a=\frac{t}{\Delta}+1}^{\frac{2t}{\Delta}}\Big(\bar{Z}_{a\Delta}^{N,K}-\bar{Z}_{(a-1)\Delta}^{N,K}-\Delta \mu\bar{\ell}_{N}^{K}\Big)^{2}\Big|, \\
D_{\Delta,t}^{N,K,2}=&\frac{N}{t}\Big|\sum_{a=\frac{t}{\Delta}+1}^{\frac{2t}{\Delta}}\Big(\bar{Z}_{a\Delta}^{N,K}-\bar{Z}_{(a-1)\Delta}^{N,K}-\Delta\mu\bar{\ell}_K^{K}\Big)^{2}\\
&-\sum_{a=\frac{t}{\Delta}+1}^{\frac{2t}{\Delta}}\Big(\bar{Z}_{a\Delta}^{N,K}-\bar{Z}_{(a-1)\Delta}^{N,K}-\mathbb{E}_{\theta}[\bar{Z}_{a\Delta}^{N,K}-\bar{Z}_{(a-1)\Delta}^{N,K}]\Big)^{2}\Big|,\\
D_{\Delta,t}^{N,K,3}=&\frac{N}{t}\Big|\sum_{a=\frac{t}{\Delta}+1}^{\frac{2t}{\Delta}}\Big(\bar{Z}_{a\Delta}^{N,K}-\bar{Z}_{(a-1)\Delta}^{N,K}-\mathbb{E}_{\theta}[\bar{Z}_{a\Delta}^{N,K}-\bar{Z}_{(a-1)\Delta}^{N,K}]\Big)^{2}\\
&-\mathbb{E}_{\theta}\Big[\sum_{a=\frac{t}{\Delta}+1}^{\frac{2t}{\Delta}}\Big(\bar{Z}_{a\Delta}^{N,K}-\bar{Z}_{(a-1)\Delta}^{N,K}-\mathbb{E}_{\theta}[\bar{Z}_{a\Delta}^{N,K}-\bar{Z}_{(a-1)\Delta}^{N,K}]\Big)^{2}\Big]\Big|,\\
\end{align*}
and finally
\begin{align*}
D_{\Delta,t}^{N,K,4}=&\Big|\frac{2N}{t}\mathbb{E}_{\theta}\Big[\sum_{a=\frac{t}{2\Delta}+1}^{\frac{t}{\Delta}}\Big(\bar{Z}_{2a\Delta}^{N,K}-\bar{Z}_{2(a-1)\Delta}^{N,K}-\mathbb{E}_{\theta}[\bar{Z}_{2a\Delta}^{N,K}-\bar{Z}_{2(a-1)\Delta}^{N,K}]\Big)^{2}\Big]\\&-\frac{N}{t}\mathbb{E}_{\theta}\Big[\sum_{a=\frac{t}{\Delta}+1}^{\frac{2t}{\Delta}}\Big(\bar{Z}_{a\Delta}^{N,K}-\bar{Z}_{(a-1)\Delta}^{N,K}
-\mathbb{E}_{\theta}[\bar{Z}_{a\Delta}^{N,K}-\bar{Z}_{(a-1)\Delta}^{N,K}]\Big)^{2}\Big]-\mathcal{W}^{N,K}_{\infty,\infty}\Big|.
\end{align*}

For the first term $D_{\Delta,t}^{N,K,1}$, we have the following lemma. 

\begin{lemma}\label{DNK1}
Assume $H(q)$ for some $q\ge 1$. Then a.s. on the set $\Omega_{N,K}$ $$\mathbb{E}_{\theta}[D_{\Delta,t}^{N,K,1}]\le C\Delta\Big(\frac{N}{t^{2q}}+\frac{N}{Kt}\Big).$$
 \end{lemma}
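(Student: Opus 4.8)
The plan is to show that $D_{\Delta,t}^{N,K,1}$ collapses to an exact expression, exactly as in the computation of $\Delta_{t}^{N,K,1}$ in Lemma~\ref{12345}(i), and then to invoke Lemma~\ref{emain}.

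First I would expand the difference of the two sums of squares via $x^{2}-y^{2}=(x-y)(x+y)$, with $x_{a}:=\bar{Z}_{a\Delta}^{N,K}-\bar{Z}_{(a-1)\Delta}^{N,K}-\Delta\varepsilon_{t}^{N,K}$ and $y_{a}:=\bar{Z}_{a\Delta}^{N,K}-\bar{Z}_{(a-1)\Delta}^{N,K}-\Delta\mu\bar{\ell}_{N}^{K}$. The crucial point is that $x_{a}-y_{a}=\Delta(\mu\bar{\ell}_{N}^{K}-\varepsilon_{t}^{N,K})$ does not depend on $a$, so it factors out of the sum $\sum_{a=t/\Delta+1}^{2t/\Delta}$. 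It then remains to compute $\sum_{a}(x_{a}+y_{a})=2\sum_{a}(\bar{Z}_{a\Delta}^{N,K}-\bar{Z}_{(a-1)\Delta}^{N,K})-(t/\Delta)\Delta(\varepsilon_{t}^{N,K}+\mu\bar{\ell}_{N}^{K})$; here I use that the inner sum telescopes to $\bar{Z}_{2t}^{N,K}-\bar{Z}_{t}^{N,K}=t\varepsilon_{t}^{N,K}$ by the very definition of $\varepsilon_{t}^{N,K}$, and that there are exactly $t/\Delta$ summands (an integer, since $t/(2\Delta)\in\mathbb{N}^{*}$), so that $\sum_{a}(x_{a}+y_{a})=t(\varepsilon_{t}^{N,K}-\mu\bar{\ell}_{N}^{K})$. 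Combining the two observations gives the identity
$$
D_{\Delta,t}^{N,K,1}=\frac{N}{t}\,\Big|\Delta(\mu\bar{\ell}_{N}^{K}-\varepsilon_{t}^{N,K})\cdot t(\varepsilon_{t}^{N,K}-\mu\bar{\ell}_{N}^{K})\Big|=N\Delta\big(\varepsilon_{t}^{N,K}-\mu\bar{\ell}_{N}^{K}\big)^{2}.
$$

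Finally, taking $\mathbb{E}_{\theta}$ and applying Lemma~\ref{emain} on $\Omega_{N,K}$,
$$
\boldsymbol{1}_{\Omega_{N,K}}\mathbb{E}_{\theta}[D_{\Delta,t}^{N,K,1}]=N\Delta\,\boldsymbol{1}_{\Omega_{N,K}}\mathbb{E}_{\theta}\big[(\varepsilon_{t}^{N,K}-\mu\bar{\ell}_{N}^{K})^{2}\big]\le C N\Delta\Big(\frac{1}{t^{2q}}+\frac{1}{tK}\Big),
$$
which is the asserted bound. There is essentially no obstacle here: the whole content is the exact cancellation produced by the telescoping sum together with the fact that $x_{a}-y_{a}$ is constant in $a$; the only thing requiring a little care is the bookkeeping of the number of summands so that the constant terms recombine correctly, after which the estimate is immediate from Lemma~\ref{emain}.
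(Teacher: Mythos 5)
Your proof is correct and follows essentially the same route as the paper: the paper's own argument consists precisely of the identity $D_{\Delta,t}^{N,K,1}=N\Delta(\mu\bar{\ell}_{N}^{K}-\varepsilon_{t}^{N,K})^{2}$ (obtained by the same telescoping cancellation you spell out) followed by an application of Lemma \ref{emain}. Your write-up simply makes explicit the algebra that the paper leaves implicit.
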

\begin{proof}
Recalling that $\varepsilon _{t}^{N,K}:=t^{-1}(\bar{Z}_{2t}^{N,K}-\bar{Z}_{t}^{N,K}),$ we have
\begin{align*}
D_{\Delta,t}^{N,K,1}=&\frac{N}{t}\Big|\sum_{a=\frac{t}{\Delta}+1}^{\frac{2t}{\Delta}}[\bar{Z}_{a\Delta}^{N,K}-\bar{Z}_{(a-1)\Delta}^{N,K}-\Delta\varepsilon_{t}^{N,K}]^{2}-\sum_{a=\frac{t}{\Delta}+1}^{\frac{2t}{\Delta}}[\bar{Z}_{a\Delta}^{N,K}-\bar{Z}_{(a-1)\Delta}^{N,K}-\Delta \mu\bar{\ell}_{N}^{K}]^{2}\Big|\\
=&N\Delta(\mu \bar{\ell}_{N}^{K}-\varepsilon_{t}^{N,K})^{2},
\end{align*}
Lemma \ref{emain} completes the proof.
\end{proof}

Next, we consider the term $D_{\Delta,t}^{N,K,2}$.

\begin{lemma}\label{DNK2}
Assume $H(q)$ for some $q\ge 1$. Then a.s. on the set $\Omega_{N,K}$, $$\mathbb{E}_{\theta}[D_{\Delta,t}^{N,K,2}]\le CNt^{1-q} $$
\end{lemma}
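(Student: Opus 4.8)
The plan is to expand the difference of squares inside $D_{\Delta,t}^{N,K,2}$ in the standard way, namely writing $A^2 - B^2 = (A-B)(A+B)$ with
$$A = \bar{Z}_{a\Delta}^{N,K}-\bar{Z}_{(a-1)\Delta}^{N,K}-\Delta\mu\bar{\ell}_{N}^{K}, \qquad B = \bar{Z}_{a\Delta}^{N,K}-\bar{Z}_{(a-1)\Delta}^{N,K}-\mathbb{E}_{\theta}[\bar{Z}_{a\Delta}^{N,K}-\bar{Z}_{(a-1)\Delta}^{N,K}],$$
so that $A - B = \mathbb{E}_{\theta}[\bar{Z}_{a\Delta}^{N,K}-\bar{Z}_{(a-1)\Delta}^{N,K}] - \Delta\mu\bar{\ell}_{N}^{K}$ is deterministic given $\theta$, while $A + B$ is (up to the same deterministic shift) twice the centered increment. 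Using Lemma~\ref{Zt}-(ii) — applied to each of $\boldsymbol{Z}_{a\Delta}^{N,K}$ and $\boldsymbol{Z}_{(a-1)\Delta}^{N,K}$ against $\mu \cdot (\text{time}) \cdot \boldsymbol{\ell}_N^K$ — the deterministic factor $A-B$ is controlled by something like $C\min\{1,((a-1)\Delta)^{1-q}\}$ divided by $K$ (after dividing by $K$ to pass from $\|\cdot\|_1$ on the vector to the average $\bar Z^{N,K}$), which since $a \geq t/\Delta + 1$ and $\Delta \leq t/4$ gives an upper bound of order $t^{1-q}$ on each term, and we are summing $t/\Delta$ of them.

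The second step is to bound $\mathbb{E}_\theta[|A+B|]$ for each block. I would write the centered increment $\bar{Z}_{a\Delta}^{N,K}-\bar{Z}_{(a-1)\Delta}^{N,K} - \mathbb{E}_\theta[\cdots] = \bar{U}_{a\Delta}^{N,K} - \bar{U}_{(a-1)\Delta}^{N,K}$ and use Lemma~\ref{eU}-(ii), which gives $\mathbb{E}_\theta[|\bar{U}_s^{N,K}|^2]^{1/2} \le C\sqrt{s}/\sqrt{K}$; since $s \le 2t$ throughout the relevant range, this is at most $C\sqrt{t}/\sqrt{K}$, and the remaining deterministic shift $A-B$ is even smaller. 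So $\mathbb{E}_\theta[|A+B|] \le C\sqrt{t}/\sqrt{K}$ uniformly over the blocks. Combining with the bound on $|A-B|$ and summing over the $t/\Delta$ blocks, together with the prefactor $N/t$, yields
$$\mathbb{E}_\theta[D_{\Delta,t}^{N,K,2}] \le \frac{N}{t}\cdot \frac{t}{\Delta}\cdot C\,\frac{t^{1-q}}{K}\cdot\frac{\sqrt t}{\sqrt K} \cdot (\text{something}),$$
but I expect a cleaner accounting: one should separate $\mathbb{E}_\theta[|(A-B)(A+B)|] \le \mathbb{E}_\theta[|A-B|^2]^{1/2}\mathbb{E}_\theta[|A+B|^2]^{1/2}$ via Cauchy--Schwarz, with $\mathbb{E}_\theta[|A-B|^2]^{1/2} \le C t^{1-q}/K$ and $\mathbb{E}_\theta[|A+B|^2]^{1/2}\le C\sqrt{t/K}$ (using that the deterministic shift is negligible compared to the stochastic part, or bounding it the same way). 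This gives each block a bound of order $t^{1-q}\sqrt{t}/(K^{3/2})$, times $t/\Delta$ blocks times $N/t$; but since the claimed bound is simply $CNt^{1-q}$, I suspect the intended route telescopes or uses that $\Delta \ge 1$ and keeps only the crudest factors — in particular $\mathbb{E}_\theta[|A+B|]$ may be bounded directly by $\Delta\mu\bar\ell_N^K + \mathbb{E}_\theta[\text{increment}]$, giving an $O(\Delta)$ bound per block and hence $\frac{N}{t}\cdot\frac{t}{\Delta}\cdot\Delta\cdot t^{1-q}/K \cdot (\text{factor})$; I would reconcile the constants during write-up to land exactly on $CNt^{1-q}$.

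The main obstacle I anticipate is getting the powers of $t$, $\Delta$, and $K$ to collapse to the stated $CNt^{1-q}$ without a spurious $1/K$ or $\Delta$-dependence — in other words, making sure the weakest of the two factors ($A-B$ versus $A+B$) is the one that controls the sum and that the other factor contributes only a bounded (or $t$-independent after multiplication) amount. The resolution should be that $A-B$ (deterministic, of size $\lesssim t^{1-q}$ per block, and crucially constant across all $t/\Delta$ blocks in the relevant regime since $((a-1)\Delta)^{1-q}\le t^{1-q}$) dominates, while $\mathbb{E}_\theta[|A+B|]$ is at most $O(\Delta + \sqrt{t/K})$; multiplying by $N/t$ and by $t/\Delta$ blocks, the $\Delta$ cancels and one is left with $N t^{1-q}$ times a bounded term, with the $\sqrt{t/K}$ contribution absorbed because it comes paired with an extra power of $t^{-q}$ that is more than enough to kill it. The only genuine input beyond bookkeeping is Lemma~\ref{Zt}-(ii), which supplies the polynomial-in-$t$ decay $t^{1-q}$ of the conditional-mean increment toward its asymptotic slope, and Lemma~\ref{eU}-(ii) for the fluctuation size; everything else is Cauchy--Schwarz and summation.
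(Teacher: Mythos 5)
Your proposal follows essentially the same route as the paper: expand $A^2-B^2=(A-B)(A+B)$, bound the deterministic factor $|A-B|$ by $Ct^{1-q}$ via Lemma~\ref{Zt}-(ii) (note that with $r=1$ that lemma gives $\|\cdot\|_1\le CK\min\{1,s^{1-q}\}$, so after dividing by $K$ the average is $C\min\{1,s^{1-q}\}$ with \emph{no} extra $1/K$ --- the $1/K$ you insert in the intermediate step is spurious, though you drop it in your final accounting), and bound $\mathbb{E}_\theta[|A+B|]$ by $C\Delta$ using the nonnegativity of the increments together with Lemma~\ref{Zt}-(i) and the boundedness of $\bar\ell_N^K$ on $\Omega_{N,K}$; summing the $t/\Delta$ blocks against the $N/t$ prefactor gives $CNt^{1-q}$. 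This is exactly the paper's proof, and the Cauchy--Schwarz detour through $\bar U$ that you contemplate is unnecessary.
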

\begin{proof}
First, we have 
\begin{align*}
D_{\Delta,t}^{N,K,2}=&
\frac{2N}{t}\Big|\sum_{a=\frac{t}{\Delta}+1}^{\frac{2t}{\Delta}}\Big(\Delta\mu\bar{\ell}_{N}^{K}-\mathbb{E}_\theta[\bar{Z}_{a\Delta}^{N,K}-\bar{Z}_{(a-1)\Delta}^{N,K}]\Big)\\
&\hskip3cm\Big(2(\bar{Z}_{a\Delta}^{N,K}-\bar{Z}_{(a-1)\Delta}^{N,K})-\mathbb{E}_\theta\Big[\bar{Z}_{a\Delta}^{N,K}-\bar{Z}_{(a-1)\Delta}^{N,K}\Big]-\Delta \mu \bar{\ell}_N^K\Big)\Big|,
\end{align*}
whence
\begin{align*}
\mathbb{E}_{\theta}[D_{\Delta,t}^{N,K,2}]&\le \frac{2N}{t}\sum_{a=\frac{t}{\Delta}+1}^{\frac{2t}{\Delta}}\Big|\Delta\mu\bar{\ell}_{N}^{K}-\mathbb{E}_\theta[\bar{Z}_{a\Delta}^{N,K}-\bar{Z}_{(a-1)\Delta}^{N,K}]\Big|\Big(\mathbb{E}_\theta\Big[\bar{Z}_{a\Delta}^{N,K}-\bar{Z}_{(a-1)\Delta}^{N,K}\Big]+\Delta \mu \bar{\ell}_N^K\Big).
\end{align*}
By Lemma \ref{Zt}-(i)-(ii) with $r=1$, since $(a-1)\Delta\ge t$, we conclude that on $\Omega_{N,K}$, a.s.,
$$\Big|\Delta\mu\bar{\ell}_{N}^{K}-\mathbb{E}_\theta[\bar{Z}_{a\Delta}^{N,K}-\bar{Z}_{(a-1)\Delta}^{N,K}]\Big|\le Ct^{1-q}
\quad\hbox{and} \quad 
\mathbb{E}_\theta\Big[\bar{Z}_{a\Delta}^{N,K}-\bar{Z}_{(a-1)\Delta}^{N,K}\Big]\le C\Delta \bar{\ell}_N^K+C\leq
C\Delta$$
since $\bar{\ell}_{N}^{K}$ is bounded on $\Omega_{N,K}$. The conclusion follows.
\end{proof}

Next we consider the term $D_{\Delta,t}^{N,K,4}$.

\begin{lemma}\label{UUW}
Assume $H(q)$ for some $q\ge 1$. 
On $\Omega_{N,K}$, there is a $\sigma((\theta_{ij})_{i,j=1...N})$-measurable finite random variable
$\mathcal{Y}^{N,K}$ such that for all $1\le \Delta\le \frac{x}{2}$, a.s. on $\Omega_{N,K}$,
$$
\mathrm{Var}_{\theta}(\bar{U}_{x+\Delta}^{N,K}-\bar{U}_{x}^{N,K})=\frac{\Delta}{N}\mathcal{W}_{\infty,\infty}^{N,K}-\mathcal{Y}^{N,K}+r_{N,K}(x,\Delta),
$$
where, for some constant $C$, $|r_{N,K}(x,\Delta)|\le Cx\Delta^{-q}K^{-1}$.
\end{lemma}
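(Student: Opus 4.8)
The plan is to write $\bar U^{N,K}_t$ as an exact stochastic integral against $\boldsymbol M^N$, compute the conditional variance of the increment $\bar U^{N,K}_{x+\Delta}-\bar U^{N,K}_x$ by orthogonality of the $M^{j,N}$'s, and then isolate the term linear in $\Delta$ from a $(x,\Delta)$-free correction and a polynomially small remainder. First I would rewrite \eqref{ee2}. Setting $g_n(w):=\int_0^w\phi^{*n}(u)\,du$ (so $g_0\equiv1$, $g_n(\infty)=\Lambda^n$), a stochastic Fubini applied to $\int_0^t\phi^{*n}(t-s)\boldsymbol M^N_s\,ds=\int_0^t g_n(t-r)\,d\boldsymbol M^N_r$ gives, on $\Omega_{N,K}$ (where $\Lambda|||A_N|||_r\le a<1$ makes every series converge),
$$
\bar U^{N,K}_t=\int_0^t \boldsymbol h_N(t-r)^T\,d\boldsymbol M^N_r,\qquad \boldsymbol h_N(w)^T:=\frac1K\,\boldsymbol 1_K^T\sum_{n\ge0}g_n(w)A_N^n,
$$
whose $j$-th component tends to $\frac1K(\boldsymbol 1_K^TQ_N)_j=\frac1Kc_N^K(j)$ as $w\to\infty$. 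Splitting the increment into an integral over $[0,x]$ with kernel $\boldsymbol h_N(x+\Delta-\cdot)-\boldsymbol h_N(x-\cdot)$ and an integral over $(x,x+\Delta]$ with kernel $\boldsymbol h_N(x+\Delta-\cdot)$, which are uncorrelated under $\mathbb E_\theta$ (the second is a martingale increment), and using \eqref{ee3} together with $\frac{d}{du}\mathbb E_\theta[Z^{j,N}_u]=\zeta_j(u):=\mathbb E_\theta[\lambda^{j,N}_u]$, I obtain the exact identity
$$
\mathrm{Var}_\theta(\bar U^{N,K}_{x+\Delta}-\bar U^{N,K}_x)=\sum_{j=1}^N\int_0^\Delta h_N(v)_j^2\,\zeta_j(x+\Delta-v)\,dv+\sum_{j=1}^N\int_0^x\bigl(h_N(v+\Delta)_j-h_N(v)_j\bigr)^2\zeta_j(x-v)\,dv=:P_1+P_2.
$$

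The analytic input, all on $\Omega_{N,K}$, is as follows. Writing $e_j(v):=h_N(v)_j-\frac1Kc_N^K(j)=-\frac1K\sum_{n\ge1}\bigl(\int_v^\infty\phi^{*n}\bigr)(\boldsymbol 1_K^TA_N^n)_j$ and combining $\int_v^\infty\phi^{*n}\le\Lambda^n$, the moment bound $\int_v^\infty\phi^{*n}(w)\,dw\le v^{-q}\int_0^\infty w^q\phi^{*n}(w)\,dw\le Cn^q\Lambda^{n-1}v^{-q}$, and $\Lambda^n|||I_KA_N^n|||_1\le(K/N)a^n$, one gets $|e_j(v)|\le CN^{-1}\min\{1,v^{-q}\}$ and, crucially, $|h_N(v+\Delta)_j-h_N(v)_j|\le C/N$ --- the $n=0$ term cancels in the difference, giving a gain of the factor $K/N$ over the crude bound $C/K$. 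From Lemma \ref{EZ}, similarly $\zeta_j(u)=\mu\ell_N(j)+\rho_j(u)$ with $\mu\le\zeta_j\le C$ and $|\rho_j(u)|\le C\min\{1,u^{-q}\}$; also $\ell_N(j)\le C$ by Lemma \ref{lo}, and $\sum_j(\tfrac1Kc_N^K(j))^2\ell_N(j)=\tfrac1{\mu N}\mathcal W^{N,K}_{\infty,\infty}$ by definition of $\mathcal W^{N,K}_{\infty,\infty}$. Finally, $\int_0^\infty|e_j(v)|\,dv$, $\int_0^\infty e_j(v)^2\,dv$ and $\int_0^\infty\bigl(h_N(v)_j^2-(\tfrac1Kc_N^K(j))^2\bigr)dv$ are all finite: after a Fubini each is dominated by $\sum_n|(\boldsymbol 1_K^TA_N^n)_j|\int_0^\infty w\phi^{*n}(w)\,dw<\infty$, which requires only $\int_0^\infty s\phi(s)\,ds<\infty$, hence $H(q)$ with $q\ge1$.

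Plugging in is then routine. In $P_1$, since $x+\Delta-v\ge x\ge2\Delta$, replacing $\zeta_j$ by $\mu\ell_N(j)$ costs $O(x^{-q}\Delta/K)$, while $\mu\sum_j\ell_N(j)\int_0^\Delta h_N(v)_j^2\,dv=\mu\Delta\sum_j(\tfrac1Kc_N^K(j))^2\ell_N(j)+\mu\sum_j\ell_N(j)\int_0^\infty\bigl(h_N(v)_j^2-(\tfrac1Kc_N^K(j))^2\bigr)dv-\mu\sum_j\ell_N(j)\int_\Delta^\infty(\cdots)dv$, the first term being $\tfrac\Delta N\mathcal W^{N,K}_{\infty,\infty}$ and the last tail $O(\Delta^{1-q}/K)$. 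In $P_2$ I expand $(h_N(v+\Delta)_j-h_N(v)_j)^2=(e_j(v+\Delta)-e_j(v))^2$, replace $\zeta_j(x-v)$ by $\mu\ell_N(j)$ at cost $O(x^{-q}/N)+O(x^{1-2q}/N)$ (splitting the $v$-integral at $x/2$ and using $|e_j(v+\Delta)-e_j(v)|\le C/N$ resp. $\le Cv^{-q}/N$), extend $\int_0^x$ to $\int_0^\infty$ (tail $O(x^{1-2q}/N)$), and use $\int_0^\infty(e_j(v+\Delta)-e_j(v))^2dv=\int_0^\infty e_j(v)^2dv+\int_\Delta^\infty e_j(v)^2dv-2\int_0^\infty e_j(v)e_j(v+\Delta)dv$, the last two terms being $O(\Delta^{1-2q}/N^2)$ and $O(\Delta^{-q}/(NK))$ per $j$. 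Setting
$$
\mathcal Y^{N,K}:=-\mu\sum_{j=1}^N\ell_N(j)\Bigl[\int_0^\infty\bigl(h_N(v)_j^2-(\tfrac1Kc_N^K(j))^2\bigr)dv+\int_0^\infty e_j(v)^2\,dv\Bigr],
$$
which is $\sigma((\theta_{ij})_{i,j})$-measurable, finite on $\Omega_{N,K}$, and independent of $x$ and $\Delta$, yields exactly $\mathrm{Var}_\theta(\bar U^{N,K}_{x+\Delta}-\bar U^{N,K}_x)=\tfrac\Delta N\mathcal W^{N,K}_{\infty,\infty}-\mathcal Y^{N,K}+r_{N,K}(x,\Delta)$, and each of the error terms collected above is $\le Cx\Delta^{-q}/K$ using only $1\le K\le N$, $1\le\Delta\le x/2$ and the elementary inequalities $\Delta^{1-q}\le x\Delta^{-q}$, $x^{-q}\Delta\le x\Delta^{-q}$, $x^{1-2q}N^{-1}\le x\Delta^{-q}K^{-1}$, $\Delta^{-q}K^{-1}\le x\Delta^{-q}K^{-1}$.

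The main obstacle is precisely the term $P_2$: since for $v$ close to $x$ the weight $\zeta_j(x-v)$ is not close to $\mu\ell_N(j)$, the various remainders in $P_2$ can be absorbed into $Cx\Delta^{-q}K^{-1}$ only because of the improved bound $|h_N(v+\Delta)_j-h_N(v)_j|\le C/N$; with just the naive estimate $C/K$ one would be left with an unwanted $N/K^2$-type contribution. A secondary technical point is to make sure the three infinite-horizon integrals defining $\mathcal Y^{N,K}$ converge (and the tails $\int_\Delta^\infty$ are estimated) via the first-moment bound $\int_0^\infty w\phi^{*n}(w)\,dw\le Cn\Lambda^{n-1}$ rather than by integrating the crude $v^{-q}$ bound, so that the argument works uniformly down to $q=1$.
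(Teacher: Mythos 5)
Your proof is correct, and it reaches the same expansion as the paper by a genuinely different organization of the computation. The paper stays with the representation \eqref{ee2} as a double series in $(m,n)$ of time-convolutions of $\boldsymbol{M}^N$ against $\beta_m,\beta_n$, evaluates the variance through \eqref{ee3} as a double integral of $\mathbb{E}_\theta[Z^{j,N}_{r\wedge s}]$, and then imports wholesale the expansion $\gamma_{m,n}(x,x+\Delta)=\Delta\Lambda^{m+n}-\kappa_{m,n}\Lambda^{m+n}+\varepsilon_{m,n}(x,x+\Delta)$ from \cite[Lemma 15 (iii)]{A}; the constant $\mathcal{Y}^{N,K}$ then appears as the double series $\mu\sum_{m,n}\kappa_{m,n}\Lambda^{m+n}K^{-2}\sum_{i,k,j}A_N^m(i,j)A_N^n(k,j)\ell_N(j)$. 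You instead collapse the double series at the outset by a pathwise Fubini into a single deterministic kernel $\boldsymbol{h}_N$, split the increment into an $\mathcal{F}_x$-measurable part and a martingale increment, and apply the isometry with density $\zeta_j=\mathbb{E}_\theta[\lambda^{j,N}]$; the linear term, the constant $\mathcal{Y}^{N,K}$ (now in closed integral form in $h_N$ and $e_j$), and the remainder are then read off from tail estimates on $g_n$ that you re-derive from the $q$-th and first moments of $\phi^{*n}$ rather than quoting \cite[Lemma 15]{A}. What your route buys is self-containedness and a transparent identification of where each piece comes from (in particular the role of the cancellation of the $n=0$ term in $h_N(v+\Delta)-h_N(v)$, which is exactly the paper's gain from $|||I_KA_N|||_1\le CK/N$ over the crude $1/K$); what the paper's route buys is brevity, since the delicate $\Delta$-asymptotics of the double integrals are already packaged in the cited lemma. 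Your bookkeeping of the error terms and their absorption into $Cx\Delta^{-q}K^{-1}$ under $1\le\Delta\le x/2$, $K\le N$ is accurate, including the point that the infinite-horizon integrals defining $\mathcal{Y}^{N,K}$ and the tails $\int_\Delta^\infty$ must be controlled via the first-moment identity $\int_0^\infty w\phi^{*n}(w)\,dw=n\Lambda^{n-1}\int_0^\infty w\phi(w)\,dw$ so that the argument survives at $q=1$.
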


\begin{proof}
Recalling (\ref{ee2}), we write
$$
\bar{U}_{x+\Delta}^{N,K}-\bar{U}_{x}^{N,K}
=\sum_{n\ge 0}\int_{0}^{x+\Delta}\beta_{n}(x,x+\Delta,s)\frac{1}{K}\sum_{i=1}^{K}\sum_{j=1}^{N}A_{N}^{n}(i,j)M_{s}^{j,N}ds,
$$
where 
$\beta_n(x,x+\Delta,s)=\phi^{\star n}(x+\Delta-s) 
- \phi^{\star n}(x-s).$
Set $V_{x,\Delta}^{N,K}=\mathrm{Var}_{\theta}(\bar{U}_{x+\Delta}^{N,K}-\bar{U}_{x}^{N,K})$. Recall  that  $\mathbb{E}[M_{s}^{i,N}M_{t}^{j,N}]=\boldsymbol{1}_{\{i=j\}}\mathbb{E}_{\theta}[Z^{i,N}_{s\wedge t}]$, see (\ref{ee3}). We thus have
\begin{align*}
V_{x,\Delta}^{N,K}\!=\!&\sum_{m,n\ge 0}\!\int_{0}^{x+\Delta}\!\!\!\int_{0}^{x+\Delta}\!\!\!\beta_{m}(x,x+\Delta,r)\beta_{n}(x,x+\Delta,s)\frac{1}{K^{2}}\sum_{i,k=1}^{K}\sum_{j=1}^{N}A_{N}^{m}(i,j)A_{N}^{n}(k,j)
\mathbb{E}_{\theta}[Z^{j,N}_{s\wedge r}]drds.
\end{align*}
In view of \cite[Lemma 28, Step 2]{A}, we have $\mathbb{E}_{\theta}[Z_{s}^{j,N}]=\mu\ell_{N}(j)s-X_{j}^{N}+R_{j}^{N}(s),$
where
\begin{align*}
X_j^N=& \mu\kappa \sum_{n\geq 0} n \Lambda^n \sum_{l=1}^N A_N^n(j,l)\quad \hbox{and}\quad
R_j^N(s) = \mu \sum_{n \geq 0} \varepsilon_n(s)  \sum_{l=1}^N A_N^n(j,l).
\end{align*}
Recall that $\kappa$ and $\varepsilon_n(s)$ were defined in Lemma \ref{EZ}. Also, there is a constant $C$
such that, for all $j=1,...,N$, we have $0\le X_{j}^{N}\le C$ and $|R_{j}^{N}(s)|\le C(s^{1-q}\wedge 1)$.
Then we can write that $V_{x,\Delta}^{N,K}=I-M+Q$,
where
\begin{align*}
I\!=\!&\sum_{n,m\ge 0}\!\!\int_{0}^{x+\Delta}\!\!\!\int_{0}^{x+\Delta}\!\!\!\beta_{n}(x,x+\Delta,s)\beta_{m}(x,x+\Delta,r)\frac{1}{K^{2}}\!\sum_{i,k=1}^{K}\!\sum_{j=1}^{N}\!A_{N}^{m}(i,j)A^{n}_{N}(k,j)\mu \ell_{N}(j)(r\wedge s)drds.\\
M\!=\!&\sum_{n,m\ge 0}\int_{0}^{x+\Delta}\int_{0}^{x+\Delta}\beta_{n}(x,x+\Delta,s)\beta_{m}(x,x+\Delta,r)\frac{1}{K^{2}}\sum_{i,k=1}^{K}\sum_{j=1}^{N}A_{N}^{m}(i,j)A^{n}_{N}(k,j)X_{j}^{N}drds.\\
Q\!=\!&\sum_{n,m\ge 0}\int_{0}^{x+\Delta}\!\!\int_{0}^{x+\Delta}\!\!
\beta_{n}(x,x+\Delta,s)\beta_{m}(x,x+\Delta,r)\frac{1}{K^{2}}\sum_{i,k=1}^{K}\sum_{j=1}^{N}A_{N}^{m}(i,j)A^{n}_{N}(k,j)R_{j}^{N}(r\wedge s)drds.
\end{align*}

First, we consider $M$. Using that $|\int_{0}^{x+\Delta}\beta_{n}(x,x+\Delta,r)dr|\le Cn^{q}\Lambda^{n}x^{-q}$,
see \cite[Lemma 15 (ii)]{A} and that 
$X_{j}^{N}$ is bounded by some constant not depending on  $t$, we conclude that on $\Omega_{N,K}$,
\begin{align*}
|M|\le& C\sum_{m,n\ge 0}m^{q}n^{q}\Lambda^{m+n}x^{-2q}K^{-2}\sum_{i,k=1}^{K}\sum_{j=1}^{N}A_{N}^{m}(i,j)A_{N}^{n}(k,j)\\
\leq & Cx^{-2q}NK^{-2}\sum_{m,n\ge 1}m^{q}n^{q}\Lambda^{m+n}|||I_{K}A_{N}^n|||_{1}|||I_KA_{N}^m|||_{1}\\
\le& Cx^{-2q}NK^{-2}\sum_{m,n\ge 1}m^{q}n^{q}\Lambda^{m+n}|||I_{K}A_{N}|||_{1}^{2}|||A_{N}|||_{1}^{m+n-2}\\
\le& Cx^{-2q}N^{-1}\le Cx\Delta^{-q}K^{-1}.
\end{align*}

Next, we consider $Q.$ We write
\begin{align*}
|Q|\le& C\sum_{m,n\ge 1}\int_{0}^{x+\Delta}\int_{0}^{x+\Delta}\Big|\beta_{m}(x,x+\Delta,r)\Big|\Big|\beta_{n}(x,x+\Delta,s)\Big|\\
&\hskip5cm\frac{N}{K^{2}}|||I_{K}A_{N}|||_{1}^{2}|||A_{N}|||_{1}^{m+n-2}[(r\wedge s)^{1-q}\wedge 1]drds\\
&+2C\sum_{m\ge 0}\int_{0}^{x+\Delta}\int_{0}^{x+\Delta}\Big|\beta_{0}(x,x+\Delta,s)\Big|\Big|\beta_{m}(x,x+\Delta,r)\Big|\frac{1}{K}|||I_{K}A^m_{N}|||_{1}[(r\wedge s)^{1-q}\wedge 1]drds\\
\le& Q_{1}+Q_{2}+2Q_{3}+2Q_{4}.
\end{align*}
where, using that $x-\Delta\ge \frac{x}{2}$ and that $(r\wedge s)^{1-q}\le x^{1-q}$ if $r\wedge s\ge x-\Delta$,
\begin{align*}
Q_{1}=&\frac{C}{x^{q-1}}\sum_{m,n\ge 1}\int_{x-\Delta}^{x+\Delta}\int_{x-\Delta}^{x+\Delta}\Big|\beta_{m}(x,x+\Delta,r)\Big|\Big|\beta_{n}(x,x+\Delta,s)\Big|\frac{N}{K^{2}}|||I_{K}A_{N}|||_{1}^{2}|||A_{N}|||_{1}^{m+n-2}drds,\\
Q_{2}=&C\sum_{m,n\ge 1}\int_{0}^{x-\Delta}\int_{0}^{x+\Delta}\Big|\beta_{m}(x,x+\Delta,r)\Big|\Big|\beta_{n}(x,x+\Delta,s)\Big|\frac{N}{K^{2}}|||I_{K}A_{N}|||_{1}^{2}|||A_{N}|||_{1}^{m+n-2}drds,\\
Q_3=& \frac{C}{x^{q-1}}\sum_{m\ge 0}\int_{0}^{x+\Delta}\int_{x-\Delta}^{x+\Delta}\Big|\beta_{0}(x,x+\Delta,s)\Big|\Big|\beta_{m}(x,x+\Delta,r)\Big|\frac{1}{K}|||I_{K}A^m_{N}|||_{1}drds,\\
Q_4=&C\sum_{m\ge 0}\int_{0}^{x+\Delta}\int_{0}^{x-\Delta}\Big|\beta_{0}(x,x+\Delta,s)\Big|\Big|\beta_{m}(x,x+\Delta,r)\Big|\frac{1}{K}|||I_{K}A^m_{N}|||_{1}drds.
\end{align*}
In view of \cite[Lemma 15-(ii)]{A}, we have the inequalities 
$\int_0^{x+\Delta} |\beta_n(x,x+\Delta,s)| ds \leq 2 \Lambda^n$
and $\int_0^{x-\Delta}|\beta_m(x,x+\Delta,r)|dr \leq Cm^q \Lambda^m \Delta^{-q}$. Hence, on $\Omega_{N,K}$, 
\begin{align*}
Q_{1}&\le Cx^{1-q}\sum_{m,n\ge 1}\Lambda^{m+n}NK^{-2}|||I_{K}A_{N}|||_{1}^{2}|||A_{N}|||_{1}^{m+n-2}\le CN^{-1}x^{1-q}\le Cx\Delta^{-q}K^{-1},\\
 Q_{2}&\le C\Delta^{-q}\sum_{m,n\ge 1}m^{q}\Lambda^{m+n}NK^{-2}|||I_{K}A_{N}|||_{1}^{2}|||A_{N}|||_{1}^{m+n-2}
\le C\Delta^{-q}N^{-1}\le Cx\Delta^{-q}K^{-1}.
\end{align*}
Since furthermore $|\beta_0(x,x+\Delta,s)|=|\delta_{\{s=x+\Delta\}}-
\delta_{\{s=x\}}| \leq \delta_{\{s=x+\Delta\}}+\delta_{\{s=x\}}$, we have
\begin{align*}
Q_{3}&\le Cx^{1-q}\sum_{m\ge 0}\Lambda^{m}K^{-1}|||I_{K}A^m_{N}|||_{1}\le Cx^{1-q}K^{-1}\le Cx\Delta^{-q}K^{-1},\\
Q_{4}&\le C\Delta^{-q}\sum_{m\ge 0}m^{q}\Lambda^{m}K^{-1}|||I_{K}A^m_{N}|||_{1}
\le C \Delta^{-q}K^{-1} \le C x \Delta^{-q}K^{-1}.
\end{align*}
All in all, on $\Omega_{N,K}$, we have $Q\le Cx\Delta^{-q}K^{-1}$.

\vip

Finally we consider  $I$. 
We recall from \cite[Lemma 15 (iii)]{A} that there are $0\leq \kappa_{m,n}\leq (m+n)\kappa$
and a function $\varepsilon_{m,n}:(0,\infty)^2\mapsto \R$ satisfying $|\varepsilon_{m,n}(t,t+\Delta)|\leq 
C (m+n)^q \Lambda^{m+n} t \Delta^{-q}$ such that
\begin{align*}
\gamma_{m,n}(x,x+\Delta)=&\int_{0}^{x+\Delta}\int_{0}^{x+\Delta}(s\wedge u)\beta_{m}(x,x+\Delta,s)\beta_{n}(x,x+\Delta,u)duds\\
=&\Delta \Lambda^{m+n}-\kappa_{m,n}\Lambda^{m+n}+\varepsilon_{m,n}(x,x+\Delta).
\end{align*}
Then we can write $I$ as:
$$
I=\mu \sum_{m,n\ge 0}\gamma_{m,n}(x,x+\Delta)\frac{1}{K^{2}}\sum_{i,k=1}^{K}\sum_{j=1}^{N}A^{m}_{N}(i,j)A_{N}^{n}(k,j)\ell_{N}(j)=I_{1}-I_{2}+I_{3},
$$
where
\begin{align*}
I_{1}=&\mu \Delta\sum_{m,n\ge 0}\Lambda^{m+n}\frac{1}{K^{2}}\sum_{i,k=1}^{K}\sum_{j=1}^{N}A_{N}^{m}(i,j)A_{N}^{n}(k,j)\ell_{N}(j),\\
I_{2}=&\mu\sum_{m,n\ge 0}\kappa_{m,n}\Lambda^{m+n}\frac{1}{K^{2}}\sum_{i,k=1}^{K}\sum_{j=1}^{N}A_{N}^{m}(i,j)A_{N}^{n}(k,j)\ell_{N}(j),\\
I_{3}=&\mu\sum_{m,n\ge 0}\varepsilon_{m,n}(x,x+\Delta)\frac{1}{K^{2}}\sum_{i,k=1}^{K}\sum_{j=1}^{N}A_{N}^{m}(i,j)A_{N}^{n}(k,j)\ell_{N}(j).
\end{align*}
Recalling that $\mathcal{W}_{\infty,\infty}^{N,K}:=\frac{\mu N}{K^{2}}\sum_{j=1}^{N}(c_{N}^{K}(j))^{2}\ell_{N}(j)$ by definition
and that $\sum_{m\ge 0}\Lambda^{m}A_{N}^{m}(i,j)=Q_{N}(i,j)$,
\begin{align*}
I_{1}&=\mu \Delta\sum_{m,n\ge 0}\Lambda^{m+n}\frac{1}{K^{2}}\sum_{i,k}^{K}\sum_{j=1}^{N}A_{N}^{m}(i,j)A_{N}^{n}(k,j)\ell_{N}(j)\\
&=\mu\Delta \frac{1}{K^{2}}\sum_{j=1}^{N}\Big(c_{N}^{K}(j)\Big)^{2}\ell_{N}(j)=\Delta\frac{1}{N}\mathcal{W}_{\infty,\infty}^{N,K}.
\end{align*}
Next, we set $\mathcal{Y}^{N,K}=I_{2}$. It is obvious that $\mathcal{Y}^{N,K}$ is a $(\theta_{ij})_{i,j=1...N}$ measurablefunction and well-defined on $\Omega_{N,K}$.
Finally, using that $\varepsilon_{m,n}(x,x+\Delta)\le C(m+n)^{q}\Lambda^{m+n}x\Delta^{-q}$ and that
$\ell_N$ is bounded on $\Omega_{N,K}$ (we have to treat separately the case $n=0$ or $m=0$),
\begin{align*}
I_3 \leq & C\frac{x}{K^2\Delta^q} \sum_{m\geq 0} m^{q} \Lambda^{m} \sum_{i,k=1}^K A_N^m(k,i) 
+ C\frac{x N}{\Delta^q K^{2}}\sum_{m,n\ge 1}(n+m)^{q}\Lambda^{m+n}|||I_{K}A_{N}^n|||_{1}|||I_K A_{N}^m|||_{1}\\
\leq & C\frac{x}{K \Delta^q} \sum_{m\geq 0} m^{q} \Lambda^{m} |||I_K A_N^m|||_1 +  C\frac{x N}{\Delta^q K^{2}}\sum_{m,n\ge 1}(n+m)^{q}\Lambda^{m+n}|||I_{K}A_{N}|||_{1}^{2}|||A_{N}|||_{1}^{m+n-2}\\
\leq & C \frac{x}{N \Delta^q},
\end{align*}
still on $\Omega_{N,K}$. All in all, we have verified that
$V^{N,K}_{x,\Delta}=I-M+Q$, with 
$$
|M|+|Q|+|I-\Delta N^{-1}\mathcal{W}_{\infty,\infty}^{N,K}+\mathcal{Y}^{N,K}|\le Cx\Delta^{-q}K^{-1},
$$
which completes the proof.
\end{proof}

Next, we  consider the term $D_{\Delta,t}^{N,K,4}$.

\begin{lemma}\label{DNK4}
Assume $H(q)$ for some $q\ge 1$. Then $a.s.$ on $\Omega_{N,K}$, for $1\le \Delta\le \frac{t}{4}$, we have:
$$
\mathbb{E}_{\theta}[D_{\Delta,t}^{N,K,4}]\le C\frac{Nt}{K\Delta^{1+q}}.
$$
\end{lemma}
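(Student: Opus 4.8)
The plan is to reduce everything to Lemma \ref{UUW}. First I would note that $\bar{Z}^{N,K}_{s}-\Et[\bar{Z}^{N,K}_{s}]=\bar{U}^{N,K}_{s}$ and that $\Et[\bar{U}^{N,K}_{s}]=0$, so that every summand appearing inside the two conditional expectations defining $D_{\Delta,t}^{N,K,4}$ is simply a conditional variance of an increment of $\bar{U}^{N,K}$: the first term equals $\frac{2N}{t}\sum_{a=t/(2\Delta)+1}^{t/\Delta}\mathrm{Var}_{\theta}(\bar{U}^{N,K}_{2a\Delta}-\bar{U}^{N,K}_{2(a-1)\Delta})$ and the second equals $\frac{N}{t}\sum_{a=t/\Delta+1}^{2t/\Delta}\mathrm{Var}_{\theta}(\bar{U}^{N,K}_{a\Delta}-\bar{U}^{N,K}_{(a-1)\Delta})$. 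Thus $D_{\Delta,t}^{N,K,4}$ is a fixed linear combination of increment-variances, minus $\mathcal{W}^{N,K}_{\infty,\infty}$.

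Next I would check that Lemma \ref{UUW} applies to each of these increments. In the first sum the increment runs over $[2(a-1)\Delta,2a\Delta]$, so in the notation of Lemma \ref{UUW} one has $x=2(a-1)\Delta$ and step $2\Delta$; since $a\geq t/(2\Delta)+1$ we get $x\geq t$, and the hypothesis $2\Delta\leq x/2$ follows from $x\geq t\geq 4\Delta$ — this is exactly where $\Delta\leq t/4$ is used. In the second sum, similarly $x=(a-1)\Delta\geq t$ and the relevant step is $\Delta\leq x/2$. In both sums $x\leq 2t$. Lemma \ref{UUW} then gives, on $\Omega_{N,K}$ and for a step $\Delta'\in\{\Delta,2\Delta\}$, $\mathrm{Var}_{\theta}(\bar{U}^{N,K}_{x+\Delta'}-\bar{U}^{N,K}_{x})=\frac{\Delta'}{N}\mathcal{W}^{N,K}_{\infty,\infty}-\mathcal{Y}^{N,K}+r_{N,K}(x,\Delta')$ with $|r_{N,K}(x,\Delta')|\leq Cx/(K(\Delta')^{q})\leq Ct/(K\Delta^{q})$, since $x\leq 2t$ and $\Delta'\geq\Delta$. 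Crucially $\mathcal{Y}^{N,K}$ does not depend on $x$ or $\Delta'$.

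Summing the first expansion over its $t/(2\Delta)$ indices produces $\frac{t}{N}\mathcal{W}^{N,K}_{\infty,\infty}-\frac{t}{2\Delta}\mathcal{Y}^{N,K}$ plus a remainder bounded by $(t/(2\Delta))\cdot Ct/(K\Delta^{q})$; summing the second over its $t/\Delta$ indices produces $\frac{t}{N}\mathcal{W}^{N,K}_{\infty,\infty}-\frac{t}{\Delta}\mathcal{Y}^{N,K}$ plus a remainder of the same size. Multiplying the first by $2N/t$, the second by $N/t$, and subtracting, the $\mathcal{Y}^{N,K}$-terms cancel (both equal $\frac{N}{\Delta}\mathcal{Y}^{N,K}$) and the main terms give $2\mathcal{W}^{N,K}_{\infty,\infty}-\mathcal{W}^{N,K}_{\infty,\infty}=\mathcal{W}^{N,K}_{\infty,\infty}$, which cancels the $-\mathcal{W}^{N,K}_{\infty,\infty}$ in the definition of $D_{\Delta,t}^{N,K,4}$. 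Only $\frac{2N}{t}$ and $\frac{N}{t}$ times the two remainders survive, each of order $\frac{N}{t}\cdot\frac{t}{\Delta}\cdot\frac{t}{K\Delta^{q}}=\frac{Nt}{K\Delta^{q+1}}$, which is the claimed bound.

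The computation is otherwise routine; the only thing that requires care is the bookkeeping, i.e. verifying that the coefficients $2N/t$, $N/t$ and the index ranges $[t/(2\Delta)+1,t/\Delta]$ and $[t/\Delta+1,2t/\Delta]$ are tuned precisely so that both the leading $\mathcal{W}^{N,K}_{\infty,\infty}$-term and the a priori non-negligible $\mathcal{Y}^{N,K}$-term telescope away, leaving only the controlled remainder $r_{N,K}$ from Lemma \ref{UUW}. I do not anticipate any genuine analytic obstacle beyond this.
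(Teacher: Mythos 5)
Your proposal is correct and follows essentially the same route as the paper: rewrite each summand of $D_{\Delta,t}^{N,K,4}$ as $\mathrm{Var}_{\theta}$ of an increment of $\bar{U}^{N,K}$, apply Lemma \ref{UUW} with $x=2(a-1)\Delta$ (step $2\Delta$) and $x=(a-1)\Delta$ (step $\Delta$), observe that the $\mathcal{W}^{N,K}_{\infty,\infty}$ and $\mathcal{Y}^{N,K}$ contributions cancel exactly, and bound the surviving remainders by $CNt/(K\Delta^{1+q})$. The verification of the hypotheses $2\Delta\le x/2$ via $x\ge t\ge 4\Delta$ is also exactly the paper's argument.
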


\begin{proof}
Recalling that $U_{t}^{i,N}=Z_{t}^{i,N}-\mathbb{E}_{\theta}[Z_{t}^{i,N}]$, we see that
\begin{align*}
D_{\Delta,t}^{N,K,4}=\Big|\frac{2N}{t}\sum_{a=t/(2\Delta)+1}^{t/\Delta}\mathrm{Var}(\bar{U}_{2a\Delta}^{N,K}-\bar{U}_{2(a-1)\Delta}^{N,K})-\frac{N}{t}\sum_{a=t/\Delta+1}^{2t/\Delta}\mathrm{Var}(\bar{U}_{a\Delta}^{N,K}-\bar{U}_{(a-1)\Delta}^{N,K})-\mathcal{W}_{\infty,\infty}^{N,K}\Big|.
\end{align*}
By Lemma \ref{UUW}, we have 
$$\mathrm{Var}_{\theta}(\bar{U}_{x+\Delta}^{N,K}-\bar{U}_{x}^{N,K})=\frac{\Delta}{N}\mathcal{W}_{\infty,\infty}^{N,K}-\mathcal{Y}^{N,K}+r_{N,K}(x,\Delta).$$
Since $a\in \{t/(2\Delta)+1,...,t/\Delta\}$, $x=2(a-1)\Delta\ge t$ satisfies $2\Delta\le \frac{x}{2}$ and for $a\in \{t/\Delta+1,...,2t/\Delta\}$, $x=(a-1)\Delta\ge t$ satisfies $\Delta\le x/2$. Then we conclude that
\begin{align*}
D_{\Delta,t}^{N,K,4}=&\Big|\frac{2N}{t}\sum_{a=t/(2\Delta)+1}^{t/\Delta}\Big[\frac{2\Delta}{N}\mathcal{W}_{\infty,\infty}^{N,K}-\mathcal{Y}^{N,K}+r_{N,K}(2(a-1)\Delta,2\Delta)\Big]\\&-\frac{N}{t}\sum_{a=\frac{t}{\Delta}+1}^{2t/\Delta}\Big[\frac{\Delta}{N}\mathcal{W}_{\infty,\infty}^{N,K}-\mathcal{Y}^{N,K}+r_{N,K}((a-1)\Delta,\Delta)\Big]-\mathcal{W}_{\infty,\infty}^{N,K}\Big|\\
=&\Big|\frac{2N}{t} \sum_{a=t/(2\Delta)+1}^{t/\Delta}r_{N,K}(2(a-1)\Delta,2\Delta)-\frac{N}{t}\sum_{a=t/\Delta+1}^{2t/\Delta}r_{N,K}((a-1)\Delta,\Delta)\Big|.
\end{align*}
But $|r_{N,K}(x,\Delta)|\le Cx\Delta^{-q}K^{-1}$, whence finally
$$
D_{\Delta,t}^{N,K,4}\le C\frac{N}{t}\frac{t}{\Delta}\Big(\frac t{\Delta^{q}K}\Big)=\frac{CN t}{K\Delta^{1+q}}
$$
as desired.
\end{proof}

To treat the last term $D_{\Delta,t}^{N,K,3}$, we need this following Lemma.

\begin{lemma}\label{Udelta}
Assume $H(q)$ for some $q\ge 1$. On the set $\Omega_{N,K}$,  for  all $t,x,\Delta\ge 1$, we have
\begin{equation}\label{tp1}
\mathrm{Var}\Big[\Big(\bar{U}_{x+\Delta}^{N,K}-\bar{U}_{x}^{N,K}\Big)^{2}\Big]\le C\Big(\frac{\Delta^{2}}{K^{2}}+\frac{t^{2}}{K^{2}\Delta^{4q}}\Big) \quad \hbox{if} \quad \frac{t}{2}\le x-\Delta\le x+\Delta\le 2t
\end{equation}
and
\begin{align}
\label{covU}
&\mathrm{Cov}_{\theta}\Big((\bar{U}_{x+\Delta}^{N,K}-\bar{U}_{x}^{N,K})^{2},(\bar{U}_{y+\Delta}^{N,K}-\bar{U}_{y}^{N,K})^{2}\Big)\le C\Big(\frac{\sqrt{t}}{K\Delta^{q-1}}+\frac{t^{2}}{K^{2}\Delta^{4q}}+\frac{\sqrt{t}}{K^{2}\Delta^{q-\frac{3}{2}}}\Big) \\
&\hskip6cm \hbox{if} \quad\frac{t}{2}\le y-\Delta\le y+\Delta\le x-2\Delta\le x+\Delta\le 2t. \notag
\end{align}
\end{lemma}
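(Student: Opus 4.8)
The plan is to reduce both estimates to moment bounds on the martingales $M^{j,N}$, along the lines of \cite[Lemmas 24 and 28]{A}, but for the $K$-averaged increments. Write $W_{x,\Delta}:=\bar U_{x+\Delta}^{N,K}-\bar U_x^{N,K}$. Starting from \eqref{ee2} and a stochastic Fubini step, a convenient representation is
\[
W_{x,\Delta}=\frac1K\sum_{j=1}^N\int_0^{x+\Delta}H_j^{x,\Delta}(s)\,dM_s^{j,N},\qquad
H_j^{x,\Delta}(s)=\sum_{n\ge0}\Big(\sum_{i=1}^K A_N^n(i,j)\Big)\int_s^{x+\Delta}\beta_n(x,x+\Delta,u)\,du,
\]
with $\beta_n(x,x+\Delta,u)=\phi^{*n}(x+\Delta-u)-\phi^{*n}(x-u)$. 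Two kernel facts I would isolate first and use throughout: (a) on $\Omega_{N,K}$, $\sup_s|H_j^{x,\Delta}(s)|\le C(\indiq_{\{j\le K\}}+K/N)$, since the $n=0$ term survives only for $j\le K$ and only on $[x,x+\Delta]$, while $\sum_{n\ge1}|||I_KA_N^n|||_1\Lambda^n\le CK/N$ because $\Lambda|||I_KA_N|||_1\le(K/N)a$ and $\Lambda|||A_N|||_1\le a<1$; (b) for $s\le y+\Delta\le x-2\Delta$ one has $\int_s^{x+\Delta}\beta_n(x,x+\Delta,u)\,du=\int_{x-s}^{x+\Delta-s}\phi^{*n}(v)\,dv$, of modulus at most $Cn^q\Lambda^n\Delta^{-q}$ by the bound $\int_0^\infty v^q\phi^{*n}(v)\,dv\le Cn^q\Lambda^n$ that follows from $H(q)$, hence $\sup_{s\le y+\Delta}|H_j^{x,\Delta}(s)|\le CK/(N\Delta^q)$ for every $j$, and similarly $H_j^{y,\Delta}$ decays like a power of the distance to time $y$. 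Point (b) is the source of all the $\Delta^{-q}$-factors in \eqref{covU}.

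Next I would expand $W_{x,\Delta}^2=K^{-2}\sum_{j,j'}(\int H_j^{x,\Delta}dM^{j,N})(\int H_{j'}^{x,\Delta}dM^{j',N})$, so that the variance in \eqref{tp1} and the covariance in \eqref{covU} become $K^{-4}$ times a fourfold sum of conditional covariances of products of two stochastic integrals, indexed by $(j,j',k,k')$. Since the $(M^{j,N})_j$ are independent given $(\theta_{ij})$, centered, and satisfy $[M^{i,N},M^{j,N}]=\indiq_{\{i=j\}}Z^{i,N}$ (see \eqref{ee3}), any term with one index distinct from the other three vanishes, and for a matched pair $\{j=j',\,k=k',\,j\ne k\}$ the covariance again vanishes by independence; only the matched pairs $j=k,\ j'=k'$ (and the symmetric $j=k',\ j'=k$) and the diagonal $j=j'=k=k'$ survive. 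For a matched pair, $\mathbb{E}_\theta[\int H_j^{x,\Delta}H_j^{y,\Delta}\,d[M^{j,N}]]=\int_0^{y+\Delta}H_j^{x,\Delta}(s)H_j^{y,\Delta}(s)\,\mathbb{E}_\theta[\lambda_s^{j,N}]\,ds$, and using $|\mathbb{E}_\theta[\lambda_s^{j,N}]|\le C$ on $\Omega_{N,K}$ (from $\mathbb{E}_\theta[Z_s^{j,N}]=\mu\ell_N(j)s-X_j^N+R_j^N(s)$, cf.\ the proof of Lemma \ref{UUW}), fact (b), and $\int_0^{y+\Delta}|H_j^{y,\Delta}(s)|\,ds\le C\Delta(\indiq_{\{j\le K\}}+K/N)$, one gets $|\mathbb{E}_\theta[\int H_j^{x,\Delta}H_j^{y,\Delta}d[M^{j,N}]]|\le C\Delta^{1-q}(K/N)(\indiq_{\{j\le K\}}+K/N)$; after the index sum ($\sum_j(\indiq_{\{j\le K\}}+K/N)\le2K$) the matched-pair contribution to \eqref{covU} is $O(\Delta^{2-2q}/N^2)$, which is absorbed into $\sqrt t/(K\Delta^{q-1})$ because $\Delta\ge1$, $q\ge1$, $K\le N$, $t\ge1$.

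The delicate contribution is the diagonal term $K^{-4}\sum_p(\text{weights})\,\mathrm{Cov}_\theta((G_p^{x})^2,(G_p^{y})^2)$, with $G_p^x:=\int_0^{x+\Delta}H_p^{x,\Delta}dM^{p,N}$ and $G_p^y:=\int_0^{y+\Delta}H_p^{y,\Delta}dM^{p,N}$, which does not factor. Here I would split $G_p^x=\int_0^{y+\Delta}+\int_{y+\Delta}^{x+\Delta}=:G_p^{x,\le}+G_p^{x,>}$. The cross term $\mathrm{Cov}_\theta(2G_p^{x,\le}G_p^{x,>},(G_p^y)^2)$ vanishes because $G_p^{x,>}$ is a martingale increment relative to $\mathcal F_{y+\Delta}$ while $G_p^{x,\le},G_p^y$ are $\mathcal F_{y+\Delta}$-measurable; $\mathrm{Cov}_\theta((G_p^{x,\le})^2,(G_p^y)^2)$ is handled by Cauchy--Schwarz using $\mathbb{E}_\theta[(G_p^{x,\le})^2]\le C(K/N)^2\int_0^{y+\Delta}(x-s)^{-2q}\,ds\le C(K/N)^2\Delta^{1-2q}$, a Burkholder--Davis--Gundy bound for $\mathbb{E}_\theta[(G_p^{x,\le})^4]$, and $\mathbb{E}_\theta[(G_p^y)^4]\le C\Delta^2$. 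The remaining piece $\mathrm{Cov}_\theta((G_p^{x,>})^2,(G_p^y)^2)$ I would compute by conditioning on $\mathcal F_{y+\Delta}$: it equals $\mathbb{E}_\theta[(G_p^y)^2\int_{y+\Delta}^{x+\Delta}(H_p^{x,\Delta}(s))^2(\mathbb{E}_\theta[\lambda_s^{p,N}\mid\mathcal F_{y+\Delta}]-\mathbb{E}_\theta[\lambda_s^{p,N}])\,ds]$; through $\lambda_s^{p,N}=\mu+N^{-1}\sum_k\theta_{pk}\int_0^{s-}\phi(s-v)dZ_v^{k,N}$ and the resolvent recursion, the conditional-intensity fluctuation is driven by $N^{-1}\sum_k\theta_{pk}\int_0^{y+\Delta}\phi(s-v)dM_v^{k,N}$, and combining the $N^{-1}$, the smallness of $\int_{y+\Delta}^{x+\Delta}(H_p^{x,\Delta}(s))^2\phi(s-v)\,ds$ for $s\simeq x$, $v\simeq y$ (tail of $\phi$, distance $\gtrsim\Delta$), and Lemma \ref{1234}(iii),(vii),(viii) applied to the resulting third and fourth moments of $M^{p,N}$ yields the terms $\sqrt t/(K^2\Delta^{q-3/2})$ and $t^2/(K^2\Delta^{4q})$. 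Estimate \eqref{tp1} is obtained by the same scheme with the two intervals coinciding, so that fact (b) is replaced by the crude $\int_0^{x+\Delta}|\beta_n|\le2\Lambda^n$ together with $\mathbb{E}_\theta[(Z_t^{j,N})^2]\le Ct^2$ on $\Omega_{N,K}$ and a BDG fourth-moment bound, giving $\Delta^2/K^2$ from the matched pairs and $t^2/(K^2\Delta^{4q})$ from the diagonal.

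The hard part, where I expect to spend most of the work, is precisely the diagonal contribution to \eqref{covU}: one has to track, configuration by configuration, which convolution kernels $\beta_n$ (equivalently, which powers of $A_N$ and which time variables) are forced into the far regime $s\le y+\Delta$ or $s-v\gtrsim\Delta$ and hence pay a factor $\Delta^{-q}$, versus which stay near their own time scale and are merely bounded, and then marry this bookkeeping with the correct case of Lemma \ref{1234}, all without letting the ``on-diagonal'' pieces — whose natural sizes are $\Delta$ or $\sqrt t$ — destroy the $\Delta$-decay. A secondary subtlety is keeping the compensator and drift corrections to $Z^{j,N}$ (the terms $X_j^N$ and $R_j^N$) under control uniformly in $j$ on $\Omega_{N,K}$, which is what makes $\mathbb{E}_\theta[\lambda^{j,N}]$ usable as an $O(1)$ quantity in the above.
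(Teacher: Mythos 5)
There is a genuine gap at the heart of your reduction. You assert that, conditionally on $(\theta_{ij})$, the martingales $(M^{j,N})_{j=1,\dots,N}$ are independent, and on that basis you discard all terms of the fourfold sum except the matched pairs $\{j=k,\ j'=k'\}$ and the diagonal $j=j'=k=k'$. This is false: the Poisson measures $\Pi^j$ are independent, but the intensities $\lambda^{j,N}_s$ depend on \emph{all} the processes $Z^{k,N}$, so the $M^{j,N}$ are merely orthogonal ($[M^{i,N},M^{j,N}]=\indiq_{\{i=j\}}Z^{i,N}$), not independent. Lemma \ref{1234} exists precisely to quantify the resulting correlations: e.g.\ $|\E_\theta[M^{k,N}_rM^{k,N}_sM^{l,N}_u]|\le Ct/N$ for $k\ne l$ (point (iv)), and $|\mathrm{Cov}_\theta(M^{k}_rM^{l}_s,M^{a}_uM^{b}_v)|\le Ct/N^2$ when $\#\{k,a,b\}=3$ (point (vi)) — none of which would vanish under independence. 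Only the case $\#\{k,l,a,b\}=4$ vanishes. These $O(t/N^2)$ and $O(t^{3/2}/N)$ contributions, summed over $\sim KN^2$ or $\sim K^2N$ index configurations with the kernel weights, are exactly what the index-by-index bookkeeping in the paper's Step 6 (and, for the variance, the analogue of Lemma \ref{211}) is designed to control; you cannot simply declare them zero, and you would need to redo your counting with the bounds of Lemma \ref{1234} (or the increment version $|\mathrm{Cov}_\theta((M^{j}_r-M^{j}_{x-\Delta})(M^{l}_s-M^{l}_{x-\Delta}),\cdots)|\le C\indiq_{\{j=l\}}\sqrt t\,\Delta^{1-q}$ quoted from \cite[Lemma 30, Step 6]{A}) to see whether your claimed rates survive.

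For comparison, the paper avoids your stochastic-Fubini representation altogether: it splits $\bar U^{N,K}_{x+\Delta}-\bar U^{N,K}_x=\bar\Gamma^{N,K}_{x,x+\Delta}+\bar X^{N,K}_{x,x+\Delta}$, where $\bar\Gamma$ is driven only by the martingale increments after $x-\Delta$ (size $\sqrt{\Delta/K}$ in $L^4$, via Doob applied to the processes $O^{N,K,n}_r$) and $\bar X$ carries the far kernel integrals (size $\sqrt t\,\Delta^{-q}N^{-1/2}$ in $L^4$). Estimate \eqref{tp1} then follows from these two fourth-moment bounds, and for \eqref{covU} all cross terms are dispatched by H\"older, leaving only $\mathrm{Cov}_\theta[(\bar\Gamma^{N,K}_{x,x+\Delta})^2,(\bar\Gamma^{N,K}_{y,y+\Delta})^2]$, which is where the increment covariance bound and the careful index count enter. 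Your idea of gaining $\Delta^{-q}$ from the kernel on the far region is the same mechanism as the paper's $\bar X$ term, so the analysis could likely be repaired along those lines, but as written the independence step invalidates the reduction.
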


\begin{proof}
Step 1: recalling (\ref{ee2}), for $z \in [x,x+\Delta]$, we write 
$$
U_{z}^{i,N}-U_{x}^{i,N}=\sum_{n\ge 0}\int_{0}^{z}\beta_{n}(x,z,r)\sum_{j=1}^{N}A^{n}_{N}(i,j)M_{r}^{j,N}dr=\Gamma_{x,z}^{i,N}+X_{x,z}^{i,N},
$$
where $\beta_n(x,z,r)=\phi^{\star n}(z-r) - \phi^{\star n}(x-r)$ and where
\begin{align*}
\Gamma_{x,z}^{i,N}&=\sum_{n\ge 0}\int_{x-\Delta}^{z}\beta_{n}(x,z,r)\sum_{j=1}^{N}A_{N}^{n}(i,j)(M_{r}^{j,N}-M_{x-\Delta}^{j,N})dr,\\
X_{x,z}^{i,N}&=\sum_{n\ge 0}\Big(\int_{x-\Delta}^{z}\beta_{n}(x,z,r)dr\Big)\sum_{j=1}^{N}A_{N}^{n}(i,j)M_{x-\Delta}^{j,N}+\sum_{n\ge 0}\int_{0}^{x-\Delta}\beta_{n}(x,z,r)\sum_{j=1}^{N}A_{N}^n(i,j)M_{r}^{j,N}dr.
\end{align*}
We set $\bar{\Gamma}_{x,z}^{N,K}=K^{-1}\sum_{i=1}^{K}\Gamma_{x,z}^{i,N}$ and 
$\bar{X}_{x,z}^{N,K}=K^{-1}\sum_{i=1}^{K}X_{x,z}^{i,N}$.
We write
$$
\bar{X}_{x,z}^{N,K}=\sum_{n\ge 0}\Big(\int_{x-\Delta}^{z}\beta_{n}(x,z,r)dr\Big)O_{x-\Delta}^{N,K,n}+\sum_{n\ge 0}\int_{0}^{x-\Delta}\beta_{n}(x,z,r)O_{r}^{N,K,n}dr.
$$
where
$$
O_{r}^{N,K,n}=\frac{1}{K}\sum_{i=1}^{K}\sum_{j=1}^{N}A_{N}^{n}(i,j)M_{r}^{j,N}.
$$
By (\ref{ee3}), we have $[M^{i,N},M^{j,N}]_{t}=\boldsymbol{1}_{\{i=j\}}Z_{t}^{i,N}$. Hence, 
for $n\ge 1$,
$$
[O^{N,K,n},O^{N,K,n}]_{r}=\frac{1}{K^{2}}\sum_{j=1}^{N}\Big(\sum_{i=1}^{K}A^{n}_{N}(i,j)\Big)^{2}Z_{r}^{j,N}
\le \frac{N}{K^{2}}|||I_{K}A_{N}|||^{2}_{1}|||A_{N}|||^{2n-2}_{1}\bar{Z}_{r}^{N}.
$$
And when $n=0$, we have 
$$
[O^{N,K,0},O^{N,K,0}]_{r}=\frac{1}{K^{2}}\sum_{j=1}^{N}\Big(\sum_{i=1}^{K}A^{0}_{N}(i,j)\Big)^{2}Z_{r}^{j,N}=\frac{1}{K}\bar{Z}_{r}^{N,K}.
$$
By Lemma \ref{eU}, we have, on $\Omega_{N,K},$
$$
\mathbb{E}_\theta[(\bar{Z}_{t}^{N,K})^2]\le 2\mathbb{E}_\theta[\bar{Z}_{t}^{N,K}]^2+2\mathbb{E}_\theta[(\bar{U}_{t}^{N,K})^2]\le Ct^2.
$$
Hence, by the Doob's inequality,
when $n\ge 1$:
\begin{align}
\label{O1}\mathbb{E}_{\theta}\Big[\sup_{[0,2t]}\Big(O_{r}^{N,K,n}\Big)^{4}\Big]\le \frac{CN^{2}}{K^{4}}|||I_{K}A_{N}|||_{1}^{4}|||A_{N}|||_{1}^{4n-4}\mathbb{E}_{\theta}\Big[\Big(\bar{Z}_{2t}^{N}\Big)^{2}\Big] \le\frac{C}{N^{2}}|||A_{N}|||_{1}^{4n-4}t^{2}.
\end{align}
By the same way, 
\begin{align}
\label{O0}\mathbb{E}_{\theta}\Big[\sup_{[x-\Delta,x+\Delta]}\Big(O_{r}^{N,K,n}-O_{x-\Delta}^{N,K,n}\Big)^{4}\Big]
\le\frac{C}{N^{2}}|||A_{N}|||_{1}^{4n-4}\Delta^{2},
\end{align}
and in the case $n=0$, by Doob's inequality, 
\begin{align}
\label{O2}\mathbb{E}_{\theta}\Big[\sup_{[x-\Delta,x+\Delta]}\Big(O_{r}^{N,K,0}-O_{x-\Delta}^{N,K,0}\Big)^{4}\Big]\le CK^{-2}\Delta^{2}.
\end{align}

Step 2: We recall the result of \cite[Lemma 15]{A}:
$$
\Big|\int_{x-\Delta}^{z}\beta_{n}(x,z,r)dr\Big|+\int_{0}^{x-\Delta}\Big|\beta_{n}(x,z,r)\Big|dr\le Cn^{q}\Lambda^{n}\Delta^{-q}.$$
So we  conclude that
$$
|\bar{X}_{x,z}^{N}|\le C\sum_{n\ge 0}n^{q}\Lambda^{n}\Delta^{-q}\sup_{[0,2t]}|O_{r}^{N,K,n}|= C\sum_{n\ge 1}
n^{q}\Lambda^{n}\Delta^{-q}\sup_{[0,2t]}|O_{r}^{N,K,n}|.
$$
Recalling (\ref{O1}),  on the set $\Omega_{N,K}$, by using the Minkowski inequality we conclude that
\begin{align*}
\mathbb{E}[(\bar{X}_{x,z}^{N})^{4}]^{\frac{1}{4}}\le C\sum_{n\ge 1}n^{q}\Lambda^{n}|||A_N|||_1^{n-1}\Delta^{-q}N^{-\frac{1}{2}}\sqrt t
\le C\Delta^{-q}N^{-\frac{1}{2}}\sqrt t.
\end{align*}

Step 3: We rewrite
$$\bar\Gamma_{x,z}^{N,K}=\sum_{n\ge 0}\int_{x-\Delta}^{z}\beta_{n}(x,z,r)[O_{r}^{N,K,n}-O_{x-\Delta}^{N,K,n}]dr.$$
Since $\int_{x-\Delta}^{x}|\beta_{n}(x,z,r)|dr\le 2\Lambda^{n}$  by \cite[Lemma 15]{A}, using 
(\ref{O0})-(\ref{O2}) and the Minkowski inequality,
$$\mathbb{E}[(\bar{\Gamma}_{x,z}^{N,K})^{4}]^{\frac{1}{4}}\le C\Big\{\Delta^{\frac{1}{2}}K^{-\frac{1}{2}}+\sum_{n\ge 1}\Lambda^{n}\frac{1}{\sqrt{N}}|||A_{N}|||^{n-1}_{1}\Delta^{\frac{1}{2}}\Big\}\le C\Delta^{\frac{1}{2}}(K^{-\frac{1}{2}}+N^{-\frac{1}{2}})
\le C\Delta^{\frac{1}{2}}K^{-\frac{1}{2}}.$$

Step 4: Since, see Step 1,
$$\Big(\bar{U}^{N,K}_{x+\Delta}-\bar{U}_{x}^{N,K}\Big)^{4}=\Big(\bar{\Gamma}_{x,x+\Delta}^{N,K}+\bar{X}_{x,x+\Delta}^{N,K}\Big)^{4}\le 8\Big[(\bar{\Gamma}_{x,x+\Delta}^{N,K})^{4}+(\bar{X}_{x,x+\Delta}^{N,K})^{4}\Big],$$
we deduce from Steps 2 and 3 that \eqref{tp1} holds true.

\vip

Step 5: 
The aim of this step is to show that, for $x,y,\Delta$ as in the statement, it holds true that
$$\mathrm{Cov}_{\theta}\Big((\bar{U}_{x+\Delta}^{N,K}-\bar{U}_{x}^{N,K})^{2},(\bar{U}_{y+\Delta}^{N,K}-\bar{U}_{y}^{N,K})^{2}\Big)\le |\mathrm{Cov}_{\theta}[(\bar{\Gamma}^{N,K}_{x,x+\Delta})^{2},(\bar{\Gamma}^{N,K}_{y,y+\Delta})^{2}]|+\frac{C}{K^{2}}\Big(\frac{t^{2}}{\Delta^{4q}}+\frac{\sqrt t}{\Delta^{q-\frac{3}{2}}}\Big).$$
We write
$$(\bar{U}_{x+\Delta}^{N,K}-\bar{U}_{x}^{N,K})^{2}=(\bar{\Gamma}_{x,x+\Delta}^{N,K})^{2}+(\bar{X}_{x,x+\Delta}^{N,K})^{2}+2\bar{\Gamma}_{x,x+\Delta}^{N,K}\bar{X}_{x,x+\Delta}^{N,K},$$
and the same formula for $y$.
Then we use the bilinearity of the covariance. We have the term 
$\mathrm{Cov}_{\theta}[(\bar{\Gamma}_{x,x+\Delta}^{N,K})^{2},(\bar{\Gamma}_{y,y+\Delta}^{N,K})^{2}],$
and it remains to verify that
\begin{align*}
R:=\mathbb{E}_{\theta}\Big[&(\bar{\Gamma}_{x,x+\Delta}^{N,K})^{2}(\bar{X}_{y,y+\Delta}^{N,K})^{2}+2(\bar{\Gamma}_{x,x+\Delta}^{N,K})^{2}|\bar{\Gamma}_{y,y+\Delta}^{N,K}\bar{X}_{y,y+\Delta}^{N,K}|+(\bar{X}_{x,x+\Delta}^{N,K})^{2}(\bar{\Gamma}_{y,y+\Delta}^{N,K})^{2}
\\&+(\bar{X}_{x,x+\Delta}^{N,K})^{2}(\bar{X}_{y,y+\Delta}^{N,K})^{2}+2(\bar{X}_{x,x+\Delta}^{N,K})^{2}|\bar{\Gamma}_{y,y+\Delta}^{N,K}\bar{X}_{y,y+\Delta}^{N,K}|+2|\bar{\Gamma}_{x,x+\Delta}^{N,K}\bar{X}_{x,x+\Delta}^{N,K}|(\bar{\Gamma}_{y,y+\Delta}^{N,K})^{2}
\\&+2|\bar{X}_{x,x+\Delta}^{N,K}\bar{\Gamma}_{x,x+\Delta}^{N,K}|(\bar{X}_{y,y+\Delta}^{N,K})^{2}+4|\bar{\Gamma}_{x,x+\Delta}^{N,K}\bar{X}_{x,x+\Delta}^{N,K}\bar{\Gamma}_{y,y+\Delta}^{N,K}\bar{X}_{y,y+\Delta}^{N,K}|\Big]
\end{align*}
is bounded by $\frac{C}{K^{2}}(\frac{t^{2}}{\Delta^{4q}}+\frac{\sqrt t}{\Delta^{q-\frac{3}{2}}})$.

\vip

By Steps 2 and 3, we know that $\mathbb{E}[(\bar{\Gamma}_{x,z}^{N,K})^{4}]\le C\Delta^{2}K^{-2}$
and $\mathbb{E}[(\bar{X}_{x,z}^{N,K})^{4}]\le Ct^{2}\Delta^{-4q}N^{-2}$, and the sames inequalities hold true 
with $y$ instead of $x$. Using furthermore the  H\"older inequality, one may verify that,
setting $a=C\Delta^{2}K^{-2}$ and $b=Ct^{2}\Delta^{-4q}N^{-2}$, we have
$$
R\leq \sqrt{ab} + 2 a^{3/4}b^{1/4}+ \sqrt{ab}+b+2 a^{1/4}b^{3/4}+ 2 a^{3/4}b^{1/4}+2 a^{1/4}b^{3/4}+4\sqrt{ab},
$$
which is easily bounded by $C(b+b^{1/4}a^{3/4})$, from which the conclusion follows.

\vip

Step 6: Here we want to verify that
$$
\mathcal{I}:= |\mathrm{Cov}_{\theta}[(\bar{\Gamma}^{N,K}_{x,x+\Delta})^{2},(\bar{\Gamma}^{N,K}_{y,y+\Delta})^{2}]|
\le  \frac{C\sqrt{t}}{K\Delta^{q-1}}.
$$
We recall from \cite[Lemma 30, Step 6]{A} that
for any $r$, $s$ in $[x-\Delta,x+\Delta]$, any $u$, $v$ in $[y-\Delta,y+\Delta]$, 
any  $j$, $l$, $\delta$, $\varepsilon$ in $\{1,\ ...N\},$
$$
\Big|\mathrm{Cov}_{\theta}\Big[(M_{r}^{j,N}-M_{x-\Delta}^{j,N})(M_{s}^{l,N}-M_{x-\Delta}^{l,N}),(M_{u}^{\delta,N}-M_{y-\Delta}^{\delta,N})(M_{v}^{\varepsilon,N}-M_{y-\Delta}^{\varepsilon,N})\Big|\le C\boldsymbol{1}_{\{j=l\}}\sqrt t\Delta^{1-q}.
$$
We start from
$$\bar{\Gamma}_{x,x+\Delta}^{N,K}=\sum_{n\ge 0}\int_{x-\Delta}^{x+\Delta}\beta_{n}(x,x+\Delta,r)\frac{1}{K}\sum_{i=1}^{K}\sum_{j=1}^{N}A_N(i,j)(M_{r}^{j,N}-M_{x-\Delta}^{j,N})dr.$$
So
\begin{align*}
\mathcal{I}=&\sum_{m, n, a, b\ge 0}\int_{x-\Delta}^{x+\Delta}\int_{x-\Delta}^{x+\Delta}\int_{y-\Delta}^{y+\Delta}\int_{y-\Delta}^{y+\Delta}\beta_{m}(x,x+\Delta,r)\beta_{n}(x,x+\Delta,s)\\
&\qquad\beta_{a}(y,y+\Delta,u)\beta_{b}(y,y+\Delta,v)\frac{1}{K^{4}}\sum_{i,k,\alpha,\gamma=1}^{K}\sum_{j,l,\delta,\varepsilon=1}^{N}A^{m}_{N}(i,j)A_{N}^{n}(k,l)A_{N}^{a}(\alpha,\delta)A_{N}^{b}(\gamma,\varepsilon)\\
&\qquad\mathrm{Cov}_{\theta}\Big[(M_{r}^{j,N}-M_{x-\Delta}^{j,N})(M_{s}^{l,N}-M_{x-\Delta}^{l,N}),(M_{u}^{\delta,N}-M_{y-\Delta}^{\delta,N})(M_{v}^{\varepsilon,N}-M_{y-\Delta}^{\varepsilon,N})\Big]dvdudsdr\\
\leq &C \sqrt t \Delta^{1-q} \sum_{m, n, a, b\ge 0} \Lambda^{m+n+a+b} 
\frac{1}{K^{4}}\sum_{i,k,\alpha,\gamma=1}^{K}\sum_{j,\delta,\varepsilon=1}^{N}A^{m}_{N}(i,j)A_{N}^{n}(k,j)A_{N}^{a}(\alpha,\delta)A_{N}^{b}(\gamma,\varepsilon).
\end{align*}
We used again the result \cite[Lemma 15]{A}:
$\int_{x-\Delta}^{x+\Delta}|\beta_{m}(x,x+\Delta,r)|dr\le 2\Lambda^{m}.$
And we observe one more time that $A_{N}^{0}(i,j)=\boldsymbol{1}_{\{i=j\}}$ and, 
when $m\ge 1$, $\sum_{i=1}^{K}A_{N}^{m}(i,j)\le |||I_{K}A_{N}|||_{1}|||A_{N}|||^{m-1}_{1}.$
We now treat separately the cases where $m,n,a,b$ vanish and find, on $\Omega_{N,K}$,
\begin{align*}
\mathcal{I}\le& \frac{C\sqrt t}{K^4\Delta^{q-1}} \sum_{m,n,a,b\ge 1}\sum^{N}_{j,\delta,\varepsilon=1} \Lambda^{m+n+a+b}|||I_KA_{N}|||^{4}_{1}|||A_{N}|||^{m+n+a+b-4}_{1}\\
+&\frac{4C\sqrt t}{K^4\Delta^{q-1}} \sum_{n,a,b\ge 1}\sum^{N}_{j,\delta,\varepsilon=1}\sum_{i=1}^K\indiq_{\{i=j\}} \Lambda^{n+a+b}|||I_KA_{N}|||^{3}_{1}|||A_{N}|||^{n+a+b-3}_{1}\\
+&\frac{2C\sqrt t}{K^4\Delta^{q-1}} \sum_{a,b\ge 1}\sum^{N}_{j,\delta,\varepsilon=1}\Big\{\sum_{i,k=1}^K\indiq_{\{i=k=j\}}+\sum_{\alpha,\gamma=1}^K\indiq_{\{\alpha=\delta,\varepsilon=\gamma\}}+\sum_{i,\alpha=1}^K\indiq_{\{i=j,\alpha=\delta\}}\Big\}\\
& \hskip9cm\times \Lambda^{a+b}|||I_KA_{N}|||^{2}_{1}|||A_{N}|||^{a+b-2}_{1}\\
+&\frac{2C\sqrt t}{K^4\Delta^{q-1}} \sum_{a\ge 1}\sum^{N}_{j,\delta,\varepsilon=1}\Big\{2\sum_{i,k,\alpha=1}^K\indiq_{\{i=k=j,\alpha=\delta\}}+2\sum_{i,\alpha,\gamma=1}^K\indiq_{\{i=j,\alpha=\delta,\varepsilon=\gamma\}}\Big\}\Lambda^{a}|||I_KA_{N}|||_{1}|||A_{N}|||^{a-1}_{1}\\
+&\frac{2C\sqrt t}{K^4\Delta^{q-1}}\sum_{i,k,\alpha,\gamma=1}^{K}\sum_{j,\delta,\varepsilon=1}^{N}\indiq_{\{i=k=j,\alpha=\delta,\varepsilon=\gamma\}}\\
\le& \frac{2C\sqrt t}{K^4\Delta^{q-1}} \Big(\frac{K^4}{N^4}N^3 + \frac{K^3}{N^3}N^2 K + 
\frac{K^2}{N^2}(K N^2+K^2 N +K^2 N)+  \frac K N (K^2 N+K^3) + K^3  \Big)\\
\le &\frac{C\sqrt{t}\Delta^{1-q}}{K}.
\end{align*}

Step 7: We conclude from Steps 5 and 6 that on the set $\Omega_{N,K}$,
\begin{align*}
\Big|\mathrm{Cov}_{\theta}\Big[(\bar{U}_{x+\Delta}^{N,K}-\bar{U}_{x}^{N,K})^{2},
(\bar{U}^{N,K}_{y+\Delta}-\bar{U}_{y}^{N,K})^{2}\Big]\Big|
\le C\Big[\frac{\sqrt{t}}{K\Delta^{q-1}}+\frac{t^2}{K^{2}\Delta^{4q}}+\frac{\sqrt{t}}
{K^{2}\Delta^{q-3/2}}\Big],
\end{align*}
which proves \eqref{covU}.
\end{proof}

We can now study $D_{\Delta,t}^{N,K,3}$.

\begin{lemma}\label{dk3}
Assume $H(q)$ for some $q\ge 1$. On the set $\Omega_{N,K}$, for all $1\le \Delta\le \frac{t}{2}$,
$$\mathbb{E}_{\theta}[(D_{\Delta,t}^{N,K,3})^{2}]\le C\Big(\frac{N^{2}}{K^{2}}\frac{\Delta}{t}+\frac{N^{2}}{K^{2}}\frac{t}{\Delta^{4q+1}}+\frac{N^{2}}{K}\frac{\sqrt t}{\Delta^{q+1}}+\frac{N^{2}}{K^{2}}\frac{t^{2}}{\Delta^{4q+2}}+\frac{N^{2}}{K^{2}}\frac{\sqrt t}{\Delta^{q+\frac{1}{2}}}\Big).$$
\end{lemma}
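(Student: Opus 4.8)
The plan is to rewrite $D_{\Delta,t}^{N,K,3}$ in terms of the centered processes $\bar U^{N,K}$ and then expand its conditional second moment into a double sum of conditional covariances, each of which is controlled by Lemma~\ref{Udelta}. Since $\bar Z_s^{N,K}-\mathbb{E}_\theta[\bar Z_s^{N,K}]=\bar U_s^{N,K}$ by definition of $U^{i,N}$, setting $x_a:=(a-1)\Delta$ and $W_a:=(\bar U_{x_a+\Delta}^{N,K}-\bar U_{x_a}^{N,K})^2$ for $a\in\{t/\Delta+1,\dots,2t/\Delta\}$, we have
$$D_{\Delta,t}^{N,K,3}=\frac{N}{t}\Big|\sum_{a=t/\Delta+1}^{2t/\Delta}\big(W_a-\mathbb{E}_\theta[W_a]\big)\Big|,\qquad\hbox{whence}\qquad \mathbb{E}_\theta\big[(D_{\Delta,t}^{N,K,3})^2\big]=\frac{N^2}{t^2}\sum_{a,b}\mathrm{Cov}_\theta(W_a,W_b).$$
There are $t/\Delta$ indices $a$, and each satisfies $x_a\ge t$ and $x_a+\Delta\le 2t$, so (using $\Delta\le t/2$) the hypothesis $\tfrac t2\le x_a-\Delta\le x_a+\Delta\le 2t$ of \eqref{tp1} is fulfilled for every $a$.

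The next step is to split the double sum according to $|a-b|$. For $a=b$, \eqref{tp1} gives $\mathrm{Var}_\theta(W_a)\le C(\Delta^2K^{-2}+t^2K^{-2}\Delta^{-4q})$. For $1\le|a-b|\le 2$, the Cauchy--Schwarz inequality together with \eqref{tp1} yields $|\mathrm{Cov}_\theta(W_a,W_b)|\le\tfrac12(\mathrm{Var}_\theta(W_a)+\mathrm{Var}_\theta(W_b))\le C(\Delta^2K^{-2}+t^2K^{-2}\Delta^{-4q})$, and there are at most $5t/\Delta$ ordered pairs with $|a-b|\le 2$. Hence these ``near-diagonal'' terms contribute at most $\dfrac{N^2}{t^2}\cdot\dfrac{Ct}{\Delta}\cdot\Big(\dfrac{\Delta^2}{K^2}+\dfrac{t^2}{K^2\Delta^{4q}}\Big)=C\Big(\dfrac{N^2\Delta}{K^2t}+\dfrac{N^2t}{K^2\Delta^{4q+1}}\Big)$, which accounts for the first two terms in the statement.

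For $|a-b|\ge 3$, the estimate \eqref{covU} applies: if $a>b+2$, then with $x=x_a$, $y=x_b$ one checks $\tfrac t2\le y-\Delta\le y+\Delta\le x-2\Delta\le x+\Delta\le 2t$, the only non-obvious inequality being $y+\Delta\le x-2\Delta$, i.e. $b\Delta\le(a-3)\Delta$, which holds precisely because $a\ge b+3$; symmetry of the covariance handles the case $b>a+2$. Thus each such pair is bounded by $C\big(\sqrt tK^{-1}\Delta^{1-q}+t^2K^{-2}\Delta^{-4q}+\sqrt tK^{-2}\Delta^{3/2-q}\big)$, and there are at most $(t/\Delta)^2$ of them, so their total contribution is at most $\dfrac{N^2}{t^2}\cdot\dfrac{t^2}{\Delta^2}\cdot C\Big(\dfrac{\sqrt t}{K\Delta^{q-1}}+\dfrac{t^2}{K^2\Delta^{4q}}+\dfrac{\sqrt t}{K^2\Delta^{q-3/2}}\Big)=C\Big(\dfrac{N^2\sqrt t}{K\Delta^{q+1}}+\dfrac{N^2t^2}{K^2\Delta^{4q+2}}+\dfrac{N^2\sqrt t}{K^2\Delta^{q+1/2}}\Big)$, i.e. the last three terms in the statement.

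Adding the contributions of the three groups gives exactly the claimed bound; all estimates hold on $\Omega_{N,K}$ a.s. since \eqref{tp1} and \eqref{covU} do. The argument is essentially bookkeeping once Lemma~\ref{Udelta} is in hand; the only genuine point of care is matching the index ranges to the hypotheses of \eqref{tp1} and \eqref{covU}. In particular one must notice that it is a gap of \emph{three} indices between $a$ and $b$ (not one) that is needed to invoke the decorrelation estimate \eqref{covU}, because the increment $\bar U_{x+\Delta}^{N,K}-\bar U_x^{N,K}$ effectively depends on the martingale increments over the interval $[x-\Delta,x+\Delta]$ of length $2\Delta$, so consecutive blocks overlap in their ``memory'' and only blocks separated by at least one empty block of length $\Delta$ genuinely satisfy the separation required by \eqref{covU}.
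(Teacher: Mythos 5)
Your proof is correct and follows essentially the same route as the paper: rewrite $D_{\Delta,t}^{N,K,3}$ via $\bar U^{N,K}$, expand the variance into covariances $K_{a,b}$, bound the near-diagonal pairs $|a-b|\le 2$ by Cauchy--Schwarz and \eqref{tp1}, and the pairs $|a-b|\ge 3$ by \eqref{covU}, exactly as in the paper. Your explicit verification of the separation condition $y+\Delta\le x-2\Delta$ is a detail the paper states only implicitly, but the argument and the final bookkeeping coincide.
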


\begin{proof}
Recall that by definition
\begin{align*}
D_{\Delta,t}^{N,K,3}=&\frac{N}{t}\Big|\sum_{a=t/\Delta+1}^{2t/\Delta}\Big(\bar{Z}_{a\Delta}^{N,K}-\bar{Z}_{(a-1)\Delta}^{N,K}-\mathbb{E}_{\theta}[\bar{Z}_{a\Delta}^{N,K}-\bar{Z}_{(a-1)\Delta}^{N,K}]\Big)^{2}\\
&-\mathbb{E}_{\theta}\Big[\sum_{a=t/\Delta+1}^{2t/\Delta}\Big(\bar{Z}_{a\Delta}^{N,K}-\bar{Z}_{(a-1)\Delta}^{N,K}-\mathbb{E}_{\theta}[\bar{Z}_{a\Delta}^{N,K}-\bar{Z}_{(a-1)\Delta}^{N,K}]\Big)^{2}\Big]\Big|.
\end{align*}
Since now $\bar{U}_{r}^{N,K}=\bar{Z}_{r}^{N,K}-\mathbb{E}_{\theta}[\bar{Z}_{r}^{N,K}]$,
$$\mathbb{E}_{\theta}[(D_{\Delta,t}^{N,K,3})^{2}]=\frac{N^{2}}{t^{2}}\mathrm{Var}_{\theta}\Big(\sum_{a=t/\Delta+1}^{2t/\Delta}(\bar{U}_{a\Delta}^{N,K}-\bar{U}_{(a-1)\Delta}^{N,K})^{2}\Big)=\frac{N^{2}}{t^{2}}\sum_{a,b=t/\Delta+1}^{2t/\Delta}K_{a,b},$$
where $K_{a,b}=\mathrm{Cov}_{\theta}[(\bar{U}_{a\Delta}^{N,K}-\bar{U}_{(a-1)\Delta}^{N,K})^{2},(\bar{U}_{b\Delta}^{N,K}-\bar{U}_{(b-1)\Delta}^{N,K})^{2}].$ By Lemma \ref{Udelta}, for $|a-b|\le 2$,   
$$|K_{a,b}|\le \Big\{\mathrm{Var}_{\theta}[(\bar{U}^{N,K}_{a\Delta}-\bar{U}^{N,K}_{(a-1)\Delta})^{2}]\mathrm{Var}[(\bar{U}^{N,K}_{b\Delta}-\bar{U}^{N,K}_{(b-1)\Delta})^{2}]\Big\}^{\frac{1}{2}}\le C\Big(\frac{\Delta^{2}}{K^{2}}+\frac{t^{2}}{K^{2}\Delta^{4q}} \Big).$$
If now $|a-b|\ge 3$,  we set $x=(a-1)\Delta$, $y=(b-1)\Delta$ in (\ref{covU}) and get
$$|K_{a,b}|\le C\Big(\frac{\sqrt t}{K\Delta^{q-1}}+\frac{t^{2}}{K^{2}\Delta^{4q}}+\frac{\sqrt t}{K^{2}\Delta^{q-\frac{3}{2}}}\Big).$$
Finally we conclude that 
\begin{align*}
\mathbb{E}_{\theta}[(D_{\Delta,t}^{N,K,3})^{2}]\le& C\frac{N^{2}}{t^{2}}\frac{t}{\Delta}\Big(\frac{\Delta^{2}}{K^{2}}+\frac{t^{2}}{K^{2}\Delta^{4q}}\Big)+C\frac{N^{2}}{t^{2}}\frac{t^{2}}{\Delta^{2}}\Big(\frac{\sqrt t}{K\Delta^{q-1}}+\frac{t^{2}}{K^{2}\Delta^{4q}}+\frac{\sqrt t}{K^{2}\Delta^{q-\frac{3}{2}}}\Big)\\
\le& C\Big(\frac{N^{2}}{K^{2}}\frac{\Delta}{t}+\frac{N^{2}}{K^{2}}\frac{t}{\Delta^{4q+1}}+\frac{N^{2}}{K}\frac{\sqrt t}{\Delta^{q+1}}+\frac{N^{2}}{K^{2}}\frac{t^{2}}{\Delta^{4q+2}}+\frac{N^{2}}{K^{2}}\frac{\sqrt t}{\Delta^{q+\frac{1}{2}}}\Big).
\end{align*}
which completes the proof.
\end{proof}

\begin{lemma}\label{WWWdelta}
Under the assumption $H(q)$ for some $q\ge 3$ and the the set $\Omega_{N,K}$, we  have:
$$\mathbb{E}_{\theta}\Big[\Big|\mathcal{W}_{\Delta,t}^{N,K}-\mathcal{W}_{\infty,\infty}^{N,K}\Big|\Big]\le C\Big(\frac{N}{K}\sqrt{\frac{\Delta}{t}}+\frac{N^{2}}{K\Delta^{\frac{1}{2}(q+1)}}+\frac{Nt}{K\Delta^{\frac{q}{2}+1}}\Big).$$
\end{lemma}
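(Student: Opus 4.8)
The plan is to combine the decomposition set up at the beginning of this section with the four estimates proved just above. Recall that on $\Omega_{N,K}$ one has
$$
|\mathcal{W}_{\Delta,t}^{N,K}-\mathcal{W}_{\infty,\infty}^{N,K}|\le D_{\Delta,t}^{N,K,1}+2D_{2\Delta,t}^{N,K,1}+D_{\Delta,t}^{N,K,2}+2D_{2\Delta,t}^{N,K,2}+D_{\Delta,t}^{N,K,3}+2D_{2\Delta,t}^{N,K,3}+D_{\Delta,t}^{N,K,4}.
$$
I would take $\mathbb{E}_{\theta}[\boldsymbol{1}_{\Omega_{N,K}}\,\cdot\,]$ and bound each of the nine pieces with the lemma tailored to it: Lemma \ref{DNK1} for the two $D^1$ terms, Lemma \ref{DNK2} for the two $D^2$ terms, Lemma \ref{DNK4} for $D_{\Delta,t}^{N,K,4}$, and, for the two $D^3$ terms, the Cauchy--Schwarz (Jensen) inequality $\mathbb{E}_{\theta}[D^{N,K,3}]\le\mathbb{E}_{\theta}[(D^{N,K,3})^{2}]^{1/2}$ followed by Lemma \ref{dk3}. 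All these applications are legitimate because $1\le\Delta\le t/4$, hence also $1\le 2\Delta\le t/2$; moreover the $2\Delta$-versions are each controlled, up to a constant, by the corresponding $\Delta$-versions, since every right-hand side above is non-increasing in $\Delta$ except for the benign factor $2\Delta/t$ in the first term of Lemma \ref{dk3}.

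It then remains to absorb everything into the three target quantities
$$
T_{1}:=\frac{N}{K}\sqrt{\frac{\Delta}{t}},\qquad T_{2}:=\frac{N^{2}}{K\Delta^{\frac12(q+1)}},\qquad T_{3}:=\frac{Nt}{K\Delta^{\frac{q}{2}+1}},
$$
using repeatedly $1\le\Delta\le t/4$, $t\ge4$ and $K\le N$. The $D^1$ contribution $C\Delta(Nt^{-2q}+N(Kt)^{-1})$ splits as $\Delta N(Kt)^{-1}=(N/K)(\Delta/t)\le T_{1}$ (since $\Delta/t\le1$) and $\Delta N t^{-2q}\le N\Delta^{1-2q}\le N\Delta^{-\frac12(q+1)}\le T_{2}$, where the first step uses $\Delta\le t$, the second uses $2q-1\ge\frac12(q+1)$ together with $\Delta\ge1$, and the last uses $K\le N$. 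The $D^2$ contribution $CNt^{1-q}$ is $\le N\Delta^{-\frac12(q+1)}\le T_{2}$: indeed $q\ge3$ gives $\frac12(q+1)\le q-1$, so $\Delta^{\frac12(q+1)}\le t^{\frac12(q+1)}\le t^{q-1}$, i.e. $t^{1-q}\le\Delta^{-\frac12(q+1)}$. The $D^4$ contribution $CNt(K\Delta^{1+q})^{-1}\le T_{3}$ because $1+q\ge\frac q2+1$ and $\Delta\ge1$.

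For the $D^3$ contribution I would show $\mathbb{E}_{\theta}[(D^{N,K,3}_{\Delta,t})^{2}]\le C(T_{1}^{2}+T_{2}^{2}+T_{3}^{2})$ and take square roots. Of the five terms produced by Lemma \ref{dk3}, the first is exactly $T_{1}^{2}$; the terms $\frac{N^{2}t}{K^{2}\Delta^{4q+1}}$ and $\frac{N^{2}t^{2}}{K^{2}\Delta^{4q+2}}$ are both $\le T_{3}^{2}=\frac{N^{2}t^{2}}{K^{2}\Delta^{q+2}}$, since $4q+1\ge q+2$, $4q+2\ge q+2$, $\Delta\ge1$ and $t\ge1$; the term $\frac{N^{2}\sqrt t}{K^{2}\Delta^{q+1/2}}$ equals $T_{1}T_{3}\,\Delta^{-q/2}\le T_{1}T_{3}\le\tfrac12(T_{1}^{2}+T_{3}^{2})$; and the last term $\frac{N^{2}\sqrt t}{K\Delta^{q+1}}$ equals $T_{2}T_{3}\cdot\frac{K\sqrt\Delta}{N\sqrt t}\le T_{2}T_{3}\le\tfrac12(T_{2}^{2}+T_{3}^{2})$, the inequality $\frac{K\sqrt\Delta}{N\sqrt t}\le1$ using $K\le N$ and $\Delta\le t$ simultaneously. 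Adding all the contributions yields $\mathbb{E}_{\theta}[\boldsymbol{1}_{\Omega_{N,K}}|\mathcal{W}_{\Delta,t}^{N,K}-\mathcal{W}_{\infty,\infty}^{N,K}|]\le C(T_{1}+T_{2}+T_{3})$, which is exactly the asserted estimate.

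The invocations of the previous lemmas are routine; the only real care is in this last bookkeeping. I expect the mildly delicate points to be two: that the term $\frac{N^{2}\sqrt t}{K\Delta^{q+1}}$ coming from Lemma \ref{dk3} is dominated not by a single $T_{i}$ but by the product $T_{2}T_{3}$ (which is where the hypotheses $K\le N$ and $\Delta\le t/4$ are genuinely used together), and that it is precisely the assumption $q\ge3$ — rather than merely $q\ge1$ — that is needed to swallow the $D^2$ term $Nt^{1-q}$ into $T_{2}$.
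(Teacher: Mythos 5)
Your proposal is correct and follows essentially the same route as the paper: sum the bounds from Lemmas \ref{DNK1}, \ref{DNK2}, \ref{DNK4} and (via Cauchy--Schwarz) Lemma \ref{dk3}, then absorb everything into the three target terms using $1\le\Delta\le t/4$, $q\ge 3$ and $K\le N$. Your bookkeeping — in particular the treatment of $\frac{N^{2}\sqrt t}{K\Delta^{q+1}}$ and $\frac{N^{2}\sqrt t}{K^{2}\Delta^{q+1/2}}$ as products $T_iT_j$ handled by AM--GM — matches the two ``most difficult terms'' singled out in the paper's own computation.
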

\begin{proof}
We summarize all the above Lemmas and conclude that, on $\Omega_{N,K}$,
\begin{align*}
&\mathbb{E}_{\theta}\Big[\Big|\mathcal{W}_{\Delta,t}^{N,K}-\mathcal{W}_{\infty,\infty}^{N,K}\Big|\Big]\\
\le& \mathbb{E}_{\theta}\Big[D_{\Delta,t}^{N,K,1}+2D_{2\Delta,t}^{N,K,1}+D_{\Delta,t}^{N,K,2}+2D_{2\Delta,t}^{N,K,2}+D_{\Delta,t}^{N,K,3}+2D_{2\Delta,t}^{N,K,3}+D_{\Delta,t}^{N,K,4}\Big]
\\\le& C\Big(\frac{N}{K}\frac{\Delta}{t}+\frac{N\Delta}{t^{2q}}+\frac{N}{t^{q-1}}+\frac{Nt}{K\Delta^{1+q}}\Big)\\
&+C\sqrt{\frac{N^{2}}{K^{2}}\frac{\Delta}{t}+\frac{N^{2}}{K^{2}}\frac{t}{\Delta^{4q+1}}+\frac{N^{2}}{K}\frac{\sqrt t}{\Delta^{q+1}}+\frac{N^{2}}{K^{2}}\frac{t^{2}}{\Delta^{4q+2}}+\frac{N^{2}}{K^{2}}\frac{\sqrt t}{\Delta^{q+\frac{1}{2}}}}.
\end{align*}
Since $1\leq \Delta\le  t$ and $q\geq 3$, we conclude, after some tedious but direct computations, 
that
\begin{align*}
\mathbb{E}_{\theta}\Big[\Big|\mathcal{W}_{\Delta,t}^{N,K}-\mathcal{W}_{\infty,\infty}^{N,K}\Big|\Big]
\le C\Big(\frac{N}{K}\sqrt{\frac{\Delta}{t}}+\frac{N^{2}}{K\Delta^{\frac{1}{2}(q+1)}}
+\frac{Nt}{K\Delta^{\frac{q}{2}+1}}\Big).
\end{align*}
The most difficult terms are 
$$\sqrt{\frac{N^2t^{1/2}}{K\Delta^{q+1}}}= \sqrt{\frac{N^2}{K\Delta^{(q+1)/2}}}
\sqrt{\frac{t^{1/2}}{\Delta^{(q+1)/2}}} 
\leq \frac{N^2}{K\Delta^{(q+1)/2}}+\frac{t^{1/2}}{\Delta^{(q+1)/2}}
\leq \frac{N^2}{K\Delta^{(q+1)/2}}+\frac{Nt}{K \Delta^{q/2+1}}
$$
and 
$$
\sqrt{\frac{N^{2}}{K^{2}}\frac{\sqrt t}{\Delta^{q+\frac{1}{2}}}}\leq \frac{N}{K}\Big(\sqrt{\frac{\Delta}t}+
\frac{t}{\Delta^{q+1}}\Big)\leq \frac{N}{K}\Big(\sqrt{\frac{\Delta}t}+
\frac{t}{\Delta^{q/2+1}}\Big).
$$
The proof is complete.
\end{proof}

Next we  prove the main result of this section.
\begin{proof}[Proof of Theorem \ref{noname}] We start from
\begin{align*}
&\mathbb{E}\Big[\boldsymbol{1}_{\Omega_{N,K}}\Big|\mathcal{X}_{\Delta,t}^{N,K}-\frac{\mu}{(1-\Lambda p)^{3}}\Big|\Big]\\
\le& \mathbb{E}\Big[\boldsymbol{1}_{\Omega_{N,K}}\Big|\mathcal{W}_{\Delta,t}^{N,K}-\mathcal{W}_{\infty,\infty}^{N,K}\Big|\Big]+\frac{N}{K}\mathbb{E}\Big[\boldsymbol{1}_{\Omega_{N,K}}\Big|\varepsilon_t^{N,K}-\mu\bar{\ell}_N^K\Big|\Big]+\mathbb{E}\Big[\boldsymbol{1}_{\Omega_{N,K}}\Big|\mathcal{X}_{\infty,\infty}^{N,K}-\frac{\mu}{(1-\Lambda p)^{3}}\Big|\Big]\\
\le & C\Big(\frac{N}{K}\sqrt{\frac{\Delta}{t}}+\frac{N^{2}}{K\Delta^{\frac{1}{2}(q+1)}}+\frac{Nt}{K\Delta^{\frac{q}{2}+1}}+\frac{1}{\sqrt{K}}\Big)+  C \frac{N}{Kt^q}+ C\frac{N}{K\sqrt{Kt}}+\frac{C}{K}
\end{align*}
by Lemmas \ref{WWWdelta}, \ref{emain} and \ref{WWW}.
Since $t\geq \Delta\geq 1$, we have $\frac{N}{Kt^q}\le \frac{Nt}{K\Delta^{\frac{q}{2}+1}}$ and we conclude that
\begin{align*}
\mathbb{E}\Big[\boldsymbol{1}_{\Omega_{N,K}}\Big|\mathcal{X}_{\Delta,t}^{N,K}-\frac{\mu}{(1-\Lambda p)^{3}}\Big|\Big]
\le C\Big(\frac{N}{K}\sqrt{\frac{\Delta}{t}}+\frac{N^{2}}{K\Delta^{\frac{1}{2}(q+1)}}+\frac{Nt}{K\Delta^{\frac{q}{2}+1}}+\frac{1}{K}+\frac{N}{K\sqrt{Kt}}\Big),
\end{align*}
which was our goal.
\end{proof}

Next, we write down the probability estimate.

\begin{cor}\label{pwnk}
Assume $H(q)$ for some $q\ge 1$. We have
\begin{align*}
&P\Big(\Big|\mathcal{X}_{\Delta,t}^{N,K}-\frac{\mu}{(1-\Lambda p)^{3}}\Big|\ge \varepsilon\Big)\\
&\le \frac{C}{\varepsilon}\Big(\frac{N}{K}\sqrt{\frac{\Delta}{t}}+\frac{N^{2}}{K\Delta^{\frac{1}{2}(q+1)}}+\frac{Nt}{K\Delta^{\frac{q}{2}+1}}+\frac{1}{K}+\frac{N}{K\sqrt{Kt}}\Big)+CNe^{-C'K}.
\end{align*}
Under $H(q)$ for some $q> 3$ and with the choice $\Delta_{t}\sim t^{\frac{4}{(q+1)}}$, this gives
$$
P\Big(\Big|\mathcal{X}_{\Delta_{t},t}^{N,K}-\frac{\mu}{(1-\Lambda p)^{3}}\Big|\ge \varepsilon\Big)
\le \frac{C}{\varepsilon}\Big(\frac{1}{K}+\frac{N}{K\sqrt{t^{1-\frac{4}{1+q}}}}+\frac{N^2}{Kt^2}\Big)
+CNe^{-C'K}.
$$
\end{cor}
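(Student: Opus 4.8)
The plan is to derive Corollary \ref{pwnk} from Theorem \ref{noname} and the probabilistic lower bound of Lemma \ref{ONK} via Chebyshev's inequality, in exactly the same manner that Corollary \ref{vmain} was obtained from Theorem \ref{vv} and that Theorem \ref{lmain} was concluded. First I would split, for $t\ge 4$ and $\Delta\in[1,t/4]$ with $t/(2\Delta)\in\mathbb{N}^*$,
\begin{align*}
P\Big(\Big|\mathcal{X}_{\Delta,t}^{N,K}-\frac{\mu}{(1-\Lambda p)^{3}}\Big|\ge \varepsilon\Big)
\le P(\Omega_{N,K}^{c})+P\Big(\Big\{\Big|\mathcal{X}_{\Delta,t}^{N,K}-\frac{\mu}{(1-\Lambda p)^{3}}\Big|\ge \varepsilon\Big\}\cap\Omega_{N,K}\Big),
\end{align*}
bound the first summand by $CNe^{-C'K}$ using Lemma \ref{ONK}, and bound the second by $\varepsilon^{-1}\mathbb{E}\big[\boldsymbol{1}_{\Omega_{N,K}}|\mathcal{X}_{\Delta,t}^{N,K}-\mu/(1-\Lambda p)^{3}|\big]$ using Markov's inequality. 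Substituting the estimate of Theorem \ref{noname} gives the first displayed bound of the corollary verbatim, since the constraints on $\Delta$ there are precisely the ones needed.

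For the second displayed estimate I would make the choice precise as $\Delta_t=t/(2\lfloor t^{1-4/(q+1)}\rfloor)$, which is exactly the $\Delta_t$ of Theorem \ref{abcd}: for $q>3$ one has $1-4/(q+1)\in(0,1)$, so that for $t$ large enough $\lfloor t^{1-4/(q+1)}\rfloor\ge 2$, whence $\Delta_t\in[1,t/4]$, $t/(2\Delta_t)\in\mathbb{N}^*$ and $\Delta_t\asymp t^{4/(q+1)}$. For the remaining bounded range of $t$, one observes that $N\ge K$ forces $\frac{N}{K\sqrt{t^{1-4/(q+1)}}}$ to be bounded below by a positive constant depending only on $q$, so that after enlarging $C$ the asserted bound is $\ge 1$ and hence trivially true. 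It then remains to plug $\Delta_t\asymp t^{4/(q+1)}$ into the four error terms of Theorem \ref{noname}: a direct computation gives $\frac{N}{K}\sqrt{\Delta_t/t}\asymp \frac{N}{K\sqrt{t^{1-4/(q+1)}}}$, $\frac{N^{2}}{K\Delta_t^{(q+1)/2}}\asymp \frac{N^{2}}{Kt^{2}}$, and $\frac{Nt}{K\Delta_t^{q/2+1}}\asymp \frac{N}{K}t^{-(q+3)/(q+1)}$.

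Finally I would argue that the last quantity, as well as $\frac{N}{K\sqrt{Kt}}$, is absorbed: since $q>3$ one has $(q+3)/(q+1)\ge (q-3)/(2(q+1))$, whence $t^{-(q+3)/(q+1)}\le t^{-(q-3)/(2(q+1))}$, i.e. $\frac{Nt}{K\Delta_t^{q/2+1}}\le \frac{N}{K\sqrt{t^{1-4/(q+1)}}}$; and $Kt^{4/(q+1)}\ge 1$ gives $\frac{N}{K\sqrt{Kt}}\le \frac{N}{K\sqrt{t^{1-4/(q+1)}}}$. Retaining $1/K$, $\frac{N}{K\sqrt{t^{1-4/(q+1)}}}$ and $\frac{N^2}{Kt^2}$ then yields the claimed bound. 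I do not expect any genuine obstacle here: all the analytic content is already in Theorem \ref{noname} and Lemma \ref{ONK}, and the only mild care needed is the arithmetic bookkeeping for $\Delta_t$ (integrality of $t/(2\Delta_t)$, the range $[1,t/4]$, the small-$t$ case) together with checking the two domination inequalities above.
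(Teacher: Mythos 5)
Your proposal is correct and follows exactly the route the paper intends: the first bound is Markov's inequality on $\Omega_{N,K}$ combined with Theorem \ref{noname} and Lemma \ref{ONK}, and the second follows by substituting $\Delta_t\asymp t^{4/(q+1)}$ and absorbing the terms $\frac{Nt}{K\Delta_t^{q/2+1}}$ and $\frac{N}{K\sqrt{Kt}}$ into $\frac{N}{K\sqrt{t^{1-4/(1+q)}}}$; your domination inequalities and the treatment of small $t$ are all valid. The paper's own proof is just the two-line remark that these assertions are immediate, so you have supplied the same argument with the bookkeeping made explicit.
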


\begin{proof}
The first assertion immediately follows from Theorem \ref{noname} and Lemma \ref{ONK}.
The second assertion is not difficult.
\end{proof}

\section{The final result in the subcritical case.}

We summarize the rates we obtained for the three estimators:
by Theorem \ref{lmain} and Corollaries \ref{vmain} and \ref{pwnk}, we have, under $H(q)$ for some $q>3$,
for all $\epsilon\in(0,1)$, all $t\geq 1$, all $N\geq K\geq 1$,
\begin{gather*}
 P\Big( \Big|\varepsilon ^{N,K}_{t}-\frac{\mu}{1-\Lambda   p}\Big|\ge\varepsilon\Big)\le CNe^{-C^{'}K}+\frac{C}{\varepsilon}\Big(\frac{1}{\sqrt{NK}}+\frac{1}{\sqrt{Kt}}+\frac{1}{t^{q}}\Big),\\
P\Big(\Big|\mathcal{V}_{t}^{N,K}-\frac{\mu^{2}\Lambda^{2}p(1-p)}{(1-\Lambda p)^{2}}\Big|\ge\varepsilon\Big)\le CNe^{-C'K}+\frac{C}{\varepsilon}\Big(\frac{1}{\sqrt{K}}+\frac{N}{t\sqrt{K}}\Big),\\
P\Big(\Big|\mathcal{X}_{\Delta_{t},t}^{N,K}-\frac{\mu}{(1-\Lambda p)^{3}}\Big|\ge \varepsilon\Big)
\le \frac{C}{\varepsilon}\Big(\frac{1}{K}+\frac{N}{K\sqrt{t^{1-\frac{4}{1+q}}}}+\frac{N^2}{Kt^2}\Big)+CNe^{-C'K}.
\end{gather*}

\begin{proof}[Proof of Theorem \ref{abcd}]
One easily verifies that $\Psi$ is $C^{\infty}$ in the domain $D$, that 
$$
(u,v,w)=\Big(\frac{\mu}{1-\Lambda p},\frac{\mu^{2}\Lambda^{2}p(1-p)}{(1-\Lambda p)^{2}}
,\frac{\mu}{(1-\Lambda p)^3}\Big) \in D
$$ 
and that $\Psi(u,v,w)=(\mu,\Lambda,p)$.
Hence there is a constant $c$ such that for any $N\ge 1$, $t\ge 1$, any $\varepsilon\in (0,1/c)$,
\begin{align*}
&P\Big(\Big|\Psi(\varepsilon_{t}^{N,K},\mathcal{V}_{t}^{N,K},\mathcal{X}_{\Delta_{t},t}^{N,K})-(\mu,\Lambda,p)\Big|\ge \varepsilon\Big)\\
\le& P\Big(\Big|\varepsilon_{t}^{N,K}-u\Big|+\Big|\mathcal{V}_{t}^{N,K}-v\Big|+\Big|\mathcal{X}_{\Delta_{t},t}^{N,K}-w\Big|\ge c\varepsilon\Big)
\\ \le& \frac{C}{\varepsilon}\Big(\frac{1}{\sqrt{K}}+\frac{N}{K\sqrt{t^{1-\frac{4}{1+q}}}}+\frac{N}{t\sqrt{K}}\Big)+CNe^{-C'K},
\end{align*}
which completes the proof.
\end{proof}

\section{Analysis of a random matrix for the supercritical case}

We define the matrix $A_N$ by $A_{N}(i,j):=N^{-1}\theta_{ij}$, $i,j\in\{1,...,N\}$.
We assume here that $\ p\in (0,1]$ and we introduce the events:
\begin{align*}
\Omega_{N}^{2}:=&\Big\{\frac{1}{N}\sum_{i=1}^{N}\sum_{j=1}^{N}A_{N}(i,j)>\frac{p}{2} \quad\hbox{and}
\quad |NA^{2}_{N}(i,j)-p^{2}|<\frac{p^{2}}{2N^{3/8}} \quad 
 \text{for all  $i,j=1,...,N$ \Big\}},\\
\Omega_{N}^{K,2}:=&\Big\{\frac{1}{K}\sum_{i=1}^{K}\sum_{j=1}^{N}A_{N}(i,j)>\frac{p}{2}\Big\} \cap \Omega_{N}^{2}.
\end{align*}
\begin{lemma}\label{ONK2} One has
$$P(\Omega_{N}^{K,2})\ge 1-Ce^{-cN^{\frac{1}{4}}}.$$
\end{lemma}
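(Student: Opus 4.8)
The plan is to bound $P((\Omega_N^{K,2})^c)$ by splitting the complement according to which of the three defining constraints fails, and to control each piece by a standard concentration inequality for sums of i.i.d.\ Bernoulli($p$) random variables. Concretely, write $(\Omega_N^{K,2})^c \subset E_1 \cup E_2 \cup E_3$, where
\begin{align*}
E_1 &= \Big\{\tfrac1N\sum_{i,j=1}^N A_N(i,j)\le \tfrac p2\Big\}
   = \Big\{\tfrac1{N^2}\sum_{i,j=1}^N \theta_{ij}\le \tfrac p2\Big\},\\
E_2 &= \Big\{\tfrac1K\sum_{i=1}^K\sum_{j=1}^N A_N(i,j)\le \tfrac p2\Big\}
   = \Big\{\tfrac1{KN}\sum_{i=1}^K\sum_{j=1}^N\theta_{ij}\le \tfrac p2\Big\},\\
E_3 &= \Big\{\exists\, i,j:\ |NA_N^2(i,j)-p^2|\ge \tfrac{p^2}{2N^{3/8}}\Big\}.
\end{align*}
First I would handle $E_1$ and $E_2$: each involves an average of at least $K$ i.i.d.\ Bernoulli($p$) variables deviating below its mean $p$ by a fixed fraction $p/2$, so Hoeffding's inequality gives $P(E_1)\le e^{-cN}$ and $P(E_2)\le e^{-cK}$; since we are comparing with $Ce^{-cN^{1/4}}$ and $K\le N$, these are more than enough after possibly shrinking $c$. (One should note here that, strictly, the lemma should read $1-Ce^{-cK}$, or implicitly assumes $K\gtrsim N^{1/4}$; in any case the $E_1$, $E_2$ bounds are not the bottleneck.)

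The substantive step is $E_3$. Fix $i,j$ and expand $NA_N^2(i,j)=\frac1N\sum_{k=1}^N\theta_{ik}\theta_{kj}$. Its expectation is $\frac1N[(N-\indiq_{\{i=j\}})p^2 + \indiq_{\{i=j\}}p] = p^2 + O(1/N)$, which for $N$ large is within $\tfrac{p^2}{4N^{3/8}}$ of $p^2$. The summands $\theta_{ik}\theta_{kj}$ are independent across $k$ (for $i\ne j$ they are products of disjoint pairs of independent entries; for $i=j$ they are just $\theta_{ik}$), and bounded in $[0,1]$, so Hoeffding again gives
$$
P\Big(\big|NA_N^2(i,j)-\E[NA_N^2(i,j)]\big|\ge \tfrac{p^2}{4N^{3/8}}\Big)\le 2\exp\!\Big(-\tfrac{2}{N}\big(\tfrac{N p^2}{4N^{3/8}}\big)^2\Big)=2\exp\big(-c\,N^{1/4}\big),
$$
with $c=p^4/8$. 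A union bound over the $N^2$ pairs $(i,j)$ then yields $P(E_3)\le 2N^2 e^{-cN^{1/4}}\le C e^{-c'N^{1/4}}$ for any $c'<c$, absorbing the polynomial factor. Combining the three estimates gives $P((\Omega_N^{K,2})^c)\le Ce^{-cN^{1/4}}$, as claimed.

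The only mild obstacle is bookkeeping: making sure the expectation shift $O(1/N)$ in the $E_3$ term is genuinely dominated by the target width $\tfrac{p^2}{2N^{3/8}}$ (true for all $N$ beyond an explicit threshold, with small $N$ absorbed into the constant $C$), and checking the independence structure of $(\theta_{ik}\theta_{kj})_k$ when $i=j$ versus $i\ne j$ so that Hoeffding applies verbatim. Everything else is a routine application of Hoeffding plus union bounds, exactly as in the proof of Lemma~\ref{ONK}.
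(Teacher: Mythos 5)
Your overall strategy (Hoeffding plus union bounds over the failing constraints) is the same as the paper's; the only structural difference is that the paper does not re-derive the bound on $\Omega_N^2$ at all, but simply invokes \cite[Lemma 33]{A} for $P(\Omega_N^2)\ge 1-Ce^{-cN^{1/4}}$ and then only has to control the new event $E_2$, observing that $N\sum_{i=1}^K\sum_{j=1}^N A_N(i,j)$ is Binomial$(NK,p)$ so that Hoeffding gives $2\exp(-NKp^2/2)$. Your decision to re-prove the $\Omega_N^2$ part is legitimate, but two details need repair. First, in the $E_3$ step the family $(\theta_{ik}\theta_{kj})_{k=1,\dots,N}$ is \emph{not} independent for $i\ne j$: the term $k=j$ uses the entries $(i,j)$ and $(j,j)$, while the term $k=i$ uses $(i,i)$ and $(i,j)$, so these two summands share the entry $\theta_{ij}$. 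This is harmless — discard the terms $k=i$ and $k=j$, which shifts the sum by at most $2/N \ll p^2/(4N^{3/8})$, and apply Hoeffding to the remaining $N-2$ genuinely independent terms — but as written the claim that Hoeffding "applies verbatim" for $i\ne j$ is false and the fix should be stated.

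Second, your treatment of $E_2$ undercounts the sample size: $\frac{1}{KN}\sum_{i=1}^K\sum_{j=1}^N\theta_{ij}$ is an average of $KN$ i.i.d.\ Bernoulli($p$) variables, not $K$ of them, so Hoeffding yields $P(E_2)\le 2\exp(-KNp^2/2)\le 2\exp(-Np^2/2)$ for every $K\ge 1$. Consequently your parenthetical remark that the lemma "should read $1-Ce^{-cK}$, or implicitly assumes $K\gtrsim N^{1/4}$" is unfounded: the statement is correct as it stands for all $N\ge K\ge 1$, and $E_2$ is never the bottleneck. With these two corrections your argument closes and is equivalent to (indeed, a self-contained expansion of) the paper's proof.
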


\begin{proof}
By   \cite[lemma 33]{A}, we already have  $P(\Omega_{N}^{2})\ge 1-Ce^{-cN^{\frac{1}{4}}}$. 
We recall the Hoeffding inequality for the Binomial$(n,q)$ random variables. For all $x\ge 0$ and $X$ is a Binomial$(n,q)$ distributed, we have:

$$
P\Big(|X-nq|\ge x\Big)\le 2\exp(-2x^{2}/n).
$$
Since $N\sum_{i=1}^{K}\sum_{j=1}^{N}A_{N}(i,j)=\sum_{i=1}^{K}\sum_{j=1}^{N}\theta_{ij}$ is  Binomial$(NK,p)$ distributed, 
$$P\Big(K^{-1}\sum_{i=1}^{K}\sum_{j=1}^{N}A_{N}(i,j)\le \frac{p}{2}\Big)
\le P\Big(\Big|N\sum_{i=1}^{K}\sum_{j=1}^{N}A_{N}(i,j)-NKp\Big|\ge \frac{NKp}{2}\Big)\le 2\exp\Big(-\frac{NKp^{2}}{2}\Big).$$
So we have
$$
P(\Omega_{N}^{K,2})\ge 1-2\exp\Big(-\frac{NKp^{2}}{2}\Big)-Ce^{-cN^{\frac{1}{4}}} \ge 1-Ce^{-cN^{\frac{1}{4}}}.
$$ 
\end{proof}

Next we apply the Perron-Frobenius theorem and recall some lemma in [1].

\begin{lemma}
On the event $\Omega_{N}^{K,2}$, the spectral radius $\rho_{N}$ of $A_{N}$ is a simple eigenvalue of $A_{N}$ and $\rho_{N}\in [p(1-\frac{1}{2N^{\frac{3}{8}}}),p(1+\frac{1}{2N^{\frac{3}{8}}})].$ There is a row eigenvector $\cV_{N}\in \mathbb{R}_{+}^{N}$ of $A_{N}$ for the eigenvalue $\rho_{N}$ such that $||\cV_{N}||_{2}=\sqrt{N}.$ We also have $\cV_{N}(i)>0$ for all $i=1,...N$.
\end{lemma}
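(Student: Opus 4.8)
The plan is to apply the classical Perron–Frobenius theorem to the nonnegative matrix $A_N$, working on the event $\Omega_N^{K,2}$ (or even just on $\Omega_N^{2}$, which contains it), and then to extract quantitative control on the spectral radius from the pointwise bounds on $NA_N^2(i,j)$ that are built into the definition of $\Omega_N^{2}$. First I would recall that on $\Omega_N^{2}$ we have $NA_N^2(i,j)\in (p^2(1-\tfrac{1}{2N^{3/8}}),p^2(1+\tfrac{1}{2N^{3/8}}))$ for all $i,j$; in particular $A_N^2$ has strictly positive entries, hence $A_N^2$ is irreducible (indeed primitive), and therefore $A_N$ itself is irreducible. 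The Perron–Frobenius theorem then gives that the spectral radius $\rho_N=|||A_N|||$ is a simple eigenvalue of $A_N$, with a (row) eigenvector having strictly positive entries, unique up to scaling; normalizing it so that $\|\boldsymbol V_N\|_2=\sqrt N$ fixes $\boldsymbol V_N$ up to sign, and positivity of the Perron eigenvector gives $V_N(i)>0$ for every $i$. Uniqueness and simplicity of $\rho_N$ as an eigenvalue of $A_N$ follow because the Perron eigenvalue of the primitive matrix $A_N^2$ is simple and equals $\rho_N^2$, and an eigenvector of $A_N$ for any eigenvalue $\lambda$ is an eigenvector of $A_N^2$ for $\lambda^2$.

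The quantitative two-sided bound $\rho_N\in[p(1-\tfrac1{2N^{3/8}}),p(1+\tfrac1{2N^{3/8}})]$ I would get by squaring: since $\rho_N$ is the Perron eigenvalue of $A_N$, $\rho_N^2$ is the Perron eigenvalue of $A_N^2$, and for a nonnegative matrix the Perron eigenvalue is squeezed between the minimal and maximal row sums. On $\Omega_N^{2}$, every entry of $A_N^2$ lies in $\tfrac{p^2}{N}(1\pm\tfrac1{2N^{3/8}})$, so every row sum of $A_N^2$ lies in $p^2(1\pm\tfrac1{2N^{3/8}})$; hence $\rho_N^2\in p^2(1\pm\tfrac1{2N^{3/8}})$, and taking square roots (using $\sqrt{1+x}\le 1+x$ and $\sqrt{1-x}\ge 1-x$ for $x\in[0,1]$) yields $\rho_N\in[p(1-\tfrac1{2N^{3/8}}),p(1+\tfrac1{2N^{3/8}})]$ exactly as stated. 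This is essentially the same computation as in \cite[Lemma 33 and around]{A}, just transported to the present normalization, so I would cite \cite{A} for the parts that are literally reused and only spell out the squaring argument.

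The step that requires the most care — though it is more bookkeeping than a genuine obstacle — is checking that irreducibility of $A_N$ holds on the relevant event and that the normalization/positivity claims are self-consistent: one must make sure that $\Omega_N^{2}$ really does force $A_N^2$ to have all entries strictly positive (which it does, since $NA_N^2(i,j)>p^2(1-\tfrac1{2N^{3/8}})>0$ for $N\ge1$), and that the Perron eigenvector of $A_N$ can be taken with $\|\cdot\|_2=\sqrt N$ and strictly positive coordinates simultaneously. Both are immediate once irreducibility and primitivity of $A_N^2$ are in hand. So the proof is short: invoke Perron–Frobenius for the primitive matrix $A_N^2$ and transfer simplicity, positivity and the row-sum bounds back to $A_N$ by the eigenvalue-squaring correspondence, referring to \cite[Lemma 33]{A} for the estimate $P(\Omega_N^{2})\ge 1-Ce^{-cN^{1/4}}$ already used in the previous lemma. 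I do not expect any serious difficulty; the only thing to watch is to state clearly that all assertions are made on the event $\Omega_N^{K,2}$.
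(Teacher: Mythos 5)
Your proposal is correct; the paper itself offers no argument here beyond citing \cite[Lemma 34]{A}, and your reconstruction (strict positivity of the entries of $A_N^2$ on $\Omega_N^2$ gives primitivity, Perron--Frobenius gives simplicity and a positive normalized eigenvector, and squeezing $\rho_N^2$ between the row sums of $A_N^2$ gives the two-sided bound after taking square roots) is exactly the standard proof of that cited result. The only cosmetic remark is that once you know $A_N$ is irreducible you can apply Perron--Frobenius to $A_N$ directly to get algebraic simplicity of $\rho_N$, so the detour through the eigenvalue-squaring correspondence is not needed for that part.
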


\begin{proof}
See \cite[lemma 34]{A}.
\end{proof}
We set  $\cV_{N}^{K}:=I_{K}\cV_{N}$ and let $(e_1,\dots,e_N)$ the canonical basis of $\mathbb{R}^N$.
Recall that $\boldsymbol{1}_N=\sum_{i=1}^N e_i$.

\begin{lemma}\label{123456} There exists $N_{0}\ge 1$ (depending only on $p$) such that for all $N\ge N_{0}$, on the set  $\Omega_{N}^{K,2}$, these properties hold true for all $i,j,k,l=1,...,N$:
\vip
$(i)$ for all $n\ge 2$, $A_{N}^{n}(i,j)\le (\frac{3}{2})A_{N}^{n}(k,l),$
\vip
$(ii)$ $V_{N}(i)\in [\frac{1}{2},2],$
\vip
$(iii)$ for all $n\ge 0$, $||A^{n}_{N}\boldsymbol{1}_{N}||_{2}\in [\sqrt{N}\frac{\rho_{N}^{n}}{2},2\sqrt{N}\rho_{N}^{n}]$,
\vip
$(iv)$ for all $n\ge 2, $ $A_{N}^{n}(i,j)\in [\rho_{N}^{n}/(3N),3\rho_{N}^{n}/N]$,
\vip

$(v)$ for all $n\ge 0$, all $r\in [1,\infty]$, $\bigl\|A_{N}^{n}\ce_{j}/||A_{N}^{n}\ce_{j}||_{r}-\cV_{N}/||\cV_{N}||_{r}\bigr\|_{r}\le 12(2N^{-\frac{3}{8}})^{\lfloor\frac{n}{2}\rfloor}$,
\vip
$(vi)$ for all $n\ge 1$, $||A_{N}^{n}e_{j}||_{2}\le 3\rho_{N}^{n}/(p\sqrt{N})$ and for all $n\ge 0$, $||A_{N}^{n}\boldsymbol{1}_{N}||_{\infty}\le 3\rho_{N}^{n}/p.$
\vip
$(vii)$ for all $n\ge 0$, all $r\in [1,\infty]$, $\bigl\| I_{K}A_{N}^{n}\boldsymbol{1}_{N}/\|I_{K}A_{N}^{n}\boldsymbol{1}_{N}\|_{r}-\cV^{K}_{N}/\|\cV^{K}_{N} \|_{r} \bigr\|_{r}\le 3(2N^{-\frac{3}{8}})^{\lfloor\frac{n}{2}\rfloor+1}$,
\vip
$(viii)$ for all $n\ge 0$, all $r\in [1,\infty]$, $\bigl\|I_KA_{N}^{n}\ce_{j}/||I_KA_{N}^{n}\ce_{j}||_{r}-\cV^K_{N}/||\cV^K_{N}||_{r}\bigr\|_{r}\le 12(2N^{-\frac{3}{8}})^{\lfloor\frac{n}{2}\rfloor}$,
\vip

$(ix)$ for all $n\ge 0$ $\|I_{K}A^{n}_{N}\boldsymbol{1}_{N}\|_{2}\in \Big[\sqrt{K}\rho^{n}_{N}/8,8\sqrt{K}\rho^{n}_{N}\Big] $.
\end{lemma}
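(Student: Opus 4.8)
The plan is to handle the nine estimates in two groups. Items (i)--(vi) do not involve $K$, and since $\Omega_N^{K,2}\subset\Omega_N^2$ they are exactly the statements proved in \cite{A} for the powers of $A_N$ on $\Omega_N^2$ (the lemma following \cite[lemma 34]{A}), so I would simply invoke that reference. The only thing worth recalling about their proof is the mechanism, because it also drives the new items: on $\Omega_N^2$ the defining bound $|NA_N^2(i,j)-p^2|\le \frac{p^2}{2N^{3/8}}$ says that $A_N^2$ lies, in Frobenius (hence operator) norm, within $\frac{p^2}{2N^{3/8}}$ of the rank-one matrix $P:=\frac{p^2}{N}\boldsymbol 1_N\boldsymbol 1_N^\top$, whose only nonzero eigenvalue is $p^2$ with eigenvector $\boldsymbol 1_N$. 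Feeding this into Perron--Frobenius (\cite[lemma 34]{A}) and iterating yields, besides (iv), the sharper fact that $\cV_N$ is a near-constant vector, as are the row sums $L_N(i)=\sum_jA_N(i,j)$ and the column sums of $A_N$, all up to a relative error $O(N^{-3/8})$ entrywise (so (ii) is far from sharp); these in turn give (iii), (v) and (vi).

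For (ix) I would argue directly. When $n\ge 2$, (iv) gives $\sum_{j}A_N^n(i,j)\in[\rho_N^n/3,\,3\rho_N^n]$ for every $i$, hence $\|I_KA_N^n\boldsymbol 1_N\|_2^2=\sum_{i=1}^K\bigl(\sum_jA_N^n(i,j)\bigr)^2\in[K\rho_N^{2n}/9,\,9K\rho_N^{2n}]$, which is inside the claimed interval. For $n=0$ the vector is $\boldsymbol 1_K$ and $\|\boldsymbol 1_K\|_2=\sqrt K$; and for $n=1$ one uses $L_N(i)=p(1+O(N^{-3/8}))$ from the first paragraph (the lower bound can alternatively be obtained crudely from $K^{-1}\sum_{i=1}^K\sum_jA_N(i,j)>p/2$ and the Cauchy--Schwarz inequality, this being the defining inequality of $\Omega_N^{K,2}$).

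For (vii) and (viii) the key point is that the truncation $I_K$ merely restricts the coordinate set $\{1,\dots,N\}$ to $\{1,\dots,K\}$, and every vector that occurs --- $A_N^{n}\boldsymbol 1_N$, $A_N^{n}\ce_j$ for $n\ge 2$, and $\cV_N$ --- is, by the first paragraph, a positive constant times $\boldsymbol 1_N$ up to the relative error appearing in (v). Truncating a nearly constant vector leaves a nearly constant vector with the same relative error, so after normalizing one finds that $I_KA_N^n\ce_j/\|I_KA_N^n\ce_j\|_r$ and $\cV_N^K/\|\cV_N^K\|_r$ both differ from $\boldsymbol 1_K/K^{1/r}$ by the stated amount in $\|\cdot\|_r$; the triangle inequality then gives (viii), and summing over $j$ (using $\boldsymbol 1_N=\sum_j\ce_j$ and that a nonnegative combination of vectors each pointing nearly in the direction $\cV_N^K$ points nearly in that direction) gives (vii). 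The cases with $\lfloor n/2\rfloor=0$ are trivial, since the difference of two $\|\cdot\|_r$-unit vectors has $\|\cdot\|_r$-norm at most $2\le12$.

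The step I expect to be the real obstacle is precisely this renormalization in (vii)--(viii), and it is why one cannot simply apply $I_K$ to the black-box estimate (v): projecting a ``generic'' error vector onto its first $K$ coordinates barely shrinks its $\ell^r$-norm, while the norm of the main term drops from order $N^{1/r}$ to order $K^{1/r}$, which would spuriously inflate the bound by a factor $(N/K)^{1/r}$. Avoiding this forces one to use the sharp near-constancy of $\cV_N$ and of the entries of $A_N^n$ ($n\ge 2$), rather than (v) as a black box: once everything in sight is a near-constant vector, truncation commutes harmlessly with normalization. Everything else is routine bookkeeping with the entrywise bounds (iv) and the norm estimates already recorded in (iii)--(vi), together with $\rho_N\in[p(1-\tfrac1{2N^{3/8}}),p(1+\tfrac1{2N^{3/8}})]$.
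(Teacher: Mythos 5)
Your proposal follows essentially the same mechanism as the paper: items (i)--(vi) are quoted from \cite{A}, and the new items rest on the fact that relative (entrywise) near-constancy of a positive vector survives restriction to the first $K$ coordinates, whereas an $\ell^r$ error bound like (v) does not. The pitfall you single out is exactly why the paper does not apply $I_K$ to (v) as a black box: its formal device is the restricted Hilbert-type ratio metric $d_K(\cx,\boldsymbol{y})=\log[\max_{i\le K}(x_i/y_i)/\min_{i\le K}(x_i/y_i)]$, together with the trivial inequality $d_K\le d_N$, the bounds $d_N(A_N^n\bun,\cV_N)\le(2N^{-3/8})^{\lfloor n/2\rfloor+1}$ and $d_N(A_N^n\ce_j,\cV_N)\le 4(2N^{-3/8})^{\lfloor n/2\rfloor}$ from \cite[Lemma 35]{A}, and \cite[Lemma 39]{A} to convert a $d_K$ bound into the normalized $\|\cdot\|_r$ bound. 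That is your ``truncation commutes with normalization for near-constant vectors'' argument, made quantitative. For (ix) you argue more directly than the paper (which writes $I_KA_N^n\bun$ as $\|A_N^n\bun\|_2(\|\cV_N\|_2^{-1}\cV_N^K+I_KZ_{N,n})$ and controls the ratio of norms); your route via the entrywise bounds (iv) for $n\ge2$ is clean and, for small $K$, actually more robust than an $\ell^2$ control of $Z_{N,n}$.

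Two points need repair. First, your claim that ``the cases with $\lfloor n/2\rfloor=0$ are trivial'' is correct for (v) and (viii), whose bound is then $12$, but \emph{not} for (vii): there the bound for $n\in\{0,1\}$ is $3(2N^{-3/8})^{1}=6N^{-3/8}$, which is small and must be proved. It does follow from your framework --- $\bun$, $A_N\bun$ and $\cV_N$ are all relatively near-constant with error $O(N^{-3/8})$ --- but you must actually run that case rather than dismiss it. Second, the assertion $L_N(i)=p(1+O(N^{-3/8}))$ overstates what $\Omega_N^2$ gives you: the defining condition controls $A_N^2$, not $A_N$, and the relative near-constancy of the row sums at $n=1$ is obtained in \cite[Lemma 35, Step 3]{A} from the non-expansiveness of the projective metric under $A_N$ applied to $d_N(\bun,\cV_N)$, not by ``reading off'' the event; moreover the common value of the $L_N(i)$ is only pinned down to within a factor of about $2$ of $\rho_N$ (via (iii)), not to $p$ itself. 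This weaker statement still suffices for the $n=1$ case of (ix) because the interval $[\rho_N/8,\,8\rho_N]$ has slack, so the argument goes through once stated correctly.
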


\begin{proof}
The proof of $(i)$-$(vi)$ see \cite[Lemma 35]{A}. For the point $(vii)$, we set for $\boldsymbol{x},\boldsymbol{y}\in(0,\infty)^N$
$$
d_{K}(\boldsymbol{x},\boldsymbol{y})=\log\Big[\frac{\max_{i=1,...,K}(\frac{x_{i}}{y_{i}})}{\min_{i=1,...,K}(\frac{x_{i}}{y_{i}})}\Big].
$$ 
Clearly one has $d_{K}(I_{K}A_{N}^{n}\boldsymbol{1}_{N},I_{K}\cV_{N})\le d_{N}(A_{N}^{n}\boldsymbol{1}_{N},\cV_{N})$. Moreover from \cite[Step 3 of the proof of Lemma 35]{A} one has 
$d_{N}(A_{N}^{n}\boldsymbol{1}_{N},\cV_{N})\le (2N^{-3/8})^{\lfloor n/2\rfloor+1}$.
Therefore we can apply \cite[Lemma 39]{A} and we obtain that
$$
||I_{K}A_{N}^{n}\boldsymbol{1}_{N}/||I_{K}A_{N}^{n}\boldsymbol{1}_{N}||_{r}-\cV^{K}_{N}/||\cV^{K}_{N}||_{r}||_{r} \le 
3d_{K}(I_{K}A_{N}^{n}\boldsymbol{1}_{N},I_{K}\cV_{N})
\le3(2N^{-\frac{3}{8}})^{\lfloor\frac{n}{2}\rfloor+1}.
$$

Let us prove $(viii)$. The case $n\in\{0,1\}$ is straightforward. In \cite[Lemma 35 step 4]{A}, we already have for all $n\geq 2$, $d_N(A_N^ne_j,V_N)\leq 4(2N^{-3/8})^{\lfloor n/2\rfloor}$.
Therefore 
$$
||I_{K}A_{N}^{n}\ce_j/||I_{K}A_{N}^{n}\ce_J||_{r}-\cV^{K}_{N}/||\cV^{K}_{N}||_{r}||_{r} \le 
3d_{K}(I_{K}A_{N}^{n}\ce_j,I_{K}\cV_{N})
\le4(2N^{-\frac{3}{8}})^{\lfloor\frac{n}{2}\rfloor}.
$$
which finishes the proof of $(viii)$.

We now verify $(ix)$. We write $A_N^n \bun = ||A_N^n\bun||_2(\|\boldsymbol{V}_N\|_2^{-1}\boldsymbol{V}_N + Z_{N,n})$,
where $Z_{N,n}=||A_N^n\bun||_2^{-1}A_N^n\bun - \|\boldsymbol{V}_N\|_2^{-1}\boldsymbol{V}_N$. By $(vii)$, we already have $||Z_{N,n}||_2 \leq 3(2N^{-3/8})^{\lfloor n/2\rfloor+1}$. Multiplying each side by $I_K$, we obtain that $I_KA_N^n \bun = ||A_N^n\bun||_2(\|\boldsymbol{V}_N\|_2^{-1}\boldsymbol{V}^K_N + I_KZ_{N,n})$

Thus
$$
\Big|\frac{||I_KA_N^n\bun||_2}{||A_N^n\bun||_2}-\frac{\|\boldsymbol{V}^K_N\|_2}{\|\boldsymbol{V}_N\|_2}\Big|\le \|I_KZ_{N,n}\|_2\le\|Z_{N,n}\|_2\le  3(2N^{-\frac{3}{8}})^{\lfloor\frac{n}{2}\rfloor+1}. $$
So for all $n\geq 0$, we have
$$
||I_KA_N^{n}\bun||_2\in \Big[\Big(\frac{\|\boldsymbol{V}_N^K\|_2}{\|\boldsymbol{V}_N\|_2}-CN^{-\frac{3}{8}}\Big) ||A_N^n\bun||_2,\Big(\frac{\|\boldsymbol{V}_N^K\|_2}{\|\boldsymbol{V}_N\|_2}+CN^{-\frac{3}{8}}\Big) ||A_N^n\bun||_2\Big].
$$ 
Finally, recalling $(ii)$ and $(iii),$ we deduce $(ix)$.

\end{proof}

 \begin{lemma}\label{ellL} We have 
 $$
 \mathbb{E}\Big[ \|\boldsymbol{\mathcal{L}}_{N}^{K}-(\bar{L}_{N}^{K})^{5}\cL_{N}^{K}\|^{2}_{2}\Big]\le\frac{C}{N}
 $$
 where $\bL_N:=A^6_N\bun$, $\bL^{K}_{N}=I_{K}\bL_N$ and $\mL_N(i)=\sum_{j=1}^NA^6_N(i,j),$ $\bar{L}^{K}_{N}=\frac{1}{K}\sum_{i=1}^{K}L_{N}(i)$.
 \end{lemma}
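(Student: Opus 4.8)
The quantity $\mL_N(i)=\sum_j A_N^6(i,j)$ is the $i$-th row sum of $A_N^6$, and $L_N(i)=\sum_j A_N(i,j)=N^{-1}\sum_j\theta_{ij}$ is the $i$-th row sum of $A_N$; similarly $\bar L_N^K$ is the average over $i\le K$ of these row sums. The heuristic is that, on the good event $\Omega_N^{K,2}$, the matrix $A_N^2$ has essentially constant entries $p^2/N$ up to a relative error of order $N^{-3/8}$, so $A_N^6(i,j)=(A_N^2)^3(i,j)$ should be very close to $p^6/N$, whence $\mL_N(i)\approx p^6$; at the same time $\bar L_N^K\approx p$, so $(\bar L_N^K)^5\mL_N(i)\approx p^{10}$ and $(\bar L_N^K)^5\mL_N^K$ is a vector whose entries are all $\approx p^{10}$, matching $\bL_N^K$ up to fluctuations. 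The plan is to make this precise by controlling $\mL_N(i)$ directly through the bounds of Lemma~\ref{123456}, in particular part $(iv)$, which gives $A_N^n(i,j)\in[\rho_N^n/(3N),3\rho_N^n/N]$ for $n\ge 2$, and part $(v)$/$(viii)$ which give the stronger geometric decay of the relative error.

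First I would work on $\Omega_N^{K,2}\cap\{N\ge N_0\}$ and write $\mL_N(i)=\sum_{j=1}^N A_N^6(i,j)$. Using $A_N^6 e_j$ or rather the row-sum identity $A_N^6\bun$, I would combine Lemma~\ref{123456}$(iii)$ (so that $\|A_N^6\bun\|_2\in[\sqrt N\rho_N^6/2,2\sqrt N\rho_N^6]$) with $(v)$ applied at $n=6$, which says $A_N^6\bun/\|A_N^6\bun\|_r$ is within $12(2N^{-3/8})^{3}$ of $\boldsymbol V_N/\|\boldsymbol V_N\|_r$ in $\|\cdot\|_r$; taking $r=\infty$ and $r=1$ this forces every coordinate $\mL_N(i)=(A_N^6\bun)(i)$ to be $\rho_N^6 V_N(i)(1+O(N^{-9/8}))$, and since $V_N(i)\in[\tfrac12,2]$ by $(ii)$ and $\rho_N=p(1+O(N^{-3/8}))$, one gets $\mL_N(i)=p^6 V_N(i)+O(N^{-3/8})$ uniformly in $i$, hence also $\max_i\mL_N(i)\le C$ on this event. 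The same reasoning gives the analogous statement for $\bL_N^K=I_KA_N^6\bun$ via $(vii)$ and $(ix)$: $\mL_N^K(i)=\mL_N(i)$ for $i\le K$ and $0$ otherwise, so in fact $\bL_N^K(i)=p^6 V_N(i)\boldsymbol 1_{\{i\le K\}}+O(N^{-3/8})$.

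Next I would handle $\bar L_N^K=K^{-1}\sum_{i=1}^K L_N(i)$. Since $NL_N(1),\dots,NL_N(K)$ are i.i.d.\ $\mathrm{Binomial}(N,p)$, we have $\E[(\bar L_N^K-p)^2]\le C/(NK)\le C/N$, and on $\Omega_N^{K,2}$ the quantity $\bar L_N^K$ is bounded (indeed $\ge p/2$ by the definition of $\Omega_N^{K,2}$, and $\le 1$ always). Writing $(\bar L_N^K)^5-p^5=(\bar L_N^K-p)\sum_{a=0}^4(\bar L_N^K)^a p^{4-a}$ and using boundedness, $\E[\boldsymbol 1_{\Omega_N^{K,2}}((\bar L_N^K)^5-p^5)^2]\le C\,\E[(\bar L_N^K-p)^2]\le C/N$. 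Then I would estimate
\begin{align*}
\|\bL_N^K-(\bar L_N^K)^5\mL_N^K\|_2
&\le \|\bL_N^K - p^{10}\boldsymbol 1_K\|_2 + \big|(\bar L_N^K)^5-p^5\big|\,\|\mL_N^K - p^5\boldsymbol 1_K\|_2\\
&\quad{}+ p^5\,\|\mL_N^K-p^5\boldsymbol 1_K\|_2\big(\text{from }(\bar L_N^K)^5\big) + \big|(\bar L_N^K)^5 p^5 - p^{10}\big|\,\|\boldsymbol 1_K\|_2,
\end{align*}
where in the last term $\|\boldsymbol 1_K\|_2=\sqrt K\le\sqrt N$. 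More carefully: on $\Omega_N^{K,2}$ the first three ``$N^{-3/8}$'' terms contribute at most $C\sqrt N\cdot N^{-3/8}$ in $\|\cdot\|_2$... which is too big, so I would instead not go through $p^6 V_N$ but rather compare $\bL_N^K$ and $\mL_N^K$ entrywise to the \emph{same} random vector $p^5 V_N^K$ (not to a deterministic vector): on $\Omega_N^{K,2}$, $\bL_N^K(i)-(\bar L_N^K)^5\mL_N^K(i) = (p^5-(\bar L_N^K)^5)\mL_N^K(i) + (\bL_N^K(i)-p^5\mL_N^K(i))$, and the second difference is, by $(viii)$ and $(vii)$ applied with the appropriate normalization, of size $\mL_N(i)\cdot O(N^{-9/8})$ per coordinate (note $\mL_N^K = $ the first-$K$ part of $A_N^6\bun$, while $p^5\mL_N^K$ comes from $A_N$ acting once more relative to $A_N^5$; the geometric factor $(2N^{-3/8})^{\lfloor n/2\rfloor}$ kills $\sqrt K$ comfortably). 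Summing squares over $i\le K$ gives $\le K\cdot C N^{-9/4} + C\,|(\bar L_N^K)^5-p^5|^2\max_i\mL_N(i)^2$, and taking expectations, $\E[\boldsymbol 1_{\Omega_N^{K,2}}\|\cdot\|_2^2]\le CKN^{-9/4} + C\E[((\bar L_N^K)^5-p^5)^2]\le C/N$. Finally, off $\Omega_N^{K,2}$ I would use the crude deterministic bound $\|\bL_N^K\|_2\le C\sqrt N$ and $\|\mL_N^K\|_2\le C\sqrt N$ together with $P((\Omega_N^{K,2})^c)\le Ce^{-cN^{1/4}}$ from Lemma~\ref{ONK2}, which is negligible.

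\textbf{Main obstacle.} The delicate point is bookkeeping the precise normalizations in Lemma~\ref{123456}$(v)$,$(vii)$,$(viii)$: these are stated for $A_N^n\boldsymbol e_j$ and $A_N^n\bun$ after dividing by their own norms, so to extract an entrywise statement about $A_N^6(i,j)$ and then about $\sum_j A_N^6(i,j)$ one must carefully track $\|A_N^n\bun\|_r$ for $r=1,2,\infty$ (using $(iii)$, $(vi)$, $(ix)$) and the bounds $\rho_N\approx p$, $V_N(i)\in[\tfrac12,2]$. The arithmetic has to be arranged so that each ``error'' term carries enough powers of $N^{-3/8}$ (coming from $\lfloor 6/2\rfloor = 3$, i.e.\ $N^{-9/8}$) to beat the $\sqrt K\le\sqrt N$ factor picked up when passing from an $\ell^\infty$ per-coordinate bound to an $\ell^2$ bound over $K$ coordinates; since $9/8 > 1/2$ this works, but it is the step where a careless split (e.g.\ comparing to a deterministic vector and losing the geometric gain) would fail, exactly as flagged above. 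The other genuinely probabilistic input — the $O(1/N)$ bound on $\E[((\bar L_N^K)^5-p^5)^2]$ — is routine from the $\mathrm{Binomial}(N,p)$ structure of $NL_N(i)$.
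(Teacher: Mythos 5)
There is a genuine gap: the route through the Perron--Frobenius eigenvector is quantitatively too coarse to reach the $C/N$ bound. The critical term in your decomposition is $\|\bL_N^K-p^5\cL_N^K\|_2$ (equivalently, comparing $I_KA_N^6\bun$ with $p^5 I_KA_N\bun$), and here the weak link is the $n=1$ vector, not the $n=6$ one. Lemma~\ref{123456}$(vii)$ for $n=1$ gives an exponent $\lfloor 1/2\rfloor+1=1$, i.e.\ $\cL_N^K=I_KA_N\bun$ is only within a \emph{relative} error $O(N^{-3/8})$ of the direction $\cV_N^K$; multiplying back by $\|I_KA_N\bun\|_2\sim\sqrt{K}$ leaves an $\ell^2$ error of order $\sqrt{K}\,N^{-3/8}$, whose square is $KN^{-3/4}$ --- as large as $N^{1/4}$ when $K\sim N$, versus the target $1/N$. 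Your claimed ``$O(N^{-9/8})$ per coordinate'' uses the exponent $\lfloor 6/2\rfloor=3$, which is only available on the $A_N^6\bun$ side; the comparison is dominated by the $A_N\bun$ side, so the assertion that ``the geometric factor kills $\sqrt{K}$ comfortably'' fails exactly where it is needed. More structurally, the lemma encodes a cancellation between the i.i.d.\ Binomial fluctuations of $L_N(i)$ (each of size $N^{-1/2}$, so that $\|\cL_N^K-p\boldsymbol{1}_K\|_2^2=O(K/N)=O(1)$) and the corresponding fluctuations of $\mL_N(i)$; no approximation of $\boldsymbol{L}_N$ by a multiple of $\cV_N$ with error $\gg N^{-1/2}$ per coordinate can see this cancellation.

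The paper's proof avoids the eigenvector entirely and is unconditional. It telescopes
$$\|I_KA_N^6\bun-(\bar L_N^K)^5I_KA_N\bun\|_2\le\sum_{k=1}^5\|I_KA_N^{k+1}\bun-\bar L_N^K\,I_KA_N^k\bun\|_2,$$
using $\bar L_N^K\le 1$, and reduces each summand to $A_N^2\bun-\bar L_N^KA_N\bun=A_N\cX_N+(\bar L_N-\bar L_N^K)\boldsymbol{L}_N$ (with $I_KA_N\cX_N$ for $k=1$). The decisive inputs are the smoothing estimates $\mathbb{E}[\|A_N\cX_N\|_2^2]\le C/N$ (Lemma~\ref{xxx}$(iii)$) and $\mathbb{E}[\|I_KA_N\cX_N\|_2^2]\le CK/N^2$ (Lemma~\ref{IAX}), which exploit the independence of the $\theta_{ij}$ to show that one application of $A_N$ reduces the fluctuation vector from $\|\cX_N\|_2=O(1)$ to $O(N^{-1/2})$, together with $\mathbb{E}[(\bar L_N-p)^4]^{1/2}\le C/N^2$ and $\mathbb{E}[(\bar L_N^K-p)^4]^{1/2}\le C/(NK)$. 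Your argument never invokes these variance estimates, and without them the $O(1/N)$ rate is out of reach. (Your treatment of the $(\bar L_N^K)^5-p^5$ factor and of the complement of $\Omega_N^{K,2}$ is fine, but those are not where the difficulty lies.)
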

 
 \newcommand{\UN}{\boldsymbol{1}_N}
 
 \begin{proof}
 We write
 \begin{align}
\|\boldsymbol{\mathcal{L}}^{K}_{N}-(\bar{L}_{N}^{K})^{5}\cL_{N}^{K}\|_{2} &=\|I_{K}A_{N}^{6}\boldsymbol{1}_{N}-(\bar{L}_{N}^{K})^{5}I_{K}A_{N}\UN\|_{2} \notag\\ 
 \label{coupeL} &\le \sum_{k=1}^{5} \|(\bar{L}_{N}^{K})^{5-k}I_{K}A^{k+1}_{N}\boldsymbol{1}_{N}-(\bar{L}_{N}^{K})^{6-k}I_{K}A^{k}_{N}\boldsymbol{1}_{N}\|_{2}\\
 &\le \sum_{k=1}^{5} \| I_{K}A^{k+1}_{N}\boldsymbol{1}_{N}-(\bar{L}_{N}^{K})I_{K}A^{k}_{N}\boldsymbol{1}_{N}\|_{2}
\end{align}
 First we study the term  corresponding to $k=1$. We have
 \begin{align*}
\mathbb{E}\Big[ \|I_{K}A^{2}_{N}\boldsymbol{1}_{N}-\bar{L}_{N}^{K}I_{K}A_{N}\boldsymbol{1}_{N}\|_{2}^{2}\Big]
 &\le 2\mathbb{E}\Big[ \|I_{K}A_{N}\cL_{N}-\bar{L}_{N}I_{K}A_{N}\boldsymbol{1}_{N}\|_{2}^{2}+\|(\bar{L}_{N}-\bar{L}_{N}^{K})I_{K}A_{N}\boldsymbol{1}_{N}\|^{2}_{2} \Big]
  \end{align*}
 By Lemma \ref{IAX} we have $\mathbb{E}[\|I_{K}A_{N}(\cL_{N}-\bar{L}_{N}\boldsymbol{1}_{N})\|_{2}^{2}]\le \frac{CK}{N^2}$. Besides we have 
 \begin{align*}
&\mathbb{E}\Big[ \|(\bar{L}_{N}-\bar{L}_{N}^{K})I_{K}A_{N}\boldsymbol{1}_{N}\|^{2}_{2}\Big]\\
&\le 2 \mathbb{E}\Big[ \|(\bar{L}_{N}-p)I_{K}A_{N}\boldsymbol{1}_{N}\|^{2}_{2}\Bigr]+ 2 \mathbb{E}\Bigl[\|(p-\bar{L}_{N}^{K})I_{K}A_{N}\boldsymbol{1}_{N}\|^{2}_{2}\Bigr]\\
 &\le 2 \mathbb{E}[(\bar{L}_{N}-p)^{4}]^{\frac{1}{2}}\mathbb{E}\Big[ \|I_{K}A_{N}\boldsymbol{1}_{N}\|^{4}_{2}\Big]^{\frac{1}{2}}+2 \mathbb{E}[(p-\bar{L}_{N}^{K})^{4}]^{\frac{1}{2}}\mathbb{E}\Big[ \|I_{K}A_{N}\boldsymbol{1}_{N}\|^{4}_{2}\Big]^{\frac{1}{2}}\\
\intertext{\hfill using the Cauchy-Schwarz inequality}
 &\le C  ( \frac{1}{N^2} K + \frac{1}{NK} K )\le \frac{C}{N}
\end{align*}
since $\|I_{K}A_{N}\boldsymbol{1}_{N}\|_2\le \sqrt{K}$, $\mathbb{E}[(\bar{L}_{N}-p)^{4}]^{\frac{1}{2}}\le \frac{C}{N^2}$ and $\mathbb{E}[(\bar{L}^K_{N}-p)^{4}]^{\frac{1}{2}}\le \frac{C}{NK}$
($NL_N(1),\dots,NL_N(K)$ are i.i.d. and Binomial$(N,p)$).
 So $$\mathbb{E}\Big[ \|I_{K}A^{2}_{N}\boldsymbol{1}_{N}-\bar{L}_{N}^{K}I_{K}A_{N}\boldsymbol{1}_{N}\|_{2}^{2}\Big]\le \frac{C}{N}.$$
 
 Next, we consider the other terms, for any $k\ge 2$. We have
 \begin{align*}
&\mathbb{E}\Big[\|I_{K}A^{k+1}_{N}\boldsymbol{1}_{N}-(\bar{L}_{N}^{K}) I_{K}A^{k}_{N}\boldsymbol{1}_{N}\|^2_{2}\Big]\\
 &\le \mathbb{E}\Big[ |||I_{K}A_{N}|||^{2}_{2}\ |||A_{N}|||^{2k-4}_{2}\, \|A^{2}_{N}\boldsymbol{1}_{N}-(\bar{L}^{K}_{N})A_{N}\boldsymbol{1}_{N}\|^{2}_{2}\Big]\\
 &\le \Big(\frac{K}{N}\Big)^2\mathbb{E}\Big[\|A^{2}_{N}\boldsymbol{1}_{N}-(\bar{L}^{K}_{N})A_{N}\boldsymbol{1}_{N}\|^{2}_{2}\Big]\\
 \intertext{\hfill since $|||I_{K}A_{N}|||_2\le {K/N}$}
 &\le 2\Big(\frac{K}{N}\Big)^2 \Big\{\mathbb{E}\Big[ \|A^{2}_{N}\boldsymbol{1}_{N}-\bar{L}_{N}A_{N}\boldsymbol{1}_{N}\|^{2}_{2}\Big]+\mathbb{E}\Big[ |\bar{L}_{N}-\bar{L}_{N}^{K}|^{2}\|A_{N}\boldsymbol{1}_{N}\|_{2}^{2}\Big]\Big\}\\
 &\le 2\Big(\frac{K}{N}\Big)^2 \Big\{\mathbb{E}\Big[ \|A_N\cX_N\|^{2}_{2}\Big]\\
 &\qquad+2\mathbb{E}\Big[ |\bar{L}_{N}-p|^{2}\|A_{N}\boldsymbol{1}_{N}\|_{2}^{2}\Big]+2\mathbb{E}\Big[|p-\bar{L}_{N}^{K}|^{2}\|A_{N}\boldsymbol{1}_{N}\|_{2}^{2}\Big]\Big\}
 \\
 &\le C \Big(\frac{K}{N}\Big)^2 \Big[\frac{1}{N}+ \frac{1}{N^{2}}N+\frac{1}{NK}N\Big]\le\frac{C}{N}.
 \end{align*}
 Recalling  (\ref{coupeL}), we conclude that
  $$
  \mathbb{E}[ \|\boldsymbol{\mathcal{L}}_{N}^{K}-(\bar{L}_{N}^{K})^{5}\cL_{N}^{K}\|^{2}_{2}]\le\frac{C}{N}$$ 
  which completes the proof.
\end{proof}

 \begin{lemma}\label{HNK}
 We have
 $$
 \mathbb{E}\Big[\boldsymbol{1}_{\Omega_{N}^{K,2}} \Big|H_{N}^{K}-\Big(\frac{1}{p}-1\Big)\Big|\Big]\le \frac{C}{\sqrt{K}},\  \hbox{where}\quad
 H_{N}^{K}:=\frac{N}{K}\sum_{i=1}^{K}\Big(\frac{L_{N}(i)-\bar{L}^{K}_{N}}{\bar{L}^{K}_{N}}\Big)^{2}.
 $$
 
 \end{lemma}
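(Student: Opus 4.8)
The plan is to reduce the statement to elementary moment estimates for the empirical variance of i.i.d.\ binomial random variables. Write $\xi_i:=NL_N(i)=\sum_{j=1}^N\theta_{ij}$, so that $\xi_1,\dots,\xi_K$ are i.i.d.\ with law $\mathrm{Binomial}(N,p)$; in particular $\mathbb{E}[L_N(i)]=p$ and $\mathrm{Var}(L_N(i))=p(1-p)/N$. Setting
\[
W_N^K:=\frac NK\sum_{i=1}^K\big(L_N(i)-\bar L_N^K\big)^2,
\]
one has the algebraic identity $H_N^K=W_N^K/(\bar L_N^K)^2$, so it suffices to control $W_N^K$ and $\bar L_N^K$ separately and to use that on $\Omega_N^{K,2}$ one has $\bar L_N^K=\frac1K\sum_{i=1}^K\sum_{j=1}^N A_N(i,j)>p/2$ (while $0\le\bar L_N^K\le 1$ always).

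First I would establish the two bounds
\[
\mathbb{E}\Big[\big(W_N^K-p(1-p)\big)^2\Big]\le\frac CK,\qquad
\mathbb{E}\Big[\big(\bar L_N^K-p\big)^2\Big]=\frac{p(1-p)}{NK}\le\frac{C}{NK}.
\]
The second is immediate. For the first, note that $W_N^K=N^{-1}\widehat S_K^2$, where $\widehat S_K^2:=\frac1K\sum_{i=1}^K(\xi_i-\bar\xi^K)^2$ is the empirical variance of the sample $\xi_1,\dots,\xi_K$ (with $\bar\xi^K=\frac1K\sum_{i=1}^K\xi_i=N\bar L_N^K$). By the classical formulas for an empirical variance, $\mathbb{E}[\widehat S_K^2]=\frac{K-1}{K}\sigma^2$ and $\mathrm{Var}(\widehat S_K^2)\le \frac CK(\mu_4+\sigma^4)$, where $\sigma^2=Np(1-p)$ and $\mu_4=\mathbb{E}[(\xi_1-Np)^4]$; for the $\mathrm{Binomial}(N,p)$ law one has $\mu_4\le CN^2$ and $\sigma^4\le CN^2$, whence $\mathbb{E}[(\widehat S_K^2-Np(1-p))^2]\le CN^2/K$. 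Dividing by $N^2$ gives $\mathbb{E}[(W_N^K-p(1-p))^2]\le C/K$. By Jensen this yields $\mathbb{E}[|W_N^K-p(1-p)|]\le C/\sqrt K$ and $\mathbb{E}[|\bar L_N^K-p|]\le C/\sqrt{NK}\le C/\sqrt K$; and since $0\le\bar L_N^K\le 1$, also $\mathbb{E}[|p^2-(\bar L_N^K)^2|]\le 2\,\mathbb{E}[|p-\bar L_N^K|]\le C/\sqrt K$.

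It then remains to assemble these bounds on $\Omega_N^{K,2}$. Using $\bar L_N^K>p/2$, hence $1/(\bar L_N^K)^2<4/p^2$, I would write
\[
H_N^K-\Big(\frac1p-1\Big)=\frac{W_N^K}{(\bar L_N^K)^2}-\frac{p(1-p)}{p^2}
=\frac{W_N^K-p(1-p)}{(\bar L_N^K)^2}+\frac{p(1-p)\big(p^2-(\bar L_N^K)^2\big)}{p^2(\bar L_N^K)^2},
\]
and bound each term on $\Omega_N^{K,2}$ by $\frac{4}{p^2}|W_N^K-p(1-p)|$ and $\frac{4(1-p)}{p^3}|p^2-(\bar L_N^K)^2|$ respectively. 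Taking expectations and inserting the estimates above gives $\mathbb{E}[\boldsymbol{1}_{\Omega_N^{K,2}}|H_N^K-(\tfrac1p-1)|]\le C/\sqrt K$, as claimed.

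The argument is essentially routine; the only point requiring a little attention is the bound $\mu_4\le CN^2$ for the fourth central moment of the $\mathrm{Binomial}(N,p)$ distribution, since this is exactly what keeps $\mathrm{Var}(W_N^K)$ of order $1/K$ and hence produces the $1/\sqrt K$ rate rather than a worse one.
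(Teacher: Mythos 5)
Your proof is correct and follows essentially the same route as the paper: the same decomposition of $H_N^K-(\tfrac1p-1)$ into a term controlled by $|\tfrac NK\|\boldsymbol{X}_N^K\|_2^2-p(1-p)|$ and one controlled by $|\bar L_N^K-p|$, combined with the lower bound $\bar L_N^K>p/2$ on $\Omega_N^{K,2}$. The only difference is that you prove the key moment bound $\mathbb{E}[(\tfrac NK\|\boldsymbol{X}_N^K\|_2^2-p(1-p))^2]\le C/K$ in full via the binomial fourth-moment computation, whereas the paper simply invokes its earlier estimate \eqref{ZE}.
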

 
 \begin{proof}
 Since $\bar{L}_N^K \ge p/2$ on $\Omega_{N}^{K,2}$,  we have
 \begin{align*}
\Big|H^{K}_{N}-\Big(\frac{1}{p}-1\Big)\Big|
 &\le \Big|\frac{N}{K}\frac{\|\cL_{N}^{K}-\bar{L}_{N}^{K}\boldsymbol{1}_{K}\|^{2}_{2}}{(\bar{L}_{N}^{K})^{2}}-\frac{p(1-p)}{(\bar{L}_{N}^{K})^{2}}\Big|+p(1-p)\Big|\frac{1}{(\bar{L}_{N}^{K})^{2}}-\frac{1}{p^{2}}\Big|\\
 &\le C\Big|\frac{N}{K}\|\cX_{N}^{K}\|^{2}_{2}-p(1-p)\Big|+C|\bar{L}_{N}^{K}-p|.
 \end{align*}
  Using (\ref{ZE}) and the fact that $\mathbb{E}[(\bar{L}_{N}^{K}-p)^2]\le \frac{C}{NK},$ we obtain

 $$\mathbb{E}\Big[ \boldsymbol{1}_{\Omega_{N}^{K,2}} \Big|H_{N}^{K}-(\frac{1}{p}-1)\Big|\Big]
 \le C\mathbb{E}\Big[\Big|\frac{N}{K}\|\cX_{N}^{K}\|^{2}_{2}-p(1-p)\Big|+|\bar{L}_{N}^{K}-p| \Big]
 \le \frac{C}{\sqrt{K}}.$$
 \end{proof}

 \begin{prop} \label{VmVb}  We set $\bar{V}_{N}^{K}=\frac{1}{K}\sum_{i=1}^K V_N(i)$ and
 \begin{equation} \label{defUinf}
 \mathcal{U}_{\infty}^{N,K}:=\frac{N}{K}(\bar{V}_{N}^{K})^{-2}\sum_{i=1}^{K}(V_{N}(i)-\bar{V}_{N}^{K})^{2} \quad \text{on $\Omega_{N}^{K,2}$.}
 \end{equation}
 There exists $N_{0}\ge 1$ and $C>0$ (depending only on $p$) such that for all $N\ge N_{0}$,
 \begin{align*}
\frac{N}{K}\mathbb{E}\Big[\boldsymbol{1}_{\Omega_{N}^{K,2}}||\cV^{K}_{N}-\bar{V}^{K}_{N}\boldsymbol{1}_{K}||^{2}_{2}\Big]\le C,\quad  \mathbb{E}\Big[\boldsymbol{1}_{\Omega_{N}^{K,2}}\Big|\mathcal{U}_{\infty}^{N,K}-\Big(\frac{1}{p}-1\Big)\Big|\Big]\le \frac{C}{\sqrt{K}}.
 \end{align*}
 \end{prop}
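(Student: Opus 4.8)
The plan is to reduce both inequalities to a single $L^{2}$-approximation: on the good event $\Omega_{N}^{K,2}$ the truncated Perron eigenvector $\cV_{N}^{K}$ should be close, up to a positive scalar, to the vector $\cL_{N}^{K}:=I_{K}A_{N}\boldsymbol{1}_{N}$, and once this is in place both claims follow by perturbation; for the second one this amounts to transferring the estimate $\mathbb{E}[\boldsymbol{1}_{\Omega_{N}^{K,2}}|H_{N}^{K}-(1/p-1)|]\le C/\sqrt{K}$ of Lemma \ref{HNK}. The guiding observation is that $\mathcal{U}_{\infty}^{N,K}$ and $H_{N}^{K}$ are scale-invariant functionals of $\cV_{N}^{K}$ and of $\cL_{N}^{K}$ respectively (a ``squared coefficient of variation''), so only directions matter. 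Throughout one works on $\Omega_{N}^{K,2}$ and uses the bounds of Lemma \ref{123456} freely, and $N_{0}$ absorbs the threshold of that lemma together with finitely many smallness conditions below.

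First I would establish the $L^{2}$-estimate. Set $\mu_{N}:=\|\cV_{N}^{K}\|_{2}/\|I_{K}A_{N}^{6}\boldsymbol{1}_{N}\|_{2}$ and $\nu_{N}:=\mu_{N}(\bar{L}_{N}^{K})^{5}$. By Lemma \ref{123456}$(ii)$, $(ix)$ one has $\|\cV_{N}^{K}\|_{2}\asymp\sqrt{K}$, $\|I_{K}A_{N}^{6}\boldsymbol{1}_{N}\|_{2}\asymp\sqrt{K}\rho_{N}^{6}$ and $\rho_{N}\asymp p$, and since $\bar{L}_{N}^{K}\in(p/2,1]$ on $\Omega_{N}^{K,2}$, this forces $\nu_{N}$ to be bounded above and below by constants depending only on $p$, for $N\ge N_{0}$. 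Then Lemma \ref{123456}$(vii)$ applied with $n=6$, $r=2$ and multiplied by $\|\cV_{N}^{K}\|_{2}\le 2\sqrt{K}$ gives $\boldsymbol{1}_{\Omega_{N}^{K,2}}\|\cV_{N}^{K}-\mu_{N}I_{K}A_{N}^{6}\boldsymbol{1}_{N}\|_{2}\le C\sqrt{K}N^{-3/2}$, a.s.\ on the event; combining this with Lemma \ref{ellL}, which bounds $\mathbb{E}[\|I_{K}A_{N}^{6}\boldsymbol{1}_{N}-(\bar{L}_{N}^{K})^{5}I_{K}A_{N}\boldsymbol{1}_{N}\|_{2}^{2}]$ by $C/N$, together with the boundedness of $\mu_{N}$ and the fact that $K\le N$, I would obtain
$$
\mathbb{E}\Big[\boldsymbol{1}_{\Omega_{N}^{K,2}}\|\boldsymbol{W}\|_{2}^{2}\Big]\le\frac{C}{N},\qquad \boldsymbol{W}:=\cV_{N}^{K}-\nu_{N}\cL_{N}^{K}.
$$

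For the first assertion I would write $\cV_{N}^{K}-\bar{V}_{N}^{K}\boldsymbol{1}_{K}=\nu_{N}\cX_{N}^{K}+(\boldsymbol{W}-\bar{W}^{K}\boldsymbol{1}_{K})$, where $\bar{W}^{K}:=K^{-1}\sum_{i\le K}W_{i}$ and $\cX_{N}^{K}=\cL_{N}^{K}-\bar{L}_{N}^{K}\boldsymbol{1}_{K}$; since centering is a $\|\cdot\|_{2}$-contraction, $\|\cV_{N}^{K}-\bar{V}_{N}^{K}\boldsymbol{1}_{K}\|_{2}^{2}\le 2\nu_{N}^{2}\|\cX_{N}^{K}\|_{2}^{2}+2\|\boldsymbol{W}\|_{2}^{2}$, and the bound $\frac{N}{K}\mathbb{E}[\boldsymbol{1}_{\Omega_{N}^{K,2}}\|\cV_{N}^{K}-\bar{V}_{N}^{K}\boldsymbol{1}_{K}\|_{2}^{2}]\le C$ follows from $\mathbb{E}[(\tfrac{N}{K}\|\cX_{N}^{K}\|_{2}^{2})^{2}]\le C$ (a consequence of \eqref{ZE}, valid because $NL_{N}(1),\dots,NL_{N}(K)$ are i.i.d.\ Binomial$(N,p)$), boundedness of $\nu_{N}$, and the previous step. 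For the second, expanding without the crude bound, $\tfrac{N}{K}\|\cV_{N}^{K}-\bar{V}_{N}^{K}\boldsymbol{1}_{K}\|_{2}^{2}=\nu_{N}^{2}\tfrac{N}{K}\|\cX_{N}^{K}\|_{2}^{2}+R$ with $R=\tfrac{N}{K}\big(2\nu_{N}\langle\cX_{N}^{K},\boldsymbol{W}-\bar{W}^{K}\boldsymbol{1}_{K}\rangle+\|\boldsymbol{W}-\bar{W}^{K}\boldsymbol{1}_{K}\|_{2}^{2}\big)$, and Cauchy--Schwarz together with $\mathbb{E}\|\cX_{N}^{K}\|_{2}^{2}\le CK/N$ and the previous step give $\mathbb{E}[\boldsymbol{1}_{\Omega_{N}^{K,2}}|R|]\le C/\sqrt{K}$. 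Dividing by $(\bar{V}_{N}^{K})^{2}\in[1/4,4]$ and using $\tfrac{N}{K}\|\cX_{N}^{K}\|_{2}^{2}=(\bar{L}_{N}^{K})^{2}H_{N}^{K}$ yields $\mathcal{U}_{\infty}^{N,K}=(\bar{V}_{N}^{K})^{-2}(\nu_{N}\bar{L}_{N}^{K})^{2}H_{N}^{K}+(\bar{V}_{N}^{K})^{-2}R$; the prefactor equals $1+\delta_{N}$ with $|\delta_{N}|\le C|\nu_{N}\bar{L}_{N}^{K}-\bar{V}_{N}^{K}|=CK^{-1}|\langle\boldsymbol{W},\boldsymbol{1}_{K}\rangle|\le CK^{-1/2}\|\boldsymbol{W}\|_{2}$, so that $\mathbb{E}[\boldsymbol{1}_{\Omega_{N}^{K,2}}\delta_{N}^{2}]\le C/(KN)$; since moreover $\mathbb{E}[\boldsymbol{1}_{\Omega_{N}^{K,2}}(H_{N}^{K})^{2}]\le C$ (again by \eqref{ZE} and $\bar{L}_{N}^{K}>p/2$), these estimates give $\mathbb{E}[\boldsymbol{1}_{\Omega_{N}^{K,2}}|\mathcal{U}_{\infty}^{N,K}-H_{N}^{K}|]\le C/\sqrt{K}$, and the triangle inequality with Lemma \ref{HNK} closes the argument.

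The main obstacle is the $L^{2}$-approximation step. The Perron--Frobenius convergence in Lemma \ref{123456}$(vii)$ contracts directions only at rate $N^{-3/8}$ per two matrix powers, so one has to go up to $A_{N}^{6}$ — which is exactly why Lemma \ref{ellL} is phrased for the sixth power — in order to beat $1/\sqrt{N}$, and the delicate part is the bookkeeping of the two scalar normalizations $\mu_{N}$ and $(\bar{L}_{N}^{K})^{5}$ and the verification that their product $\nu_{N}$ stays uniformly bounded away from $0$ and $\infty$ on $\Omega_{N}^{K,2}$. Once $\|\cV_{N}^{K}-\nu_{N}\cL_{N}^{K}\|_{2}$ is controlled in $L^{2}$, everything else is a routine perturbation around Lemma \ref{HNK} and \eqref{ZE}.
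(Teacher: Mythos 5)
Your proof is correct, and it uses exactly the same ingredients as the paper (the sixth power of $A_N$, Lemma \ref{ellL}, the direction convergence of Lemma \ref{123456}, estimate \eqref{ZE}, and the final reduction to Lemma \ref{HNK}), but the bookkeeping is organized differently. The paper compares three scale-invariant functionals pairwise: it introduces the intermediate quantity $\mathcal{H}_N^K$ built from $\boldsymbol{\mathcal{L}}_N=A_N^6\boldsymbol{1}_N$, bounds $|\mathcal{U}_\infty^{N,K}-\mathcal{H}_N^K|$ via the $\ell^1$-normalized direction estimate, bounds $|\mathcal{H}_N^K-H_N^K|$ via Lemma \ref{ellL} together with an orthogonal decomposition $(I_N^K)^2+(J_N^K)^2=\|\boldsymbol{\mathcal{L}}_N^K-(\bar L_N^K)^5\boldsymbol{L}_N^K\|_2^2$ and Lipschitz bounds on $x\mapsto x^{-2}$, and finally obtains the first inequality as a corollary of the second via $\frac NK\|\cV_N^K-\bar V_N^K\boldsymbol{1}_K\|_2^2=(\bar V_N^K)^2|\mathcal{U}_\infty^{N,K}|$. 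You instead fix the scale once, through the scalar $\nu_N=\mu_N(\bar L_N^K)^5$, and compress both comparison steps into the single vector-level bound $\mathbb{E}[\boldsymbol{1}_{\Omega_N^{K,2}}\|\cV_N^K-\nu_N\cL_N^K\|_2^2]\le C/N$; both assertions then follow by elementary perturbation, with the first obtained directly rather than as a consequence of the second. Your verification that $\nu_N$ is bounded above and below (via Lemma \ref{123456}(ii),(ix) and $\bar L_N^K\in(p/2,1]$ on $\Omega_N^{K,2}$) is the one point that has no counterpart in the paper and is needed for your route; it is correct. The exponent accounting also matches: $(2N^{-3/8})^{\lfloor 6/2\rfloor+1}=CN^{-3/2}$ beats the $1/\sqrt N$ coming from Lemma \ref{ellL}, exactly as in the paper's Step 2. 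Net effect: your version avoids the paper's somewhat intricate Step 1 at the cost of introducing and controlling one extra normalizing constant.
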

 \begin{proof}
 We start from
 $$
 \Big|\mathcal{U}_{\infty}^{N,K}-\Big(\frac{1}{p}-1\Big)\Big|\le \Big|\mathcal{U}_{\infty}^{N,K}-\mathcal{H}^{K}_{N}\Big|+\Big|\mathcal{H}^{K}_{N}-H^{K}_{N}\Big|+\Big|H^{K}_{N}-\Big(\frac{1}{p}-1\Big)\Big|
 $$
 where
 $\mathcal{H}^{K}_{N}=\frac{N}{K}\sum_{i=1}^{K}\Big(\frac{\mathcal{L}_{N}(i)-\bar{\mathcal{L}}^{K}_{N}}{\bar{\mathcal{L}}^{K}_{N}}\Big)^{2}$ and $\bar{\mathcal{L}}^{K}_{N}=\frac{1}{K}\sum_{i=1}^K\mathcal{L}_{N}(i).$

Step 1: First we check that
$\mathbb{E}\Big[\boldsymbol{1}_{\Omega_{N}^{K,2}}\Big|H_{N}^{K}-\mathcal{H}^{K}_{N}\Big|\Big]
\le C/\sqrt{K}$.
We notice that
$$
H_{N}^{K}=\frac{N}{K}||(\bar{L}^{K}_{N})^{5}\cL^{K}_{N}-(\bar{L}^{K}_{N})^{6}\boldsymbol{1}_{K}||_{2}^{2}/(\bar{L}_{N}^{K})^{12}.
$$
 Thus
 \begin{eqnarray*}
 |H_{N}^{K}-\mathcal{H}^{K}_{N}| \le \frac{N}{K}\Big|\|(\bar{L}^{K}_{N})^{5}(\cL_{N}^{K}-\bar{L}_{N}^{K}\boldsymbol{1}_{K})\|_{2}^2 \Big(1/(\bar{L}_{N}^{K})^{12}-1/(\bar{\mathcal{L}}_{N}^{K})^{2}\Big)\\+(1/\bar{\mathcal{L}}_{N}^{K})^{2}\Big(\|(\bar{L}_{N}^{K})^{5}(\cL^{K}_{N}-\bar{L}_{N}^{K}\boldsymbol{1}_{K})\|_{2}^{2}-\|\boldsymbol{\mathcal{L}}^{K}_{N}-\bar{\mathcal{L}}^{K}_{N}\boldsymbol{1}_{K}\|_2^{2}\Big)\Big|.
 \end{eqnarray*}
  On the set $\Omega_{N}^{K,2}$, by Lemma \ref{123456} (iv), we have that $(\bar{L}_{N}^{K})^{6}\ge \frac{p^{6}}{64}$ and $\bar{\mathcal{L}}_{N}^{K}\ge \frac{(\rho_N)^{6}}{3}\ge \frac{p^6}{192}$,
   and the function $\frac{1}{x^{2}}$ is globally Lipschitz and bounded on the interval $[\frac{p^{6}}{192},\infty)$. So
  \begin{align*}
\boldsymbol{1}_{\Omega_{N}^{K,2}}|H_{N}^{K}-\mathcal{H}^{K}_{N}| &\le \frac{N}{K}\Big|\|(\bar{L}^{K}_{N})^{5}(\cL_{N}^{K}-\bar{L}_{N}^{K}\boldsymbol{1}_{K})\|^{2}_{2}\Big(1/(\bar{L}_{N}^{K})^{12}-1/(\bar{\mathcal{L}}_{N}^{K})^{2}\Big)\\
&+(1/\bar{\mathcal{L}}_{N}^{K})^{2}\Big(\|(\bar{L}_{N}^{K})^{5}(\cL^{K}_{N}-\bar{L}_{N}^{K}\boldsymbol{1}_{K})\|_{2}^{2}-\|\boldsymbol{\mathcal{L}}^{K}_{N}-\bar{\mathcal{L}}^{K}_{N}\boldsymbol{1}_{K}\|_2^{2}\Big)\Big|\\
  &\le C\frac{N}{K}\Big(\|(\bar{L}^{K}_{N})^{5}(\cL_{N}^{K}-\bar{L}_{N}^{K}\boldsymbol{1}_{K})\|^{2}_{2}\Big|(\bar{L}_{N}^{K})^{6}-\bar{\mathcal{L}}_{N}^{K}\Big|\\
  &+\Big|\|(\bar{L}_{N}^{K})^{5}(\cL^{K}_{N}-\bar{L}_{N}^{K}\boldsymbol{1}_{K})\|_{2}^{2}-\|\boldsymbol{\mathcal{L}}^{K}_{N}-\bar{\mathcal{L}}^{K}_{N}\boldsymbol{1}_{K}\|_2^{2}\Big|\Big).
   \end{align*}
  
 Next, we use the inequality $|a^{2}-b^{2}|\le (a-b)^{2}+2a|a-b|$ for $a,\ b\ge 0$. 
  So  
  \begin{align*}
\boldsymbol{1}_{\Omega_{N}^{K,2}}|H_{N}^{K}-\mathcal{H}^{K}_{N}| &\le C\frac{N}{K}\Big(\|(\bar{L}^{K}_{N})^{5}(\cL_{N}^{K}-\bar{L}_{N}^{K}\boldsymbol{1}_{K})\|^{2}_{2}\Big|(\bar{L}_{N}^{K})^{6}-\bar{\mathcal{L}}_{N}^{K}\Big|\\
&+\Big|\|(\bar{L}_{N}^{K})^{5}(\cL^{K}_{N}-\bar{L}_{N}^{K}\boldsymbol{1}_{K})\|_{2}^{2}-\|\boldsymbol{\mathcal{L}}^{K}_{N}-\bar{\mathcal{L}}^{K}_{N}\boldsymbol{1}_{K}\|_2^{2}\Big|\Big)\\
&\le C\frac{N}{K}\Big\{\|(\bar{L}_{N}^{K})^{5})(\cL_{N}^{K}-\bar{L}_{N}^{K}\boldsymbol{1}_{K})\|_{2}^{2}\frac{1}{\sqrt{K}}J_{N}^{K}+(I_{N}^{K})^{2}\\&+\|(\bar{L}_{N}^{K})^{5}(\cL_{N}^{K}-\bar{L}_{N}^{K}\boldsymbol{1}_{K})\|_{2}^{2}I_{N}^{K}\Big\},
  \end{align*}
   where $$J_{N}^{K}=\|[\bar{\mathcal{L}}_{N}^{K}-(\bar{L}_{N}^{K})^{6}]\boldsymbol{1}_{K}\|_{2}=\sqrt{K}\Big|\bar{\mathcal{L}}_{N}^{K}-(\bar{L}_{N}^{K})^{6}\Big|,\ I_{N}^{K}=\|(\boldsymbol{\mathcal{L}}_{N}^{K}-\bar{\mathcal{L}}_{N}^{K}\boldsymbol{1}_{K})-(\bar{L}_{N}^{K})^{5}(\cL_{N}^{K}-\bar{L}_{N}^{K}\boldsymbol{1}_{K})\|_{2}.$$

  Because 
  $$
  \Big((\boldsymbol{\mathcal{L}}_{N}^{K}-\bar{\mathcal{L}}_{N}^{K}\boldsymbol{1}_{K})-(\bar{L}_{N}^{K})^{5}(\cL_{N}^{K}-\bar{L}_{N}^{K}\boldsymbol{1}_{K}), \boldsymbol{1}_{K}\Big)=0.
  $$
   it implies $$(J_{N}^{K})^{2}+(I_{N}^{K})^{2}=\|\boldsymbol{\mathcal{L}}_{N}^{K}-(\bar{L}^{K}_{N})^{5}\cL_{N}^{K}\|^{2}_{2}.$$
   And by Lemma \ref{ellL}, we conclude  $$\frac{N}{K}\mathbb{E}\Big[\Big\{(I_{N}^{K})^{2}+(J_{N}^{K})^{2}\Big\}\Big]= \frac{N}{K}\mathbb{E}\Big[ \|\boldsymbol{\mathcal{L}}_{N}^{K}-(\bar{L}^{K}_{N})^{5}\cL_{N}^{K}\|^{2}_{2}\Big]\le \frac{C}{K}.$$

   By (\ref{ZE}), we conclude that $\mathbb{E}\Big[\Big(\frac{N}{K}\Big)^{2}\|X_{N}^{K}\|^{4}_{2}\Big]\le C$.
   Finally,
   \begin{align*}
&\mathbb{E}\Big[\boldsymbol{1}_{\Omega_{N}^{K,2}}\Big|H_{N}^{K}-\mathcal{H}^{K}_{N}\Big|\Big]
   \\&\le C\frac{N}{K}\mathbb{E}\Big[\Big\{\|\cX_{N}^{K}\|_{2}^{2}\frac{1}{\sqrt{K}}J_{N}^{K}+(I_{N}^{K})^{2}+\|\cX_{N}^{K}\|_{2}^{2}I_{N}^{K}\Big\}\Big]
   \\&\le C\frac{N}{K}\mathbb{E}\Big[\Big\{\|\cX_{N}^{K}\|_{2}^{2}J_{N}^{K}+(I_{N}^{K})^{2}+\|\cX_{N}^{K}\|_{2}^{2}I_{N}^{K}\Big\}\Big] \\
   &\le C\frac{N}{K}\mathbb{E}\Big[\Big\{(I_{N}^{K})^{2}+(J_{N}^{K})^{2}\Big\}\Big]+C\mathbb{E}\Big[\Big(\frac{N}{K}\Big)^{2}\|\cX_{N}^{K}\|^{4}_{2}\Big]^{\frac{1}{2}}\mathbb{E}\Big[\Big\{(I_{N}^{K})^{2}+(J_{N}^{K})^{2}\Big\}\Big]^{\frac{1}{2}} \\
   &\le \frac{C}{\sqrt{K}}.
  \end{align*}
   
   Step 2:
  By (ii) and (iv) in Lemma \ref{123456},
    we have the following inequality under the set $\Omega_{N}^{K,2}$:
   \begin{align*}
    \Big|\mathcal{U}_{\infty}^{N,K}-\mathcal{H}_{N,K}\Big|
   =\frac{N}{K}\Big|\sum_{i=1}^{K}\Big[(V_{N}(i)/\bar{V}_{N}^{K})^{2}-(\mathcal{L}_{N}(i)/\bar{\mathcal{L}}^{K}_{N})^{2}\Big]\Big|\\ \le C\frac{N}{K}\sum_{i=1}^{K}\Big|V_{N}(i)/\bar{V}_{N}^{K}-\mathcal{L}_{N}(i)/\bar{\mathcal{L}}_{N}^{K}\Big|
   \end{align*}
  Then we use the lemma \ref{123456} $(v)$: on the set $\Omega_{N}^{K,2}$ we have
  \begin{align*}
\frac{N}{K}\sum_{i=1}^{K}\Big|V_{N}(i)/\bar{V}_{N}^{K}-\mathcal{L}_{N}(i)/\bar{\mathcal{L}}_{N}^{K}\Big|
  &=N\Big\|\|I_{K}A_{N}^{6}\boldsymbol{1}_{N}\|^{-1}_{1}I_{K}A_{N}^{6}\boldsymbol{1}_{N}-\|\cV_{N}^{K}\|_{1}^{-1}\cV_{N}^{K}\Big\|_{1}\\
  &\le CN(N^{-\frac{3}{8}})^{3+1}\le\frac{C}{\sqrt{N}} 
  \end{align*}
  So we have the following inequality:
  $$
  \mathbb{E}\Big[\boldsymbol{1}_{\Omega_{N}^{K,2}}\Big|\mathcal{U}_{\infty}^{N,K}-\mathcal{H}_{N,K}\Big|\Big]\le \frac{C}{\sqrt{N}}.
  $$
  
  Step 3: From the two previous steps and  lemma \ref{HNK}, it follows that
$$\mathbb{E}\Big[\boldsymbol{1}_{\Omega_{N}^{K,2}}\Big|\mathcal{U}_{\infty}^{N,K}-\Big(\frac{1}{p}-1\Big)\Big|\Big]\le \frac{C}{\sqrt{K}}.$$
Moreover, by lemma \ref{123456} $(ii)$, $\bar{V}^{K}_{N}$ is bounded by $2$ on the set $\Omega_{N}^{K,2}$,
thus $$\frac{N}{K}\mathbb{E}\Big[\boldsymbol{1}_{\Omega_{N}^{K,2}}\|\boldsymbol{V}^{K}_{N}-\bar{V}^{K}_{N}\boldsymbol{1}_{K}\|^{2}_{2}\Big] = \mathbb{E}\Big[\boldsymbol{1}_{\Omega_{N}^{K,2}}(\bar{V}_{N}^{K})^{2}|\mathcal{U}_{\infty}^{N,K}|\Big]\le C.$$
\end{proof}
\section{The  estimator in the supercritical case}
 Recall the definition in (\ref{UP}), the aim of this section is to prove $\mathcal{P}_{t}^{N,K}\simeq p.$ Recall (\ref{ee1}) and (\ref{ee2}). We start from
 \begin{eqnarray}
 \label{defI}
 \mathbb{E}_{\theta}[\boldsymbol{Z}_{t}^{N,K}]=\mu\sum_{n\ge 0}\Big[\int^{t}_{0}s\phi^{*n}(t-s)ds\Big]I_{K}A^{n}_{N}\boldsymbol{1}_{N}=v_{t}^{N,K}\boldsymbol{V}_{N}^{K}+\boldsymbol{I}_{t}^{N,K},\\
 \label{defJ}
 \boldsymbol{U}_{t}^{N,K}=\boldsymbol{Z}_{t}^{N,K}-\mathbb{E}_{\theta}[\boldsymbol{Z}_{t}^{N,K}]=\sum_{n\ge 0}\Big[\int^{t}_{0}\phi^{*n}(t-s)\Big]I_{K}A^{n}_{N}\boldsymbol{M}_{s}^{N}ds=\boldsymbol{M}_{t}^{N,K}+\boldsymbol{J}_{t}^{N,K}
  \end{eqnarray}

 where
 \begin{align}
     \label{vtNK}v_{t}^{N,K}=\mu \sum_{n\ge 0}\frac{\|I_{K}A_{N}^{n}\boldsymbol{1}_{N}\|_{2}}{\|\cV_{N}^{K}\|_{2}}\int_{0}^{t}s\phi^{*n}(t-s)ds,\\
     \label{ItNK}\boldsymbol{I}_{t}^{N,K}=\mu\sum_{n\ge 0}\Big[\int^{t}_{0}s\phi^{*n}(t-s)ds\Big]\Big[I_{K}A^{n}_{N}\boldsymbol{1}_{N}-\frac{\|I_{K}A_{N}^{n}\boldsymbol{1}_{N}\|_{2}}{\|\boldsymbol{V}_{N}^{K}\|_{2}}\boldsymbol{V}_{N}^{K}\Big]
 \end{align}

 and
 \begin{align}
\label{JtNK}\boldsymbol{J}_{t}^{N,K}=\sum_{n\ge 1}\Big[\int_{0}^{t}\phi^{*n}(t-s)\Big]I_{K}A^{n}_{N}\boldsymbol{M}_{s}^{N}ds.
 \end{align}

 \begin{lemma}\label{INK}
 Assume (A). For all $\eta >0$, there exists $N_{\eta}\ge 1$ and $C_{\eta}<\infty$ such that for all $N\ge N_{\eta}$, $t\ge 0$, on the set $\Omega_{N}^{K,2}$, we have

 $$\|\boldsymbol{I}_{t}^{N,K}\|_{2}\le C_{\eta}t\sqrt{K}N^{-\frac{3}{8}}.$$
 \end{lemma}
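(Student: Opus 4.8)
\textbf{Proof plan for Lemma \ref{INK}.}
The plan is to bound $\|\boldsymbol{I}_{t}^{N,K}\|_{2}$ by splitting the sum over $n$ in \eqref{ItNK} into the small values $n\in\{0,1\}$, which are handled directly, and the tail $n\ge 2$, which is controlled by the Perron--Frobenius contraction estimates of Lemma \ref{123456}. The key observation is that the coefficient of $I_KA_N^n\bun$ in \eqref{ItNK} is $\mu\int_0^t s\phi^{*n}(t-s)\,ds \le \mu t\Lambda^n$ (using $\int_0^\infty \phi^{*n}(s)\,ds\le\Lambda^n$), and that by Lemma \ref{123456}(vii), for $n\ge 0$,
$$
\Big\|I_KA_N^n\bun-\frac{\|I_KA_N^n\bun\|_2}{\|\boldsymbol{V}_N^K\|_2}\boldsymbol{V}_N^K\Big\|_2
\le \|I_KA_N^n\bun\|_2\cdot 3\big(2N^{-3/8}\big)^{\lfloor n/2\rfloor+1},
$$
since $\|I_KA_N^n\bun/\|I_KA_N^n\bun\|_2 - \boldsymbol{V}_N^K/\|\boldsymbol{V}_N^K\|_2\|_2 \le 3(2N^{-3/8})^{\lfloor n/2\rfloor+1}$ and $\|\boldsymbol{V}_N^K/\|\boldsymbol{V}_N^K\|_2\|_2\le 1$.

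First I would record that on $\Omega_N^{K,2}$ we have $\|I_KA_N^n\bun\|_2\le 8\sqrt{K}\rho_N^n$ by Lemma \ref{123456}(ix), and that $\rho_N\le p(1+\tfrac12 N^{-3/8})\le \Lambda p \cdot(1+o(1))$ is bounded, so in fact $\Lambda\rho_N$ need not be $<1$ here --- this is why the factor $t$ and the constant $C_\eta$ (allowed to depend on $\eta$, hence implicitly on how large we need $N$) appear. Combining the two displays above, the $n$-th term of $\boldsymbol{I}_t^{N,K}$ has $\|\cdot\|_2$-norm at most
$$
\mu t\Lambda^n\cdot 8\sqrt K\rho_N^n\cdot 3\big(2N^{-3/8}\big)^{\lfloor n/2\rfloor+1}
= 24\,\mu t\sqrt K\,(\Lambda\rho_N)^n\,2^{\lfloor n/2\rfloor+1}N^{-3/8(\lfloor n/2\rfloor+1)}.
$$
Summing over $n\ge 0$: each term carries at least one factor $N^{-3/8}$, so pulling out $N^{-3/8}$ leaves $\sum_{n\ge0}24\mu(\Lambda\rho_N)^n 2^{\lfloor n/2\rfloor+1}N^{-3/8\lfloor n/2\rfloor}$, and for $N$ large enough (depending on the bound on $\Lambda\rho_N$, i.e. on $p,\phi$ and the margin $\eta$) the ratio $(\Lambda\rho_N)^2\cdot 2\cdot N^{-3/8}<1$, so the geometric-type series converges to a constant $C_\eta$. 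That gives $\|\boldsymbol{I}_t^{N,K}\|_2\le C_\eta t\sqrt K N^{-3/8}$. One must be slightly careful pairing the $n=2m$ and $n=2m+1$ terms so the exponent $\lfloor n/2\rfloor$ genuinely produces enough powers of $N^{-3/8}$; treating even and odd $n$ separately is the cleanest bookkeeping.

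The main obstacle is not any single estimate but making sure the convergence of the series over $n$ is uniform in $t$ and that the dependence on $\eta$ is correctly threaded: since assumption $(A)$ only gives $\Lambda p>1$, one cannot use a contraction of $A_N$ itself, and the geometric decay must come entirely from the $N^{-3/8}$ factors in Lemma \ref{123456}(vii)/(ix) beating the growth $(\Lambda\rho_N)^n$. This forces $N\ge N_\eta$ with $N_\eta$ chosen so that $2(\Lambda\rho_N)^2 N^{-3/8}<1$; here $\rho_N$ is close to $p$, so $\Lambda\rho_N$ is close to $\Lambda p$, and the threshold $N_\eta$ depends on $p,\mu,\phi$ (the role of $\eta$ is essentially to absorb the sub-exponential corrections, exactly as in the analogous \cite[Lemma]{A}). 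Once the summability is secured, collecting the constants and invoking Lemma \ref{123456}(ix) and (vii) finishes the proof.
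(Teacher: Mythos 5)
Your proposal is correct and follows essentially the same route as the paper: bound $\int_0^t s\phi^{*n}(t-s)\,ds$ by $t\Lambda^n$, control the $n$-th bracket via Lemma \ref{123456}(vii) and (ix), and let the $N^{-3/8}$ gained every two steps of $n$ beat the growth of $(\Lambda\rho_N)^n$ once $N\ge N_\eta$. If anything, you track the $\rho_N^n$ factor more explicitly than the paper's displayed computation does, but the argument is the same.
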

 
 \begin{proof}
In view of (\ref{ItNK}), Lemma \ref{123456} (vii) yields
 \begin{align*}
\|\boldsymbol{I}_{t}^{N,K}\|_{2} &\le\mu\sum_{n\ge 0}\Big[\int^{t}_{0}s\phi^{*n}(t-s)ds\Big]\Big\|I_{K}A^{n}_{N}\boldsymbol{1}_{N}-\frac{\|I_{K}A_{N}^{n}\boldsymbol{1}_{N}\|_{2}}{\|\boldsymbol{V}_{N}^{K}\|_{2}}\boldsymbol{V}_{N}^{K}\Big\|_{2}\\
&\le C_{\eta}t\sqrt{K}\sum_{n\ge 0}\Big[\int_{0}^{t}\phi^{*n}(t-s)ds\Big](N^{-\frac{3}{8}})^{\lfloor\frac{n}{2}\rfloor+1}\\
&\le C_{\eta}t\sqrt{K}N^{-\frac{3}{8}}\sum_{n\ge 0}\Lambda^n(N^{-\frac{3}{8}})^{\lfloor\frac{n}{2}\rfloor}\\
&\le C_{\eta}t\sqrt{K}N^{-\frac{3}{8}}.
 \end{align*}
\end{proof}

 \begin{lemma}\label{JNK}
 Assume (A). For all $\eta >0$, there exists $N_{\eta}\ge 1$ and $C_{\eta}<\infty$ such that for all $N\ge N_{\eta}$, $t\ge 0$, on the set $\Omega_{N}^{K,2}$, we have 
 $$\mathbb{E}_{\theta}\Big[\|\boldsymbol{J}_{t}^{N,K}-\bar{J}_{t}^{N,K}\boldsymbol{1}_{K}\|_{2}^{2}\Big]^\frac{1}{2}\le C_{\eta}\sqrt{\frac{K}{N}}\Big[e^{\frac{1}{2}(\alpha_{0}+\eta)t}+\frac{\|\boldsymbol{V}_{N}^{K}-\bar{V}_{N}^{K}\boldsymbol{1}_{K}\|_{2}}{\|\boldsymbol{V}_{N}^{K}\|_{2}}e^{(\alpha_{0}+\eta)t}\Big]$$
 where $\bar{J}_{t}^{N,K}=\frac{1}{K} ( \boldsymbol{J}_{t}^{N,K}, \boldsymbol{1}_{K})$.
 \end{lemma}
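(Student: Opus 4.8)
The plan is to split $\boldsymbol{J}_t^{N,K}$ into its component along the Perron--Frobenius direction $\boldsymbol{V}_N^K$ and the orthogonal remainder, and to show that the remainder is genuinely small (of size $\sqrt{K/N}\,e^{(\alpha_0+\eta)t/2}$) while the component along $\boldsymbol{V}_N^K$ contributes only through the non-constancy of $\boldsymbol{V}_N^K$ on the first $K$ coordinates. Concretely, set $\beta_t=\langle\boldsymbol{J}_t^{N,K},\boldsymbol{V}_N^K\rangle\,\|\boldsymbol{V}_N^K\|_2^{-2}$ and $\boldsymbol{R}_t=\boldsymbol{J}_t^{N,K}-\beta_t\boldsymbol{V}_N^K$, so that $\boldsymbol{R}_t\perp\boldsymbol{V}_N^K$ and both $\boldsymbol{J}_t^{N,K}$ and $\boldsymbol{R}_t$ are supported on the first $K$ coordinates. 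Since averaging over the first $K$ coordinates is an orthogonal projection, $\|\boldsymbol{J}_t^{N,K}-\bar J_t^{N,K}\boldsymbol{1}_K\|_2\le|\beta_t|\,\|\boldsymbol{V}_N^K-\bar V_N^K\boldsymbol{1}_K\|_2+\|\boldsymbol{R}_t\|_2$, and as $\boldsymbol{V}_N^K$ and $\Omega_N^{K,2}$ are $\sigma((\theta_{ij}))$-measurable, Minkowski's inequality for $\Et$ reduces the lemma, on $\Omega_N^{K,2}$ and for $N\ge N_\eta$, to the two estimates
$$\Et[\beta_t^2]^{1/2}\le C_\eta\sqrt{\tfrac KN}\,\frac{e^{(\alpha_0+\eta)t}}{\|\boldsymbol{V}_N^K\|_2},\qquad \Et[\|\boldsymbol{R}_t\|_2^2]^{1/2}\le C_\eta\sqrt{\tfrac KN}\,e^{(\alpha_0+\eta)t/2}.$$

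The first estimate follows from $|\beta_t|\le\|\boldsymbol{J}_t^{N,K}\|_2/\|\boldsymbol{V}_N^K\|_2$ together with the bound $\Et[\|\boldsymbol{J}_t^{N,K}\|_2^2]\le C_\eta(K/N)e^{2(\alpha_0+\eta)t}$, which expresses that the martingale fluctuation is $1/N$-suppressed per particle: one expands $\|\boldsymbol{J}_t^{N,K}\|_2^2$ using \eqref{JtNK} and $\Et[M_s^{j,N}M_r^{l,N}]=\boldsymbol{1}_{\{j=l\}}\Et[Z_{s\wedge r}^{j,N}]$ (cf.\ \eqref{ee3}), the bound $A_N^n(i,j)\le 3\rho_N^n/N$ for $n\ge2$ (Lemma \ref{123456}(iv)), the exponential growth $\Et[Z_s^{j,N}]\le C_\eta e^{(\alpha_0+\eta)s}$ for all $j$ (coming from the decomposition \eqref{defI} with $K=N$, Lemmas \ref{INK} and \ref{123456}(ii), and the exponential bound on $v^{N,N}_\cdot$ obtained from the renewal analysis of $\sum_n\rho_N^n\phi^{*n}$ under $(A)$, as in \cite{A}), and the convergence of the resulting series in $n$.

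For the second estimate write $\boldsymbol{R}_t=\sum_{n\ge1}\int_0^t\phi^{*n}(t-s)\,P\big(I_KA_N^n\boldsymbol{M}_s^N\big)\,ds$, where $P$ is the orthogonal projection onto $(\boldsymbol{V}_N^K)^\perp$, and treat $n=1$ and $n\ge2$ separately. For $n\ge2$, expanding $I_KA_N^n\boldsymbol{M}_s^N=\sum_j M_s^{j,N}\,I_KA_N^n\boldsymbol{e}_j$ and using Lemma \ref{123456}(viii) with $r=2$ to replace $I_KA_N^n\boldsymbol{e}_j$ by a multiple of $\boldsymbol{V}_N^K$ up to an error of $\ell_2$-norm $\le 12(2N^{-3/8})^{\lfloor n/2\rfloor}\,\|I_KA_N^n\boldsymbol{e}_j\|_2$, combined with $\|I_KA_N^n\boldsymbol{e}_j\|_2\le 3\rho_N^n\sqrt K/N$ (Lemma \ref{123456}(iv)) and the orthogonality of the $M^{j,N}$, one gets
$$\Et\Big[\big\|\sum_j M_s^{j,N}\,P(I_KA_N^n\boldsymbol{e}_j)\big\|_2^2\Big]\le C_\eta\,\tfrac KN\,\rho_N^{2n}\,(2N^{-3/8})^{2\lfloor n/2\rfloor}\,e^{(\alpha_0+\eta)s}.$$
For $n=1$, projections do not increase norms and $\Et[\|I_KA_N\boldsymbol{M}_s^N\|_2^2]=N^{-2}\sum_{i\le K}\sum_j\theta_{ij}\,\Et[Z_s^{j,N}]\le C_\eta(K/N)e^{(\alpha_0+\eta)s}$, i.e.\ $I_KA_N$ still has squared Frobenius norm $\le K/N$. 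Applying Minkowski's inequality for $\Et$ in the variable $s$, summing in $n$, and using $\int_0^t\phi^{*n}(t-s)e^{(\alpha_0+\eta)s/2}ds\le e^{(\alpha_0+\eta)t/2}\,\ell_\eta^n$ with $\ell_\eta:=\int_0^\infty\phi(u)e^{-(\alpha_0+\eta)u/2}du<\infty$ (finite because $(A)$ forces $\phi$ to grow at most polynomially), the geometric factor $(2N^{-3/8})^{\lfloor n/2\rfloor}\le(2N^{-3/8})^{(n-1)/2}$ makes $\sum_n(\rho_N\ell_\eta)^n(2N^{-3/8})^{\lfloor n/2\rfloor}$ summable uniformly for $N\ge N_\eta$, and the bound on $\Et[\|\boldsymbol{R}_t\|_2^2]^{1/2}$ follows.

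The main obstacle I foresee is securing the sharp exponential growth bounds $\Et[Z_s^{j,N}]\lesssim e^{(\alpha_0+\eta)s}$ and $\Et[\|\boldsymbol{J}_t^{N,K}\|_2^2]\lesssim(K/N)e^{2(\alpha_0+\eta)t}$, which feed into the estimate for $\beta_t$: in the supercritical regime the crude bound $\int_0^t\phi^{*n}(t-s)ds\le\Lambda^n$ diverges after summation over $n$, so one genuinely needs the renewal structure of $\sum_n\rho_N^n\phi^{*n}$ and assumption $(A)$ — this is the step most dependent on \cite{A}. A secondary nuisance is that the $n=1$ term of $\boldsymbol{R}_t$ is not aligned with $\boldsymbol{V}_N^K$ and must be controlled directly rather than via the near-rank-one structure of $A_N^n$, but the Frobenius-norm computation above handles it cleanly.
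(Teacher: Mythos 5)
Your proposal is correct and follows essentially the same route as the paper: both arguments split the fluctuation into a component aligned with $\boldsymbol{V}_N^K$ (which contributes only through $\|\boldsymbol{V}_N^K-\bar V_N^K\boldsymbol{1}_K\|_2/\|\boldsymbol{V}_N^K\|_2$ times the full rate $e^{(\alpha_0+\eta)t}$) and a transverse part damped by the factor $(2N^{-3/8})^{\lfloor n/2\rfloor}$ from Lemma \ref{123456}(viii), and both rest on the same inputs, namely \eqref{ee3}, Lemma \ref{123456}(iv) and (viii), the moment bound $\max_j\mathbb{E}_\theta[(Z_s^{j,N})^2]\le C_\eta e^{2(\alpha_0+\eta)s}$ and the renewal estimates of \cite[Lemmas 43--44]{A}. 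The only difference is organizational — you project the assembled vector $\boldsymbol{J}_t^{N,K}$ onto $\boldsymbol{V}_N^K$ and its orthogonal complement before applying Minkowski in $n$, whereas the paper performs the comparison with $\boldsymbol{V}_N^K$ column by column on each $I_KA_N^n\boldsymbol{e}_j$ inside the sum — and you correctly identify that the genuinely nontrivial external input is the bound on $\sum_n\rho_N^n\int_0^t\phi^{*n}(t-s)e^{(\alpha_0+\eta)s/2}\,ds$ in the supercritical regime.
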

 
 \begin{proof}
 In view of (\ref{JtNK}), by Minkowski inequality we have
 \begin{align*}
&\mathbb{E}_{\theta}\Big[\|\boldsymbol{J}_{t}^{N,K}-\bar{J}_{t}^{N,K}\boldsymbol{1}_{K}\|_{2}^{2}\Big]^{\frac{1}{2}}
 \\&\le \sum_{n\ge 1}\int_{0}^{t}\phi^{*n}(t-s)\mathbb{E}_{\theta}\Big[\|I_{K}A_{N}^{n}\boldsymbol{M}_{s}^{N}-\overline{I_{K}A_{N}^{n}\boldsymbol{M}_{s}^{N}}\boldsymbol{1}_{K}\|^{2}_{2}\Big]^{\frac{1}{2}}.
 \end{align*}
 where $\overline{I_{K}A_{N}^{n}\boldsymbol{M}_{s}^{N}}:=\frac{1}{K}\sum_{j=1}^N\sum_{i=1}^KA_N^n(i,j)M_s^{j,N}.$\\
In \cite[Lemma 44 (i)]{A}, it is shown that $\max_{i=1,...,N}\mathbb{E}_\theta[(Z^{i,N}_t)^2] \leq C_\eta e^{2(\alpha_0+\eta)t}$ on $\Omega^2_N$. Using (\ref{ee3}), we conclude that on $\Omega^2_N$:
\begin{align*}
\mathbb{E}_{\theta}\Big[\|I_{K}A_{N}^{n}\boldsymbol{M}_{s}^{N}-\overline{I_{K}A_{N}^{n}\boldsymbol{M}_{s}^{N}}\boldsymbol{1}_{K}\|^{2}_{2}\Big]
&=\sum_{i=1}^{K}\sum_{j=1}^{N}\Big(A_{N}^{n}(i,j)-\frac{1}{K}\sum_{k=1}^{K}A_{N}^{n}(k,j)\Big)^{2}\mathbb{E}_{\theta}[Z_{s}^{j,N}]\\
&\le C_{\eta}e^{(\alpha_{0}+\eta)s}\sum_{j=1}^{N}\|I_{K}A_{N}^{n}\boldsymbol{e}_{j}-\overline{I_{K}A_{N}^{n}\boldsymbol{e}_{j}}\boldsymbol{1}_{K}\|_{2}^{2}.
\end{align*}
Using $(viii)$ in Lemma \ref{123456} and and the inequality $\big|||\cx-\bar x \indiq_N||_2-||\boldsymbol{y}-\bar y \indiq_N||_2\big|\leq ||\cx-\boldsymbol{y}||_2$ for all $x,\ y\in \mathbb{R}^N,$ we deduce that on $\Omega^{K,2}_N$:
\begin{align*}
&\|I_{K}A_{N}^{n}\boldsymbol{e}_{j}-\overline{I_{K}A_{N}^{n}\boldsymbol{e}_{j}}\boldsymbol{1}_{K}\|_{2}\\
&\le \Big\|I_{K}A_{N}^{n}\boldsymbol{e}_{j}-\frac{1}{\|\boldsymbol{V}_{N}^{K}\|_{2}}\|I_{K}A_{N}^{n}\boldsymbol{e}_{j}\|_{2}\boldsymbol{V}_{N}^{K}\Big\|_{2}+\frac{\|I_{K}A^{n}_{N}\boldsymbol{e}_{j}\|_{2}}{\|\boldsymbol{V}_{N}^{K}\|_{2}}\|\boldsymbol{V}_{N}^{K}-\bar{V}_{N}^{K}\boldsymbol{1}_{K}\|_{2}\\
&=\|I_{K}A^{n}_{N}\boldsymbol{e}_{j}\|_{2}\Big(\Big\|\frac{I_{K}A^{n}_{N}\boldsymbol{e}_{j}}{\|I_{K}A^{n}_{N}\boldsymbol{e}_{j}\|_{2}}-\frac{\boldsymbol{V}_{N}^{K}}{\|\boldsymbol{V}_{N}^{K}\|_{2}}\Big\|_{2}+\frac{\|\boldsymbol{V}_{N}^{K}-\bar{V}_{N}^{K}\boldsymbol{1}_{K}\|_{2}}{\|\boldsymbol{V}_{N}^{K}\|_{2}}\Big)
\\ &\le C\|I_{K}A^{n}_{N}\boldsymbol{e}_{j}\|_{2}\Big(N^{-\frac{3}{8}\lfloor\frac{n}{2}\rfloor}+\frac{\|\boldsymbol{V}_{N}^{K}-\bar{V}_{N}^{K}\boldsymbol{1}_{K}\|_{2}}{\|\boldsymbol{V}_{N}^{K}\|_{2}}\Big).
\end{align*}
From Lemma \ref{123456} $(iv)$ it follows that on the event $\Omega_{N}^{K,2}$ for all $n\ge 2$, $\|I_{K}A^{n}_{N}\boldsymbol{e}_{j}\|_{2}\le \frac{3\sqrt{K}}{N}\rho_{N}^{n}$.
So on the event $\Omega_{N}^{K,2}$, 
\begin{align*}
&\mathbb{E}_{\theta}[\|\boldsymbol{J}_{t}^{N,K}-\bar{J}_{t}^{N,K}\boldsymbol{1}_{K}\|^{2}_{2}]^{\frac{1}{2}}
\\&\le C_{\eta}\sqrt{\frac{K}{N}}\sum_{n\ge 1}\rho_{N}^{n}\Big[(2N^{-\frac{3}{8}})^{\lfloor\frac{n}{2}\rfloor}+\frac{\|\boldsymbol{V}_{N}^{K}-\bar{V}_{N}^{K}\boldsymbol{1}_{K}\|_{2}}{\|\boldsymbol{V}_{N}^{K}\|_{2}}\Big]\int_{0}^{t}\phi^{*n}(t-s)e^{\frac{(\alpha_0+\eta)s}{2}}ds.
\end{align*}
Using \cite[lemma 43 (iii) and (iv)]{A}, we deduce that on the event $\Omega_{N}^{K,2}$
$$\mathbb{E}_{\theta}[\|\boldsymbol{J}_{t}^{N,K}-\bar{J}_{t}^{N,K}\boldsymbol{1}_{K}\|_{2}^{2}]^\frac{1}{2}\le C_{\eta}\sqrt{\frac{K}{N}}\Big[e^{\frac{1}{2}(\alpha_{0}+\eta)t}+\frac{\|\boldsymbol{V}_{N}^{K}-\bar{V}_{N}^{K}\boldsymbol{1}_{K}\|_{2}}{\|\boldsymbol{V}_{N}^{K}\|_{2}}e^{(\alpha_{0}+\eta)t}\Big].$$

\end{proof}

\begin{lemma}\label{DNK}
There exists $N_0\ge1$ such that for all $N\ge N_{0}$, for all $t\ge 0$, on the event $\Omega_{N}^{K,2}\cap \{\bar{Z}_{t}^{N,K}\ge \frac{1}{4}v_{t}^{N,K}>0\}$,
we have the following inequality:
$$\cD_{t}^{N,K}\le 16\cD_{t}^{N,K,1}+128\frac{N}{K}\|\cV_{N}^{K}-\bar{V}_{N}^{K}\boldsymbol{1}_{K}\|_{2}^{2} \, \cD_{t}^{N,K,2}+\Big|\mathcal{U}_{\infty}^{N,K}-\Big(\frac{1}{p}-1\Big)\Big|$$
where
\begin{align}
\cD_{t}^{N,K}&=\Big|\mathcal{U}_{t}^{N,K}-\Big(\frac{1}{p}-1\Big)\Big|,\\
 \label{cD1}\cD_{t}^{N,K,1}&=\frac{1}{(v_{t}^{N,K})^{2}}\Big|\frac{N}{K}\|\boldsymbol{Z}_{t}^{N,K}-\bar{Z}_{t}^{N,K}\boldsymbol{1}_{K}\|_{2}^{2}-N\bar{Z}_{t}^{N,K}-\frac{N}{K}(v_{t}^{N,K})^{2}\|\boldsymbol{V}_{N}^{K}-\bar{V}_{N}^{K}\boldsymbol{1}_{K}\|_{2}^{2}\Big|,\\
\cD_{t}^{N,K,2}&=\Big|\frac{\bar{Z}_{t}^{N,K}}{v_{t}^{N,K}}-\bar{V}_{N}^{K}\Big|,\ 
\end{align}

\end{lemma}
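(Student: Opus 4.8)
The plan is to turn the estimate into an elementary algebraic manipulation, after rewriting the two quantities through Euclidean norms. On the event in the statement we have $\bar Z^{N,K}_t>0$, so the indicator in \eqref{UP} equals $1$; since $\sum_{i=1}^K(Z_t^{i,N}-\bar Z^{N,K}_t)^2=\|\boldsymbol{Z}_t^{N,K}-\bar Z^{N,K}_t\boldsymbol{1}_K\|_2^2$ (recall $\boldsymbol{Z}_t^{N,K}=I_K\boldsymbol{Z}_t^N$) and, likewise, $\sum_{i=1}^K(V_N(i)-\bar V_N^K)^2=\|\boldsymbol{V}_N^K-\bar V_N^K\boldsymbol{1}_K\|_2^2$ by \eqref{defUinf}, one gets $\mathcal{U}_t^{N,K}=(W-Nb)/b^2$ and $\mathcal{U}_\infty^{N,K}=V/(\bar V_N^K)^2$, where $b:=\bar Z^{N,K}_t$, $v:=v_t^{N,K}$, $W:=\frac NK\|\boldsymbol{Z}_t^{N,K}-\bar Z^{N,K}_t\boldsymbol{1}_K\|_2^2$ and $V:=\frac NK\|\boldsymbol{V}_N^K-\bar V_N^K\boldsymbol{1}_K\|_2^2$. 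With $a:=b/v$, the two error terms then read $\cD_t^{N,K,1}=v^{-2}|W-Nb-v^2V|$ and $\cD_t^{N,K,2}=|a-\bar V_N^K|$.

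First I would peel off the ``Poisson correction'' discrepancy. Setting $r:=W-Nb-v^2V$, so that $|r|=v^2\,\cD_t^{N,K,1}$, we have $\mathcal{U}_t^{N,K}=(v^2V+r)/b^2=V/a^2+r/b^2$. On the event $b\ge v/4>0$, hence $b^2\ge v^2/16$ and $|r/b^2|\le 16|r|/v^2=16\,\cD_t^{N,K,1}$, so that $|\mathcal{U}_t^{N,K}-V/a^2|\le 16\,\cD_t^{N,K,1}$. Next I would compare $V/a^2$ with $\mathcal{U}_\infty^{N,K}=V/(\bar V_N^K)^2$: here I use that $a=b/v\ge 1/4$ on the event, and that for $N\ge N_0$ Lemma \ref{123456}(ii) gives $\bar V_N^K\in[1/2,2]$ on $\Omega_N^{K,2}$, in particular $\bar V_N^K\ge 1/4$. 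Since $x\mapsto x^{-2}$ has derivative of modulus $2x^{-3}\le 128$ for $x\ge 1/4$, the mean value theorem gives $|a^{-2}-(\bar V_N^K)^{-2}|\le 128\,|a-\bar V_N^K|=128\,\cD_t^{N,K,2}$, hence $|V/a^2-\mathcal{U}_\infty^{N,K}|\le 128\,V\,\cD_t^{N,K,2}=128\,\frac NK\|\boldsymbol{V}_N^K-\bar V_N^K\boldsymbol{1}_K\|_2^2\,\cD_t^{N,K,2}$.

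Combining the two bounds with the triangle inequality $\cD_t^{N,K}\le|\mathcal{U}_t^{N,K}-\mathcal{U}_\infty^{N,K}|+|\mathcal{U}_\infty^{N,K}-(1/p-1)|$ yields exactly the claimed inequality. There is no genuine obstacle here: the whole argument is a rearrangement, and the only delicate points are the norm identity for $\mathcal{U}_t^{N,K}$ and the fact that both $a=\bar Z^{N,K}_t/v_t^{N,K}$ and $\bar V_N^K$ stay bounded below by $1/4$ — which is precisely what makes dividing by $b^2$ and applying the Lipschitz bound for $x^{-2}$ legitimate, and which is guaranteed by the conditioning on $\Omega_N^{K,2}\cap\{\bar Z^{N,K}_t\ge\frac14 v_t^{N,K}>0\}$ together with $N\ge N_0$.
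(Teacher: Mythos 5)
Your argument is correct and is essentially the paper's own proof: the same splitting of $\mathcal{U}_t^{N,K}-\mathcal{U}_\infty^{N,K}$ into the term $r/b^2$ (controlled by $16\,\cD_t^{N,K,1}$ via $b\ge v/4$) and the term $V\,|a^{-2}-(\bar V_N^K)^{-2}|$ (controlled by the Lipschitz constant $128$ of $x\mapsto x^{-2}$ on $[1/4,\infty)$), followed by the triangle inequality. The only cosmetic difference is that the paper verifies the bound $|x^{-2}-y^{-2}|\le 128|x-y|$ directly rather than via the mean value theorem.
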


\begin{proof}
Recall definitions (\ref{UP}) and \eqref{defUinf}. On the event $\Omega_{N}^{K,2}\cap \{\bar{Z}_{t}^{N,K}\ge \frac{1}{4}v_{t}^{N,K}>0\}$, we have
\begin{multline*}
|\mathcal{U}_{t}^{N,K}-\mathcal{U}_{\infty}^{N,K}| \le \frac{1}{(\bar{Z}_{t}^{N,K})^{2}}\Big|\frac{N}{K}\|\boldsymbol{Z}_{t}^{N,K}-\bar{Z}_{t}^{N,K}\boldsymbol{1}_{K}\|_{2}^{2}-N\bar{Z}_{t}^{N,K}-(v_{t}^{N,K})^{2}\frac{N}{K}\|\boldsymbol{V}_{N}^{K}-\bar{V}_{N}^{K}\boldsymbol{1}_{K}\|_{2}^{2}\Big|\\
+\frac{N}{K}\|\boldsymbol{V}_{N}^{K}-\bar{V}_{N}^{K}\boldsymbol{1}_{K}\|_{2}^{2}\Big|\Big(\frac{v_{t}^{N,K}}{\bar{Z}_{t}^{N,K}}\Big)^{2}-\frac{1}{(\bar{V}_{N}^{K})^{2}}\Big|.
\end{multline*}
  By  \cite[lemma 35 (ii)]{A}, we have $\bar{V}_{N}^{K}\ge \frac{1}{2}$ on $\Omega_{N}^{K,2}$. Since
  $|\frac{1}{x^{2}}-\frac{1}{y^{2}}|=|\frac{(x-y)(x+y)}{x^{2}y^{2}}|\le 128|x-y|$, for $x,y \ge \frac{1}{4}$, on the event $\Omega_{N}^{K,2}\cap \{\bar{Z}_{t}^{N,K}\ge \frac{1}{4}v_{t}^{N,K}>0\}$, we have 
$$
\Big|\Big(\frac{v_{t}^{N,K}}{\bar{Z}_{t}^{N,K}}\Big)^{2}-\frac{1}{(\bar{V}_{N}^{K})^{2}}\Big|\le 128\cD_{t}^{N,K,2}.
$$
Finally on the event $\Omega_{N}^{K,2}\cap \{\bar{Z}_{t}^{N,K}\ge \frac{1}{4}v_{t}^{N,K}>0\}$, we obtain
\begin{align*}
\cD_{t}^{N,K}&\le \Big|\mathcal{U}_{t}^{N,K}-\mathcal{U}_{\infty}^{N,K}\Big|+\Big|\mathcal{U}_{\infty}^{N,K}-\Big(\frac{1}{p}-1\Big)\Big|
\\ 
&\le 16\cD_{t}^{N,K,1}+128\frac{N}{K}\|\boldsymbol{V}_{N}^{K}-\bar{V}_{N}^{K}\boldsymbol{1}_{K}\|_{2}^{2}\cD_{t}^{N,K,2}+\Big|\mathcal{U}_{\infty}^{N,K}-\Big(\frac{1}{p}-1\Big)\Big|.
\end{align*}

\end{proof}
Before the analysis of the term $\cD_{t}^{N,K,2},$ we still need the following fact:
\begin{lemma}\label{Cvc}
Assume $(A)$. For any $\eta>0$, we can find $N_{\eta}\ge1$, $t_{\eta}>0$ and $0<c_{\eta}<C_{\eta}<\infty$, such that for all $N\ge N_{\eta}$, $t\ge t_{\eta}$ on the set $\Omega_{N}^{K,2}$
\begin{align*}
 c_{\eta}e^{(\alpha_{0}-\eta)t}\le v_{t}^{N,K}\le C_{\eta}e^{(\alpha_{0}+\eta)}
\end{align*}
where $v_t^{N,K}$ is defined in (\ref{vtNK}).
\end{lemma}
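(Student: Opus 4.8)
The plan is to strip the truncation $I_K$ off $v_t^{N,K}$, reducing the claim to a scalar growth estimate, and then to quote the supercritical renewal analysis of \cite{A}.

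\textbf{Step 1 (removing $I_K$).} On $\Omega_N^{K,2}$, Lemma \ref{123456}(ii) gives $V_N(i)\in[1/2,2]$ for all $i\le N$, hence $\tfrac12\sqrt K\le\|\cV_N^K\|_2\le 2\sqrt K$, and Lemma \ref{123456}(ix) gives $\|I_KA_N^n\bun\|_2\in[\sqrt K\rho_N^n/8,\,8\sqrt K\rho_N^n]$ for every $n\ge 0$. Inserting these two estimates into the definition (\ref{vtNK}) of $v_t^{N,K}$ yields, on $\Omega_N^{K,2}$,
\begin{equation*}
\frac{\mu}{16}\,W_t(\rho_N)\ \le\ v_t^{N,K}\ \le\ 16\mu\,W_t(\rho_N),\qquad W_t(\rho):=\sum_{n\ge 0}\rho^n\int_0^t s\,\phi^{*n}(t-s)\,ds .
\end{equation*}
Thus it suffices to squeeze $W_t(\rho_N)$ between $c'_\eta e^{(\alpha_0-\eta)t}$ and $C'_\eta e^{(\alpha_0+\eta)t}$ for $N$ and $t$ large.

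\textbf{Step 2 (continuity of the Malthusian exponent).} By $(A)$ we have $\Lambda p>1$, so for every $\rho>1/\Lambda$ there is a unique $\alpha(\rho)>0$ with $\rho\int_0^\infty e^{-\alpha(\rho)s}\phi(s)\,ds=1$; the map $\rho\mapsto\alpha(\rho)$ is continuous and increasing on $(1/\Lambda,\infty)$, and $\alpha(p)=\alpha_0$. Pick $\delta_\eta\in(0,1)$ so small that $p(1-\delta_\eta)>1/\Lambda$ and $|\alpha(\rho)-\alpha_0|<\eta/2$ whenever $|\rho-p|\le p\delta_\eta$, and set $\rho_\pm:=p(1\pm\delta_\eta)$. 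Taking $N_\eta$ with $p/(2N_\eta^{3/8})\le p\delta_\eta$, the inclusion $\rho_N\in[p(1-\tfrac{1}{2N^{3/8}}),p(1+\tfrac{1}{2N^{3/8}})]$ forces $\rho_-\le\rho_N\le\rho_+$ for all $N\ge N_\eta$. Since every coefficient of $W_t(\cdot)$ is nonnegative and nondecreasing in $\rho$, we get $W_t(\rho_-)\le W_t(\rho_N)\le W_t(\rho_+)$.

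\textbf{Step 3 (exponential growth of $W_t$).} With $u=t-s$ and the convention $\int_0^t s\,\phi^{*0}(t-s)\,ds=t$ one writes $W_t(\rho)=t+\int_0^t(t-u)\psi_\rho(u)\,du$, where $\psi_\rho(u)\,du=\sum_{n\ge1}\rho^n\phi^{*n}(u)\,du$ is the renewal density whose Laplace transform has its rightmost singularity at $\alpha=\alpha(\rho)$. The analysis carried out in \cite{A} (the supercritical analogue of Lemma \ref{EZ}; cf. \cite[Lemma 43]{A}) shows that for each fixed $\rho>1/\Lambda$ there are $0<c_\rho<C_\rho<\infty$ and $t_\rho>0$ with $c_\rho e^{\alpha(\rho)t}\le W_t(\rho)\le C_\rho e^{\alpha(\rho)t}$ for $t\ge t_\rho$. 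Applying this to the two fixed values $\rho_\pm$ and using $\alpha(\rho_+)<\alpha_0+\eta/2$, $\alpha(\rho_-)>\alpha_0-\eta/2$, we obtain $t_\eta>0$ and $0<c'_\eta<C'_\eta<\infty$ such that, for $t\ge t_\eta$,
\begin{equation*}
c'_\eta e^{(\alpha_0-\eta)t}\ \le\ W_t(\rho_-)\ \le\ W_t(\rho_N)\ \le\ W_t(\rho_+)\ \le\ C'_\eta e^{(\alpha_0+\eta)t}.
\end{equation*}
Combined with Step 1 (replace $c'_\eta$ by $\mu c'_\eta/16$ and $C'_\eta$ by $16\mu C'_\eta$), this is the asserted bound on $\Omega_N^{K,2}$.

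\textbf{Where the difficulty lies.} The upper bound is essentially routine once the $I_K$-truncation has been handled via Lemma \ref{123456}(ix). The delicate part is the exponential \emph{lower} bound for $W_t$: one must show that $W_t(\rho)$ genuinely grows like $e^{\alpha(\rho)t}$ — not merely is bounded by it — which rests on the behaviour of the renewal kernel $\sum_{n\ge0}\rho^n\phi^{*n}$ near its abscissa of convergence together with the continuity of $\rho\mapsto\alpha(\rho)$ at $\rho=p$. Both are already available from \cite{A}; the only genuinely new ingredient here is the comparison $\|I_KA_N^n\bun\|_2\asymp\sqrt K\,\rho_N^n$ on $\Omega_N^{K,2}$, which is exactly Lemma \ref{123456}(ix).
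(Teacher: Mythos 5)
Your proof is correct and follows essentially the same route as the paper: both use Lemma \ref{123456}(ii) and (ix) to strip off the $I_K$-truncation and then delegate the exponential growth of the renewal sum to \cite[Lemma 43]{A}. The only cosmetic difference is that the paper compares $v_t^{N,K}$ directly with $v_t^{N,N}$ and quotes \cite[Lemma 43 (i)--(ii)]{A} wholesale, whereas you reduce to the scalar $W_t(\rho_N)$ and re-absorb the fluctuation of $\rho_N$ yourself via monotonicity in $\rho$ and continuity of $\rho\mapsto\alpha(\rho)$ --- a step that \cite[Lemma 43]{A} already packages.
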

\begin{proof}
We work on the set $\Omega_{N}^{K,2}$.
Recall Lemma \ref{123456} (ii) and (ix). We can conclude that $\frac{1}{2}\sqrt{K}\le \|\cV_{N}^{K}\|_{2}\le 2\sqrt{K}$ and $\|I_{K}A^{n}_{N}\boldsymbol{1}_{N}\|_{2}\in [\sqrt{K}\rho^{n}_{N}/8,8\sqrt{K}\rho^{n}_{N}] .$  So there exists $0<c<C<\infty$ such that
$$
c\frac{\|A_{N}^{n}\boldsymbol{1}_{N}\|_{2}}{\|\cV_{N}\|_{2}}\le \frac{\|I_{K}A_{N}^{n}\boldsymbol{1}_{N}\|_{2}}{\|\cV_{N}^{K}\|_{2}}\le C\frac{\|A_{N}^{n}\boldsymbol{1}_{N}\|_{2}}{\|\cV_{N}\|_{2}}.
$$
Therefore we have $c v_{t}^{N,N}\le v_{t}^{N,K}\le C v_{t}^{N,N}.$
Moreover, in view of \cite[(i) and (ii) Lemma 43]{A}, we already have $c_{\eta}e^{(\alpha_{0}-\eta)t}\le v_{t}^{N,N}\le C_{\eta}e^{(\alpha_{0}+\eta)}.$
The proof is finished.
\end{proof}

\begin{lemma}\label{D2K}
Assume (A). For all $\eta>0$, there exists $N_{\eta}\ge 1$, $t_{\eta}\ge 0$ and $C_{\eta}<\infty$ such that for all $N\ge N_{\eta}$, all $t\ge t_{\eta}$, on the event $\Omega_{N}^{K,2}$,

$$(i)\quad \mathbb{E}_{\theta}[\cD_{t}^{N,K,2}]\le C_{\eta}e^{2\eta t}\Big(\frac{1}{\sqrt{K}}+e^{-\alpha_{0}t}\Big).$$
$$(ii)\quad P_{\theta}\Big(\bar{Z}_{t}^{N,K}\le \frac{1}{4}v_{t}^{N,K}\Big)\le C_{\eta}e^{2\eta t}\Big(\frac{1}{\sqrt{K}}+e^{-\alpha_{0}t}\Big).$$

\end{lemma}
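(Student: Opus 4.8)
The plan is to isolate in $\bar Z_t^{N,K}$ its deterministic Perron--Frobenius part $v_t^{N,K}\bar V_N^K$ and to control the three resulting fluctuation terms in $L^1(P_\theta)$ on $\Omega_N^{K,2}$, and then to divide by $v_t^{N,K}$ using the lower bound of Lemma~\ref{Cvc}. Averaging the first $K$ coordinates in \eqref{defI} and \eqref{defJ} gives, on $\Omega_N^{K,2}$,
\[
\bar Z_t^{N,K}=v_t^{N,K}\bar V_N^K+\bar I_t^{N,K}+\bar M_t^{N,K}+\bar J_t^{N,K},
\]
where $\bar I_t^{N,K}:=\tfrac1K(\boldsymbol{I}_t^{N,K},\boldsymbol{1}_K)$, $\bar M_t^{N,K}:=\tfrac1K\sum_{i=1}^KM_t^{i,N}$ and $\bar J_t^{N,K}=\tfrac1K(\boldsymbol{J}_t^{N,K},\boldsymbol{1}_K)$ as in Lemma~\ref{JNK}. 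Consequently $\cD_t^{N,K,2}=(v_t^{N,K})^{-1}\bigl|\bar I_t^{N,K}+\bar M_t^{N,K}+\bar J_t^{N,K}\bigr|$, so it suffices to estimate the three numerators.

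For $\bar I_t^{N,K}$, Cauchy--Schwarz and Lemma~\ref{INK} give $|\bar I_t^{N,K}|\le K^{-1/2}\|\boldsymbol{I}_t^{N,K}\|_2\le C_\eta tN^{-3/8}$ on $\Omega_N^{K,2}$. For $\bar M_t^{N,K}$, the $(M^{i,N})_i$ are, conditionally on $\theta$, orthogonal martingales with $\mathbb{E}_\theta[(M_t^{i,N})^2]=\mathbb{E}_\theta[Z_t^{i,N}]$ by \eqref{ee3}, and $\mathbb{E}_\theta[Z_t^{i,N}]\le\mathbb{E}_\theta[(Z_t^{i,N})^2]^{1/2}\le C_\eta e^{(\alpha_0+\eta)t}$ on $\Omega_N^2\supset\Omega_N^{K,2}$ by \cite[Lemma 44 (i)]{A}; hence $\mathbb{E}_\theta[(\bar M_t^{N,K})^2]=K^{-2}\sum_{i=1}^K\mathbb{E}_\theta[Z_t^{i,N}]\le C_\eta K^{-1}e^{(\alpha_0+\eta)t}$, so $\mathbb{E}_\theta[|\bar M_t^{N,K}|]\le C_\eta K^{-1/2}e^{(\alpha_0+\eta)t/2}$. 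The term $\bar J_t^{N,K}$ is the heart of the matter: writing $\bar J_t^{N,K}=\tfrac1K\sum_{n\ge1}\int_0^t\phi^{*n}(t-s)\sum_{j=1}^Nc_N^{K,n}(j)M_s^{j,N}\,ds$ with $c_N^{K,n}(j):=\sum_{i=1}^KA_N^n(i,j)$, expanding the square via \eqref{ee3}, bounding $\mathbb{E}_\theta[Z_{s\wedge r}^{j,N}]\le C_\eta e^{(\alpha_0+\eta)(s+r)/2}$ uniformly in $j$ (using $s\wedge r\le (s+r)/2$), and using $\sum_j(c_N^{K,n}(j))^2\le CK^2\rho_N^{2n}/N$ (which follows for $n\ge2$ from Lemma~\ref{123456}$(iv)$, and for $n=1$ from $\sum_j(\sum_{i=1}^K\theta_{ij})^2\le K^2N$ together with $\rho_N\ge p/2$ on $\Omega_N^{K,2}$ for $N$ large), I would obtain
\[
\mathbb{E}_\theta[(\bar J_t^{N,K})^2]\le\frac{C_\eta}{N}\Bigl(\sum_{n\ge1}\rho_N^n\int_0^t\phi^{*n}(t-s)e^{(\alpha_0+\eta)s/2}\,ds\Bigr)^2\le\frac{C_\eta}{N}e^{2(\alpha_0+\eta)t},
\]
the last step being the convolution estimate of \cite[Lemma 43]{A} already used (with the extra centering factor) in the proof of Lemma~\ref{JNK}. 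Thus $\mathbb{E}_\theta[|\bar J_t^{N,K}|]\le C_\eta N^{-1/2}e^{(\alpha_0+\eta)t}$.

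Collecting the three bounds, dividing by $v_t^{N,K}\ge c_\eta e^{(\alpha_0-\eta)t}$ (Lemma~\ref{Cvc}, valid on $\Omega_N^{K,2}$ for $t\ge t_\eta$), and using $N\ge K$ together with the elementary facts $e^{-\alpha_0t/2+3\eta t/2}\le e^{2\eta t}$ and $\sup_{t\ge0}tN^{-3/8}e^{-\eta t}\le C_\eta$, one checks term by term that each contribution is at most $C_\eta e^{2\eta t}(K^{-1/2}+e^{-\alpha_0t})$, which is $(i)$; the case of large $\eta$ is trivial by monotonicity of the right-hand side in $\eta$. For $(ii)$, Lemma~\ref{123456}$(ii)$ gives $\bar V_N^K\ge1/2$ on $\Omega_N^{K,2}$, so on $\Omega_N^{K,2}\cap\{\bar Z_t^{N,K}\le\tfrac14v_t^{N,K}\}$ one has $\cD_t^{N,K,2}\ge\bar V_N^K-\tfrac14\ge\tfrac14$; Markov's inequality and $(i)$ then yield $P_\theta(\bar Z_t^{N,K}\le\tfrac14v_t^{N,K})\le4\,\mathbb{E}_\theta[\cD_t^{N,K,2}]\le C_\eta e^{2\eta t}(K^{-1/2}+e^{-\alpha_0t})$. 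The hard part is the $\bar J_t^{N,K}$ estimate: without the centering present in Lemma~\ref{JNK} there is no geometric factor $(2N^{-3/8})^{\lfloor n/2\rfloor}$ to damp the series, so the convolution sum can only be controlled by $C_\eta e^{(\alpha_0+\eta)t}$ rather than $C_\eta e^{(\alpha_0+\eta)t/2}$; one must verify that this coarser bound, once divided by $v_t^{N,K}$, still yields the admissible term $C_\eta N^{-1/2}e^{2\eta t}\le C_\eta K^{-1/2}e^{2\eta t}$, and that the double time-integral correctly decouples through $s\wedge r\le (s+r)/2$.
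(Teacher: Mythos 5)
Your proof is correct and follows essentially the same route as the paper: the same decomposition $\bar Z_t^{N,K}=v_t^{N,K}\bar V_N^K+\bar I_t^{N,K}+\bar M_t^{N,K}+\bar J_t^{N,K}$, the same bounds on $\bar I$ (Lemma \ref{INK}) and $\bar M$ (via \eqref{ee3} and the moment bound of \cite[Lemma 44 (i)]{A}), the same division by $v_t^{N,K}\ge c_\eta e^{(\alpha_0-\eta)t}$, and the same derivation of $(ii)$ from $(i)$ via $\bar V_N^K\ge 1/2$ and Markov. The only difference is that you re-derive $\mathbb{E}_\theta[(\bar J_t^{N,K})^2]\le C_\eta N^{-1}e^{2(\alpha_0+\eta)t}$ from scratch (correctly, via $\sum_j(c_N^{K,n}(j))^2\le CK^2\rho_N^{2n}/N$ and the convolution estimate), whereas the paper simply quotes $\mathbb{E}_\theta[(J_t^{i,N})^2]\le C_\eta N^{-1}e^{2(\alpha_0+\eta)t}$ from \cite[proof of Lemma 44, step 3]{A} and averages over $i$.
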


\begin{proof}
Recalling \eqref{defI} and \eqref{defJ}, we can write
 $$
 \cD_{t}^{N,K,2}=\Big|\frac{\bar{Z}_{t}^{N,K}}{v_{t}^{N,K}}-\bar{V}_{N}^{K}\Big|
 \le (v_{t}^{N,K})^{-1}\Big(|\bar{I}_{t}^{N,K}|+|\bar{U}_{t}^{N,K}|\Big).
 $$
 We fix $\eta>0$ and work with $N$ large enough and on $\Omega_N^{K,2}$.
  By the lemma \ref{INK}, we have:
 $|\bar{I}_{t}^{N,K}|\le \frac{1}{\sqrt{K}}\|\boldsymbol{I}_{t}^{N,K}\|_{2}\le C_{\eta}t N^{-\frac{3}{8}}.$\\
 From \cite[proof of Lemma 44, step 3]{A}, we have $\Et[(J^{i,N}_t)^2] \leq C_\eta N^{-1}e^{2(\alpha_0+\eta)t}$. Thus
 $$\mathbb{E}_{\theta}[(\bar{J}_{t}^{N,K})^{2}]\le K^{-1}\sum_{i=1}^{K}\mathbb{E}_{\theta}[(J_{t}^{i,N})^{2}]\le C_{\eta}\frac{1}{N}e^{2(\alpha_{0}+\eta)t}.$$
  In view of  \cite[Lemma 44 (i)]{A}, we already have $\max_{i=1,\dots,N} \Et[(Z^{i,N}_t)^2] \leq C_\eta e^{2(\alpha_0+\eta)t}$. Then by (\ref{ee3}) we deduce that
 $$
 \mathbb{E}[(\bar{M}_{t}^{N,K})^{2}]=\frac{1}{K^{2}}\sum_{i=1}^{K}\mathbb{E}_{\theta}[Z_{t}^{i,N}]
 \le C_{\eta}\frac{1}{K}e^{(\alpha_{0}+\eta)t}.
 $$
Over all, we deduce that $\mathbb{E}[|\bar{U}_{t}^{N,K}|]\le \frac{C}{\sqrt{K}}e^{(\alpha_{0}+\eta) t}$.
 According to  Lemma \ref{Cvc}, there exists $t_\eta\ge 0$ such that for all $t\ge t_{\eta}$, $v_{t}^{N,K}\ge c_{\eta}e^{(\alpha_{0}-\eta)t}$ and we finally obtain $(i)$:
$$\mathbb{E}_{\theta}[\cD_{t}^{N,K,2}]
 =\mathbb{E}_{\theta}\Big[(v_{t}^{N,K})^{-1}\Big(|\bar{I}_{t}^{N,K}|+|\bar{U}_{t}^{N,K}|\Big)\Big] \le C_{\eta}e^{2\eta t}\Big(\frac{1}{\sqrt{K}}+e^{-\alpha_{0}t}\Big).$$
 
Now we prove $(ii)$. Because of $\bar{V}_{N}^{K}\ge \frac{1}{2}$ we have $\bigl\{\bar{Z}_{t}^{N,K}\le \frac{v_{t}^{N}}{4}\bigr\} \subset \bigl\{ \cD_{t}^{N,K,2}=\Big|\frac{\bar{Z}_{t}^{N,K}}{v_{t}^{N}}-\bar{V}_{N}^{K}\Big|\ge \frac{1}{4} \bigr\}$.
 Hence
 $$P_{\theta}\Big(\bar{Z}_{t}^{N,K}\le \frac{1}{4}v_{t}^{N,K}\Big)\le 4\mathbb{E}_{\theta}[\cD_{t}^{N,K,2}]
 \le C_{\eta}e^{2\eta t}\Big(\frac{1}{\sqrt{K}}+e^{-\alpha_{0}t}\Big).$$
 
 \end{proof}

 \begin{lemma}\label{MVV}
 Assume $(A)$. For all $\eta>0$, there exists $N_\eta\geq 1$ and $C_\eta<\infty$ such that for all $N\geq N_\eta$, all $t\geq 0$,
on $\Omega_N^{K,2}$:
\begin{itemize}
\item[(i)] $\mathbb{E}_{\theta}[(\boldsymbol{M}_{t}^{N,K}-\bar{M}_{t}^{N,K}\boldsymbol{1}_{K},\cV_{N}^{K}-\bar{V}_{N}^{K}\boldsymbol{1}_{K})^{2}]\le C_{\eta}\|\cV_{N}^{K}-\bar{V}_{N}^{K}\boldsymbol{1}_{K}\|_{2}^{2} \, e^{(\alpha_{0}+\eta)t}.$
 \item[(ii)] $\mathbb{E}_\theta[|X_{t}^{N,K}|]\le C_{\eta}\frac{N}{\sqrt{K}}e^{(\alpha_{0}+\eta)t},$  where
  $X_{t}^{N,K}:=\frac{N}{K}(\|\boldsymbol{M}_{t}^{N,K}-\bar{M}_{t}^{N,K}\boldsymbol{1}_{K}\|^{2}_{2}-K\bar{Z}_{t}^{N,K}).$
\item[(iii)] $\mathbb{E}_{\theta}[\|\boldsymbol{M}_{t}^{N,K}-\bar{M}_{t}^{N,K}\boldsymbol{1}_{K}\|_{2}^{2}]\le CNe^{(\alpha_{0}+\eta)t}.$
\end{itemize}
 \end{lemma}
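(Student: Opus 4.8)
The plan rests on the structural observation that $\boldsymbol{M}_t^{N,K}$ is simply the $n=0$ term of the series \eqref{defJ}, i.e. $\boldsymbol{M}_t^{N,K}=I_K\boldsymbol{M}_t^N$, so that its nonzero coordinates are the genuine martingales $M_t^{i,N}$, $i\le K$. By \eqref{ee3} these are pairwise orthogonal under $P_\theta$: $\mathbb{E}_\theta[M_t^{i,N}M_t^{j,N}]=\boldsymbol{1}_{\{i=j\}}\mathbb{E}_\theta[Z_t^{i,N}]$. I would use this together with two inputs valid on $\Omega_N^{K,2}$ for $N$ large: the moment bound $\mathbb{E}_\theta[(Z_t^{i,N})^2]\le C_\eta e^{2(\alpha_0+\eta)t}$ from \cite[Lemma 44 (i)]{A}, which by Cauchy--Schwarz also gives $\mathbb{E}_\theta[Z_t^{i,N}]\le C_\eta e^{(\alpha_0+\eta)t}$, and the fourth-moment bound $\mathbb{E}_\theta[(M_t^{i,N})^4]\le C\,\mathbb{E}_\theta[[M^{i,N},M^{i,N}]_t^2]=C\,\mathbb{E}_\theta[(Z_t^{i,N})^2]\le C_\eta e^{2(\alpha_0+\eta)t}$ obtained from the Burkholder--Davis--Gundy inequality and \eqref{ee3}. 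Granting these, part (i) is immediate: since $\sum_{i=1}^K(V_N(i)-\bar{V}_N^K)=0$, the inner product equals $\sum_{i=1}^K M_t^{i,N}(V_N(i)-\bar{V}_N^K)$; squaring, taking $\mathbb{E}_\theta$ and using orthogonality leaves only $\sum_{i=1}^K(V_N(i)-\bar{V}_N^K)^2\,\mathbb{E}_\theta[Z_t^{i,N}]\le C_\eta e^{(\alpha_0+\eta)t}\,\|\boldsymbol{V}_N^K-\bar{V}_N^K\boldsymbol{1}_K\|_2^2$.

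Part (iii) is just as short: writing $\|\boldsymbol{M}_t^{N,K}-\bar{M}_t^{N,K}\boldsymbol{1}_K\|_2^2=\sum_{i=1}^K(M_t^{i,N})^2-K(\bar{M}_t^{N,K})^2\le\sum_{i=1}^K(M_t^{i,N})^2$, its $\mathbb{E}_\theta$ equals $\sum_{i=1}^K\mathbb{E}_\theta[Z_t^{i,N}]\le K C_\eta e^{(\alpha_0+\eta)t}\le C_\eta N e^{(\alpha_0+\eta)t}$ because $K\le N$. For the main part (ii), I would first rewrite, using the identity above and $K\bar{Z}_t^{N,K}=\sum_{i=1}^K Z_t^{i,N}$, that $X_t^{N,K}=(N/K)\sum_{i=1}^K((M_t^{i,N})^2-Z_t^{i,N})-N(\bar{M}_t^{N,K})^2$. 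Each term $(M_t^{i,N})^2-Z_t^{i,N}=(M_t^{i,N})^2-[M^{i,N},M^{i,N}]_t=2\int_0^t M_{s-}^{i,N}\,dM_s^{i,N}$ is a centered $P_\theta$-martingale, and for $i\ne j$ two such martingales are orthogonal since $[M^{i,N},M^{j,N}]_t=0$; hence $\mathrm{Var}_\theta(\sum_{i=1}^K((M_t^{i,N})^2-Z_t^{i,N}))=\sum_{i=1}^K\mathrm{Var}_\theta((M_t^{i,N})^2-Z_t^{i,N})\le\sum_{i=1}^K(2\mathbb{E}_\theta[(M_t^{i,N})^4]+2\mathbb{E}_\theta[(Z_t^{i,N})^2])\le C_\eta K e^{2(\alpha_0+\eta)t}$. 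As this sum is centered, Cauchy--Schwarz gives $\mathbb{E}_\theta[|(N/K)\sum_{i=1}^K((M_t^{i,N})^2-Z_t^{i,N})|]\le (N/K)(C_\eta K)^{1/2}e^{(\alpha_0+\eta)t}=C_\eta N K^{-1/2}e^{(\alpha_0+\eta)t}$, while $\mathbb{E}_\theta[N(\bar{M}_t^{N,K})^2]=(N/K^2)\sum_{i=1}^K\mathbb{E}_\theta[Z_t^{i,N}]\le (N/K)C_\eta e^{(\alpha_0+\eta)t}\le C_\eta N K^{-1/2}e^{(\alpha_0+\eta)t}$ since $K\ge1$; adding the two yields (ii).

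The only genuinely technical point, and where I expect to spend the bulk of the effort, is the fourth-moment estimate for the purely discontinuous martingale $M_t^{i,N}$: checking that Burkholder--Davis--Gundy applies and that $\mathbb{E}_\theta[[M^{i,N},M^{i,N}]_t^2]=\mathbb{E}_\theta[(Z_t^{i,N})^2]$ is controlled by \cite[Lemma 44 (i)]{A}. Alternatively one can avoid BDG and compute $\mathrm{Var}_\theta((M_t^{i,N})^2-Z_t^{i,N})=4\,\mathbb{E}_\theta[\int_0^t(M_{s-}^{i,N})^2\,dZ_s^{i,N}]$ directly, bounding it via $\mathbb{E}_\theta[(M_s^{i,N})^2]=\mathbb{E}_\theta[Z_s^{i,N}]\le C_\eta e^{(\alpha_0+\eta)s}$ together with the polynomial growth of $\int_0^t|d\phi|$ from assumption $(A)$. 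A last, harmless, step is to relabel $\eta$ at the end so that the various factors $e^{(\alpha_0+\eta)t}$ absorb any residual polynomial-in-$t$ prefactors, recovering the stated inequalities with the same symbol $\eta$.
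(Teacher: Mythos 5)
Your argument is correct and matches the paper's proof essentially step for step: (i) via the centering identity $\sum_{i\le K}(V_N(i)-\bar V_N^K)=0$ and the orthogonality $[M^{i,N},M^{j,N}]_t=\boldsymbol{1}_{\{i=j\}}Z_t^{i,N}$, (ii) via the It\^o identity $(M_t^{i,N})^2-Z_t^{i,N}=2\int_0^t M_{s-}^{i,N}dM_s^{i,N}$ followed by an $L^2$ bound on the resulting sum of orthogonal centered martingales using Doob/BDG and \cite[Lemma 44 (i)]{A}, and (iii) from the second moments of the $M_t^{i,N}$ (the paper deduces (iii) from (ii), you prove it directly, but the content is identical). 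The only caveat concerns your parenthetical ``alternative'' for the fourth-moment step: since $\mathbb{E}_\theta[\int_0^t(M_{s-}^{i,N})^2dZ_s^{i,N}]=\mathbb{E}_\theta[\int_0^t(M_{s-}^{i,N})^2\lambda_s^{i,N}ds]$ and $(M_{s-}^{i,N})^2$ is correlated with $\lambda_s^{i,N}$, this quantity cannot be controlled by $\mathbb{E}_\theta[(M_s^{i,N})^2]$ alone; one still needs Cauchy--Schwarz and hence a fourth moment of $M^{i,N}$ (equivalently a second moment of $Z^{i,N}$), exactly as in your main route.
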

 
 \begin{proof}
  We fix $\eta>0$ and work with $N$ large enough and on $\Omega_N^{K,2}$.
 We already from \cite[Lemma 44 (i)]{A} that $\max_{i=1,\dots,N} \Et[(Z^{i,N}_t)^2] \leq C_\eta e^{2(\alpha_0+\eta)t}$.  Thus
 \begin{align*}
\mathbb{E}_{\theta}\Big[\Big(\boldsymbol{M}_{t}^{N,K}-\bar{M}_{t}^{N,K}\boldsymbol{1}_{K},\cV_{N}^{K}-\bar{V}_{N}^{K}\boldsymbol{1}_{K}\Big)^{2}\Big]
 &=\sum_{i=1}^{K}(V_{N}(i)-\bar{V}_{N}^{K})^{2}\mathbb{E}_{\theta}[Z_{t}^{i,N}]\\
 &\le C_{\eta}\|\cV_{N}^{K}-\bar{V}_{N}^{K}\|_{2}^{2}e^{(\alpha_{0}+\eta)t}
 \end{align*}
 which completes the proof of (i).
 
 By It\^o's formula, we have
  $$\|\boldsymbol{M}_{t}^{N,K}\|_{2}^{2}=\sum_{i=1}^{K}(M_{t}^{i,N})^{2}
 =2\sum_{i=1}^{K}\int_{0}^{t}M_{s-}^{i,N}dM_{s}^{i,N}+\sum_{i=1}^{K}Z_{t}^{i,N},$$
 hence
  \begin{align*}
X_{t}^{N,K} &= \frac{N}{K}\Big( \|\boldsymbol{M}_{t}^{N,K}\|_{2}^{2} -K (\bar{M}_{t}^{N,K})^2
 -K \bar{Z}_{t}^{N,K}\Big) \\
&=\frac{N}{K}\Big( 2\sum_{i=1}^{K}\int_{0}^{t}M_{s-}^{i,N}dM_{s}^{i,N}
-K (\bar{M}_{t}^{N,K})^2  \Big).
 \end{align*}
 It follows that
 $$
\mathbb{E}_{\theta}[|X_{t}^{N,K}|]
 \le \frac{N}{K}\Big(2\mathbb{E}_{\theta}\Big[\Big|\sum_{i=1}^{K}\int_{0}^{t}M_{s-}^{i,N}dM_{s}^{i,N}\Big|\Big]+\mathbb{E}_\theta[\bar{Z}_{t}^{N,K}]\Big).
$$

Besides,  using Cauchy-Schwartz inequality 
\begin{align*}
 \mathbb{E}_{\theta}\Big[\Big(\sum_{i=1}^{K}\int_{0}^{t}M_{s-}^{i,N}dM_{s}^{i,N}\Big)^2\Big]&=\sum_{i=1}^{K}\mathbb{E}_{\theta}\Big[\int_{0}^{t}(M_{s-}^{i,N})^{2}dZ_{s}^{i,N}\Big]\\
 &\le \sum_{i=1}^{K}\mathbb{E}_\theta \Big[\sup_{[0,t]}(M_{s}^{i,N})^{4}\Big]^{\frac{1}{2}}
 \mathbb{E}_{\theta}\Big[(Z_{t}^{i,N})^{2}\Big]^{\frac{1}{2}}\\
& \le C \sum_{i=1}^{K} \mathbb{E}_{\theta}\Big[(Z_{t}^{i,N})^{2}\Big]
 \end{align*}
 since  $\mathbb{E}_{\theta}[\sup_{[0,t]}(M_{s}^{i,N})^{4}]\le C\mathbb{E}_{\theta}[(Z_{t}^{i,N})^{2}]$
 by Doob's inequality.
 So
 \begin{align*}
\mathbb{E}_{\theta}[|X_{t}^{N,K}|]
 &\le \frac{N}{K}\Big(2\mathbb{E}_{\theta}\Big[\Big|\sum_{i=1}^{K}\int_{0}^{t}M_{s-}^{i,N}dM_{s}^{i,N}\Big|\Big]+\mathbb{E}_\theta[\bar{Z}_{t}^{N,K}]\Big)
 \le C_{\eta}\frac{N}{\sqrt{K}}e^{(\alpha_{0}+\eta)t}
 \end{align*}
 This completes the proof of (ii).
 Finally, we have
 $$\frac{N}{K}\mathbb{E}_{\theta}\Big[\|\boldsymbol{M}_{t}^{N,K}-\bar{M}_{t}^{N,K}\boldsymbol{1}_{K}\|^{2}_{2}\Big]
 \le \mathbb{E}_{\theta}[|X_{t}^{N,K}|]+N\mathbb{E}_{\theta}[\bar{Z}_{t}^{N,K}]\le C_{\eta}Ne^{(\alpha_{0}+\eta)t}.$$
 This completes the proof of (iii).
 \end{proof}

Next we consider the term $\cD_{t}^{N,K,1}$.
\begin{lemma}\label{DK1}
Assume (A). For all $\eta>0$, there are $N_{\eta}\ge 1$, $t_{\eta}\ge 0$ and $C_{\eta}<\infty$ such that for all $N\ge N_{\eta}$, all $t\ge t_{\eta}$, we have:

$$\mathbb{E}[\boldsymbol{1}_{\Omega_{N}^{K,2}}\cD_{t}^{N,K,1}]\le C_{\eta}e^{4\eta t}\Big(\frac{1}{\sqrt{K}}+\Big(\frac{\sqrt{N}}{e^{\alpha_{0}t}}\Big)^{\frac{3}{2}}+\frac{N}{\sqrt{K}}e^{-\alpha_{0}t}\Big).$$

\end{lemma}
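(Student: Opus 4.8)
The plan is to expand the bracket in the definition \eqref{cD1} of $\cD_{t}^{N,K,1}$ along the decomposition $\boldsymbol{Z}_{t}^{N,K}=v_{t}^{N,K}\boldsymbol{V}_{N}^{K}+\boldsymbol{I}_{t}^{N,K}+\boldsymbol{M}_{t}^{N,K}+\boldsymbol{J}_{t}^{N,K}$ provided by \eqref{defI}--\eqref{defJ}, cancel the principal term, and estimate the remainders. For a vector $\boldsymbol{x}$ supported on $\{1,\dots,K\}$ write $\widehat{\boldsymbol{x}}=\boldsymbol{x}-K^{-1}(\boldsymbol{x},\boldsymbol{1}_{K})\boldsymbol{1}_{K}$, so that $(\widehat{\boldsymbol{x}},\boldsymbol{1}_{K})=0$ and $\|\widehat{\boldsymbol{x}}\|_{2}\le\|\boldsymbol{x}\|_{2}$. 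Then $\boldsymbol{Z}_{t}^{N,K}-\bar{Z}_{t}^{N,K}\boldsymbol{1}_{K}=v_{t}^{N,K}\widehat{\boldsymbol{V}}_{N}^{K}+\widehat{\boldsymbol{I}}_{t}^{N,K}+\widehat{\boldsymbol{M}}_{t}^{N,K}+\widehat{\boldsymbol{J}}_{t}^{N,K}$, and expanding $\|\cdot\|_{2}^{2}$ the term $\tfrac{N}{K}(v_{t}^{N,K})^{2}\|\widehat{\boldsymbol{V}}_{N}^{K}\|_{2}^{2}$ cancels against its copy in \eqref{cD1}, while $\tfrac{N}{K}\|\widehat{\boldsymbol{M}}_{t}^{N,K}\|_{2}^{2}-N\bar{Z}_{t}^{N,K}=X_{t}^{N,K}$ is exactly the quantity of Lemma \ref{MVV}(ii). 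Writing $w=v_{t}^{N,K}$, this yields on $\Omega_{N}^{K,2}$ (where $w>0$ for $t\geq t_\eta$) the pointwise bound
\begin{align*}
\cD_{t}^{N,K,1}\le{}&\frac{|X_{t}^{N,K}|}{w^{2}}+\frac{N}{Kw^{2}}\Big(\|\widehat{\boldsymbol{I}}_{t}^{N,K}\|_{2}^{2}+\|\widehat{\boldsymbol{J}}_{t}^{N,K}\|_{2}^{2}\Big)+\frac{2N}{Kw^{2}}|(\widehat{\boldsymbol{M}}_{t}^{N,K},\widehat{\boldsymbol{J}}_{t}^{N,K})|\\
&+\frac{2N}{Kw}\big(|(\widehat{\boldsymbol{V}}_{N}^{K},\widehat{\boldsymbol{I}}_{t}^{N,K})|+|(\widehat{\boldsymbol{V}}_{N}^{K},\widehat{\boldsymbol{M}}_{t}^{N,K})|+|(\widehat{\boldsymbol{V}}_{N}^{K},\widehat{\boldsymbol{J}}_{t}^{N,K})|\big)\\
&+\frac{2N}{Kw^{2}}\big(|(\widehat{\boldsymbol{I}}_{t}^{N,K},\widehat{\boldsymbol{M}}_{t}^{N,K})|+|(\widehat{\boldsymbol{I}}_{t}^{N,K},\widehat{\boldsymbol{J}}_{t}^{N,K})|\big).
\end{align*}

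I would then bound $\mathbb{E}[\boldsymbol{1}_{\Omega_{N}^{K,2}}(\cdot)]$ term by term. By Lemma \ref{Cvc}, $w\ge c_{\eta}e^{(\alpha_{0}-\eta)t}$ on $\Omega_{N}^{K,2}$ for $t\ge t_{\eta}$, so $w^{-1},w^{-2}$ may be replaced by the deterministic factors $c_{\eta}^{-1}e^{-(\alpha_0-\eta)t}$, $c_{\eta}^{-2}e^{-2(\alpha_0-\eta)t}$. The diagonal pieces are routine: $\mathbb{E}[\boldsymbol{1}_{\Omega_{N}^{K,2}}|X_{t}^{N,K}|]\le C_{\eta}N K^{-1/2}e^{(\alpha_0+\eta)t}$ (Lemma \ref{MVV}(ii)); $\|\widehat{\boldsymbol{I}}_{t}^{N,K}\|_{2}\le\|\boldsymbol{I}_{t}^{N,K}\|_{2}\le C_{\eta}t\sqrt{K}N^{-3/8}$ (Lemma \ref{INK}); and, using Lemma \ref{JNK} together with $\|\boldsymbol{V}_{N}^{K}\|_{2}\ge\tfrac12\sqrt K$ (Lemma \ref{123456}(ii)), $\tfrac{N}{K}\mathbb{E}_{\theta}[\|\widehat{\boldsymbol{J}}_{t}^{N,K}\|_{2}^{2}]\le C_{\eta}(e^{(\alpha_0+\eta)t}+K^{-1}\|\widehat{\boldsymbol{V}}_{N}^{K}\|_{2}^{2}e^{2(\alpha_0+\eta)t})$, which on taking $\mathbb{E}[\boldsymbol{1}_{\Omega_{N}^{K,2}}\cdot]$ and invoking $\mathbb{E}[\boldsymbol{1}_{\Omega_{N}^{K,2}}\|\widehat{\boldsymbol{V}}_{N}^{K}\|_{2}^{2}]\le CK/N$ (Proposition \ref{VmVb}) becomes $C_{\eta}(e^{(\alpha_0+\eta)t}+N^{-1}e^{2(\alpha_0+\eta)t})$. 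The cross term with $\widehat{\boldsymbol{M}}_{t}^{N,K}$ against $\widehat{\boldsymbol{V}}_{N}^{K}$ is controlled by Lemma \ref{MVV}(i): $\mathbb{E}_{\theta}[(\widehat{\boldsymbol{V}}_{N}^{K},\widehat{\boldsymbol{M}}_{t}^{N,K})^{2}]^{1/2}\le C_{\eta}\|\widehat{\boldsymbol{V}}_{N}^{K}\|_{2}e^{(\alpha_0+\eta)t/2}$. For the four cross terms $(\widehat{\boldsymbol{V}}_{N}^{K},\widehat{\boldsymbol{I}}_{t}^{N,K})$, $(\widehat{\boldsymbol{V}}_{N}^{K},\widehat{\boldsymbol{J}}_{t}^{N,K})$, $(\widehat{\boldsymbol{I}}_{t}^{N,K},\widehat{\boldsymbol{J}}_{t}^{N,K})$, $(\widehat{\boldsymbol{M}}_{t}^{N,K},\widehat{\boldsymbol{J}}_{t}^{N,K})$, Cauchy--Schwarz together with Lemmas \ref{INK}, \ref{JNK}, \ref{MVV}(iii) suffices — but the point is that every occurrence of $\|\widehat{\boldsymbol{V}}_{N}^{K}\|_{2}$ must be carried inside the expectation and estimated by $\mathbb{E}[\boldsymbol{1}_{\Omega_{N}^{K,2}}\|\widehat{\boldsymbol{V}}_{N}^{K}\|_{2}]\le(CK/N)^{1/2}$, never by the crude deterministic $O(\sqrt K)$; this replacement of $\sqrt K$ by $\sqrt{K/N}$ is what makes those contributions fit the target rate.

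The main obstacle is the remaining cross term $(\widehat{\boldsymbol{I}}_{t}^{N,K},\widehat{\boldsymbol{M}}_{t}^{N,K})$, for which Cauchy--Schwarz is too lossy: $\mathbb{E}_{\theta}[\|\widehat{\boldsymbol{M}}_{t}^{N,K}\|_{2}^{2}]^{1/2}\asymp\sqrt N e^{(\alpha_0+\eta)t/2}$ (Lemma \ref{MVV}(iii)), which combined with $\|\widehat{\boldsymbol{I}}_{t}^{N,K}\|_{2}\lesssim t\sqrt KN^{-3/8}$ and the prefactor $N/(Kw^{2})$ gives a term $\asymp tN^{9/8}K^{-1/2}e^{-3\alpha_0 t/2}$, not dominated by $(\sqrt Ne^{-\alpha_0 t})^{3/2}$ when $K$ is small. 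Instead one computes the conditional second moment exactly: since $\widehat{\boldsymbol{I}}_{t}^{N,K}$ is $\sigma((\theta_{ij}))$-measurable, $\mathbb{E}_{\theta}[(\widehat{\boldsymbol{I}}_{t}^{N,K},\widehat{\boldsymbol{M}}_{t}^{N,K})^{2}]=\sum_{i,j\le K}\widehat I_{i}\widehat I_{j}\,\mathbb{E}_{\theta}[\widehat M_{i}\widehat M_{j}]$ (coordinates $\widehat I_i,\widehat M_i$), and from $[M^{i,N},M^{j,N}]_{t}=\boldsymbol{1}_{\{i=j\}}Z_{t}^{i,N}$, hence $\mathbb{E}_{\theta}[M^{i,N}_tM^{j,N}_t]=\boldsymbol{1}_{\{i=j\}}\mathbb{E}_\theta[Z^{i,N}_t]$, one finds $\mathbb{E}_{\theta}[\widehat M_{i}\widehat M_{j}]=\boldsymbol{1}_{\{i=j\}}\mathbb{E}_{\theta}[Z_{t}^{i,N}]+c_{ij}$ where $c_{ij}$ only involves $\bar M^{N,K}_t$-type corrections of the form $K^{-1}$ times averages of the $\mathbb{E}_\theta[Z^{k,N}_t]$; because $(\widehat{\boldsymbol{I}}_{t}^{N,K},\boldsymbol{1}_{K})=0$ the whole $\sum_{i,j}\widehat I_i\widehat I_j c_{ij}$ telescopes to $0$, leaving $\sum_{i\le K}(\widehat I_i)^2\mathbb{E}_\theta[Z^{i,N}_t]\le C_{\eta}e^{(\alpha_0+\eta)t}\|\widehat{\boldsymbol{I}}_{t}^{N,K}\|_{2}^{2}$ (using $\mathbb{E}_\theta[(Z^{i,N}_t)^2]\le C_\eta e^{2(\alpha_0+\eta)t}$ on $\Omega^2_N$, as in the proof of Lemma \ref{MVV}). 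With Lemma \ref{INK} this gives, after dividing by $w^2$, a term $\asymp tN^{5/8}K^{-1/2}e^{-3\alpha_0 t/2}\le tN^{3/4}e^{-3\alpha_0 t/2}$, which is acceptable. Saving the factor $\sqrt N$ here relative to the naive estimate is the crux of the argument.

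Finally I would collect the pieces. Each contributes $C_{\eta}\,\mathrm{poly}(t)\,e^{c\eta t}\cdot r$ with $c\le 4$ and $r$ one of $K^{-1/2}$, $(\sqrt Ne^{-\alpha_0 t})^{3/2}$, $NK^{-1/2}e^{-\alpha_0 t}$; absorbing $\mathrm{poly}(t)$ and the constant $c$ into the exponent for $t\ge t_\eta$ (equivalently, running the whole argument with $\eta/4$ in place of $\eta$) produces the stated bound $C_{\eta}e^{4\eta t}(K^{-1/2}+(\sqrt Ne^{-\alpha_0 t})^{3/2}+NK^{-1/2}e^{-\alpha_0 t})$. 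The elementary comparisons needed all exploit $K\le N$: for instance $N^{-1}\le K^{-1/2}$ and $e^{-\alpha_0 t}\le NK^{-1/2}e^{-\alpha_0 t}$; $N^{5/8}K^{-1/2}\le N^{3/4}$ and $N^{1/4}e^{-2\alpha_0 t}\le N^{3/4}e^{-3\alpha_0 t/2}$ for the $\widehat{\boldsymbol{I}}$-type contributions; and $\sqrt{N/K}\,e^{-\alpha_0 t/2}\le\tfrac12(K^{-1/2}+NK^{-1/2}e^{-\alpha_0 t})$ by AM--GM (valid since $N\ge K$) for every term carrying a single factor $\|\widehat{\boldsymbol{V}}_{N}^{K}\|_{2}$.
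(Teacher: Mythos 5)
Your proposal is correct and follows the same route as the paper: the same decomposition $\boldsymbol{Z}_{t}^{N,K}=v_{t}^{N,K}\boldsymbol{V}_{N}^{K}+\boldsymbol{I}_{t}^{N,K}+\boldsymbol{M}_{t}^{N,K}+\boldsymbol{J}_{t}^{N,K}$, the same cancellation of the principal term and identification of $X_t^{N,K}$, and the same invocations of Lemmas \ref{INK}, \ref{JNK}, \ref{Cvc}, \ref{MVV} and Proposition \ref{VmVb}, including the essential point that $\|\boldsymbol{V}_{N}^{K}-\bar V_N^K\boldsymbol{1}_K\|_2$ must be estimated in expectation by $\sqrt{CK/N}$ rather than by $\sqrt K$. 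The one place you genuinely diverge is the cross term $(\boldsymbol{I}_{t}^{N,K}-\bar I_t^{N,K}\boldsymbol{1}_K,\boldsymbol{M}_{t}^{N,K}-\bar M_t^{N,K}\boldsymbol{1}_K)$, which you call the crux and treat by an exact conditional second-moment computation giving $\mathbb{E}_\theta[(\cdot,\cdot)^2]\le C_\eta e^{(\alpha_0+\eta)t}\|\boldsymbol{I}_{t}^{N,K}-\bar I_t^{N,K}\boldsymbol{1}_K\|_2^2$ (in fact you can shortcut your telescoping of the $c_{ij}$: since the first vector is orthogonal to $\boldsymbol{1}_K$, the pairing equals $(\widehat{\boldsymbol{I}},\boldsymbol{M}_t^{N,K})$ directly, and \eqref{ee3} finishes it). This refinement is correct but not needed: the obstruction you describe only arises if one uses Lemma \ref{MVV}(iii) literally as stated, with the factor $N$; its proof actually establishes the sharper bound $\mathbb{E}_\theta[\|\boldsymbol{M}_{t}^{N,K}-\bar M_t^{N,K}\boldsymbol{1}_K\|_2^2]\le C_\eta K e^{(\alpha_0+\eta)t}$, and with that factor $\sqrt K$ the plain Cauchy--Schwarz estimate yields $C_\eta t N^{5/8}e^{-3\alpha_0 t/2+O(\eta)t}\le C_\eta e^{4\eta t}(\sqrt N e^{-\alpha_0 t})^{3/2}$, which is exactly what the paper does. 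Your version costs an extra computation but gives a marginally better power of $K$ on that term; either way the final rate is the same.
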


\begin{proof}
Recalling \eqref{defI} and \eqref{defJ}, we start from 
$\boldsymbol{Z}_{t}^{N,K}=\boldsymbol{M}_{t}^{N,K}+\boldsymbol{J}_{t}^{N,K}+v_{t}^{N,K}\boldsymbol{V}_{N}^{K}+\boldsymbol{I}_{t}^{N,K}$.  In view of (\ref{cD1}), we have:
\begin{align*}
\cD_{t}^{N,K,1}
&=\frac{1}{(v_{t}^{N,K})^{2}}\Big|\frac{N}{K}\|\boldsymbol{I}_{t}^{N,K}-\bar{I}_{t}^{N,K}\boldsymbol{1}_{K}+\cJ_{t}^{N,K}-\bar{J}_{t}^{N,K}\boldsymbol{1}_{K}\|_{2}^{2}+\frac{N}{K}\|\cM_{t}^{N,K}-\bar{M}_{t}^{N,K}\boldsymbol{1}_{K}\|_{2}^{2}\\
&\qquad-NZ_{t}^{N,K}+2\frac{N}{K}\Big(\boldsymbol{I}_{t}^{N,K}-\bar{I}^{N,K}_{t}\boldsymbol{1}_{K}+\cJ_{t}^{N,K}-\bar{J}_{t}^{N,K}\boldsymbol{1}_{K},v_{t}^{N,K}(V_{N}^{K}-\bar{V}_{N}^{K}\boldsymbol{1}_{K})
\\&\qquad+\boldsymbol{M}_{t}^{N,K}-\bar{M}_{t}^{N,K}\boldsymbol{1}_{K}\Big)
+2\frac{N}{K}v_{t}^{N,K}\Big(\boldsymbol{V}_{N}^{K}-\bar{V}_{N}^{K}\boldsymbol{1}_{K},\boldsymbol{M}_{t}^{N,K}-\bar{M}_{t}^{N,K}\boldsymbol{1}_{K}\Big)\Big|\\
&\le \frac{1}{(v_{t}^{N,K})^{2}}\Big[2\frac{N}{K}\|\boldsymbol{I}_{t}^{N,K}-\bar{I}_{t}^{N,K}\boldsymbol{1}_{K}\|^{2}_{2}+2\frac{N}{K}\|\boldsymbol{J}_{t}^{N,K}-\bar{J}_{t}^{N,K}\boldsymbol{1}_{K}\|_{2}^{2}+|X_{t}^{N,K}|
\\&\qquad+2\frac{N}{K}\Big(\|\boldsymbol{I}_{t}^{N,K}-\bar{I}_{t}^{N,K}\boldsymbol{1}_{K}\|_{2}+\|\boldsymbol{J}_{t}^{N,K}-\bar{J}_{t}^{N,K}\boldsymbol{1}_{K}\|_{2}\Big)\Big(v_{t}^{N,K}\|\boldsymbol{V}_{N}^{K}-\bar{V}_{N}^{K}\boldsymbol{1}_{K}\|_{2}
\\&\qquad+\|\boldsymbol{M}_{t}^{N,K}-\bar{M}_{t}^{N,K}\boldsymbol{1}_{K}\|_{2}\Big)+2\frac{N}{K}\Big|v_{t}^{N,K}\Big(\boldsymbol{V}_{N}^{K}-\bar{V}_{N}^{K}\boldsymbol{1}_{K},\boldsymbol{M}_{t}^{N,K}-\bar{M}_{t}^{N,K}\boldsymbol{1}_{K}\Big)\Big|\Big].
\end{align*}
We fix $\eta>0$ and work with $N$ and $t$ large enough and on $\Omega_N^{K,2}$.
Using Lemmas \ref{INK}, \ref{JNK}, \ref{Cvc}, \ref{MVV} together with the fact that $c \sqrt{K} \le \|\boldsymbol{V}_{N}^{K}\|_{2}\le C\sqrt{K}$ on $\Omega_{N}^{K,2}$ (by Lemma \ref{123456} (ii)),
we deduce the following bound on the set $\Omega_{N}^{K,2}$:
\begin{align*}
&\mathbb{E}_{\theta}\bigl[\cD_{t}^{N,K,1}\bigr] \le C_\eta e^{-2(\alpha_{0}-\eta)t}\Big\{N^{\frac{1}{4}}e^{{2\eta}t}+e^{(\alpha_{0}+\eta)t}+e^{2(\alpha_{0}+\eta)t}\frac{1}{\|\boldsymbol{V}_{N}^{K}\|^{2}_{2}}\|\boldsymbol{V}_{N}^{K}-\bar{V}_{N}^{K}\boldsymbol{1}_{K}\|_{2}^{2}\\&+\frac{N}{\sqrt{K}}e^{(\alpha_{0}+\eta)t}+\frac{2N}{K}\Big[t\sqrt{K}N^{-\frac{3}{8}}+\sqrt{\frac{K}{N}}e^{\frac{\alpha_{0}+\eta}{2}t}+\sqrt{\frac{K}{N}}e^{(\alpha_{0}+\eta)}
\frac{1}{\|\boldsymbol{V}_{N}^{K}\|_{2}}\|\boldsymbol{V}_{N}^{K}-\bar{V}_{N}^{K}\|_{2}\Big]\Big[\sqrt{K}e^{\frac{\alpha_{0}+\eta}{2}t}\\&+e^{(\alpha_{0}+\eta)t}\|\boldsymbol{V}_{N}^{K}-\bar{V}_{N}^{K}\boldsymbol{1}_{K}\|_{2}\Big]+e^{\frac{1}{2}(\alpha_{0}+\eta)t}\frac{N}{K}\|\boldsymbol{V}_{N}^{K}-\bar{V}_{N}^{K}\boldsymbol{1}_{K}\|_{2}\Big\}
\end{align*}

By proposition \ref{VmVb}, we finally obtain:
\begin{align*}
\mathbb{E}[\boldsymbol{1}_{\Omega_{N}^{K,2}}\cD_{t}^{N,K,1}] 
&\le  C_\eta e^{-2(\alpha_{0}-\eta)t}\Big|N^{\frac{5}{8}}te^{\frac{\alpha_{0}+\eta}{2}}+\frac{N}{\sqrt{K}}e^{(\alpha_{0}+\eta)t}+N^{\frac{1}{8}}e^{(\alpha_{0}+\eta)t}\\
&\qquad+e^{\frac{3(\alpha_{0}+\eta)}{2}t}
+e^{\frac{3}{2}(\alpha_{0}+\eta)t}+e^{2(\alpha_{0}+\eta)t}\frac{1}{\sqrt{N}}+N^{\frac{1}{4}}e^{2\eta t}\Big|\\
&\le C_\eta e^{4\eta t}\Big|N^{\frac{5}{8}}e^{-\frac{3}{2}\alpha_{0}t}+\frac{N}{\sqrt{K}}e^{-\alpha_{0}t}+e^{-\frac{1}{2}\alpha_{0}t}+\frac{1}{\sqrt{N}}\Big|.
\end{align*}
Since $\frac{N}{\sqrt{K}}e^{-\alpha_{0}t}+\frac{1}{\sqrt{N}}\ge e^{-\frac{\alpha_{0}}{2}t},\  N^{\frac{5}{8}}e^{-\frac{3}{2}\alpha_{0}t}\le (\sqrt{N}e^{-\alpha_{0}t})^{\frac{3}{2}}$, one gets
$$\mathbb{E}[\boldsymbol{1}_{\Omega_{N}^{K,2}}\cD_{t}^{N,K,1}]\le C_{\eta}e^{4\eta t}\Big(\frac{1}{\sqrt{N}}+\Big(\frac{\sqrt{N}}{e^{\alpha_{0}t}}\Big)^{\frac{3}{2}}+\frac{N}{\sqrt{K}}e^{-\alpha_{0}t}\Big).$$
\end{proof}

\section{Proof of the main theorem in the supercritical case.}

In this section we prove Theorem \ref{supercrit} and Remark \ref{remsup}.

\subsection{Proof of Theorem \ref{supercrit}}
By Lemma \ref{DNK}, 
on the event $\Omega_{N}^{K,2}\cap \{\bar{Z}_{t}^{N,K}\ge \frac{1}{4}v_{t}^{N,K}>0\}$,\\
we already have the following inequality:

$$\cD_{t}^{N,K}\le 16\cD_{t}^{N,K,1}+128\frac{N}{K}\|\boldsymbol{V}_{N}^{K}-\bar{V}_{N}^{K}\boldsymbol{1}_{N}\|_{2}^{2}\cD_{t}^{N,K,2}+\Big|\mathcal{U}_{\infty}^{N,K}-\Big(\frac{1}{p}-1\Big)\Big|.$$
Thus
\begin{align*}
&\boldsymbol{1}_{\Omega_{N}^{K,2}}\mathbb{E}_{\theta}\Big[\boldsymbol{1}_{\{\bar{Z}_{t}^{N,K}\ge v_{t}^{N,K}/4>0\}}\Big|\mathcal{U}_{t}^{N,K}-\Big(\frac{1}{p}-1\Big)\Big|\Big]
\le \boldsymbol{1}_{\Omega_{N}^{K,2}}\Big|\mathcal{U}_{\infty}^{N,K}-\Big(\frac{1}{p}-1\Big)\Big|+C_{\eta}16\mathbb{E}_{\theta}[\cD_{t}^{N,K,1}]\\&+128\frac{N}{K}\|\boldsymbol{V}_{N}^{K}-\bar{V}_{N}^{K}\boldsymbol{1}_{N}\|_{2}^{2}\ \mathbb{E}_{\theta}[\cD_{t}^{N,K,2}]. 
\end{align*}
From Proposition \ref{VmVb} and Lemmas  \ref{D2K}, \ref{DK1} it follows that
$$\mathbb{E}\Big[\boldsymbol{1}_{\Omega_{N}^{K,2}}\boldsymbol{1}_{\{\bar{Z}_{t}^{N,K}\ge v_{t}^{N,K}/4>0\}}\Big|\mathcal{U}_{t}^{N,K}-\Big(\frac{1}{p}-1\Big)\Big|\Big]\le C_{\eta}e^{4\eta t}\Big(\frac{1}{\sqrt{K}}+\Big(\frac{\sqrt{N}}{e^{\alpha_{0}t}}\Big)^{\frac{3}{2}}+\frac{N}{\sqrt{K}}e^{-\alpha_{0}t}\Big).$$
Moreover, by Lemmas \ref{ONK2} and \ref{D2K} we have:
$$P(\Omega_{N}^{K,2})\ge 1-Ce^{-cN^{\frac{1}{4}}}, \quad
P_{\theta}\Big(\bar{Z}_{t}^{N,K}\le \frac{1}{4}v_{t}^{N,K}\Big)\le C_{\eta}e^{2\eta t}\Big(\frac{1}{\sqrt{K}}+e^{-\alpha_{0}t}\Big).$$
Hence, by the Chebyshev's inequality, we obtain:
\begin{align*}
P(|\mathcal{P}_{t}^{N,K}-p|\ge \varepsilon)
&\le
(C_{\eta}/\varepsilon) e^{4\eta t}\Big(\frac{1}{\sqrt{K}}+\Big(\frac{\sqrt{N}}{e^{\alpha_{0}t}}\Big)^{\frac{3}{2}}+\frac{N}{\sqrt{K}}e^{-\alpha_{0}t}\Big)\\
&\quad+Ce^{-cN^{\frac{1}{4}}}+C_{\eta}e^{2\eta t}\Big(\frac{1}{\sqrt{K}}+e^{-\alpha_{0}t}\Big)\\
&\le (C_{\eta}^{,}/\varepsilon)e^{4\eta t}\Big(\frac{1}{\sqrt{K}}+\Big(\frac{\sqrt{N}}{e^{\alpha_{0}t}}\Big)^{\frac{3}{2}}+\frac{N}{\sqrt{K}}e^{-\alpha_{0}t}\Big).
\end{align*}
Finally, using that
$(\frac{\sqrt{N}}{e^{\alpha_{0}t}})^{\frac{3}{2}}\le \frac{N}{\sqrt{K}}e^{-\alpha_{0}t}$,
we get:
$$P(|\mathcal{P}_{t}^{N,K}-p|\ge \varepsilon)\le \frac{C_{\eta}e^{4\eta t}}{\varepsilon}\Big(\frac{N}{\sqrt{K}e^{\alpha_{0}t}}+\frac{1}{\sqrt{K}}\Big).$$
The proof is complete.

\subsection{Proof of Remark \ref{remsup}}
By the Lemma \ref{D2K}, for $N\ge N_{\eta}$, we have that: 
$$\boldsymbol{1}_{\Omega_{N}^{K,2}}\mathbb{E}_{\theta}\Big[\Big|\frac{\bar{Z}_{t}^{N,K}}{v_{t}^{N,K}}-\bar{V}_{N}^{K}\Big|\Big]=\boldsymbol{1}_{\Omega_{N}^{K,2}}\mathbb{E}_{\theta}[\mathcal{D}_{t}^{N,K,2}]\le C_{\eta}e^{2\eta t}\Big(\frac{1}{\sqrt{K}}+e^{-\alpha_{0}t}\Big).$$
From lemma \ref{123456} $(ii)$, we have for all $V_{N}(i)\in [\frac{1}{2},2]$. So $\bar{V}_{N}^{K}=(\frac{1}{K}\sum_{i=1}^{K}V_{N}(i))\in [\frac{1}{2},2]$ on the set $\Omega_{N}^{K,2}.$ From lemma \ref{ONK2}, we have $P(\Omega_{N}^{K,2})\ge 1-Ce^{-cN^{\frac{1}{4}}}$. From Lemma \ref{Cvc},  for $t\ge t_{\eta}$ we  get $ v_{t}^{N,K}\in [a_{\eta}e^{(\alpha_{0}+\eta)}, b_{\eta}e^{(\alpha_{0}-\eta)t}]$ for some $a_{\eta}<b_{\eta}$. So we  deduce that for $N\ge N_{\eta}$, $t\ge t_{\eta}$,

$$P\Big(\bar{Z}_{t}^{N,K}\in [\frac{a_{\eta}}{2}e^{(\alpha_{0}-\eta)t},2b_{\eta}e^{(\alpha_{0}+\eta)t}]\Big)\ge 1-Ce^{-cN^{\frac{1}{4}}}-C_{\eta}e^{2\eta t}\Big(\frac{1}{\sqrt{K}}+e^{-\alpha_{0}t}\Big).$$ This implies that 
for any $\eta>0$, $$\lim_{t\to \infty}\lim_{(N,K)\to (\infty,\infty)} P(\bar{Z}_{t}^{N,K}\in[e^{(\alpha_{0}-\eta)t},e^{(\alpha_{0}+\eta)t}])=1.$$

\section{Acknowledgement}
I want to thank my supervisors, N. Fournier and S. Delattre. This paper can not be finished without their superb guidance. I am sincerely grateful to them.



\bibliographystyle{amsplain}

\begin{thebibliography}{10}



\bibitem{I} {E. Bacry, S. Delattre, M. Hoffmann, J.F. Muzy, {\it Some limit theorems for Hawkes 
processes and applications to financial statistics}.  Stoch. Processes Appl. 123 (2013), 2475--2499.}
\bibitem{bmu2}{E. Bacry and J.F. Muzy, {\it Second order statistics characterization of Hawkes processes 
and non-parametric estimation}, arXiv:1401.0903.}
\bibitem{Q}L. Bauwens  and N. Hautsch, \textit{Modelling financial high frequency data using point processes. Handbook of Financial Time Series}. 953-979, 2009. 
\bibitem{F}C. Blundell, K.A. Heller, J.F. Beck, \textit{Modeling reciprocating relationships with Hawkes 
processes}. Neural Information Processing Systems 2012.
\bibitem{J} {P. Br\'emaud, G. Nappo, G.L. Torrisi, {\it Rate of convergence to equilibrium of 
marked Hawkes processes}. J. Appl. Probab. 39 (2002), 123--136.}
\bibitem{A} S. Delattre and N. Fournier, \textit{Statistical inference versus mean field limit for Hawkes processes.} Electron. J. Stat 10 (2016), 1223--1295.
\bibitem{hrr}{N.R. Hansen, P. Reynaud-Bouret, V. Rivoirard, {\it Lasso and probabilistic inequalities for 
multivariate point processes},  Bernoulli 2015, Vol. 21, No. 1, 83-143.}

\bibitem{E}{P. Hewlett, {\it Clustering of order arrivals, price impact and trade path optimisation}.
In Workshop on Financial Modeling with Jump processes. Ecole Polytechnique, 2006.}

\bibitem{S}G.O. Mohler,  M.B. Short, P.J. Brantingham, F.P. Schoenberg  and G.E. Tita \textit{Self-exciting point process modelling of crime.} Journal of the American Statistical Association. 106 (2011), 100-108.
\bibitem{o1}{Y. Ogata, {\it The asymptotic behaviour of maximum likelihood estimators for stationary 
point processes}, Ann. Instit. Math. Statist. 30 (1978), 243--261.}

\bibitem{r}{J.G. Rasmussen, {\it Bayesian inference for Hawkes processes}, 
Methodol. Comput. Appl. Probab. 15 (2013), 623--642.}

\bibitem{PRBB} P.  Reynaud-Bouret, V.Rivoirard and C.T. Malot, \textit{Inference of functional connectivity in Neurosciences via Hawkes processes}
 1st IEEE Global Conference on Signal and Information Processing, 2013.

\bibitem{rs}{P. Reynaud-Bouret and S. Schbath, {\it Adaptive estimation for Hawkes processes: application to 
genome analysis,} Ann. Statist. 38 (2010), 2781--2822.}

\bibitem{rrgt}{P. Reynaud-Bouret, V. Rivoirard, F. Grammont, C. Tuleau-Malot, {\it Goodness-of-fit tests and 
nonparametric adaptive estimation for spike train analysis},  Journal of Math. Neuroscience 4:3 (2014).}


\bibitem{ASMI} A. Simma, M.I. Jordan, \textit{Modeling events with cascades of Poisson processes},  http://www.eecs.berkeley.edu/
Pubs/TechRpts/2010/EECS-2010-109.html  

\bibitem{zhzh} K. Zhou, H. Zha, L. Song, \textit{Learning triggering kernels for multi-dimensional Hawkes processes}, Proceedings of
the 30th International Conf. on Machine Learning (ICML), 2013.
\end{thebibliography}

\end{document}